\newtheorem{theorem}{Theorem}[section]
\newtheorem{corollary}[theorem]{Corollary}
\newtheorem{definition}[theorem]{Definition}
\newtheorem{lemma}[theorem]{Lemma}
\newtheorem{proposition}[theorem]{Proposition}
\newtheorem{remark}[theorem]{Remark}
\def\@makefnmark{}
\begin{document}
\title[Existence and Nonexistence of Extremals for critical Adams inequalities in $%
%TCIMACRO{\U{211d} }%
%BeginExpansion
\mathbb{R}
%EndExpansion
^{4}$ ]{Existence and Nonexistence of Extremals for critical Adams inequalities in $%
%TCIMACRO{\U{211d} }%
%BeginExpansion
\mathbb{R}
%EndExpansion
^{4}$ and Trudinger-Moser inequalities in $\mathbb{R}
%EndExpansion
^{2}$}

\author{Lu Chen, Guozhen Lu and Maochun Zhu}
\address{School of Mathematics and Statistics, Beijing Institute of Technology, Beijing 100081, P. R. China}
\email{chenlu5818804@163.com}

\address{Department of Mathematics\\
University of Connecticut\\
Storrs, CT 06269, USA}
\email{guozhen.lu@uconn.edu}

\address{Faculty of Science\\
Jiangsu University\\
Zhenjiang, 212013, P. R. China\\}
\email{zhumaochun2006@126.com}

\thanks{The second author was partly supported by a grant from the Simons Foundation. The third author was partly supported by grants from the NNSF of China (No. 11601190) and Natural Science of Jiangsu Province Foundation No. BK20160483.}

\begin{abstract}
Though much work has been done with respect to the existence of extremals of the critical first order Trudinger-Moser inequalities in $W^{1,n}(\mathbb{R}^n)$ and higher order Adams inequalities on finite domain $\Omega\subset \mathbb{R}^n$,
whether there exists an extremal function for the critical higher order Adams inequalities on the entire space $\mathbb{R}^n$ still remains open. The current paper represents the first attempt in this direction. The classical blow-up procedure cannot apply to solving
the existence of critical Adams type inequality because of the absence of the P\'{o}lya-Szeg\"{o}\ type inequality. In this paper, we develop some new ideas and approaches based on a sharp Fourier rearrangement principle (see \cite{Lenzmann}), sharp constants of the higher-order Gagliardo-Nirenberg inequalities and optimal poly-harmonic truncations to study the existence and nonexistence of the maximizers for the Adams inequalities in $\mathbb{R}^4$ of the form
$$   S(\alpha)=\sup_{\|u\|_{H^2}=1}\int_{\mathbb{R}^4}\big(\exp(32\pi^2|u|^2)-1-\alpha|u|^2\big)dx,$$
where $\alpha \in (-\infty, 32\pi^2)$. We establish the existence of the threshold
$\alpha^{\ast}$, where $\alpha^{\ast}\geq \frac{(32\pi^{2})^2B_{2}}{2}$ and $B_2\geq \frac{1}{24\pi^2}$, such that $S\left( \alpha\right) $ is attained if
$32\pi^{2}-\alpha<\alpha^{\ast}$, and is not attained if $32\pi^{2}-\alpha>\alpha^{\ast}$.  This phenomena has not been  observed before  even in
the case of first order Trudinger-Moser inequality. Therefore, we also establish  the existence and non-existence of an extremal function for the Trudinger-Moser  inequality on $\mathbb{R}^2$.
Furthermore, the symmetry of the extremal functions can also be deduced through the Fourier rearrangement principle.

\end{abstract}

\maketitle {\small {\bf Keywords:} Trudiner-Moser inequality, Adams inequality, blow up analysis, extremal function, Sharp Fourier rearrangement principle, sharp constants, threshold. \\

{\bf 2010 MSC.} 35J60, 35B33, 46E30.}
\section{Introduction}
Let $\Omega\subseteq%
%TCIMACRO{\U{211d} }%
%BeginExpansion
\mathbb{R}
%EndExpansion
^{n}$ and $W_{0}^{m,p}\left(  \Omega\right)  $ denote the usual Sobolev space
consisting of functions vanishing on boundary $\partial\Omega$ together with
their derivatives of order less than $m-1$, that is, the completion of
$C_{0}^{\infty}\left(  \Omega\right)  $ under the norm%
\[
\left\Vert u\right\Vert _{W^{m,p}\left(  \Omega\right)  }=\left(  \int
_{\Omega}\left(  \left\vert u\right\vert ^{p}+\left\vert \Delta^{m/2}%
u\right\vert ^{p}\right)  dx\right)  ^{\frac{1}{p}}.
\]
If $1<p<n/m$, the classical Sobolev embedding asserts that $W_{0}^{m,p}\left(
\Omega\right)  \hookrightarrow L^{p^{\ast}}\left(  \Omega\right)  $ for
$p^{\ast}=\frac{np}{n-mp}$. However, when $p=n/m$, it is known that
$W_{0}^{m,p}\left(  \Omega\right)  \hookrightarrow L^{\infty}\left(
\Omega\right)  $ does not hold. The borderline case of the optimal Sobolev embedding is
the well-known Trudinger-Moser inequality $\left(  m=1\right)$ (\cite{Mo},
\cite{Tru}) and Adams inequality $\left(  m>1\right)$ (\cite{A}). \\[0.11in]\textbf{Trudinger-Moser inequality.} The Trudinger inequality was established
independently by Yudovi\v{c} \cite{Yu}, Poho\v{z}aev \cite{Po} and Trudinger
\cite{Tru}. In 1971, Trudinger's inequality was sharpened in \cite{Mo} by proving%

\begin{equation}
\underset{\left\Vert \nabla u\right\Vert _{n\left(  \Omega\right)  }\leq
1}{\underset{u\in W_{0}^{1,n}\left(  \Omega\right)  }{\sup}}\int_{\Omega
}e^{\alpha\left\vert u\right\vert ^{\frac{n}{n-1}}}dx<\infty\text{ iff }%
\alpha\leq\alpha_{n}=n\omega_{n-1}^{\frac{1}{n-1}}, \label{moser-tru}%
\end{equation}
for any bounded domain $\Omega\subset%
%TCIMACRO{\U{211d} }%
%BeginExpansion
\mathbb{R}
%EndExpansion
^{n}$, where $\omega_{n-1}$ denotes the $n-1$ dimensional surface measure of
the unit ball in $%
%TCIMACRO{\U{211d} }%
%BeginExpansion
\mathbb{R}
%EndExpansion
^{n}$.

When the volume of $\Omega$ is infinite, there are several extensions of the
Trudinger-Moser inequality, see Cao \cite{cao} in the case $n=2$ and for
any dimension ($n\geq2$) by do \'{O} \cite{J.M. do1}.   Adachi-Tanaka
\cite{Adachi-Tanaka} obtained a sharp Trudinger-Moser on $%
%TCIMACRO{\U{211d} }%
%BeginExpansion
\mathbb{R}
%EndExpansion
^{n}$. Unlike in the inequality (\ref{moser-tru}), the result of \cite{Adachi-Tanaka}%
\ has a subcritical form, that is $\alpha<\alpha_{n}$. In \cite{ruf} and
\cite{liruf}, Li and Ruf showed that the exponent $\alpha_{n}$ becomes
admissible if the Dirichlet norm $\int_{\mathbb{R}^{n}}\left\vert \nabla
u\right\vert ^{2}dx$ is replaced by Sobolev norm $\int_{\mathbb{R}^{n}}\left(
\left\vert u\right\vert ^{2}+\left\vert \nabla u\right\vert ^{2}\right)  dx$,
more precisely, they proved that%

\[
\underset{\int_{\mathbb{R}^{n}}\left(  \left\vert u\right\vert ^{n}+\left\vert
\nabla u\right\vert ^{n}\right)  dx\leq1}{\underset{u\in W^{1,n}\left(
%TCIMACRO{\U{211d} }%
%BeginExpansion
\mathbb{R}
%EndExpansion
^{n}\right)  }{\sup}}\int_{%
%TCIMACRO{\U{211d} }%
%BeginExpansion
\mathbb{R}
%EndExpansion
^{2}}\Phi_{n}\left(  \alpha\left\vert u\right\vert ^{\frac{n}{n-1}}\right)
dx<+\infty,\text{ iff }\alpha\leq\alpha_{n},
\]
where $\Phi_{n}\left(  t\right)  =e^{t}-\underset{j=0}{\overset{N-2}{\sum}%
}\frac{t^{j}}{j!}$.

We should note that all the earlier proofs of both critical and subcritical Trudinger-Moser inequalities rely on
the P\'{o}lya-Szeg\"{o} symmetrization argument which is not available in many other non-Euclidean settings.
Lam and Lu in \cite{LamLu1} developed a symmetrization-free argument using the level sets of the functions under consideration and derive critical Trudinger-Moser inequalities on the Heisenberg groups from local inequalities obtained in \cite{cohn-lu} to global ones (see also \cite{LaLu4}, \cite{li-lu-zhu}). For such an argument in the subcritical case,  see   \cite{LamLuTang}. These also give an alternative proof of both critical and subcritical Trudinger-Moser in the Euclidean space $\mathbb{R}^n$.

\textbf{Existence of extremals for Trudinger-Moser
inequality.\ }A classical problem related to Trudinger-Moser inequalities
is  to investigate the existence of extremal functions.\ The first
proof of the existence of extremals for Trudinger-Moser inequality
(\ref{moser-tru}) was given by Carleson and Chang in their celebrated work
\cite{c-c} when the finite domain is a ball in $%
%TCIMACRO{\U{211d} }%
%BeginExpansion
\mathbb{R}
%EndExpansion
^{n}$. After that, the existence of extremals was proved for any bounded
domains\ in \cite{Flucher} and \cite{lin} in $\mathbb{R}^n$. More related results can be found
in several works, see e.g. Y. X. Li (\cite{Li2}, \cite{Li 1}, \cite{Li3}) for existence of
extremals on compact Riemannian manifold, and  Li and Ruf (\cite{ruf},\cite{liruf}) for existence of extremals on
unbounded domain. Malchiodi and Martinazzi \cite{Malchiodi1} further investigated the
blow-up of a sequence of critical points of the Trudinger-Moser functionals on the planar
disk.\\[0.11in]\textbf{Adams inequality on bounded domains}. In
1988, Adams~\cite{A} extended the Trudinger-Moser inequality (\ref{moser-tru})
to the higher order space $W_{0}^{m,\frac{n}{m}}(\Omega)$ and
obtained
\begin{equation}
\underset{\Vert\Delta^{\frac{m}{2}}u\Vert_{\frac{n}{m}}\leq{1}}{\underset
{u\in{W_{0}^{m,\frac{n}{m}}(\Omega)}}{\sup}}\int_{\Omega}\exp(\beta
|u(x)|^{\frac{n}{n-m}})dx\left\{
\begin{array}
[c]{l}%
\leq c\left\vert \Omega\right\vert \text{ if }\beta\leq{\beta(n,m),}\\
=+\infty\text{ \ if }\beta>{\beta(n,m),}%
\end{array}
\right.  \label{adams}%
\end{equation}
\textit{where}%
\[
\Delta^{\frac{m}{2}}u=%
%TCIMACRO{\QDATOPD{\{}{.}{\Delta^{l}u\text{ }\ \ \ \ \ \ \text{is }%
%m=2l,l\in\U{2115} }{\nabla\Delta^{l}u\text{, is }m=2l+1,l\in\U{2115} }}%
%BeginExpansion
\genfrac{\{}{.}{0pt}{0}{\Delta^{l}u\text{ }\ \ \ \ \ \ \text{is }%
m=2l,l\in\mathbb{N} }{\nabla\Delta^{l}u\text{, is }m=2l+1,l\in\mathbb{N} }%
%EndExpansion
\]
and
\[
\beta(n,m)=%
\begin{cases}
\frac{n}{\omega_{n-1}}[\frac{\pi^{n/2}2^{m}\Gamma(\frac{m+1}{2})}{\Gamma
(\frac{n-m+1}{2})}]^{\frac{n}{n-m}},\text{\textit{when}}~m~\text{\textit{is}
\textit{odd.}}\\
\frac{n}{\omega_{n-1}}[\frac{\pi^{n/2}2^{m}\Gamma(\frac{m}{2})}{\Gamma
(\frac{n-m}{2})}]^{\frac{n}{n-m}},\text{\textit{when}}~m~\text{\textit{is}
\textit{even.}}%
\end{cases}
\]
\textit{ }

Later, Tarsi \cite{Tar} proved that the Adams
inequality (\ref{adams}) also holds for a larger class of Sobolev functions,
i.e. the functions with homogeneous Navier boundary condition:%

\[
W_{N}^{m,\frac{n}{m}}\left(  \Omega\right)  =\left\{  u\in W^{m,\frac{n}{m}%
}\left(  \Omega\right)  ,s.t.\text{ }\Delta^{j}u=0\text{ on }\partial
\Omega\text{ for }0\leq j\leq\left[  \left(  m-1\right)  /2\right]  \right\}
.
\]
\textbf{Adams inequality on the entire Euclidean space }$\mathbb{R}^{n}$.\ In
1995, Ozawa~\cite{O} obtained  the Adams inequality in Sobolev
space $W^{m,\frac{n}{m}}(\mathbb{R}^{n})$ on the entire Euclidean space
$\mathbb{R}^{n}$ by using the restriction $\Vert\Delta^{\frac{m}{2}}%
u\Vert_{\frac{n}{m}}\leq{1}$. However, with the argument in \cite{O,KSW}, one
cannot obtain the best possible exponent $\beta$ for this type of
inequality.  Sharp Adams inequalities on even dimensional space $\mathbb{R}^n$ was proved by Ruf and Sani ~\cite{RS}
under
  the
stronger constraint
$$\{
u\in W^{{m},\frac{{n}}{m}}| \Vert(I-\Delta)^{\frac{m}{2}}\Vert_{\frac{n}{m}%
}\leq1
\},$$
when the order of derivatives  $m$ is an even integer. While the order of derivatives $m$ is odd, the inequality was established by
Lam and Lu ~\cite{LaLu6}. Moreover,
the following Adams inequality on
  Sobolev spaces $W^{\gamma, \frac{n}{\gamma}}(\mathbb{R}^{n})$
of arbitrary positive fractional order $\gamma<n$ was established by Lam and Lu using a rearrangement-free argument ~\cite{LaLu4}.

\begin{theorem}
\label{frac2}Let $0<\gamma<n$ be an arbitrary real positive number,
$p=\frac{n}{\gamma}$ and $\tau>0$. There holds%
\[
\underset{u\in W^{\gamma,p}\left(
%TCIMACRO{\U{211d} }%
%BeginExpansion
\mathbb{R}
%EndExpansion
^{n}\right)  ,\left\Vert \left(  \tau I-\Delta\right)  ^{\frac{\gamma}{2}%
}u\right\Vert _{p}\leq1}{\sup}\int_{%
%TCIMACRO{\U{211d} }%
%BeginExpansion
\mathbb{R}
%EndExpansion
^{n}}\phi\left(  \beta_{0}\left(  n,\gamma\right)  \left\vert u\right\vert
^{p^{\prime}}\right)  dx<\infty
\]
where
\begin{align*}
\phi(t)  &  =e^{t}-%
%TCIMACRO{\dsum \limits_{j=0}^{j_{p}-2}}%
%BeginExpansion
{\displaystyle\sum\limits_{j=0}^{j_{p}-2}}
%EndExpansion
\frac{t^{j}}{j!},\\
j_{p}  &  =\min\left\{  j\in%
%TCIMACRO{\U{2115} }%
%BeginExpansion
\mathbb{N}
%EndExpansion
:j\geq p\right\}  \geq p,
\end{align*}
and
\begin{align*}
p^{\prime}  &  =\frac{p}{p-1},\\
\beta_{0}\left(  n,\gamma\right)   &  =\frac{n}{\omega_{n-1}}\left[  \frac
{\pi^{n/2}2^{\gamma}\Gamma\left(  \gamma/2\right)  }{\Gamma\left(
\frac{n-\gamma}{2}\right)  }\right]  ^{p^{\prime}}.
\end{align*}
Furthermore this inequality is sharp, i.e., if $\beta_{0}\left(
n,\gamma\right)  $ is replaced by any $\beta>\beta_{0}\left(  n,\gamma\right)
$, then the supremum is infinite.
\end{theorem}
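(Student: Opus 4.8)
The plan is to argue without symmetrization, since the Bessel-type operator $(\tau I-\Delta)^{\gamma/2}$ is nonlocal and no P\'{o}lya--Szeg\"{o} principle is available for it; instead I would combine a convolution representation, a sharp analysis of the associated kernel, O'Neil's rearrangement inequality and a one-dimensional Adams-type calculus lemma, in the spirit of a level-set (rearrangement-free) argument. Write $f:=(\tau I-\Delta)^{\gamma/2}u$, so $\|f\|_p\le 1$ and $u=\mathcal G_{\gamma,\tau}*f$, where $\mathcal G_{\gamma,\tau}$ is the positive, radially symmetric, strictly decreasing convolution kernel of $(\tau I-\Delta)^{-\gamma/2}$. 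Everything hinges on two properties of this kernel, which I would establish (or quote): (i) near the origin it agrees with the Riesz kernel, $\mathcal G_{\gamma,\tau}(x)=\tfrac{1}{c_{n,\gamma}}|x|^{\gamma-n}+R(x)$ with $c_{n,\gamma}=\tfrac{\pi^{n/2}2^{\gamma}\Gamma(\gamma/2)}{\Gamma((n-\gamma)/2)}$ and $R$ strictly less singular than $|x|^{\gamma-n}$; (ii) $\mathcal G_{\gamma,\tau}$ decays exponentially at infinity, so $\mathcal G_{\gamma,\tau}\in L^1(\mathbb R^n)$. Note the target constant is exactly $\beta_0(n,\gamma)=\tfrac{n}{\omega_{n-1}}c_{n,\gamma}^{p'}$, so $c_{n,\gamma}$ is precisely what must be tracked, and that $p'=\tfrac{n}{n-\gamma}$ is the borderline exponent for which $|x|^{\gamma-n}\notin L^{p'}_{\mathrm{loc}}$ while $R\in L^{p'}$.

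Next I would peel off the harmless contribution. Since $1<p<\infty$, the constraint $\|(\tau I-\Delta)^{\gamma/2}u\|_p\le 1$ controls the full Sobolev norm $\|u\|_{W^{\gamma,p}}$, and hence $\|u\|_q$ for every $p\le q<\infty$ by the Sobolev--Gagliardo--Nirenberg embedding. On $\{|u|\le 1\}$ one has $\phi(\beta_0|u|^{p'})\le C\,|u|^{(j_p-1)p'}$, and since $(j_p-1)p'\ge p$ this is uniformly integrable. So it remains only to bound $\int_{\{|u|>1\}}\phi(\beta_0|u|^{p'})\,dx$, on a set of uniformly bounded measure, where only the singular part of $\mathcal G_{\gamma,\tau}$ matters.

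The crux is this local estimate. I would split $\mathcal G_{\gamma,\tau}=\mathcal G^{(1)}+\mathcal G^{(2)}$ with $\mathcal G^{(1)}$ supported in a small ball and equal to $\tfrac{1}{c_{n,\gamma}}|x|^{\gamma-n}$ up to lower-order corrections, and $\mathcal G^{(2)}\in L^1\cap L^{p'}$; then $|u|\le|\mathcal G^{(1)}*f|+\|\mathcal G^{(2)}\|_{p'}\le|\mathcal G^{(1)}*f|+C$. Applying O'Neil's inequality $(g*h)^{**}(t)\le t\,g^{**}(t)h^{**}(t)+\int_t^\infty g^*(s)h^*(s)\,ds$ with $g=\mathcal G^{(1)}$ (for which $(\mathcal G^{(1)})^*(s)\approx \tfrac{1}{c_{n,\gamma}}(n/(\omega_{n-1}s))^{(n-\gamma)/n}$) and $h=f$, then changing to the logarithmic variables $t=e^{-r}$, $s=e^{-\sigma}$, the estimate of $\int_{\{|u|>1\}}\phi(\beta_0|u|^{p'})\,dx$ reduces to the one-dimensional Adams-type lemma: if $w\ge 0$ with $\int_0^\infty w^{p'}\le 1$ and the kernel $a(r,\sigma)$ satisfies $a(r,\sigma)\le 1$ for $0<\sigma<r$ together with $\sup_r\big(\int_r^\infty a(r,\sigma)^p\,d\sigma\big)^{1/p}<\infty$, then $\int_0^\infty\exp\!\big(\big(\int_0^\infty a(r,\sigma)w(\sigma)\,d\sigma\big)^{p'}-r\big)\,dr\le C$. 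The normalization that makes $a\le 1$ is exactly the one producing $\beta_0(n,\gamma)=\tfrac{n}{\omega_{n-1}}c_{n,\gamma}^{p'}$, and any larger constant destroys that threshold. I expect the main obstacle to be precisely this passage: carrying the lower-order remainder $R$ and the off-diagonal term $t\,g^{**}(t)h^{**}(t)$ through the O'Neil and log-substitution steps without eroding the sharp constant.

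Finally, for sharpness I would test on a fractional Moser sequence: take $u_k=\mathcal G_{\gamma,\tau}*f_k$ with $f_k$ an $L^p$-normalized family concentrating logarithmically at the origin (the fractional analogue of $f_k=(\log k)^{-1/p}|x|^{-\gamma}\mathbf 1_{\{1/k\le|x|\le 1\}}$, suitably mollified). Property (i) of the kernel gives $\beta_0|u_k|^{p'}\ge(\log k)(1+o(1))$ on $\{|x|\le 1/k\}$, so that this ball alone contributes $\gtrsim k^{-n}\exp\!\big((\beta/\beta_0)\log k+o(\log k)\big)\to\infty$ whenever $\beta>\beta_0$, while the subtracted terms in $\phi$ stay bounded. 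The delicate point here is estimating $\|f_k\|_p$ and $u_k$ near the origin for the nonlocal operator, i.e. controlling $(\tau I-\Delta)^{-\gamma/2}-(-\Delta)^{-\gamma/2}$ against concentrating data; property (ii), or a Lions-type concentration argument, handles it.
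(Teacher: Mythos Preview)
The paper does not prove Theorem~\ref{frac2}: it is quoted as a known result of Lam and Lu \cite{LaLu4} in the introduction, with no proof supplied here. So there is no ``paper's own proof'' to compare against directly.

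That said, the approach you outline is \emph{not} the one the paper attributes to \cite{LaLu4}. The Lam--Lu argument is genuinely rearrangement-free: it splits the domain into $\{|u|\le 1\}$ and $A(u)=\{|u|>1\}$; on the first set the integrand is controlled by a fixed $L^q$-norm via Sobolev embedding, while on the second set, which has uniformly bounded measure, one subtracts a constant from $u$ so that the modified function belongs to $W_0^{\gamma,p}$ of a set of bounded measure and has Bessel energy at most $1$, and then invokes the \emph{bounded-domain} Adams inequality (Adams \cite{A}, Fontana--Morpurgo) as a black box. No O'Neil inequality, no one-dimensional calculus lemma, no kernel asymptotics are redone; the sharp constant is inherited from the local result.

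Your route---Bessel kernel asymptotics, O'Neil's inequality, and the Adams one-dimensional lemma---is essentially the Adams/Fontana--Morpurgo machinery carried out directly for the Bessel potential on $\mathbb{R}^n$, and it is a perfectly sound way to prove the theorem. It is more self-contained (you do not need the bounded-domain theorem as input) but also heavier, and the ``main obstacle'' you flag---carrying the remainder $R$ and the O'Neil off-diagonal term through the log substitution without losing the sharp constant---is real and is precisely where care is needed. The Lam--Lu argument sidesteps that difficulty entirely at the cost of relying on the local result. Your sharpness argument via a fractional Moser sequence is standard and correct; the Lam--Lu paper handles sharpness the same way.
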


 The following Adams inequality was established  in \cite{LaLu4}  for $m=2$ and
subsequently in \cite{Fontana} for  $m>2$:
\begin{equation}
\underset{\Vert\Delta^{\frac{m}{2}}u\Vert_{\frac{n}{m}}^{\frac{n}{m}}+\Vert
u\Vert_{\frac{n}{m}}^{\frac{n}{m}}\leq{1}}{\underset{u\in{W^{m,\frac{n}{m}%
}(\mathbb{R}^{n})}}{\sup}}\int_{{\mathbb{R}^{n}}}\Phi_{n,m}(\beta|u(x)|^{\frac{n}{n-m}})dx\left\{
\begin{array}
[c]{l}%
\leq C_{m,n}\text{ if }\beta\leq{\beta(n,m),}\\
=+\infty\text{ \ if }\beta>{\beta(n,m),}%
\end{array}
\right.  \label{Adams entire space}%
\end{equation}
where $$\Phi_{n,m}(t)=e^t-\sum_{j=0}^{j_{\frac{n}{m}}-2}\frac{t^j}{j!}, \ j_{\frac{n}{m}}=\min \{j \in \mathbb{N}:j\geq \frac{n}{m}\}.$$

We mention that there are sharpened Trudinger-Moser and Adams inequalities with exact growth in $\mathbb{R}^n$.
In 2011, Ibrahim et al \cite{IMN} discovered a new kind of
Trudinger-Moser inequality on $%
%TCIMACRO{\U{211d} }%
%BeginExpansion
\mathbb{R}
%EndExpansion
^{2}$--the Trudinger-Moser inequality\ with the exact growth condition:
\begin{equation}
\underset{\int_{\mathbb{R}^{4}}|\nabla u|^{2}dx\leq1}{\underset{u\in
H^{1}\left(
%TCIMACRO{\U{211d} }%
%BeginExpansion
\mathbb{R}
%EndExpansion
^{2}\right)  }{\sup}}\int_{\mathbb{R}^{2}}\frac{\exp(4\pi|u|^{2}%
)-1}{(1+|u|)^{p}}dx\leq C_{p}\int_{\mathbb{R}^{2}}|u|^{2}dx\ \text{iff }%
p\geq2. \label{exact}%
\end{equation}
Later, (\ref{exact}) was extended to the general case $n\geq3$\ by Masmoudi
and Sani \cite{MS2} (see \cite{lamluzhang}) for more general form) and to the
framework of hyperbolic space by Lu and Tang in \cite{LuTa2}.
\vskip0.1cm

The second order Adams' inequality\ with the exact growth condition was obtained by
Masmoudi and Sani \cite{MS} in dimension $4$: \ \
\begin{equation}
\underset{\int_{\mathbb{R}^{4}}|\Delta v|^{2}dx\leq1}{\underset{u\in
H^{2}\left(
%TCIMACRO{\U{211d} }%
%BeginExpansion
\mathbb{R}
%EndExpansion
^{4}\right)  }{\sup}}\int_{\mathbb{R}^{4}}\frac{\exp(32\pi^{2}|v|^{2}%
)-1}{(1+|v|)^{p}}dx\leq C_{p}\int_{\mathbb{R}^{4}}|v|^{2}dx\ \text{iff }%
p\geq2, \label{int1}%
\end{equation}
and then established in any dimension $n\geq3$ by Lu et al in \cite{LuTZ} (see
\cite{MS1} for higher order case).

\textbf{Existence
of extremals for Adams inequality.} The first result of the existence of
extremals of Adams' inequality (\ref{adams}) on bounded domain $\Omega\subset \mathbb{R}^n$ was obtained by Lu and Yang in
\cite{lu-yang 1}. We note that the work of Carleson and Chang was based on the
rearrangement argument to reduce the problem to the one-dimensional case.
However, the symmetrization technique cannot be used for the Adams inequality,
since there is no  corresponding P\'{o}lya-Szeg\"{o}\ type inequality in
the higher order case. In \cite{lu-yang 1}, the authors applied the
capacity-type estimates and the Pohozaev identity to obtain the existence of extremals for bounded
domains in the case $n=4$ and $m=2$. Recently, DelaTorre and Mancini
\cite{Dela} extended the results of \cite{lu-yang 1} to arbitrary even dimension.

\subsection{The main results and Outline of the paper}

An interesting and intriguing  question is whether the Adams inequality on any unbounded domain
has an extremal. As far as we know, nothing is known at the present. In
this work, we are devoted\ to studying this kind of problem for the special case
$n=4$ and $m=2$.

Setting%
\[
S\left(  \alpha\right)  =\underset{u\in H}{\sup}\int_{%
%TCIMACRO{\U{211d} }%
%BeginExpansion
\mathbb{R}
%EndExpansion
^{4}}\left(  \exp\left(  32\pi^{2}\left\vert u\right\vert ^{2}\right)
-1-\alpha\left\vert u\right\vert ^{2}\right)  dx,
\]
and%

\begin{equation}
B_{2}=\sup_{u\in H^{2}(\mathbb{R}^{4})}\frac{\Vert v\Vert_{4}^{4}}{\Vert\Delta
v\Vert_{2}^{2}\Vert v\Vert_{2}^{2}}, \label{GNBN}%
\end{equation}
where $\alpha\in%
%TCIMACRO{\U{211d} }%
%BeginExpansion
(-\infty, 32\pi^2)
%EndExpansion
$, and%
\[
H:=\left\{  \left.  u\in H^{2}\left(
%TCIMACRO{\U{211d} }%
%BeginExpansion
\mathbb{R}
%EndExpansion
^{4}\right)  \right\vert \left\Vert u\right\Vert _{H^{2}(\mathbb{R}^{4}%
)}=\big(\int_{\mathbb{R}^{4}}|u|^{2}+|\Delta u|^{2}dx\big)^{\frac{1}{2}%
}=1\right\}  .
\]

First, we can prove the following result.

\begin{theorem}\label{maintheorem}
\label{attain} If $32\pi^{2}-\alpha<\frac{(32\pi^{2})^2B_{2}}{2}$, then
$S(\alpha)$ has a radially symmetric extremal function.
\end{theorem}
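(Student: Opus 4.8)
The plan is to run a concentration--compactness argument on a maximizing sequence for $S(\alpha)$, using the sharp Fourier rearrangement of \cite{Lenzmann} in place of the P\'{o}lya--Szeg\"{o}\ symmetrization that is unavailable for $\Delta$. Write
\[
F(u)=\int_{\mathbb{R}^{4}}\big(\exp(32\pi^{2}|u|^{2})-1-\alpha|u|^{2}\big)\,dx
=(32\pi^{2}-\alpha)\,\|u\|_{2}^{2}+\sum_{j\ge 2}\frac{(32\pi^{2})^{j}}{j!}\,\|u\|_{2j}^{2j},
\]
the second form coming from the Taylor expansion of $\exp$; since $\alpha<32\pi^{2}$ every coefficient is positive, so $S(\alpha)>0$. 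Abbreviate $d:=32\pi^{2}-\alpha$ and $c:=\tfrac{(32\pi^{2})^{2}B_{2}}{2}$, so the hypothesis reads $d<c$. Given a maximizing sequence $\{u_{k}\}\subset H$, I would first replace each $u_{k}$ by its Fourier rearrangement $u_{k}^{\sharp}$ (so $\widehat{u_{k}^{\sharp}}$ is the symmetric--decreasing rearrangement of $|\widehat{u_{k}}|$): then $u_{k}^{\sharp}$ is radial, $\|u_{k}^{\sharp}\|_{2}=\|u_{k}\|_{2}$ by Plancherel, $\|\Delta u_{k}^{\sharp}\|_{2}\le\|\Delta u_{k}\|_{2}$ since $|\xi|^{4}$ is radially increasing (so a centered ball minimizes $\int_{E}|\xi|^{4}$ at fixed measure), and $\|u_{k}^{\sharp}\|_{2j}\ge\|u_{k}\|_{2j}$ for every integer $j\ge1$ (writing $\|u\|_{2j}^{2j}=\|\widehat{u^{j}}\|_{2}^{2}\le\||\widehat{u}|\ast\cdots\ast|\widehat{u}|\|_{2}^{2}$ and estimating the $j$--fold convolution by that of the rearrangements via the Riesz inequality). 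Hence $\|u_{k}^{\sharp}\|_{H^{2}}\le1$ and $F(u_{k}^{\sharp})\ge F(u_{k})$, and rescaling by $1/\|u_{k}^{\sharp}\|_{H^{2}}\ge1$ only raises $F$; so we may assume $\{u_{k}\}$ is radial. Passing to a subsequence, $u_{k}\rightharpoonup u_{0}$ in $H^{2}$, $u_{k}\to u_{0}$ a.e., and by compactness of the radial embedding $H^{2}_{\mathrm{rad}}(\mathbb{R}^{4})\hookrightarrow L^{q}(\mathbb{R}^{4})$ for $2<q<\infty$ we get $\|u_{k}\|_{q}\to\|u_{0}\|_{q}$ for all such $q$; put $\gamma=\lim_{k}\|\Delta u_{k}\|_{2}^{2}$.

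Next I would establish two a priori lower bounds on $S(\alpha)$. Testing $F$ against a rescaled near--optimizer $v$ of \eqref{GNBN}, normalized so that $\|\Delta v\|_{2}^{2}=1-b$, $\|v\|_{2}^{2}=b$ (hence $v\in H$) and $\|v\|_{4}^{4}\ge(B_{2}-\varepsilon)b(1-b)$, and discarding the nonnegative terms $j\ge3$, gives $S(\alpha)\ge F(v)\ge db+cb(1-b)-O(\varepsilon)$ for each $b\in(0,1)$; since $d<c$, maximizing $b\mapsto db+cb(1-b)$ at $b^{\ast}=\tfrac12+\tfrac{d}{2c}\in(0,1)$ and letting $\varepsilon\to0$ yields
\[
S(\alpha)\ \ge\ d+\frac{(c-d)^{2}}{4c}\ >\ d\ =\ 32\pi^{2}-\alpha ,
\]
and in particular $S(\alpha)\ge\tfrac{c}{4}\ge\tfrac{16\pi^{2}}{3}$ (using $B_{2}\ge\tfrac{1}{24\pi^{2}}$). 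Second, with the paper's test--function construction --- a concentrating, optimally truncated poly--harmonic Adams (Moser--type) function, corrected by a lower--order bump whose weight is tuned via the sharp constant $B_{2}$ --- I would show that $S(\alpha)$ also strictly exceeds the Carleson--Chang concentration level $\Lambda$ for the $H^{2}(\mathbb{R}^{4})$ Adams inequality, throughout the range $d<c$ (i.e.\ the gain of the corrected test function over the bare concentration level is positive precisely when $d<c$).

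Finally I would rule out loss of compactness, case by case. If $u_{0}\equiv0$ and $\gamma<1$, the problem is asymptotically subcritical, so by the subcritical Adams inequality on $\mathbb{R}^{4}$ (a consequence of \eqref{Adams entire space}) together with $\|u_{k}\|_{q}\to0$ one gets $\int_{\mathbb{R}^{4}}(\exp(32\pi^{2}|u_{k}|^{2})-1-32\pi^{2}|u_{k}|^{2})\,dx\to0$, hence $F(u_{k})\to d(1-\gamma)\le d<S(\alpha)$, a contradiction. If $u_{0}\equiv0$ and $\gamma=1$, then $\|u_{k}\|_{2}\to0$, and the Carleson--Chang--type upper bound for concentrating radial sequences (the $\mathbb{R}^{4}$ analogue of \cite{c-c}, in the spirit of \cite{lu-yang 1}) gives $\limsup_{k}F(u_{k})\le\Lambda<S(\alpha)$ by the second bound, again a contradiction. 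Hence $u_{0}\not\equiv0$. Writing $u_{k}=u_{0}+r_{k}$ with $r_{k}\rightharpoonup0$, the fact that $u_{0}\ne0$ forces $\|\Delta u_{0}\|_{2}^{2}>0$, so $\limsup_{k}\|\Delta r_{k}\|_{2}^{2}\le\gamma-\|\Delta u_{0}\|_{2}^{2}<1$: the residual is subcritical, and a Brezis--Lieb splitting of the exponential in this regime gives $F(u_{k})\to F(u_{0})+d\mu$ with $\mu=\lim_{k}\|r_{k}\|_{2}^{2}\ge0$ and $\|u_{0}\|_{H^{2}}^{2}\le1-\mu$. If $\mu>0$, then $F(u_{0})/\|u_{0}\|_{H^{2}}^{2}\le S(\alpha)=F(u_{0})+d\mu$ together with $\mu\le1-\|u_{0}\|_{H^{2}}^{2}$ forces $F(u_{0})\le d(1-\mu)$, contradicting $F(u_{0})=S(\alpha)-d\mu>d-d\mu$ (here we use $S(\alpha)>d$ from the first bound). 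So $\mu=0$, $F(u_{k})\to F(u_{0})=S(\alpha)$, and $\|u_{0}\|_{H^{2}}\le1$; if $\|u_{0}\|_{H^{2}}<1$ then $F(u_{0}/\|u_{0}\|_{H^{2}})>F(u_{0})=S(\alpha)$ contradicts the supremum, so $\|u_{0}\|_{H^{2}}=1$ and $u_{0}$ is an extremal for $S(\alpha)$, radial as an a.e.\ limit of radial functions.

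The main obstacle is the sharp analysis underlying the second lower bound and the $\gamma=1$ case: lacking a P\'{o}lya--Szeg\"{o}\ inequality, the Carleson--Chang upper bound $\Lambda$ for concentrating radial sequences has to be obtained directly through capacity and level--set estimates for the bi--Laplacian on $\mathbb{R}^{4}$ (carefully tracking the $L^{2}$--contribution, which drops out because $\|u_{k}\|_{2}\to0$), and it must then be beaten by a test function precise enough that the gain is captured --- and it is exactly this matching, together with the elementary identity $d+\tfrac{(c-d)^{2}}{4c}>d\iff d<c$, that produces the threshold $\tfrac{(32\pi^{2})^{2}B_{2}}{2}$ in the statement. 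The remaining ingredients (the subcritical Adams estimate and the Brezis--Lieb splitting of the exponential nonlinearity) are standard in this circle of ideas but remain delicate in the borderline space $W^{2,2}(\mathbb{R}^{4})$.
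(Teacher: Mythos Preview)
Your overall strategy---Fourier rearrangement to get a radial maximizing sequence, the lower bound $S(\alpha)>32\pi^{2}-\alpha$ via Gagliardo--Nirenberg, a concentration ceiling, and test functions to beat it---is close in spirit to the paper's, but there is a real gap in the concentration case $u_{0}=0$, $\gamma=1$. You assert a Carleson--Chang-type upper bound $\Lambda$ for arbitrary concentrating radial sequences in $H^{2}(\mathbb{R}^{4})$, to be obtained ``directly through capacity and level-set estimates.'' No such bound is known for general sequences, and this is exactly the obstacle the paper stresses: absent a P\'olya--Szeg\"o inequality for $\Delta$, there is no reduction to a one-dimensional problem. The paper does \emph{not} work with a critical maximizing sequence. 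It takes subcritical maximizers $u_{k}$ for $I_{\beta_{k}}^{\alpha}$ with $\beta_{k}\nearrow 32\pi^{2}$, so that each $u_{k}$ solves the Euler--Lagrange equation $\Delta^{2}u_{k}+u_{k}=\lambda_{k}^{-1}u_{k}\big(\exp(\beta_{k}u_{k}^{2})-\alpha/\beta_{k}\big)$. That equation is indispensable throughout Section~3: for elliptic regularity when $c_{k}$ is bounded; for the gradient estimate $c_{k}\int_{B_{Rr_{k}}}|\Delta u_{k}|\le C(Rr_{k})^{2}$ (which needs bi-harmonic truncation and Lorentz bounds on the right-hand side); for classifying the blow-up profile $\psi$; for proving $c_{k}u_{k}\to G$ in $C^{3}_{\mathrm{loc}}$ away from the origin; and finally for the optimal polynomial truncation $u_{k}^{\delta}$ that converts the Lu--Yang ball bound into the sharp ceiling $\frac{\pi^{2}}{6}\exp\big(\frac{5}{3}+32\pi^{2}A\big)$. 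Your direct maximizing sequence carries no equation, so none of this machinery is available; the subcritical approximation is not a convenience but the source of the regularity the argument requires.

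There is also a misreading of where the hypothesis $32\pi^{2}-\alpha<\frac{(32\pi^{2})^{2}B_{2}}{2}$ enters. You claim the test function beats the concentration level ``precisely when $d<c$,'' via a correction ``tuned via the sharp constant $B_{2}$.'' In the paper this is not so: the test function of Section~4 is built from the Green function $G$ of $\Delta^{2}+1$ and the constant $A$, involves $B_{2}$ nowhere, and beats the concentration ceiling for \emph{every} $\alpha<32\pi^{2}$ (the surplus term is $(32\pi^{2}-\alpha)\,C^{-2}\|G\|_{L^{2}(\mathbb{R}^{4})}^{2}>0$). The threshold enters only through the vanishing comparison $S(\alpha)>d_{nv}=32\pi^{2}-\alpha$, i.e.\ exactly your ``first bound''; your Gagliardo--Nirenberg argument there is correct and matches the paper's, but it is the sole place the hypothesis is used.
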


Naturally, one may ask whether extremal functions of critical Adams
inequalities must be radially symmetric. Recently, Lenzmann and Sork \cite{Lenzmann} introduced the Fourier-rearrangement inequalities. Though they did not prove the existence of the Adams inequality on the entire space,  they observed that every  possible maximizer of  $S(\alpha)$, if exists,  must be radially symmetric and real valued (up to translation and constant phase). In fact, assume that $u$ is a maximizer
for $S(\alpha)$, and define $u^{\sharp}$ by $u^{\sharp}=\mathcal{F}^{-1}\{(\mathcal{F}(u))^{\ast
}\}$, where $\mathcal{F}$ denotes the Fourier transform on $\mathbb{R}^{4}$ (with its
inverse $\mathcal{F}^{-1}$) and $u^{\ast}$ stands for the Schwarz symmetrization of $u$.
We easily see that $u^{\sharp}$ is also a maximizer for $S(\alpha)$ with
$\Vert\Delta u^{\sharp}\Vert_{2}=\Vert\Delta u\Vert_{2}$. Using the property
of the Fourier rearrangement from \cite{Lenzmann}, we conclude that
\[
u(x)=e^{i\alpha}u^{\sharp}(x-x_{0})\ \ \mbox{for any}\ x\in\mathbb{R}^{4}%
\]
with some constants $\alpha\in\mathbb{R}$ and $x_{0}\in\mathbb{R}^{4}$. That
is to say that $u$ is radially symmetric and real valued up to translation and
constant phase. Therefore, combining our Theorem \ref{attain} with Lenzmann and Sork's result, we obtain that
\begin{corollary}\label{cor}
 If $32\pi^{2}-\alpha<\frac{(32\pi^{2})^2B_{2}}{2}$, the extremals of the Adams inequalities must be  radially symmetric and real valued (up to translation and constant phase).
\end{corollary}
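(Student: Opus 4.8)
The plan is to derive the corollary by marrying the existence statement of Theorem~\ref{attain} with the symmetrization mechanism behind the Fourier rearrangement of Lenzmann and Sork. Under the standing hypothesis $32\pi^{2}-\alpha<\frac{(32\pi^{2})^{2}B_{2}}{2}$, Theorem~\ref{attain} already supplies a maximizer $u\in H$ for $S(\alpha)$, so the content of the corollary is that \emph{every} maximizer is radially symmetric and real-valued up to translation and a constant unimodular factor. Fix such a $u$ and set $u^{\sharp}=\mathcal{F}^{-1}\{(\mathcal{F}u)^{\ast}\}$; I would show that $u^{\sharp}$ is again a maximizer with $\|\Delta u^{\sharp}\|_{2}=\|\Delta u\|_{2}$, and then read off the symmetry from the equality cases of the Fourier rearrangement inequalities.

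For admissibility: Schwarz symmetrization preserves the $L^{2}$ norm, so Plancherel gives $\|u^{\sharp}\|_{2}=\|u\|_{2}$; and from $\widehat{\Delta u}(\xi)=-|\xi|^{2}\widehat{u}(\xi)$ together with the rearrangement inequality $\int_{\mathbb{R}^{4}}|\xi|^{4}((\widehat{u})^{\ast}(\xi))^{2}\,d\xi\le\int_{\mathbb{R}^{4}}|\xi|^{4}|\widehat{u}(\xi)|^{2}\,d\xi$ (valid because $|\xi|^{4}$ is a radially increasing weight) we get $\|\Delta u^{\sharp}\|_{2}\le\|\Delta u\|_{2}$, hence $\|u^{\sharp}\|_{H^{2}}\le 1$, and by the Adams inequality \eqref{Adams entire space} (here $32\pi^{2}=\beta(4,2)$ and $\Phi_{4,2}(t)=e^{t}-1$) the quantity $J(u^{\sharp}):=\int_{\mathbb{R}^{4}}\big(\exp(32\pi^{2}|u^{\sharp}|^{2})-1-\alpha|u^{\sharp}|^{2}\big)dx$ is finite. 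Expanding the integrand and using monotone convergence together with $\int_{\mathbb{R}^{4}}\exp(32\pi^{2}|u|^{2})\,dx<\infty$,
\[
J(v)=(32\pi^{2}-\alpha)\|v\|_{2}^{2}+\sum_{k\ge 2}\frac{(32\pi^{2})^{k}}{k!}\|v\|_{2k}^{2k}.
\]
Since $32\pi^{2}-\alpha>0$, $\|u^{\sharp}\|_{2}=\|u\|_{2}$, and Lenzmann--Sork's rearrangement inequality gives $\|u^{\sharp}\|_{2k}\ge\|u\|_{2k}$ for every even integer $2k\ge 4$, we conclude $J(u^{\sharp})\ge J(u)=S(\alpha)$.

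Next I would rule out $\|u^{\sharp}\|_{H^{2}}<1$ by scaling. One checks $S(\alpha)>0$ (keep the positive $k=1$ term for any normalized test function) and $J(tv)\ge t^{2}J(v)$ for $t\ge 1$ (from $e^{t^{2}a}-1\ge t^{2}(e^{a}-1)$, $a\ge 0$); so if $\|u^{\sharp}\|_{H^{2}}<1$ the rescaled admissible function $t u^{\sharp}$ with $t=\|u^{\sharp}\|_{H^{2}}^{-1}>1$ would give $J(tu^{\sharp})\ge t^{2}S(\alpha)>S(\alpha)$, impossible. Hence $\|u^{\sharp}\|_{H^{2}}=1$, which forces $\|\Delta u^{\sharp}\|_{2}=\|\Delta u\|_{2}$ and $J(u^{\sharp})=S(\alpha)$; comparing the two series term by term then forces $\|u^{\sharp}\|_{2k}=\|u\|_{2k}$ for every $k\ge 2$. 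Thus all the rearrangement inequalities above are in fact equalities, and I would invoke the rigidity from \cite{Lenzmann}: equality in $\int|\xi|^{4}|\widehat{u}|^{2}=\int|\xi|^{4}((\widehat{u})^{\ast})^{2}$, with $|\xi|^{4}$ \emph{strictly} radially increasing, forces $|\widehat{u}|=(\widehat{u})^{\ast}$ a.e.; and equality in $\|u^{\sharp}\|_{4}=\|u\|_{4}$ unwinds via Plancherel ($\|w\|_{4}^{4}=c\|\widehat{w}\ast\widehat{w}\|_{2}^{2}$) to $|\widehat{u}\ast\widehat{u}|=|\widehat{u}|\ast|\widehat{u}|$ a.e., i.e.\ the convolution defining $\widehat{u}\ast\widehat{u}$ exhibits no cancellation, which yields a Cauchy-type functional equation forcing the phase of $\widehat{u}$ to be affine. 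Hence $\widehat{u}(\xi)=e^{i\theta}e^{-ix_{0}\cdot\xi}(\widehat{u})^{\ast}(\xi)$ for some $\theta\in\mathbb{R}$, $x_{0}\in\mathbb{R}^{4}$, so $u(x)=e^{i\theta}u^{\sharp}(x-x_{0})$, and $u^{\sharp}=\mathcal{F}^{-1}\{(\widehat{u})^{\ast}\}$ is real-valued and radially symmetric as the inverse Fourier transform of a nonnegative radial function --- which is precisely the corollary.

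The corollary is not hard once Theorem~\ref{attain} is in hand; the only places needing genuine care are (i) the scaling step that upgrades ``$\|u^{\sharp}\|_{H^{2}}\le 1$'' to equality, and hence to ``$u^{\sharp}$ is a maximizer,'' and (ii) the equality cases of the Fourier rearrangement inequalities --- deducing $|\widehat{u}|=(\widehat{u})^{\ast}$ and the affineness of the phase of $\widehat{u}$ --- which I would quote from \cite{Lenzmann} rather than re-derive. A routine but necessary technical point is the legitimacy of the term-by-term expansion of $J$, which rests precisely on the a priori bound $\int_{\mathbb{R}^{4}}\exp(32\pi^{2}|u|^{2})\,dx<\infty$ furnished by the Adams inequality.
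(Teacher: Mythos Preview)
Your argument is correct and follows essentially the same route as the paper: show that $u^{\sharp}$ is again a maximizer with $\|\Delta u^{\sharp}\|_{2}=\|\Delta u\|_{2}$, then invoke the equality-case rigidity of Lenzmann--Sork to conclude $u(x)=e^{i\theta}u^{\sharp}(x-x_{0})$. The paper is much terser (it asserts ``we easily see that $u^{\sharp}$ is also a maximizer for $S(\alpha)$ with $\Vert\Delta u^{\sharp}\Vert_{2}=\Vert\Delta u\Vert_{2}$'' and then cites \cite{Lenzmann}), so your scaling step $J(tu^{\sharp})\ge t^{2}J(u^{\sharp})$ ruling out $\|u^{\sharp}\|_{H^{2}}<1$ is a useful detail the paper leaves implicit.
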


The method developed in this paper on the existence of the extremals of the critical Adams inequality also gives new insight on the existence of extremal functions for the first order critical Trudinger-Moser inequality on the entire space.
By adapting the same method
 as used in the proof of Theorem \ref{attain}, we can also obtain a similar existence result. More precisely,
  if we define $$\tilde{S}(\alpha)=\sup_{\|u\|_{H^1}=1}\int_{\mathbb{R}^2}\big(\exp(4\pi|u|^2)-1-\alpha|u|^2\big)dx,$$
then there exists an extremal function for  $\tilde{S}(\alpha)$ when $4\pi-\alpha<\frac{(4\pi)^2B_{1}}{2}$,
where $$B_1=\sup_{u\in H^{1}(\mathbb{R}^{2})}\frac{\Vert v\Vert_{4}^{4}}{\Vert\nabla
v\Vert_{2}^{2}\Vert v\Vert_{2}^{2}}.$$
According to the result in \cite{Wein}, we know $B_1>\frac{1}{2\pi}$ and $\frac{(4\pi)^2B_{1}}{2}>4\pi$. Therefore, even if we slightly enlarge the coefficient of the first term of the critical Trudinger-Moser functional $\int_{\mathbb{R}^2}\big(\exp(4\pi|u|^2)-1\big)dx$, the Trudinger-Moser inequality still has an extremal function. Then we can deduce the following stronger existence result than currently known in the literature.

\begin{theorem}\label{addthm2}
There exists $\alpha_0>0$ such that for any $\beta \in (0, \alpha_0)$, the critical Trudinger-Moser inequality $\sup_{\|u\|_{H^1}=1}\int_{\mathbb{R}^2}\big(\exp(4\pi|u|^2)-1+\beta|u|^2\big)dx$ has an extremal function. Furthermore, all extremals of the critical Trudinger-Moser inequalities must be non-negative, radially symmetric and real valued up to translation and
constant phase.

\end{theorem}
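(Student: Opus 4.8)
The plan is to obtain Theorem \ref{addthm2} from the two-dimensional, first-order analogue of Theorem \ref{attain} (whose proof is, as announced, a line-by-line transcription of the Adams argument to $\mathbb{R}^2$) together with the Fourier-rearrangement rigidity of Lenzmann and Sork. Writing $\alpha=-\beta$, the functional in the statement is precisely $\tilde S(-\beta)=\sup_{\|u\|_{H^1}=1}\int_{\mathbb{R}^2}\big(\exp(4\pi|u|^2)-1+\beta|u|^2\big)\,dx$, and the sufficient condition $4\pi-\alpha<\frac{(4\pi)^2 B_1}{2}$ becomes $\beta<\frac{(4\pi)^2 B_1}{2}-4\pi$. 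I therefore set
\[
\alpha_0:=\frac{(4\pi)^2 B_1}{2}-4\pi=8\pi^2 B_1-4\pi .
\]
By Weinstein's evaluation \cite{Wein} of the sharp constant in the Gagliardo--Nirenberg inequality $\|v\|_4^4\le B_1\|\nabla v\|_2^2\|v\|_2^2$ on $\mathbb{R}^2$ one has $B_1>\frac{1}{2\pi}$, hence $8\pi^2 B_1>4\pi$ and $\alpha_0>0$. Thus for every $\beta\in(0,\alpha_0)$ the pair $(\alpha,\beta)=(-\beta,\beta)$ lies in the regime $4\pi-\alpha<\frac{(4\pi)^2 B_1}{2}$ in which $\tilde S(-\beta)$ is attained.

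It remains to run the existence scheme of Theorem \ref{attain} in the present setting. I would: (i) check that $\tilde S(-\beta)$ is finite (via the Ruf--Li critical Trudinger--Moser inequality on $\mathbb{R}^2$) and, more importantly, strictly larger than the value that any $H^1$-bounded sequence concentrating at a point can contribute — this is the step where the strict inequality $\beta<\alpha_0$, equivalently the gap in the sharp Gagliardo--Nirenberg threshold governed by $B_1$, is exploited through an explicit test-function computation using the optimal (harmonic/Moser-type) truncation; (ii) take a maximizing sequence $u_k$, bound it in $H^1$, and replace $u_k$ by its Fourier rearrangement $u_k^{\sharp}=\mathcal F^{-1}\{(\mathcal F u_k)^{\ast}\}$, which keeps $\|u_k\|_2$ fixed, does not increase $\|\nabla u_k\|_2$, and does not decrease the functional, so that one may assume $u_k$ radially symmetric and decreasing; (iii) extract a weak limit $u$ in $H^1$ and exclude vanishing and dichotomy, the crucial point being to rule out concentration by means of (i) and the exact-growth estimate \eqref{exact}; (iv) upgrade to strong convergence of the nonlinear term so that $u$ realizes $\tilde S(-\beta)$ (after normalizing, $u\not\equiv 0$ forces $\|u\|_{H^1}=1$). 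I expect step (i)--(iii) — producing the sharp strict lower bound for $\tilde S(-\beta)$ and then excluding concentration of the rearranged maximizing sequence — to be the main obstacle, exactly as in the proof of Theorem \ref{attain}.

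For the qualitative properties, let $u$ be any extremal. Since the functional depends only on $|u|$ and the diamagnetic inequality gives $\|\,\nabla|u|\,\|_2\le\|\nabla u\|_2$ with $\|\,|u|\,\|_2=\|u\|_2$, the normalized function $|u|/\|\,|u|\,\|_{H^1}$ is admissible and still a maximizer; comparing values forces $\|\,\nabla|u|\,\|_2=\|\nabla u\|_2$, and equality in the diamagnetic inequality yields $u=e^{i\gamma}|u|$ for a constant $\gamma$, so up to a constant phase $u\ge 0$. Applying the Fourier-rearrangement principle of \cite{Lenzmann} to such a maximizer, $u^{\sharp}$ is again a maximizer with $\|\nabla u^{\sharp}\|_2\le\|\nabla u\|_2$, hence equality holds throughout the Fourier rearrangement; the associated rigidity statement then gives $u(x)=e^{i\gamma'}u^{\sharp}(x-x_0)$ for some $\gamma'\in\mathbb{R}$ and $x_0\in\mathbb{R}^2$. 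Combined with $u\ge 0$, this shows that every extremal of the critical Trudinger--Moser inequality is, up to translation and constant phase, a non-negative, radially symmetric, real-valued function, which is precisely the analogue of Corollary \ref{cor} and completes the proof.
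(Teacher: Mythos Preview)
Your identification of $\alpha_0=8\pi^2B_1-4\pi>0$ and the overall architecture (rule out vanishing and concentration separately, then invoke Fourier-rearrangement rigidity for the symmetry statement) are correct and match the paper. However, you have the roles of the two obstructions reversed. The hypothesis $\beta<\alpha_0$, i.e.\ $4\pi-\alpha<\tfrac{(4\pi)^2B_1}{2}$, is \emph{not} what beats the concentration level; it is what beats the \emph{vanishing} level. For a normalized vanishing sequence the functional collapses to $(4\pi-\alpha)\lim_k\|u_k\|_2^2\le 4\pi-\alpha=4\pi+\beta$, and the paper (Step~4 of Section~\ref{section6}, mirroring Lemma~\ref{lem2}) shows $\tilde S(-\beta)>4\pi+\beta$ via the scaling family $v_t(x)=t^{1/2}v(t^{1/2}x)$ together with the sharp Gagliardo--Nirenberg constant $B_1$: this is exactly where the condition $g_v'(0)>0\iff 4\pi-\alpha<8\pi^2B_1$ enters. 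Your outline, by contrast, lists ``exclude vanishing and dichotomy'' in (iii) with no mechanism, which is precisely the gap that the hypothesis $\beta<\alpha_0$ is designed to fill.

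Conversely, the concentration upper bound $d_{nc}$ is surpassed for \emph{every} $\alpha\in\mathbb R$, independently of $B_1$: this is the Carleson--Chang/Ruf test-function computation (Step~5 of Section~\ref{section6}), not a Gagliardo--Nirenberg argument, and it does not use Moser-type truncation as a lower-bound device but rather produces an explicit sequence whose energy exceeds the sharp concentration ceiling. The paper also departs from your scheme in a second, structural respect: it does not take a direct maximizing sequence for the critical problem but instead works with radially symmetric extremals $u_k$ of the \emph{subcritical} functionals $I_{\beta_k}^{\alpha}$ with $\beta_k\uparrow 4\pi$, so that each $u_k$ solves an Euler--Lagrange equation and the blow-up analysis (elliptic regularity, Green-function asymptotics) applies cleanly. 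Finally, in $\mathbb R^2$ ordinary Schwarz symmetrization already suffices to make the maximizing sequence radial and nonnegative (P\'olya--Szeg\H{o} holds for $\nabla$); Fourier rearrangement is only needed for the rigidity statement about extremals, where your argument via the diamagnetic inequality followed by the Lenzmann--Sok equality case is correct and matches the template of Corollary~\ref{cor}.
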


Since the method to prove Theorem \ref{addthm2} is inspired by  and similar to that of proving Theorem \ref{maintheorem} and Corollary \ref{cor}
for the critical  Adams inequalities on the entire space $\mathbb{R}^4$, we will be sketchy and only give the outline
of proofs in Section \ref{section6}.

Though the general strategy we use here is the  blow up analysis, it is considerably more difficult to deal with than the situation on  bounded domains. The failure of the P\'olya-Szeg\"{o} inequality for the higher order derivatives will not allow us to take care of the maximizing sequence as in the first order Trudinger-Moser inequality on finite balls. To overcome this difficulty,
we first apply the method based on the Fourier rearrangement (see
\cite{Lenzmann}) to obtain the existence of radially symmetric extremals for
the subcritical Adams functional on the entire space. \ Then, we take a sequence $\beta
_{k}\rightarrow32\pi^{2}$ and find a radially symmetric maximizing sequence
$u_{k}\in H^{2}\left(
%TCIMACRO{\U{211d} }%
%BeginExpansion
\mathbb{R}
%EndExpansion
^{4}\right)  $ for $S\left(  \alpha\right)  $. If $u_{k}$ is bounded in
$L^{\infty}\left(
%TCIMACRO{\U{211d} }%
%BeginExpansion
\mathbb{R}
%EndExpansion
^{4}\right)  $, i.e. $c_{k}:=\underset{x\in%
%TCIMACRO{\U{211d} }%
%BeginExpansion
\mathbb{R}
%EndExpansion
^{4}}{\max}\left\vert u_{k}\right\vert <\infty$, we can easily show that
$u_{k}$ converges to a function $u\,$in\ $H^{2}\left(
%TCIMACRO{\U{211d} }%
%BeginExpansion
\mathbb{R}
%EndExpansion
^{4}\right)  $ by\ the standard elliptic estimates. If $c_{k}\rightarrow
+\infty$, i.e. the blow up arises, we apply the blow up analysis method to
analyze the asymptotic behavior of $u_{k}$ near and far away from the origin,
which is the blowing up point, and we are able to derive an upper bound for the Adams
functional:
\[
S\left(  \alpha\right)  \leq\frac{\pi^{2}}{6}\exp\left(  \frac{5}{3}+32\pi
^{2}A\right)  ,
\]
where $A$ is the value at $0$ of\ the trace of the regular part of the Green
function $G$ for the operator $\Delta^{2}+1$. Finally, we construct a function
sequence to show that the upper bound can actually be surpassed, this
implies that the concentration phenomenon will not happen.\ \vskip0.2cm

At first sight, this type of approach may seem to be a straightforward
generalization of the existing theories. However, this is not the case. Several substantial difficulties exist and some serious
subtlety arises. We are going to describe some of them below.

\vskip0.1cm

First of all, unlike the case on a bounded domain, in order to establish the
existence of a maximizer of the subcritical Adams functional in the entire space $%
%TCIMACRO{\U{211d} }%
%BeginExpansion
\mathbb{R}
%EndExpansion
^{4}$, we need to avoid the lack of compactness. In this case, concentration
phenomenon does not occur and vanishing phenomenon is the issue due to the
unboundedness of the domain.\ \ For this reason, we will impose an extra
assumption on $\alpha$ such as $32\pi^{2}-\alpha<\frac{(32\pi^{2})^2B_{2}}{2}$ and adapt
the argument used in \cite{Ishiwata} to rule out the vanishing phenomenon. \vskip0.1cm

Secondly, when we try to analyze the asymptotic behavior of $u_{k}$, a crucial
step is to prove a local estimate for $\Delta u_{k}$:
\begin{equation}
c_{k}\int_{B_{Rr_{k}}}|\Delta u_{k}|dx\leq C(Rr_{k})^{2}. \label{es for gra}%
\end{equation}
When $\Omega$ is a bounded domain, (\ref{es for gra}) can be proved by the
following estimate
\[
\int_{\Omega}|\Delta\left(  u_{k}^{2}\right)  |dx<c
\]
and a representation formula (see \cite{Mar}). However,
because of the
unboundedness of the domain, the argument in \cite{Mar} cannot be directly
applied in our setting. In order to obtain the estimate (\ref{es for gra}), we
will try to truncate $u_{k}$ and adapt the approach in \cite{Mar} to our
situation. But as we know, in the second order Sobolev space $H^{2}\left(
\mathbb{R}^{4}\right)$, one cannot truncate $u_{k}$ linearly. To overcome
this difficulty, we will apply the bi-harmonic type of truncation as
used in \cite{Dela}. We remark that this kind of truncations have many nice
properties: on the one hand, they preserve the high order regularity, on the
other hand, their behaviors are very similar to the constant in a ball. Nevertheless,
we must point out that it is necessary for us to find an optimal bi-harmonic truncation
in our case.
\vskip0.1cm

When we try to obtain the upper bound of the concentration compactness of the Adams inequality on the entire
Euclidean space\textbf{ }$\mathbb{R}^{4}$, firstly, one needs to know the specific value
of the upper bound for any blow up function sequences in $H_{0}^{2}\left(
B_{R}\right)  $, but this value cannot be directly derived from the result of
Lu and Yang \cite{lu-yang 1} in the case of the finite domain. We will show that the value of that upper bound
is $\frac{1}{3}\left\vert B_{R}\right\vert \exp\left(  -\frac{1}{3}\right)  $
by solving the corresponding ODE's. Secondly, in our situation we cannot truncate $u_{k}$ directly by the linear truncation as  in \cite{liruf}. Therefore,  we have to construct some polynomial truncation functions to preserve some regularity on the boundary of the balls. In the calculation, we find that the polynomial truncation functions will bring some extra energy, which will enlarge the estimate of the upper bound for the normalized concentration sequence of the Adams inequality, such that the upper bound may not be surpassed by  any test function sequence. In order to address this problem, we will construct the ��optimal�� polynomial truncation functions which generate the smallest energy, such that the ��exact�� upper bound of Adams functional for normalized concentration sequence obtained can be surpassed by the some test function. It should be noted that elliptic estimate of the optimal polynomial truncation is far from the exact upper bound of the concentration compactness sequence, we need the precise expression of polynomial truncation without any error estimate. Therefore, we also need the quantitative estimate with  respect to the upper bound of the concentration compactness sequence.

\vskip0.1cm

Finally, although we can show that $u_{k}$ is radially symmetric by the
Fourier rearrangement argument, this function sequence is not necessarily positive,
which makes\ the\ proof of the existence of a maximizer more complicated.

Our second result is as follows.

\begin{theorem}
\label{nonatain}There exists some constant $\alpha^{\ast\ast}>0$ such that,
when $32\pi^{2}-\alpha>\alpha^{\ast\ast}$, $S$ is not attained.
\end{theorem}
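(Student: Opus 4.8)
The plan is to show that when $32\pi^2-\alpha$ is large, i.e.\ $\alpha$ is very negative, the subtracted term $\alpha|u|^2$ dominates enough to keep $S(\alpha)$ small—small enough that it coincides with the value obtained by a concentrating (blowing-up) sequence, which is never attained. Concretely, I would first establish a two-sided comparison between $S(\alpha)$ and the ``flat'' Adams supremum: on the one hand, since $\exp(32\pi^2|u|^2)-1\ge 32\pi^2|u|^2$, we always have the trivial lower bound; more usefully, by testing with suitably rescaled Moser-type functions one produces sequences along which the functional approaches a concentration level. The key quantitative input is the sharp upper bound announced in the introduction,
\[
S(\alpha)\le \frac{\pi^2}{6}\exp\!\Big(\frac{5}{3}+32\pi^2 A\Big),
\]
valid whenever blow-up occurs, where $A=A(\alpha)$ is the value at $0$ of the regular part of the Green function for $\Delta^2+1$ associated to the normalization involving $\alpha$. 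The strategy is to show that for $\alpha\to-\infty$ the constant $A(\alpha)\to-\infty$, so this concentration level tends to $0$, while simultaneously the genuine supremum $S(\alpha)$ is forced down to exactly this concentration level because no bounded (non-concentrating) maximizing sequence can do better.

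The steps I would carry out, in order, are: (1) Record the precise dependence of the concentration upper bound on $\alpha$. When $32\pi^2-\alpha$ is large, the effective quadratic form controlling the problem is heavily weighted, which pushes $A(\alpha)$—the regular part of the Green's function—to $-\infty$ at a controlled rate; hence $\frac{\pi^2}{6}\exp(\frac53+32\pi^2 A(\alpha))\to 0$. (2) Show that $S(\alpha)$ itself is small in this regime: estimate $\int_{\mathbb R^4}(\exp(32\pi^2|u|^2)-1-\alpha|u|^2)\,dx$ from above using the Adams inequality \eqref{Adams entire space} together with the Gagliardo–Nirenberg inequality \eqref{GNBN} to absorb lower-order terms, producing a bound of the form $S(\alpha)\le C\,g(32\pi^2-\alpha)$ with $g\to 0$. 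Comparing the rates in (1) and (2), deduce that for $32\pi^2-\alpha$ large enough the only way $S(\alpha)$ can be realized is through a concentrating sequence. (3) Rule out attainment: suppose $u$ were an extremal; by Corollary \ref{cor}'s Fourier-rearrangement argument (or the direct symmetrization of Lenzmann–Sork) $u$ may be taken radial and real, and then the Euler–Lagrange equation for $u$ forces $u$ to satisfy a fourth-order semilinear PDE whose solutions, by a Pohozaev-type identity combined with the smallness of $S(\alpha)$, cannot exist—equivalently, any such $u$ would have $\int(\exp(32\pi^2|u|^2)-1-\alpha|u|^2)\,dx$ strictly below the concentration level, contradicting that it is the supremum. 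Thus $S(\alpha)$ is not attained. The threshold $\alpha^{\ast\ast}$ is the explicit value of $32\pi^2-\alpha$ beyond which the rate comparison in steps (1)–(2) becomes strict.

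The main obstacle I anticipate is step (1): pinning down the asymptotic behavior of $A(\alpha)$, the regular part of the Green's function for $\Delta^2+1$ under the $\alpha$-dependent normalization, as $\alpha\to-\infty$, and doing so with enough precision to compare it against the upper bound in step (2). This requires a careful analysis of the Green's function expansion near the diagonal—in $\mathbb R^4$ the fundamental solution of $\Delta^2$ has a logarithmic singularity, and the zeroth-order term $+1$ (scaled by the normalization) controls the regular part—so one must track how rescaling by the large factor $32\pi^2-\alpha$ shifts the constant term. A secondary subtlety is ensuring in step (3) that the non-attainment argument is not circular: one must verify independently (via the explicit test-function construction sketched in the introduction, or via the Carleson–Chang-type analysis adapted to $\mathbb R^4$) that the concentration level is genuinely $\frac{\pi^2}{6}\exp(\frac53+32\pi^2 A)$ and that it is approached but not achieved, so that $S(\alpha)$ equals this level exactly when $32\pi^2-\alpha>\alpha^{\ast\ast}$.
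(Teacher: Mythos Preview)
Your proposal rests on a fundamental misconception: the constant $A$ does \emph{not} depend on $\alpha$. In the paper, $A$ is the value at $0$ of the regular part of the Green function for the operator $\Delta^{2}+1$ on $\mathbb{R}^{4}$; this operator is fixed, and $\alpha$ appears only in the functional $S(\alpha)$, never in the constraint $\|u\|_{H^{2}}=1$ or in the Euler--Lagrange structure that produces the Green function. Hence the concentration level $\frac{\pi^{2}}{6}\exp\bigl(\frac{5}{3}+32\pi^{2}A\bigr)$ is a single numerical constant, independent of $\alpha$, and your step~(1) --- driving $A(\alpha)\to-\infty$ --- cannot be carried out. Likewise your step~(2) claims $S(\alpha)\to 0$ as $\alpha\to-\infty$, but in fact $S(\alpha)\ge d_{nv}(\alpha)=32\pi^{2}-\alpha\to+\infty$ (this is shown in the paper's Proposition following Lemma~2.4 by an explicit vanishing sequence).

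The actual mechanism is the opposite of what you describe: non-attainment is caused by \emph{vanishing}, not concentration. The paper expands the functional termwise,
\[
\int_{\mathbb{R}^{4}}\bigl(\exp(32\pi^{2}|u|^{2})-1-\alpha|u|^{2}\bigr)\,dx=(32\pi^{2}-\alpha)\int u^{2}\,dx+\sum_{k\ge 2}\frac{(32\pi^{2})^{k}}{k!}\int u^{2k}\,dx,
\]
and uses the sharp asymptotics $B_{k}\lesssim \frac{8\pi^{2}\sqrt{e}}{\sqrt{k}}\bigl(\frac{k}{32\pi^{2}}\bigr)^{k}$ (Lemma~\ref{sharp1}) to bound $\int u^{2k}$ by $B_{k}t^{k-1}(1-t)$ where $t=\|\Delta u\|_{2}^{2}$. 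Summing the tail and splitting at a fixed $t_{0}>0$ --- using Stirling and the Adams inequality for $t\ge t_{0}$ --- one gets $I(u)<(32\pi^{2}-\alpha)$ for every $u\in H$ once $32\pi^{2}-\alpha>M/t_{0}$. Since the vanishing level $32\pi^{2}-\alpha$ is approached but never realised by any $u\in H$, $S(\alpha)=32\pi^{2}-\alpha$ is not attained. Your Pohozaev/Green-function strategy would need to be replaced entirely by this Gagliardo--Nirenberg series estimate.
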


The proof of this theorem is based on the precise estimates for the upper
bounds of the best constants of higher order Gagliardo-Nirenberg inequalities.
To calculate the best constants, we will exploit the method of Beckner in
\cite{Beckner}. We stress that the upper bound derived here is  sharp and
can be of independent interest.\ Indeed, we take $B_{2}$ for example. On the
one hand, by calculating the number associate to the trial function $\left(
1+\left\vert x\right\vert \right)  ^{-\gamma}$ and letting $\gamma
\rightarrow+\infty$, one can found that $\frac{1}{24\pi^{2}}$ is a lower bound
of $B_{2}$. On the other hand, by the upper bound formula we have $B_{2}%
<\frac{32}{729\pi^{2}}$ (see Appendix). Then, we get
\[
\frac{1}{24\pi^{2}}\left(  \sim4.2217\times10^{-3}\right)  \leq B_{2}%
<\frac{32}{729\pi^{2}}\left(  \sim4.4476\times10^{-3}\right)  ,
\]
which indicates that our estimates are quite precise.

Define
\[
\alpha^{\ast}=\sup\left\{  \left.  \left(  32\pi^{2}-\alpha\right)
\right\vert S\left(  \alpha\right)  \text{ is attained}\right\}  .
\]
Based on Theorem \ref{attain} and Theorem \ref{nonatain}, we have $\frac{(32\pi^{2})^2B_{2}}{2}\leq \alpha^{*}\leq \alpha^{\ast\ast}<+\infty$.
Furthermore, we can obtain the following surprising result.

\begin{theorem}
\label{fina}When $32\pi^{2}-\alpha<\alpha^{\ast}$, then $S\left(  \alpha\right)< \alpha^{\ast}$ and $S(\alpha)$ can be attained, while when $32\pi^{2}-\alpha>\alpha^{\ast}$, $S\left(  \alpha\right)=32\pi^{2}-\alpha$, and $S(\alpha)$ is not attained.
\end{theorem}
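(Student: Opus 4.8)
\textbf{Proof proposal for Theorem \ref{fina}.}

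The plan is to combine the two existence/nonexistence thresholds already in hand (Theorems \ref{attain} and \ref{nonatain}) with a monotonicity and continuity analysis of $S$, and a sharp evaluation of $S(\alpha)$ in the nonexistence regime. First I would record the elementary facts about the function $t\mapsto S(\alpha)$ where $t:=32\pi^2-\alpha\in(0,+\infty)$. Since $\exp(32\pi^2|u|^2)-1-\alpha|u|^2 = \exp(32\pi^2|u|^2)-1-32\pi^2|u|^2 + t|u|^2$, for every fixed admissible $u$ the integrand is affine and increasing in $t$, hence $S$ is nondecreasing in $t$; moreover, testing with any fixed $u$ with $\int_{\mathbb{R}^4}|u|^2>0$ shows $S(\alpha)\ge 32\pi^2-\alpha + (\text{positive constant})$, and in particular $S(\alpha)\geq 32\pi^2-\alpha$ always. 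A short argument using the Adams inequality \eqref{Adams entire space} (to control the exponential part uniformly) shows $S$ is in fact continuous, and even locally Lipschitz, in $t$. The key structural dichotomy to establish is: $S(\alpha)=32\pi^2-\alpha$ exactly when $S(\alpha)$ is \emph{not} attained, i.e. the ``trivial'' value $32\pi^2-\alpha$ (approached by a spreading/vanishing sequence $u_k\rightharpoonup 0$, using $\lim_{s\to 0}\frac{e^{32\pi^2 s}-1-32\pi^2 s}{s}=0$ together with mass conservation $\|u_k\|_2^2\to 1$) is a universal lower bound, and attainment forces strict inequality $S(\alpha)>32\pi^2-\alpha$ because any maximizer $u$ has $32\pi^2\|u\|_2^2<$ its full $H^2$-norm leaving room for the genuinely superlinear term.

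Next I would show that the set $\mathcal{A}:=\{t>0 : S(\alpha) \text{ is attained}\}$ is an interval of the form $(0,\alpha^\ast)$ or $(0,\alpha^\ast]$. The monotonicity of $S$ is not by itself enough; instead I would argue that if $S$ is attained at some $t_1$ with maximizer $u_1$, then for every $t_0<t_1$ one has $S(t_0)\geq S(t_1)-(t_1-t_0)\|u_1\|_2^2 > S(t_1)-(t_1-t_0) \ge 32\pi^2-\alpha_0 + \delta$ for a fixed $\delta>0$ coming from the superlinear term evaluated at $u_1$; combined with the dichotomy from the previous paragraph and a concentration-compactness analysis of a maximizing sequence at level $t_0$ (ruling out vanishing by the strict gap just produced, and ruling out concentration by the blow-up upper bound $S(\alpha)\leq \frac{\pi^2}{6}\exp(\frac53+32\pi^2 A)$ established in the body of the paper whenever $S(t_0)$ exceeds that bound — which by the quantitative analysis behind Theorem \ref{attain} is arranged precisely in this range), this yields attainment at $t_0$. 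Hence $\mathcal{A}$ is downward closed, so $\mathcal{A}=(0,\alpha^\ast)$ or $(0,\alpha^\ast]$, and by Theorems \ref{attain} and \ref{nonatain} we already know $\frac{(32\pi^2)^2 B_2}{2}\le \alpha^\ast\le \alpha^{\ast\ast}<+\infty$. For $t>\alpha^\ast$, $S(\alpha)$ is not attained, so by the dichotomy $S(\alpha)=32\pi^2-\alpha$, giving the second assertion; and for $t<\alpha^\ast$, $S(\alpha)$ is attained, so $S(\alpha)>32\pi^2-\alpha$.

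It remains to prove the more delicate claim that for $t<\alpha^\ast$ we have $S(\alpha)<\alpha^\ast$. Here I would use the characterization of $\alpha^\ast$ as a supremum together with the continuity/monotonicity of $S$: since $S$ is nondecreasing and, on $(0,\alpha^\ast)$, strictly bigger than the line $t\mapsto t$ (shifted by $32\pi^2$), the natural fixed-point relation to exploit is $S(\alpha^\ast-0)=\alpha^\ast$, which I would derive as follows. For $t\uparrow\alpha^\ast$ the maximizers $u_t$ stay bounded in $H^2$, and either they converge to a maximizer at $t=\alpha^\ast$ — in which case $\alpha^\ast\in\mathcal{A}$, $S(\alpha^\ast)>32\pi^2-\alpha^\ast$, and by continuity $S(t)\to S(\alpha^\ast)$, forcing (by the definition of $\alpha^\ast$ as the \emph{sup} of the attainment set and the blow-up bound being the only alternative) the value $S(\alpha^\ast)$ to equal the critical blow-up threshold $\frac{\pi^2}{6}\exp(\frac53+32\pi^2 A)$, which by the test-function construction in the body is strictly surpassable, a contradiction unless $S(\alpha^\ast)=\alpha^\ast$ — or they blow up, in which case $\limsup_{t\uparrow\alpha^\ast} S(t)\le\frac{\pi^2}{6}\exp(\frac53+32\pi^2 A)$ and again the surpassing test function forces $S(\alpha^\ast-0)$ to be at least this bound with equality, i.e. $\lim_{t\uparrow\alpha^\ast}S(t)=\alpha^\ast$. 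In either case $S$ is continuous and nondecreasing with $\lim_{t\uparrow\alpha^\ast}S(t)=\alpha^\ast$, hence $S(t)<\alpha^\ast$ for all $t<\alpha^\ast$, completing the proof. \emph{The main obstacle} I anticipate is this last step: pinning down the boundary value $S(\alpha^\ast-0)=\alpha^\ast$ rigorously requires reconciling the blow-up upper bound $\frac{\pi^2}{6}\exp(\frac53+32\pi^2 A)$ with the quantitative surpassing construction and showing these two numbers coincide exactly at the threshold $\alpha^\ast$ — in other words, that $\alpha^\ast$ is precisely the value of $t$ at which the supremum over test functions meets the concentration level — which is where all the delicate energy bookkeeping behind Theorems \ref{attain} and \ref{nonatain} (the optimal poly-harmonic truncations and the sharp Gagliardo–Nirenberg constants) must be deployed with care.
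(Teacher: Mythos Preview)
Your approach for the monotonicity of the attainment set (your step 3) is essentially the paper's argument: use a maximizer $\bar u$ at $\alpha_1$ to show $S(\alpha_2)>d_{nv}(\alpha_2)=32\pi^2-\alpha_2$ for any $\alpha_2>\alpha_1$, then combine with the universal fact $S(\alpha)>d_{nc}(\alpha)$ (from the test-function construction, which is independent of $\alpha$) to rule out both failure modes of a maximizing sequence and conclude attainment. The paper's proof stops there and regards the remaining assertions as immediate consequences of this together with the already-established dichotomy ``not attained $\Rightarrow S(\alpha)=d_{nv}(\alpha)$''.

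Your final step, however, goes astray. To get $S(\alpha)<\alpha^*$ you try to run a blow-up/convergence dichotomy on the maximizers $u_t$ as $t\uparrow\alpha^*$ and force $\alpha^*$ to coincide with the concentration level $d_{nc}=\frac{\pi^2}{6}\exp(\frac53+32\pi^2A)$. That cannot work: the test-function section shows $S(\alpha)>d_{nc}$ for \emph{every} $\alpha$, so $d_{nc}$ plays no role at the threshold --- $\alpha^*$ is governed entirely by where the vanishing level $d_{nv}(\alpha)=32\pi^2-\alpha$ catches up with $S(\alpha)$. Relatedly, your first-paragraph claim that a fixed test function yields $S(\alpha)\geq 32\pi^2-\alpha+(\text{positive constant})$ is false and would contradict Theorem~\ref{nonatain}: for fixed $u\in H$ one has $I(u)=t\|u\|_2^2+G(u)$ with $\|u\|_2^2<1$, hence $I(u)-t=G(u)-t\|\Delta u\|_2^2$, which is negative once $t$ is large.

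The inequality $S(\alpha)<\alpha^*$ actually drops out in one line from the other conclusions, with no further analysis. By the second half of the theorem, $S(t')=t'$ for every $t'>\alpha^*$; monotonicity of $S$ in $t$ then gives $S(t)\le\alpha^*$ for $t<\alpha^*$. For strictness, let $\bar u$ be a maximizer at $t$: for any $t'>\alpha^*$,
\[
t'=S(t')\ \ge\ I_{t'}(\bar u)\ =\ S(t)+(t'-t)\|\bar u\|_2^2,
\]
so $S(t)\le t'-(t'-t)\|\bar u\|_2^2$; letting $t'\downarrow\alpha^*$ yields $S(t)\le \alpha^*-(\alpha^*-t)\|\bar u\|_2^2<\alpha^*$ since $\|\bar u\|_2^2>0$. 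No matching of $\alpha^*$ with the concentration bound is needed.
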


Similarly, we can also obtain the following existence and nonexistence of maximizers for the Trudinger-Moser inequality in $\mathbb{R}^2$.
\begin{theorem}\label{addthm3}
When $4\pi-\alpha<\beta^{\ast}$, then $\tilde{S}\left(  \alpha\right)< \beta^{\ast}$ and $\tilde{S}(\alpha)$ can be attained; When $4\pi-\alpha>\beta^{\ast}$, then $\tilde{S}\left(  \alpha\right)=4\pi-\alpha$ and $\tilde{S}(\alpha)$ is not attained, where $\beta^{*}$
is defined as \[
\beta^{\ast}=\sup\left\{  \left.  \left(  4\pi-\alpha\right)
\right\vert S\left(  \alpha\right)  \text{ is attained}\right\} .
\] and $\beta^{*}\geq \frac{(4\pi)^2B_{1}}{2}>4\pi$.
\end{theorem}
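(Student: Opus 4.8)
The plan is to mirror, in the $\mathbb{R}^2$/$H^1$ setting, the whole machinery developed for the $\mathbb{R}^4$/$H^2$ critical Adams functional, using $B_1$ in place of $B_2$ and the first‑order exponent $4\pi$ in place of $32\pi^2$. The statement of Theorem \ref{addthm3} has three ingredients: (i) the numerical bound $\beta^{\ast}\ge \frac{(4\pi)^2 B_1}{2}>4\pi$; (ii) the dichotomy that $\tilde S(\alpha)$ is attained when $4\pi-\alpha<\beta^{\ast}$ and not attained when $4\pi-\alpha>\beta^{\ast}$; and (iii) the explicit value $\tilde S(\alpha)=4\pi-\alpha$ in the non‑attainment regime.

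First I would establish (i). The inequality $\frac{(4\pi)^2B_1}{2}\le\beta^\ast$ is immediate from the definition of $\beta^\ast$ as a supremum, once we know (from the $\mathbb{R}^2$ analogue of Theorem \ref{maintheorem}, sketched in Section \ref{section6}) that $\tilde S(\alpha)$ is attained whenever $4\pi-\alpha<\frac{(4\pi)^2B_1}{2}$. The strict inequality $\frac{(4\pi)^2B_1}{2}>4\pi$ reduces to $B_1>\frac{1}{2\pi}$, which is Weinstein's sharp constant computation in \cite{Wein} for the Gagliardo–Nirenberg inequality $\|u\|_4^4\le B_1\|\nabla u\|_2^2\|u\|_2^2$ in $\mathbb{R}^2$; I would simply cite it (the paper already quotes $B_1>\frac{1}{2\pi}$ and $\frac{(4\pi)^2B_1}{2}>4\pi$ in the discussion preceding Theorem \ref{addthm2}). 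So (i) costs essentially nothing beyond invoking results already in place.

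Next I would prove the non‑attainment half of (ii) together with (iii). Fix $\alpha$ with $4\pi-\alpha>\beta^\ast$. The key structural fact, analogous to Theorem \ref{fina}, is that $\tilde S(\alpha)=\max\{4\pi-\alpha,\ \sup_{\text{blow‑up}}\}$, and that in this regime the blow‑up contribution cannot exceed $4\pi-\alpha$. Concretely: for any $u\in H^1(\mathbb{R}^2)$ with $\|u\|_{H^1}=1$ the Taylor expansion gives $\int_{\mathbb{R}^2}\big(\exp(4\pi|u|^2)-1-\alpha|u|^2\big)dx\ge (4\pi-\alpha)\|u\|_2^2$, and by concentrating mass ($u$ spreading out to infinity, so $\|u\|_2^2\to1$ while the higher‑order exponential terms vanish) one gets $\tilde S(\alpha)\ge 4\pi-\alpha$; this shows the vanishing/escaping value is exactly $4\pi-\alpha$ and is never attained (any genuine $H^1$ function has $\|u\|_2^2<1$ strictly or else $\nabla u=0$). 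For the reverse bound one runs the blow‑up analysis: a radially symmetric maximizing sequence $u_k$ for $\tilde S(\alpha)$ (obtained via the Fourier rearrangement argument of \cite{Lenzmann} as in the $H^2$ case) either stays bounded in $L^\infty$ — whence it converges and $\tilde S(\alpha)$ is attained, contradicting $4\pi-\alpha>\beta^\ast$ via the definition of $\beta^\ast$ — or it concentrates, in which case the Carleson–Chang‑type ODE computation on the disk (with the optimal polynomial truncation of \cite{Dela}, \cite{liruf} adapted to first order, here the \emph{linear} truncation of \cite{liruf} suffices since $H^1$ allows Lipschitz truncations) yields the sharp upper bound $\tilde S(\alpha)\le \pi e^{1+4\pi A_1}$ with $A_1$ the regular part of the Green function for $-\Delta+1$ on $\mathbb{R}^2$ at the origin. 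One then shows, as in the strategy preceding Theorem \ref{nonatain}, that $\pi e^{1+4\pi A_1}$ is dominated by $4\pi-\alpha$ precisely when $4\pi-\alpha$ is large (i.e. beyond $\beta^{\ast\ast}$, the $\mathbb{R}^2$ analogue of $\alpha^{\ast\ast}$, bounded using the sharp estimate for $B_1$). Combining the two regimes forces $\tilde S(\alpha)=4\pi-\alpha$ and non‑attainment.

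Finally, the attainment half of (ii) — that $\tilde S(\alpha)$ \emph{is} attained and $\tilde S(\alpha)<\beta^\ast$ whenever $4\pi-\alpha<\beta^\ast$ — follows from the definition of $\beta^\ast$ as a supremum over attainment, combined with a continuity/threshold argument identical in structure to the proof of Theorem \ref{fina}: one shows that the set $\{(4\pi-\alpha): \tilde S(\alpha)\text{ attained}\}$ is an interval (monotonicity of the problem in $\alpha$), so attainment for $4\pi-\alpha<\beta^\ast$ is inherited from attainment closer to the threshold, and the strict inequality $\tilde S(\alpha)<\beta^\ast$ is what prevents the escaping/vanishing value $4\pi-\alpha<\beta^\ast$ from being the supremum. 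The main obstacle, as in the $\mathbb{R}^4$ case, is the blow‑up upper bound $\tilde S(\alpha)\le \pi e^{1+4\pi A_1}$ and the accompanying test‑function construction showing it can be exceeded: one must solve the limiting ODE explicitly to pin down the constant $\pi e^{1+4\pi A_1}$ (not merely bound it), and construct an optimal truncation that does not leak energy — though in the first‑order case this last point is substantially easier than for $H^2$ because linear truncation is admissible in $H^1$, so the "optimal polynomial truncation" subtlety of the Adams case largely evaporates and one can quote the truncation of Li–Ruf \cite{liruf} directly. For this reason I would present the $\mathbb{R}^2$ proof only in outline (as promised in the paragraph before Theorem \ref{addthm2}), emphasizing the ODE computation and the numerical comparison with $B_1$, and referring to the detailed $\mathbb{R}^4$ argument for the common steps.
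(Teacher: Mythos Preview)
Your outline is broadly correct and would succeed, but it takes a considerably heavier route than the paper. Three differences are worth flagging. First, the paper does \emph{not} redo the blow-up analysis or the test-function construction in $\mathbb{R}^2$: in Step 5 of Section \ref{section6} it simply quotes Ruf \cite{ruf} for the fact that $\tilde S(\alpha)>d_{cv}$ (the concentration level is always beaten), so the entire computation of the sharp concentration bound and the ODE/test-function verification is outsourced to the existing first-order literature. Second, you propose Fourier rearrangement to obtain a radial maximizing sequence, but for $H^1(\mathbb{R}^2)$ this is unnecessary: the classical P\'olya--Szeg\H{o} inequality is available, and the paper accordingly works with a \emph{positive} radial maximizing sequence obtained by Schwarz symmetrization (Step 2). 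Third, the paper establishes the existence of $\beta^{\ast\ast}$ (the non-attainment threshold) via the Gagliardo--Nirenberg constant estimates for $\bar B_k$, exactly parallel to Theorem \ref{nonatain}, rather than through a comparison between $4\pi-\alpha$ and the blow-up bound; your description conflates these two mechanisms somewhat.

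The dichotomy itself is then deduced in both approaches the same way: once one knows attainment for $4\pi-\alpha$ small and non-attainment for $4\pi-\alpha$ large, the monotonicity lemma (if $\tilde S(\alpha_1)$ is attained and $\alpha_1<\alpha_2$ then $\tilde S(\alpha_2)$ is attained, proved exactly as in Theorem \ref{fina}) forces the attainment set to be an interval with supremum $\beta^\ast$. Your route would work, but the paper's shortcuts---citing Ruf and using classical symmetrization---make the $\mathbb{R}^2$ case essentially a corollary of known results plus the new vanishing-level/monotonicity observation, which is why the paper presents it only as a sketch.
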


\begin{remark}
The proof of Theorems \ref{addthm2} and \ref{addthm3} concerning the existence and nonexistence of maximizers for  the Trudinger-Moser inequality in $\mathbb{R}^2$ is similar in spirit  to that of the existence and nonexistence of maximizers for the Adams inequality in $\mathbb{R}^4$. Therefore,  we have chosen to give the sketch of the proof in Section 6.
\end{remark}

It is important to point out that Theorem \ref{fina} provides a further
insight on the existence or nonexistence of extremals for Adams inequality on
the whole space. From the proof of Theorem \ref{attain}, we know that the supremum of Adams functional is larger than the upper bound of concentration-compactness
sequences. Hence \textit{whether }$S\left(  \alpha\right)  $\textit{ is
attained\ highly depends on the vanishing phenomena, whose energy level is
determined only by the coefficient of the first term of }$S\left(
\alpha\right)  $\textit{. Thus, changing the coefficients of finite terms
(especially the first term) of }$S\left(  \alpha\right)  $\textit{, will
not\ affect on the validity of the Adams inequality and the upper bound of
concentration sequences, but will change the existence or nonexistence of
extremals. }It seems that this phenomenon  has not been noticed before, even in
the case of Trudinger-Moser inequality.
 \vskip0.1cm

Once the existence and nonexistence of extremals for Adams inequality in the
special case $m=2,n=4$ are established, a natural, but nontrivial extension is
to establish similar results for any arbitrary $m\geq2$. The proof for this
extension has some extra difficulties to overcome and we have decided to
address this problem in a forthcoming paper. \vskip 0.1cm

The following remarks are in order. The problem considered here was initially suggested by the second author to the first and third authors several years ago. We have worked together on the problem since then. This is a revised version of the manuscript posted as arXiv:1812.00413v1 by the first
and third authors.
A mistake in that version was found recently. In particular, the argument of obtaining the optimal upper bound
of the concentration-compactness sequence was incorrect in that version. As we pointed out earlier in the introduction,
in the derivation of the optimal upper bound
of the concentration-compactness sequence,  the polynomial truncation
functions will add some extra energy which will enlarge the estimate of the upper bound
for the normalized concentration sequence of the Adams inequality. Thus, the upper
bound may be too large to be surpassed by the any test function sequence. In the old version,
we used the elliptic estimates   to get the
upper bound of the concentration compactness sequence which  were  far from being the exact bound.
In this new version, we apply the optimal and precise  expression
of polynomial truncation without any error estimate to address  this
issue and thus derive the optimal upper bound. In this new version, we have also obtained
the existence of extremal functions for the critical Trudinger-Moser supremum $\sup_{\|u\|_{H^1}=1}\int_{\mathbb{R}^2}\big(\exp(4\pi|u|^2)-1+\beta|u|^2\big)dx$ for a
perturbation term of $\beta |u|^2$
which gives more information than those known in the literature.

\medskip
We finally remark that there is some recent development on the existence and nonexistence of extremal functions
for subcritical Trudinger-Moser inequalities established by Lam, Lu and Zhang \cite{LLZ1}  using the equivalence and identities between the supremums for the critical and subcritical Trudinger-Moser inequalities in $\mathbb{R}^n$ established by the same authors in \cite{LLZ3}. For subcritical Adams inequalities on the entire space, the existence of extremal functions has been proved by Chen, Lu and Zhang \cite{ChenLuZhang}.

This paper is organized as follows. Section 2 is devoted to proving existence
of radially symmetric maximizing sequence for the critical Adams functional;
in Section 3, we will analyze the asymptotic behavior of the maximizing
sequence, and derive an upper bound for the Adams' inequality when the blowing
up arises; In Section 4, we prove the existence of extremals (Theorem
\ref{attain}) by constructing a proper test function sequence. In Section 5,
we give the proof for Theorem \ref{nonatain} and Theorem \ref{fina} by
estimating the best constant of higher order Gagliardo-Nirenberg inequalities.
In Section 6, we establish the existence and nonexistence of extremal functions
for the Trudinger-Moser inequality in $\mathbb{R}^2$.
For the convenience of the reader, the work of estimating the best constants
of Gagliardo-Nirenberg inequalities and some known results concerning elliptic
estimates for operator $\Delta^{2}$ are arranged in the Appendix. \vskip0.1cm

Throughout this paper, the letter $c$ always denotes some positive constant
which may vary from line to line.
\section{\bigskip The maximizing sequence for critical Adams functional}

\subsection{Existence of extremals for the subcritical Adams functionals}

In this section, we will establish the existence of extremal functions for
subcritical Adams functional. Set
\[
I_{\beta}^{\alpha}\left(  u\right)  =\int_{B_{R}}\left(  \exp(\beta
|u|^{2})-1-\alpha|u|^{2}\right)  dx.
\]

\begin{lemma}
\label{lem2} For any $0<\beta<32\pi^{2}$, there exists a radially symmetric
extremal function $u\in H$ such that%
\[
I_{\beta}^{\alpha}\left(  u\right)  =\underset{u\in H}{\sup}I_{\beta}^{\alpha
}\left(  u\right)  ,
\]
provided $\beta-\frac{\beta^{2}B_{2}}{2}<\alpha<\beta$.
\end{lemma}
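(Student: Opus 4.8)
The plan is to run the direct method in the class of radial functions, reaching that class via the Fourier rearrangement of \cite{Lenzmann} (the P\'{o}lya-Szeg\"{o} inequality for $\Delta$ being unavailable) and using the sharp Gagliardo-Nirenberg constant $B_{2}$ to kill the loss of compactness coming from the unboundedness of $\mathbb{R}^{4}$. Write $S_{\mathrm{sub}}:=\sup_{u\in H}I_{\beta}^{\alpha}(u)$. First one checks that $S_{\mathrm{sub}}\in(0,\infty)$: finiteness is the subcritical Adams inequality \eqref{Adams entire space} on $\mathbb{R}^{4}$ with $n=4$, $m=2$ (valid since $\beta<32\pi^{2}$), while from the expansion
\[
\exp(\beta|u|^{2})-1-\alpha|u|^{2}=(\beta-\alpha)|u|^{2}+\sum_{k\ge2}\frac{\beta^{k}}{k!}|u|^{2k},
\]
all of whose coefficients are nonnegative because $\alpha<\beta$, one sees that $I_{\beta}^{\alpha}(u)>0$ for small $u$. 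Given a maximizing sequence I would replace each term $u$ by its Fourier rearrangement $u^{\sharp}=\mathcal{F}^{-1}\{(\mathcal{F}u)^{\ast}\}$: by the inequalities of \cite{Lenzmann} this preserves $\|u\|_{2}$, does not increase $\|\Delta u\|_{2}$, and does not decrease $\|u\|_{q}$ for $q\ge2$, so $I_{\beta}^{\alpha}(u^{\sharp})\ge I_{\beta}^{\alpha}(u)$ and $\|u^{\sharp}\|_{H^{2}}\le1$; rescaling $u^{\sharp}$ back onto the constraint sphere only increases $I_{\beta}^{\alpha}$ since (again using $\beta>\alpha$) its integrand is nondecreasing in $|u|$. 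This produces a radially symmetric maximizing sequence $v_{k}\in H$.

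The second, decisive step is to prove $S_{\mathrm{sub}}>\beta-\alpha$, which is the only place the hypothesis $\beta-\tfrac{\beta^{2}B_{2}}{2}<\alpha$ is used. Fix a radial $w\in H^{2}(\mathbb{R}^{4})$ with $\|w\|_{4}^{4}/(\|\Delta w\|_{2}^{2}\|w\|_{2}^{2})$ as close to $B_{2}$ as desired, and for small $\lambda>0$ test with $v_{\lambda}(x)=t_{\lambda}\,w(\lambda x)$, where $t_{\lambda}>0$ is determined by $\|v_{\lambda}\|_{H^{2}}=1$. Using that $w\mapsto w(\lambda\,\cdot)$ preserves $\|\Delta w\|_{2}$ in dimension four, and keeping only the first two terms of the expansion, one gets
\[
I_{\beta}^{\alpha}(v_{\lambda})\ \ge\ (\beta-\alpha)+\frac{\lambda^{4}}{\|w\|_{2}^{2}}\Big(\frac{\beta^{2}}{2}\,\frac{\|w\|_{4}^{4}}{\|w\|_{2}^{2}}-(\beta-\alpha)\|\Delta w\|_{2}^{2}\Big)+o(\lambda^{4}),
\]
so $I_{\beta}^{\alpha}(v_{\lambda})>\beta-\alpha$ for small $\lambda$ as soon as $\tfrac{\beta^{2}}{2}\,\|w\|_{4}^{4}/(\|\Delta w\|_{2}^{2}\|w\|_{2}^{2})>\beta-\alpha$; since $\beta-\alpha<\tfrac{\beta^{2}B_{2}}{2}$, a sufficiently good choice of $w$ makes this hold, whence $S_{\mathrm{sub}}>\beta-\alpha$.

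Finally one proves compactness of $\{v_{k}\}$. Up to a subsequence $v_{k}\rightharpoonup v$ in $H^{2}(\mathbb{R}^{4})$; by the compact embedding of radial functions, $v_{k}\to v$ in $L^{q}(\mathbb{R}^{4})$ for all $q\in(2,\infty)$, and a standard uniform integrability (Vitali) argument, using the subcritical Adams bound to prevent concentration of the integral on small sets and the radial decay at infinity to control the exponential part of the integrand, upgrades this to $\int(\exp(\beta|v_{k}|^{2})-1-\beta|v_{k}|^{2})\to\int(\exp(\beta|v|^{2})-1-\beta|v|^{2})$. Set $\ell=\lim\|v_{k}\|_{2}^{2}$, $\delta=\ell-\|v\|_{2}^{2}\ge0$, $\gamma=(1-\ell)-\|\Delta v\|_{2}^{2}\ge0$, $t=\|v\|_{H^{2}}^{2}=1-\delta-\gamma$, and $P=(\beta-\alpha)\|v\|_{2}^{2}+\int(\exp(\beta|v|^{2})-1-\beta|v|^{2})$, so that $S_{\mathrm{sub}}=P+(\beta-\alpha)\delta$. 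If $v\equiv0$, then $S_{\mathrm{sub}}=(\beta-\alpha)\ell\le\beta-\alpha$, contradicting the previous step, so $v\ne0$ and $t>0$. Since $t\le1$ forces $t^{-k}\ge t^{-1}$ for every $k\ge1$, the normalized function $\tilde v=v/\sqrt t\in H$ satisfies $I_{\beta}^{\alpha}(\tilde v)\ge P/t$; combined with $I_{\beta}^{\alpha}(\tilde v)\le S_{\mathrm{sub}}=P+(\beta-\alpha)\delta$, if $\delta+\gamma>0$ this yields $P\le t(\beta-\alpha)$ and then $S_{\mathrm{sub}}\le(\beta-\alpha)(1-\gamma)\le\beta-\alpha$, again a contradiction. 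Hence $\delta=\gamma=0$, i.e. $v_{k}\to v$ strongly in $H^{2}(\mathbb{R}^{4})$, so $v\in H$ is a radial maximizer with $I_{\beta}^{\alpha}(v)=S_{\mathrm{sub}}$.

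I expect this last step, together with the threshold estimate, to be the main obstacle: one has to rule out simultaneously pure vanishing ($v\equiv0$) and the escape of a fixed amount of $L^{2}$ mass to infinity along spherical shells (the parameters $\delta,\gamma$ above), and this is controlled only by the \emph{sharp} value of $B_{2}$ through the quantitative gap $\beta-\alpha<\tfrac{\beta^{2}B_{2}}{2}$. The radial reduction carried out in the first step, forced by the absence of a P\'{o}lya-Szeg\"{o} inequality for $\Delta$, is exactly what makes such a concentration-compactness analysis feasible here.
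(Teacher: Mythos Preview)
Your argument is correct and follows the same overall architecture as the paper: Fourier rearrangement to pass to radial competitors, the strict threshold $S_{\mathrm{sub}}>\beta-\alpha$ obtained by a one-parameter scaling test, and then a compactness argument ruling out loss of mass. The only substantive difference is in the last step. The paper, exploiting that $\|\Delta u\|_{2}$ is dilation-invariant in $\mathbb{R}^{4}$, replaces the weak limit $u$ by $\tilde u(x)=u(x/\tau)$ with $\tau^{4}=\lim\|u_{k}\|_{2}^{2}/\|u\|_{2}^{2}$; this recovers the lost $L^{2}$ mass exactly while keeping $\|\Delta\tilde u\|_{2}=\|\Delta u\|_{2}$, and comparison with $S_{\mathrm{sub}}$ forces $\tau=1$. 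You instead normalize $v\mapsto v/\sqrt t$ with $t=\|v\|_{H^{2}}^{2}$ and use the elementary inequality $t^{-k}\ge t^{-1}$ in the power-series expansion to get $I_{\beta}^{\alpha}(v/\sqrt t)\ge P/t$, then close via $P(\delta+\gamma)\le t(\beta-\alpha)\delta\le t(\beta-\alpha)(\delta+\gamma)$. Your route is slightly more elementary and does not rely on the conformal trick special to dimension four; the paper's dilation argument, on the other hand, yields directly that the extremal lies on the unit sphere without separately tracking the defect $\gamma$ in $\|\Delta v\|_{2}^{2}$. Both are clean and complete.
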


\begin{remark}
It follows from that Lemma \ref{sharp1} in the Appendix that $B_{2}<\frac
{1}{16\pi^{2}}$, which leads to that $\beta-\frac{\beta^{2}B_{2}}{2}>0$.
\end{remark}

\begin{proof}
Define $u^{\sharp}$ by $u^{\sharp}=\mathcal{F}^{-1}\{(\mathcal{F}(u))^{\ast}\}$, where $\mathcal{F\mathcal{}}$ denotes
the Fourier transform on $\mathbb{R}^{4}$ (with its inverse $\mathcal{F}^{-1}$) and
$f^{\ast}$ stands for the Schwarz symmetrization of $f$. Using the property of
the Fourier rearrangement from \cite{Lenzmann}, one can derive that
\[
\Vert\Delta u^{\sharp}\Vert_{2}\leq\Vert\Delta u\Vert_{2},\Vert u^{\sharp
}\Vert_{2}=\Vert u\Vert_{2}, \Vert u^{\sharp
}\Vert_{q}\geq \Vert u\Vert_{2}\ (q>2).
\]
Hence
\[
\underset{u\in H}{\sup}I_{\beta}^{\alpha}(u)=\underset{u\in H_{r}}{\sup
}I_{\beta}^{\alpha}(u),
\]
where $H_{r}$ denotes all radial functions in $H$. Therefore, we may assume
that $\{u_{k}\}_{k}\in H$ is a radially maximizing sequence for $\underset
{u\in H}{\sup}I_{\beta}^{\alpha}(u)$, that is
\[
\Vert u_{k}\Vert_{H^{2}(\mathbb{R}^{4})}=1,\lim_{k\rightarrow\infty}I_{\beta
}^{\alpha}(u_{k})=\underset{u\in H}{\sup}I_{\beta}^{\alpha}(u).
\]
By the Sobolev compact embedding, there exists a subsequence $\{u_{k}\}_{k}$
such that
\[%
\begin{split}
&  u_{k}(x)\rightarrow u(x),\quad\mbox{strongly}~\mbox{in}~L^{q}%
(B_{R}(0))\ \ \mathrm{for\ any}\ R>0,q>1\\
&  u_{k}(x)\rightarrow u(x),\quad\text{for a.e.}~x\in\mathbb{R}^{4}.
\end{split}
\]

Since $\exp(\beta|u|^{2})-1-\alpha|u|^{2}\in L^{p}(B_{R})$ for some $p>1$, we
have
\begin{equation}
\lim_{k\rightarrow\infty}\int_{B_{R}}\left(  \exp(\beta|u_{k}|^{2}%
)-1-\alpha|u_{k}|^{2}\right)  dx=\int_{B_{R}}\left(  \exp(\beta|u|^{2}%
)-1-\alpha|u|^{2}\right)  dx. \label{add1}%
\end{equation}
On the other hand, it follows from the radial lemma that
\begin{equation}%
\begin{split}
&  \lim_{k\rightarrow\infty}\int_{\mathbb{R}^{4}\setminus B_{R}}\left(
\exp(\beta|u_{k}|^{2})-1-\beta u_{k}^{2}\right)  dx\\
&  \ \ \leq c\lim_{k\rightarrow\infty}\int_{\mathbb{R}^{4}\setminus B_{R}%
}|u_{k}|^{4}dx\\
&  \ \ \leq c\sup_{k}\Vert u_{k}\Vert_{H^{2}(\mathbb{R}^{4})}^{2}R^{-2}.
\end{split}
\label{add2}%
\end{equation}

From (\ref{add1}) and (\ref{add2}), we derive that\ \ \
\[%
\begin{split}
&  \lim_{k\rightarrow\infty}\int_{\mathbb{R}^{4}}\left(  \exp(\beta|u_{k}%
|^{2})-1-\beta u_{k}^{2}\right)  dx\\
&  =\underset{R\rightarrow\infty}{\lim}\lim_{k\rightarrow\infty}\left(
\int_{B_{R}}+\int_{%
%TCIMACRO{\U{211d} }%
%BeginExpansion
\mathbb{R}
%EndExpansion
^{4}\backslash B_{R}}\right)  \left(  \exp(\beta|u_{k}|^{2})-1-\beta u_{k}%
^{2}\right)  dx\\
&  =\int_{\mathbb{R}^{4}}\left(  \exp(\beta|u|^{2})-1-\beta\left\vert
u\right\vert ^{2}\right)  dx.
\end{split}
\]
Hence, we have%
\begin{align}
&  \lim_{k\rightarrow\infty}\int_{\mathbb{R}^{4}}\left(  \exp(\beta|u_{k}%
|^{2})-1-\alpha|u_{k}|^{2}\right)  dx\nonumber\\
&  =\lim_{k\rightarrow\infty}\int_{\mathbb{R}^{4}}\left(  \exp(\beta
|u_{k}|^{2})-1-\beta|u_{k}|^{2}\right)  dx+\left(  \beta-\alpha\right)
\lim_{k\rightarrow\infty}\int_{\mathbb{R}^{4}}u_{k}^{2}dx\nonumber\\
&  =\int_{\mathbb{R}^{4}}\left(  \exp(\beta|u|^{2})-1-\beta|u|^{2}\right)
dx+\left(  \beta-\alpha\right)  \lim_{k\rightarrow\infty}\int_{\mathbb{R}^{4}%
}u_{k}^{2}dx\nonumber\\
&  =\int_{\mathbb{R}^{4}}\left(  \exp(\beta|u|^{2})-1-\alpha|u|^{2}\right)
dx+\lim_{k\rightarrow\infty}\left(  \beta-\alpha\right)  \int_{\mathbb{R}^{4}%
}\left(  u_{k}^{2}-|u|^{2}\right)  dx. \label{1.1}%
\end{align}
When $u\neq0$, we set
\[
\tau^{4}=\lim_{k\rightarrow\infty}\frac{\int_{\mathbb{R}^{4}}u_{k}^{2}dx}%
{\int_{\mathbb{R}^{4}}u^{2}dx},
\]
by Fatou's lemma, we have $\tau\geq1$. Let $\tilde{u}(x)=u(\frac{x}{\tau})$,
we can easily verify the following fact:%
\[
\int_{\mathbb{R}^{4}}|\Delta\tilde{u}|^{2}dx=\int_{\mathbb{R}^{4}}|\Delta
u|^{2}dx\leq\lim_{k\rightarrow\infty}\int_{\mathbb{R}^{4}}|\Delta u_{k}%
|^{2}dx,
\]%
\[
\int_{\mathbb{R}^{4}}\tilde{u}^{2}dx=\tau^{4}\int_{\mathbb{R}^{4}}u^{2}%
dx=\lim_{k\rightarrow\infty}\int_{\mathbb{R}^{4}}u_{k}^{2}dx
\]
and
\[
\int_{\mathbb{R}^{4}}|\left(  \Delta\tilde{u}|^{2}+\tilde{u}^{2}\right)
dx\leq\lim_{k\rightarrow\infty}\int_{\mathbb{R}^{4}}\left(  |\Delta u_{k}%
|^{2}+u_{k}^{2}\right)  dx=1.
\]

Hence, by (\ref{1.1}) we get%
\begin{equation}%
\begin{split}
&  \underset{u\in H}{\sup}I_{\beta}^{\alpha}(u)\geq\int_{\mathbb{R}^{4}%
}\left(  \exp(\beta\tilde{u}^{2})-1-\alpha\tilde{u}^{2}\right)  dx=\tau
^{4}\int_{\mathbb{R}^{4}}\left(  \exp(\beta u^{2})-1-\alpha u^{2}\right)  dx\\
&  =\int_{\mathbb{R}^{4}}\left(  \exp(\beta u^{2})-1-\alpha u^{2}\right)
dx+(\tau^{4}-1)\left(  \beta-\alpha\right)  \int_{\mathbb{R}^{4}}u^{2}dx+\\
&  +(\tau^{4}-1)\int_{\mathbb{R}^{4}}\left(  \exp(\beta u^{2})-1-\beta
u^{2}\right)  dx\\
&  \geq\lim_{k\rightarrow\infty}\int_{\mathbb{R}^{4}}\left(  \exp(\beta
u_{k}^{2})-1-\alpha u_{k}^{2}\right)  dx+(\tau^{4}-1)\int_{\mathbb{R}^{4}%
}\left(  \exp(\beta u^{2})-1-\beta u^{2}\right)  dx\\
&  =\underset{u\in H}{\sup}I_{\beta}^{\alpha}\left(  u\right)  +(\tau
^{4}-1)\int_{\mathbb{R}^{4}}\left(  \exp(\beta u^{2})-1-\beta u^{2}\right)
dx.
\end{split}
\label{pro}%
\end{equation}
Since $\exp(\beta u^{2})-1-\beta u^{2}>0$, we have $\tau=1$, and then
\[
\underset{u\in H}{\sup}I_{\beta}^{\alpha}(u)=\int_{\mathbb{R}^{4}}\left(
\exp(\beta u^{2})-1-\alpha u^{2}\right)  dx.
\]
Therefore, $u$ is an extremal function for $\underset{u\in H}{\sup}I_{\beta
}^{\alpha}(u)$.

Next, it suffices to show that $u=0$ is impossible to happen. Assume by
contradiction that $u=0$, we derive from radial lemma that
\begin{equation}%
\begin{split}
\underset{u\in H}{\sup}I_{\beta}^{\alpha}(u)  &  =\lim_{k\rightarrow\infty
}\int_{\mathbb{R}^{4}}\left(  \exp(\beta|u_{k}|^{2})-1-\alpha u_{k}%
^{2}\right)  dx\\
&  =\lim_{R\rightarrow\infty}\lim_{k\rightarrow\infty}\left(  \int_{B_{R}%
}+\int_{\mathbb{R}^{4}\setminus B_{R}}\right)  \left(  \exp(\beta|u_{k}%
|^{2})-1-\alpha u_{k}^{2}\right)  dx\\
&  =\lim_{R\rightarrow\infty}\lim_{k\rightarrow\infty}\left(  \beta
-\alpha\right)  \int_{\mathbb{R}^{4}\setminus B_{R}}u_{k}^{2}dx\leq
\beta-\alpha.
\end{split}
\label{s1}%
\end{equation}
On the other hand, for any $v\in H^{2}(\mathbb{R}^{4})$ and $t>0$, we
introduce a family of functions $v_{t}$ by
\[
v_{t}(x)=t^{\frac{1}{2}}v(t^{\frac{1}{4}}x),
\]
and we easily verify that
\[
\Vert\Delta v_{t}\Vert_{2}^{2}=t\Vert\Delta v\Vert_{2}^{2},\Vert v_{t}%
\Vert_{p}^{p}=t^{\frac{p-2}{2}}\Vert v\Vert_{p}^{p}.
\]
Hence, it follows that
\begin{equation}%
\begin{split}
&  \int_{\mathbb{R}^{4}}\left(  \exp\Big(\beta\big(\frac{v_{t}}{\Vert
v_{t}\Vert_{H^{2}(\mathbb{R}^{4})}}\big)^{2}\Big)-1-\alpha\left(  \frac{v_{t}%
}{\Vert v_{t}\Vert_{H^{2}(\mathbb{R}^{4})}}\right)  ^{2}\right)  dx\\
&  \ \ \geq\left(  \beta-\alpha\right)  \frac{\Vert v_{t}\Vert_{2}^{2}}%
{\Vert\Delta v_{t}\Vert_{2}^{2}+\Vert v_{t}\Vert_{2}^{2}}+\frac{\beta^{2}}%
{2}\frac{\Vert v_{t}\Vert_{4}^{4}}{(\Vert\Delta v_{t}\Vert_{2}^{2}+\Vert
v_{t}\Vert_{2}^{2})^{2}}\\
&  \ \ =\left(  \beta-\alpha\right)  \Big(\frac{\Vert v\Vert_{2}^{2}}%
{t\Vert\Delta v\Vert_{2}^{2}+\Vert v\Vert_{2}^{2}}\Big)+\frac{\beta^{2}}%
{2}\Big(\frac{t\Vert v\Vert_{4}^{4}}{(t\Vert\Delta v\Vert_{2}^{2}+\Vert
v\Vert_{2}^{2})^{2}}\Big)\\
&  \ \ =\left(  \beta-\alpha\right)  \Big(\frac{\Vert v\Vert_{2}^{2}}%
{t\Vert\Delta v\Vert_{2}^{2}+\Vert v\Vert_{2}^{2}}+\frac{\beta^{2}}{2\left(
\beta-\alpha\right)  }\frac{t\Vert v\Vert_{4}^{4}}{(t\Vert\Delta v\Vert
_{2}^{2}+\Vert v\Vert_{2}^{2})^{2}}\Big)\\
&  \ \ =\left(  \beta-\alpha\right)  g_{v}(t).
\end{split}
\label{s2}%
\end{equation}
Note that $g_{v}(0)=1$, once we show that $g_{v}^{\prime}(t)>0$ for small
$t>0$, then we have $g_{v}(t)>g_{v}(0)$ for small $t>0$, which leads to
$\underset{u\in H}{\sup}I_{\beta}^{\alpha}(u)>\beta-\alpha$. Combining
\eqref{s1} and \eqref{s2}, we obtain a contradiction. This accomplishes the
proof of Lemma \ref{lem2}. \vskip0.1cm

In the following, we show there exists some $v\in H^{2}\left(  \mathbb{R}%
^{4}\right)  $ such that $g_{v}^{\prime}(0)>0$. Indeed, after a direct
calculation, we have
\begin{equation}%
\begin{split}
g_{v}^{\prime}(0)  &  =-\frac{\Vert\Delta v\Vert_{2}^{2}}{\Vert v\Vert_{2}%
^{2}}+\frac{\beta^{2}}{2\left(  \beta-\alpha\right)  }\frac{\Vert v\Vert
_{4}^{4}}{\Vert v\Vert_{2}^{4}}\\
&  =\frac{\Vert\Delta v\Vert_{2}^{2}}{\Vert v\Vert_{2}^{2}}(-1+\frac{\beta
^{2}}{2\left(  \beta-\alpha\right)  }\frac{\Vert v\Vert_{4}^{4}}{\Vert\Delta
v\Vert_{2}^{2}\Vert v\Vert_{2}^{2}}).
\end{split}
\label{c1}%
\end{equation}
It can be shown this supremun \[
\underset{v\in H^{2}\left(
%TCIMACRO{\U{211d} }%
%BeginExpansion
\mathbb{R}
%EndExpansion
^{4}\right)  \backslash\{0\}}{\sup}\frac{\Vert v\Vert_{4}^{4}}{\Vert\Delta
v\Vert_{2}^{2}\Vert v\Vert_{2}^{2}}%
\] could be attained by some $Q\in H^{2}%
(\mathbb{R}^{4})$, which must satisfy (after a rescaling $Q\rightarrow\mu
Q\left(  \lambda\cdot\right)  $, see \cite{Bellazzini}) the nonlinear
equation
\begin{equation}
\Delta^{2}Q+Q-\left\vert Q\right\vert ^{2}Q=0\text{ in }%
%TCIMACRO{\U{211d} }%
%BeginExpansion
\mathbb{R}
%EndExpansion
^{4}. \label{G-N Eq}%
\end{equation}
Set $v=Q$, we get $g_{v}^{\prime}(0)=\frac{\Vert\Delta Q\Vert_{2}^{2}}{\Vert
Q\Vert_{2}^{2}}\left(  -1+\frac{\beta^{2}}{2\left(  \beta-\alpha\right)
}B_{2}\right)  $. Thus, if $\frac{\beta^{2}B_{2}}{2}>\beta-\alpha$, we have
$g_{v}^{\prime}(0)>0$.
\end{proof}

\subsection{The radially symmetric maximizing sequence for critical
functional}

Let $\left\{  \beta_{k}\right\}  $ be an increasing sequence which converges
to $32\pi^{2}$. According to Lemma \ref{lem2}, we see that there exists a
radial function sequence $\{u_{k}\}_{k}$ satisfying $\Vert u_{k}\Vert
_{H^{2}(\mathbb{R}^{4})}=1$ such that
\[
\int_{\mathbb{R}^{4}}\left(  \exp(\beta_{k}|u_{k}|^{2})-1-\alpha|u_{k}%
|^{2}\right)  dx=\underset{u\in H}{\sup}\int_{\mathbb{R}^{4}}\left(
\exp(\beta_{k}|u|^{2})-1-\alpha|u|^{2}\right)  dx.
\]
It is not difficult to see that
\[
\underset{k\rightarrow\infty}{\lim}\int_{%
%TCIMACRO{\U{211d} }%
%BeginExpansion
\mathbb{R}
%EndExpansion
^{4}}\left(  \exp\left(  \beta_{k}\left\vert u_{k}\right\vert ^{2}\right)
-1-\alpha\left\vert u_{k}\right\vert ^{2}\right)  dx=S\left(  \alpha\right)
.
\]
In fact, for any given $\varphi\in H^{2}\left(
%TCIMACRO{\U{211d} }%
%BeginExpansion
\mathbb{R}
%EndExpansion
^{4}\right)  $ with $\int_{%
%TCIMACRO{\U{211d} }%
%BeginExpansion
\mathbb{R}
%EndExpansion
^{4}}\left(  \left\vert \varphi\right\vert ^{2}+\left\vert \Delta
\varphi\right\vert ^{2}\right)  dx=1$, we have
\[
\int_{%
%TCIMACRO{\U{211d} }%
%BeginExpansion
\mathbb{R}
%EndExpansion
^{4}}\left(  \exp\left(  \beta_{k}\left\vert \varphi\right\vert ^{2}\right)
-1-\alpha\left\vert \varphi\right\vert ^{2}\right)  dx\leq\int_{%
%TCIMACRO{\U{211d} }%
%BeginExpansion
\mathbb{R}
%EndExpansion
^{4}}\left(  \exp\left(  \beta_{k}\left\vert u_{k}\right\vert ^{2}\right)
-1-\alpha\left\vert u_{k}\right\vert ^{2}\right)  dx.
\]
It follows from Levi's lemma that
\[
\int_{%
%TCIMACRO{\U{211d} }%
%BeginExpansion
\mathbb{R}
%EndExpansion
^{4}}\left(  \exp\left(  32\pi^{2}\left\vert \varphi\right\vert ^{2}\right)
-1-\alpha\left\vert \varphi\right\vert ^{2}\right)  dx\leq\underset
{k\rightarrow\infty}{\lim}\int_{%
%TCIMACRO{\U{211d} }%
%BeginExpansion
\mathbb{R}
%EndExpansion
^{4}}\left(  \exp\left(  \beta_{k}\left\vert u_{k}\right\vert ^{2}\right)
-1-\alpha\left\vert u_{k}\right\vert ^{2}\right)  dx,
\]
which implies that
\[
\underset{k\rightarrow\infty}{\lim}\int_{%
%TCIMACRO{\U{211d} }%
%BeginExpansion
\mathbb{R}
%EndExpansion
^{4}}\left(  \exp\left(  \beta_{k}\left\vert u_{k}\right\vert ^{2}\right)
-1-\alpha\left\vert u_{k}\right\vert ^{2}\right)  dx=S\left(  \alpha\right)
.
\]

An easy computation shows that the Euler--Lagrange equation of $u_{k}$ is
given by the following bi-harmonic equation in $\mathbb{R}^{4}$:%

\begin{equation}
\triangle^{2}u_{k}+u_{k}=\lambda_{k}^{-1}u_{k}\left(  \exp(\beta_{k}u_{k}%
^{2})-\frac{\alpha}{\beta_{k}}\right)  , \label{euler}%
\end{equation}
where $\left\Vert u_{k}\right\Vert _{H^{2}\left(  \mathbb{R}^{4}\right)  }=1$
and $\lambda_{k}=\int_{%
%TCIMACRO{\U{211d} }%
%BeginExpansion
\mathbb{R}
%EndExpansion
^{4}}u_{k}^{2}\left(  \exp\left\{  \beta_{k}u_{k}^{2}\right\}  -\frac{\alpha
}{\beta_{k}}\right)  dx$. Since%
\[
\lambda_{k}^{-1}u_{k}\left(  \exp\left\{  \beta_{k}u_{k}^{2}\right\}
-\frac{\alpha}{\beta_{k}}\right)  \in L^{p}_{loc}(\mathbb{R}^{4})
\]
for any $1\leq p<\infty$, by Lemma \ref{local estimate}, we know $u_{k}\in
C^{\infty}(\mathbb{R}^{4})$.

Now, we give the following important observation.

\begin{lemma}
\label{lamna}$\underset{k}{\inf}\lambda_{k}>0.$
\end{lemma}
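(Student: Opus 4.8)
The plan is to argue by contradiction: suppose along a subsequence $\lambda_k\to 0$. Recall $\lambda_k=\int_{\mathbb{R}^4}u_k^2\big(\exp(\beta_k u_k^2)-\tfrac{\alpha}{\beta_k}\big)dx$. Since $\exp(\beta_k u_k^2)\ge 1$ pointwise, the integrand is bounded below by $(1-\tfrac{\alpha}{\beta_k})u_k^2$; but $\alpha$ may be negative or positive, so I first split into the regions where $u_k^2$ is small and where it is large. On $\{|u_k|\le\delta\}$ the factor $\exp(\beta_k u_k^2)-\tfrac{\alpha}{\beta_k}$ is bounded below by a positive constant (using $\beta_k\to 32\pi^2$ and $\alpha<32\pi^2$), so $\int_{\{|u_k|\le\delta\}}u_k^2\,dx\le C\lambda_k\to 0$. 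Combined with $\int_{\{|u_k|>\delta\}}u_k^2\,dx\le\delta^{-2}\|u_k\|_4^4\cdot\delta^{2}$... more usefully, $\int_{\{|u_k|>\delta\}}u_k^2dx\le \delta^{-2}\int u_k^4dx$, and the $L^4$ norm of $u_k$ is controlled by $\|u_k\|_{H^2}=1$ via Sobolev embedding. Actually the cleaner route: $\int_{\{|u_k|>\delta\}}u_k^2\,dx\le\int_{\{|u_k|>\delta\}}\tfrac{u_k^2}{\delta^2}u_k^2dx\le\delta^{-2}\lambda_k'$ where I absorb the exponential — so both pieces go to $0$, giving $\|u_k\|_2\to 0$.

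Next I use $\|u_k\|_2\to 0$ together with the normalization $\|\Delta u_k\|_2^2+\|u_k\|_2^2=1$ to deduce $\|\Delta u_k\|_2\to 1$. Now I bring in the Adams inequality on $\mathbb{R}^4$ in the form \eqref{Adams entire space} (or rather the subcritical boundedness from Theorem~\ref{frac2}) to control $\int(\exp(\beta_k u_k^2)-1-\beta_k u_k^2)dx$. The key quantitative input is the Gagliardo–Nirenberg / interpolation bound: for radial $u_k$, $\exp(\beta_k u_k^2)-1\le C\, u_k^2\exp(\beta_k u_k^2)$ is too crude, so instead I estimate $S(\alpha)=\lim_k I^\alpha_{\beta_k}(u_k)$ directly. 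Write $\exp(\beta_k u_k^2)-1-\alpha u_k^2 = (\exp(\beta_k u_k^2)-1-\beta_k u_k^2)+(\beta_k-\alpha)u_k^2$. Since $\|u_k\|_2\to 0$, the second term contributes $0$ in the limit. For the first term, $\exp(t)-1-t\le \tfrac{t^2}{2}e^t$, so $\int(\exp(\beta_k u_k^2)-1-\beta_k u_k^2)dx\le\tfrac{\beta_k^2}{2}\int u_k^4 e^{\beta_k u_k^2}dx$, and I need this to vanish. This is where the smallness of $\lambda_k$ re-enters: $\int u_k^4 e^{\beta_k u_k^2}dx\le \|u_k\|_\infty^2\int u_k^2 e^{\beta_k u_k^2}dx\le\|u_k\|_\infty^2\big(\lambda_k+\tfrac{|\alpha|}{\beta_k}\|u_k\|_2^2\big)$. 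If $\|u_k\|_\infty$ stays bounded this already gives $S(\alpha)\le 0$, contradicting $S(\alpha)\ge(\beta-\alpha)\cdot(\text{positive})>0$ shown via the test functions $v_t$ in Lemma~\ref{lem2}; if $\|u_k\|_\infty\to\infty$ I instead split $\mathbb{R}^4$ at a level $L_k\to\infty$ chosen so that on $\{u_k^2\le L_k\}$ the exponential is subcritically controlled by Adams (using $\|\Delta u_k\|_2\to 1$ and $\beta_k\to 32\pi^2$) while on $\{u_k^2>L_k\}$ one uses $u_k^2\le L_k^{-1}u_k^4$ and Sobolev, or one uses $e^{\beta_k u_k^2}\le e^{\beta_k L_k} + \text{(tail controlled by }\lambda_k)$, to again force $S(\alpha)=0$.

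The main obstacle I anticipate is the case $\|u_k\|_\infty\to\infty$: there $\int u_k^4 e^{\beta_k u_k^2}dx$ is not obviously controlled by $\lambda_k$, because $\lambda_k$ weights by $u_k^2$ not $u_k^4$, and one cannot freely pull out $\|u_k\|_\infty^2$. The right fix is a dyadic decomposition of the range of $u_k^2$: on each shell $\{2^{j}\le u_k^2<2^{j+1}\}$ one has $u_k^4 e^{\beta_k u_k^2}\le 2^{j+1}\cdot u_k^2 e^{\beta_k u_k^2}$, so summing over $j\ge j_0$ gives $\int_{\{u_k^2\ge 2^{j_0}\}}u_k^4 e^{\beta_k u_k^2}dx\le C\int_{\{u_k^2\ge 2^{j_0}\}}u_k^2 e^{\beta_k u_k^2}(u_k^2)dx$ — still weighted wrong. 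So instead I will exploit that $\lambda_k\to 0$ forces $\int_{\{|u_k|>\delta\}}e^{\beta_k u_k^2}dx\to 0$ (since there $e^{\beta_k u_k^2}\le\delta^{-2}u_k^2 e^{\beta_k u_k^2}$), hence the exponential mass concentrates on $\{|u_k|\le\delta\}$ where $\exp(\beta_k u_k^2)-1-\beta_k u_k^2\le C\delta^2 u_k^2$, whence $S(\alpha)\le C\delta^2\limsup\|u_k\|_2^2 + o(1)=o(1)$; letting first $k\to\infty$ then $\delta\to 0$ yields $S(\alpha)\le 0$. This contradicts $S(\alpha)>0$, which follows because the perturbed functional evaluated on the rescaled test functions $v_t$ from Lemma~\ref{lem2} exceeds $\beta-\alpha>0$ (recall $32\pi^2-\alpha<32\pi^2$, and one checks $S(\alpha)\ge I^\alpha_{\beta}(u)>0$ for the subcritical extremal $u\ne 0$). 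Hence $\inf_k\lambda_k>0$.
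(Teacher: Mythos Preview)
Your argument is correct but considerably more elaborate than the paper's. The paper dispatches the lemma in one stroke via the elementary pointwise bound $e^{t}-1\le t e^{t}$, which gives
\[
\int_{\mathbb{R}^4}\bigl(\exp(\beta_k u_k^2)-1-\alpha u_k^2\bigr)\,dx
\;\le\;\int_{\mathbb{R}^4}\bigl(\beta_k u_k^2\exp(\beta_k u_k^2)-\alpha u_k^2\bigr)\,dx
\;=\;\beta_k\lambda_k,
\]
so $S(\alpha)\le\lim_k\beta_k\lambda_k=0$, contradicting $S(\alpha)>0$ (immediate from $\alpha<32\pi^2$). By contrast, you first extract $\|u_k\|_2\to 0$ (which, incidentally, follows in one line from $\lambda_k\ge(1-\alpha/\beta_k)\|u_k\|_2^2$ without any $\delta$-splitting), then deduce $\int u_k^2 e^{\beta_k u_k^2}dx\to 0$, and finally run a level-set decomposition at height $\delta$ to force $\int(\exp(\beta_k u_k^2)-1-\beta_k u_k^2)dx\to 0$. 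Everything you write is valid, and the exploratory detours through $\|u_k\|_\infty$ and dyadic shells are ultimately discarded in favor of the clean $\{|u_k|\gtrless\delta\}$ split, which does work. What the paper's approach buys is brevity: the single inequality $e^t-1\le te^t$ already packages together both the tail control (large $|u_k|$) and the small-value control (small $|u_k|$) that you handle separately. Your route, while longer, makes the mechanism more transparent---one sees explicitly that $\lambda_k\to 0$ kills both the $L^2$ mass and the exponential concentration.
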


\begin{proof}
We assume by contradiction that $\lambda_{k}\rightarrow0$. Since $\exp t-1\leq
t\exp t$, we derive that
\begin{equation}
\int_{%
%TCIMACRO{\U{211d} }%
%BeginExpansion
\mathbb{R}
%EndExpansion
^{4}}\left(  \beta_{k}u_{k}^{2}\exp\left(  \beta_{k}u_{k}^{2}\right)  \right)
dx\geq\int_{%
%TCIMACRO{\U{211d} }%
%BeginExpansion
\mathbb{R}
%EndExpansion
^{4}}\left(  \exp(\beta_{k}u_{k}^{2})-1\right)  dx. \label{1}%
\end{equation}
Hence
\begin{align*}
&  \underset{u\in H}{\sup}\frac{1}{32\pi^{2}}\int_{%
%TCIMACRO{\U{211d} }%
%BeginExpansion
\mathbb{R}
%EndExpansion
^{4}}\left(  \exp\left(  32\pi^{2}\left\vert u\right\vert ^{2}\right)
-1-\alpha\left\vert u\right\vert ^{2}\right)  dx\\
&  =\underset{k\rightarrow\infty}{\lim}\frac{1}{\beta_{k}}\int_{\mathbb{R}%
^{4}}\left(  \exp(\beta_{k}u_{k}^{2})-1-\alpha\left\vert u_{k}\right\vert
^{2}\right)  dx\\
&  \leq\underset{k\rightarrow\infty}{\lim}\frac{1}{\beta_{k}}\int_{%
%TCIMACRO{\U{211d} }%
%BeginExpansion
\mathbb{R}
%EndExpansion
^{4}}\left(  \beta_{k}u_{k}^{2}\exp\left(  \beta_{k}u_{k}^{2}\right)
-\alpha\left\vert u_{k}\right\vert ^{2}\right)  dx\\
&  =\underset{k\rightarrow\infty}{\lim}\int_{%
%TCIMACRO{\U{211d} }%
%BeginExpansion
\mathbb{R}
%EndExpansion
^{4}}u_{k}^{2}\left(  \exp\left\{  \beta_{k}u_{k}^{2}\right\}  -\frac{\alpha
}{\beta_{k}}\right)  dx\\
&  =\underset{k\rightarrow\infty}{\lim}\lambda_{k}\rightarrow0,
\end{align*}
which is a contradiction.
\end{proof}

\bigskip Now, we introduce the following

\begin{definition}
We said that $\{u_{k}\}_{k}$ is a normalized vanishing sequence, if
$\{u_{k}\}_{k}$ satisfies $\Vert u_{k}\Vert_{H^{2}(\mathbb{R}^{4})}=1$,
$u_{k}\rightharpoonup0$ in $H^{2}(\mathbb{R}^{4})$ and
\[
\lim\limits_{R\rightarrow\infty}\lim\limits_{k\rightarrow\infty}\int_{B_{R}%
}\left(  \exp(\beta_{k}|u_{k}|^{2})-1-\alpha\left\vert u_{k}\right\vert
^{2}\right)  dx=0.
\]
\bigskip
\end{definition}

Extracting a subsequence and changing the sign of $u_{k}$, we can always take
a point $x_{k}\in%
%TCIMACRO{\U{211d} }%
%BeginExpansion
\mathbb{R}
%EndExpansion
^{4}$ such that
\[
c_{k}=\max\left\vert u_{k}\right\vert =u_{k}\left(  x_{k}\right)  .
\]
If $c_{k}$ is bounded from above, we have the following

\begin{lemma}
\label{lemx2} If $\sup_{k}c_{k}<+\infty$, then one of the following holds.

(i) $u\neq0$ and $S\left(  \alpha\right)  $ could be achieved by a radial
function $u\in H^{2}(\mathbb{R}^{4})$,

(ii) $u=0$ and $\{u_{k}\}$ is a normalized vanishing sequence, furthermore,
$S\left(  \alpha\right)  \leq32\pi^{2}-\alpha$.
\end{lemma}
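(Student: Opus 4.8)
The plan is to analyze the two alternatives separately, starting from the hypothesis $\sup_k c_k <+\infty$. First, observe that since $\{u_k\}$ is bounded in $L^\infty(\mathbb{R}^4)$ and, by Lemma~\ref{lamna}, $\lambda_k$ is bounded away from zero while also $\lambda_k = \int_{\mathbb{R}^4} u_k^2(\exp(\beta_k u_k^2) - \alpha/\beta_k)\,dx \le (\exp(\beta_k c_k^2)+|\alpha|/\beta_k)\int u_k^2 \le C$, the right-hand side of the Euler--Lagrange equation \eqref{euler} is bounded in $L^p_{loc}$ for every $p$; by the elliptic estimates (Lemma~\ref{local estimate}) the sequence $\{u_k\}$ is bounded in $C^{2,\gamma}_{loc}$, so after passing to a subsequence $u_k \to u$ in $C^2_{loc}(\mathbb{R}^4)$ and weakly in $H^2(\mathbb{R}^4)$, with $u$ radial. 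We then split into the cases $u \neq 0$ and $u = 0$.

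In the case $u \neq 0$, I would repeat verbatim the argument already used in the proof of Lemma~\ref{lem2}: one introduces the dilation $\tilde u(x) = u(x/\tau)$ with $\tau^4 = \lim_k \|u_k\|_2^2/\|u\|_2^2 \ge 1$ (by Fatou), checks that $\|\Delta \tilde u\|_2^2 = \|\Delta u\|_2^2 \le \liminf_k \|\Delta u_k\|_2^2$ and $\|\tilde u\|_2^2 = \lim_k \|u_k\|_2^2$, so that $\|\tilde u\|_{H^2} \le 1$, hence $\tilde u$ (normalized) is an admissible competitor. Using the now-available strong $L^q_{loc}$ convergence together with the Brezis--Lieb / uniform-integrability type control of the tails afforded by the boundedness of $c_k$ and the radial lemma decay, one passes to the limit in $\int_{\mathbb{R}^4}(\exp(\beta_k u_k^2) - 1 - \alpha u_k^2)\,dx$ and concludes, exactly as in \eqref{pro}, that the only consistent possibility is $\tau = 1$ and that $u$ (after renormalizing, which is automatic since $\tau=1$ forces $\|u\|_{H^2}=1$) attains $S(\alpha)$. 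This proves (i). It is worth noting that here one must be slightly careful: the convergence $\beta_k \to 32\pi^2$ means the nonlinearity itself is varying, but since $c_k$ is bounded the family $\exp(\beta_k u_k^2)$ is uniformly bounded in $L^\infty$, so dominated convergence handles the compact region and the radial lemma handles $\mathbb{R}^4 \setminus B_R$.

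In the case $u = 0$, I would first show $\{u_k\}$ is a normalized vanishing sequence. Since $c_k$ is bounded, on any fixed ball $B_R$ we have $\exp(\beta_k |u_k|^2) - 1 - \alpha |u_k|^2 \le C_R |u_k|^2$ with $C_R$ independent of $k$, and $u_k \to 0$ in $L^2(B_R)$, so $\int_{B_R}(\exp(\beta_k|u_k|^2)-1-\alpha|u_k|^2)\,dx \to 0$ as $k \to \infty$ for each $R$; taking $R \to \infty$ afterwards gives the vanishing condition, and $u_k \rightharpoonup 0$ in $H^2$ holds by construction. Then, to get the upper bound, I would write
\[
S(\alpha) = \lim_{R\to\infty}\lim_{k\to\infty}\Big(\int_{B_R} + \int_{\mathbb{R}^4\setminus B_R}\Big)\big(\exp(\beta_k|u_k|^2)-1-\alpha u_k^2\big)\,dx,
\]
where the $B_R$ piece tends to $0$ by the previous step, and on $\mathbb{R}^4 \setminus B_R$ one uses the radial lemma (pointwise decay of radial $H^2$ functions) to bound $\exp(\beta_k|u_k|^2) - 1 \le C|u_k|^4$ there, so that the exponential tail is controlled by $\int_{\mathbb{R}^4\setminus B_R}|u_k|^4\,dx \le C\sup_k\|u_k\|_{H^2}^2 R^{-2} \to 0$; what survives is only $(\beta_k - \alpha)\int_{\mathbb{R}^4\setminus B_R} u_k^2\,dx \le (\beta_k-\alpha)\|u_k\|_2^2 \le \beta_k - \alpha$, and letting $k \to \infty$ gives $S(\alpha) \le 32\pi^2 - \alpha$. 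This establishes (ii).

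The main obstacle is the bookkeeping of the three regions (compact ball, far-field annulus, and the transition) simultaneously with two limiting parameters $k \to \infty$ and $R \to \infty$ and the drifting nonlinearity $\beta_k \to 32\pi^2$: one must ensure that the order of limits is legitimate and that no energy escapes undetected in the "neck" region. The boundedness of $c_k$ is what makes this routine rather than delicate — it is precisely the hypothesis that defuses any possible concentration, so the only competing phenomenon is vanishing, which is exactly what alternative (ii) records. If instead one did not have $\sup_k c_k < \infty$, the blow-up analysis of Section~3 would be required; that is deferred and not part of this lemma.
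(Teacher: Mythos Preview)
Your proposal is correct and follows essentially the same argument as the paper: elliptic regularity from $\sup_k c_k < \infty$ to get $u_k \to u$ in $C^2_{loc}$ (the paper gets $C^3_{loc}$, but this is immaterial), the dilation $\tilde u(x) = u(x/\tau)$ with $\tau^4 = \lim_k \|u_k\|_2^2/\|u\|_2^2$ for case (i), and the radial-lemma tail control for case (ii). One small slip: on $\mathbb{R}^4 \setminus B_R$ the inequality you need is $\exp(\beta_k|u_k|^2) - 1 - \beta_k|u_k|^2 \le C|u_k|^4$ (not $\exp(\beta_k|u_k|^2)-1 \le C|u_k|^4$, which is false for small $u_k$), and this is precisely what leaves the surviving term $(\beta_k-\alpha)\int u_k^2\,dx$ that you then handle correctly.
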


\begin{proof}
If $\sup_{k}c_{k}<+\infty$, it follows from the standard elliptic estimates
(see Lemma \ref{local estimate}) that $u_{k}\rightarrow u$ in $C_{loc}%
^{3}(\mathbb{R}^{4})$. Then for any $R>0$, we have
\begin{equation}
\lim_{k\rightarrow\infty}\int_{B_{R}}\left(  \exp(\beta_{k}|u_{k}%
|^{2})-1-\alpha\left\vert u_{k}\right\vert ^{2}\right)  dx=\int_{B_{R}}\left(
\exp(32\pi^{2}|u|^{2})-1-\alpha\left\vert u\right\vert ^{2}\right)  dx.
\label{9}%
\end{equation}
On the other hand, according to the radial lemma, we derive that
\[%
\begin{split}
&  \lim_{k\rightarrow\infty}\int_{\mathbb{R}^{4}\setminus B_{R}}\left(
\exp(\beta_{k}|u_{k}|^{2})-1-\beta_{k}u_{k}^{2}\right)  dx\\
&  \ \ \leq c\lim_{k\rightarrow\infty}\int_{\mathbb{R}^{4}\setminus B_{R}%
}|u_{k}|^{4}dx\\
&  \ \ \leq c\sup_{k}\Vert u_{k}\Vert_{H^{2}(\mathbb{R}^{4})}^{2}R^{-2}.
\end{split}
\]
Similar as (\ref{1.1}), we have
\begin{align}
\lim_{k\rightarrow\infty}\int_{\mathbb{R}^{4}}\left(  \exp(\beta_{k}%
|u_{k}|^{2})-1-\alpha\left\vert u_{k}\right\vert ^{2}\right)  dx  &
=\int_{\mathbb{R}^{4}}\left(  \exp(32\pi^{2}|u|^{2})-1-\alpha\left\vert
u\right\vert ^{2}\right)  dx\nonumber\\
&  +\left(  32\pi^{2}-\alpha\right)  \lim_{k\rightarrow\infty}\int
_{\mathbb{R}^{4}}(u_{k}^{2}-u^{2})dx. \label{1.1a}%
\end{align}

When $u\neq0$, we set
\[
\tau^{4}=\lim_{k\rightarrow\infty}\frac{\int_{\mathbb{R}^{4}}u_{k}^{2}dx}%
{\int_{\mathbb{R}^{4}}u^{2}dx},
\]
\bigskip and let $\tilde{u}(x)=u(\frac{x}{\tau})$, as we did in (\ref{pro}),
we have
\[%
\begin{split}
S\left(  \alpha\right)   &  \geq\int_{\mathbb{R}^{4}}\left(  \exp(32\pi
^{2}\tilde{u}^{2})-1-\alpha\tilde{u}^{2}\right)  dx\\
&  =\tau^{4}\int_{\mathbb{R}^{4}}\left(  \exp(32\pi^{2}u^{2})-1-\alpha
u^{2}\right)  dx\\
&  \geq S\left(  \alpha\right)  +(\tau^{4}-1)\int_{\mathbb{R}^{4}}\left(
\exp(32\pi^{2}u^{2})-1-32\pi^{2}u^{2}\right)  dx.
\end{split}
\]
Since $\exp(32\pi^{2}u^{2})-1-32\pi^{2}u^{2}>0$, we have $\tau=1$, and then
\[
S\left(  \alpha\right)  =\int_{\mathbb{R}^{4}}\left(  \exp(32\pi^{2}%
u^{2})-1-\alpha u^{2}\right)  dx.
\]
So, $u$ is an extremal function.

When $u=0$, by (\ref{9}) we know $\{u_{k}\}$ is a normalized vanishing
sequence. Furthermore, by (\ref{1.1a}), we get
\[%
\begin{split}
S\left(  \alpha\right)   &  =\lim_{k\rightarrow\infty}\int_{\mathbb{R}^{4}%
}\left(  \exp(\beta_{k}|u_{k}|^{2})-1-\alpha u_{k}^{2}\right)  dx\\
&  =\lim_{R\rightarrow\infty}\lim_{k\rightarrow\infty}\left(  \beta_{k}%
-\alpha\right)  \int_{\mathbb{R}^{4}}u_{k}^{2}dx\leq32\pi^{2}-\alpha.
\end{split}
\]

\end{proof}

\bigskip In the following, we show that the second case of Lemma \ref{lemx2}
will not happen.

Setting
\[
d_{nv}=\sup_{u_{k}:(NVS)}\lim\limits_{k\rightarrow\infty}\int_{\mathbb{R}^{4}%
}\left(  \exp(\beta_{k}|u_{k}|^{2})-1-\alpha\left\vert u_{k}\right\vert
^{2}\right)  dx,
\]
we have the following

\begin{proposition}%
\[
d_{nv}=32\pi^{2}-\alpha.
\]

\end{proposition}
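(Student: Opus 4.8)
The plan is to prove the two inequalities $d_{nv} \le 32\pi^2 - \alpha$ and $d_{nv} \ge 32\pi^2 - \alpha$ separately. The upper bound is essentially already in hand: for any normalized vanishing sequence $\{u_k\}$, the computation carried out in \eqref{s1} (equivalently, the last display in the proof of Lemma \ref{lemx2} for the case $u=0$) shows
\[
\lim_{k\to\infty}\int_{\mathbb{R}^4}\bigl(\exp(\beta_k|u_k|^2)-1-\alpha|u_k|^2\bigr)\,dx
= \lim_{R\to\infty}\lim_{k\to\infty}(\beta_k-\alpha)\int_{\mathbb{R}^4\setminus B_R}u_k^2\,dx
\le 32\pi^2-\alpha,
\]
since $\beta_k \uparrow 32\pi^2$ and $\|u_k\|_{H^2}=1$ forces $\int_{\mathbb{R}^4}u_k^2\,dx \le 1$. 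Here one uses that on $B_R$ the integrand tends to $0$ (this is built into the definition of a normalized vanishing sequence together with the radial lemma estimate on $\mathbb{R}^4\setminus B_R$ to control the Taylor remainder $\exp(\beta_k u_k^2)-1-\beta_k u_k^2 \le c\,u_k^4$). Taking the supremum over all such sequences gives $d_{nv}\le 32\pi^2-\alpha$.

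For the lower bound I would exhibit an explicit family of concentrating (hence vanishing) test functions achieving the value $32\pi^2-\alpha$ in the limit. The natural choice is a scaling family built from a fixed profile: take $v\in H^2(\mathbb{R}^4)$, $v\not\equiv 0$, and set $v_t(x)=t^{1/2}v(t^{1/4}x)$ as in \eqref{s2}, normalized to $w_t = v_t/\|v_t\|_{H^2}$. As $t\to 0^+$ one has $\|\Delta v_t\|_2^2 = t\|\Delta v\|_2^2 \to 0$ while $\|v_t\|_2^2=\|v\|_2^2$ stays fixed, so $\|v_t\|_{H^2}^2 \to \|v\|_2^2$ and $w_t \to 0$ weakly in $H^2$ (indeed $\Delta w_t \to 0$ strongly in $L^2$, and $w_t$ spreads mass to infinity after rescaling — more precisely one should rescale so the support escapes to infinity, or equivalently translate, to guarantee the $B_R$-integral vanishes). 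Then from the elementary inequality $\exp(s)-1 \ge s + \tfrac{s^2}{2}$ for $s\ge 0$ one gets, exactly as in \eqref{s2},
\[
\int_{\mathbb{R}^4}\bigl(\exp(\beta_k w_t^2)-1-\alpha w_t^2\bigr)\,dx
\ge (\beta_k-\alpha)\frac{\|v\|_2^2}{t\|\Delta v\|_2^2+\|v\|_2^2}
+ \frac{\beta_k^2}{2}\frac{t\|v\|_4^4}{(t\|\Delta v\|_2^2+\|v\|_2^2)^2}.
\]
Letting first $k\to\infty$ and then $t\to 0^+$, the second term vanishes and the first term tends to $32\pi^2-\alpha$. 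Since a suitable reindexing of $t=t_k\to 0$ (slowly enough) produces a genuine normalized vanishing sequence along which the Adams functional converges to $32\pi^2-\alpha$, we conclude $d_{nv}\ge 32\pi^2-\alpha$.

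The main technical point — the only place requiring care rather than routine estimates — is verifying that the concentrating family really satisfies all three requirements in the definition of a normalized vanishing sequence simultaneously: $\|u_k\|_{H^2}=1$ (handled by the explicit normalization), $u_k\rightharpoonup 0$ in $H^2$, and $\lim_{R\to\infty}\lim_{k\to\infty}\int_{B_R}(\exp(\beta_k|u_k|^2)-1-\alpha|u_k|^2)\,dx=0$. The scaling $v_t(x)=t^{1/2}v(t^{1/4}x)$ by itself makes the function spread out (the mass moves to large $|x|$), so on any fixed ball $B_R$ the function $w_t$ becomes small in every $L^q$ norm as $t\to 0$, which kills the $B_R$-integral; and $\Delta w_t\to 0$ strongly in $L^2$ together with $L^q$-smallness on bounded sets yields weak convergence to $0$ in $H^2$. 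One must also diagonalize: choose $t_k\to 0$ after $\beta_k\to 32\pi^2$ so that the double limit structure in the definition of $d_{nv}$ is respected and the value $32\pi^2-\alpha$ is attained in the limit along the single index $k$. Combining the two inequalities completes the proof.
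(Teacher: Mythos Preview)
Your proposal is correct and follows essentially the same two-step strategy as the paper: the upper bound $d_{nv}\le 32\pi^2-\alpha$ is taken from the computation in Lemma~\ref{lemx2} (case $u=0$), and the lower bound is achieved by a spreading rescaled profile. In fact your scaling $v_t(x)=t^{1/2}v(t^{1/4}x)$ is, after the substitution $\rho_k=t^{1/4}$, exactly the paper's test sequence $\omega_k(x)=\rho_k^2\eta(\rho_k x)$ with $\rho_k\to 0$.

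The only genuine difference is in how the lower bound is extracted. The paper normalizes so that $\|\bar\omega_k\|_\infty\to 0$ and then uses the pointwise expansion $\exp(\beta_k\bar\omega_k^2)-1-\alpha\bar\omega_k^2=(\beta_k-\alpha)\bar\omega_k^2+O(\bar\omega_k^4)$ together with $\|\bar\omega_k\|_4^4\to 0$ to compute the limit \emph{exactly} as $32\pi^2-\alpha$. You instead use the elementary inequality $e^s-1\ge s+\tfrac{s^2}{2}$ to get only a lower bound tending to $32\pi^2-\alpha$, and then sandwich with the already-proved upper bound. Both routes are valid; the paper's is slightly cleaner because it avoids the sandwich and makes the diagonalization issue disappear (any $\rho_k\to 0$ works directly). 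Your remark about needing to ``diagonalize'' is in fact unnecessary: since $\|w_{t_k}\|_\infty\to 0$ for any $t_k\to 0$, the $B_R$-integral vanishes for each fixed $R$, and the lower bound $(\beta_k-\alpha)\|v\|_2^2/(t_k\|\Delta v\|_2^2+\|v\|_2^2)$ converges to $32\pi^2-\alpha$ regardless of the relative speeds of $\beta_k\to 32\pi^2$ and $t_k\to 0$. The parenthetical about translating to guarantee vanishing on $B_R$ can be dropped.
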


\begin{proof}
Recalling in the proof of Lemma \ref{lemx2}, we have verified that if
$\{u_{k}\}_{k}$ is a radially symmetric normalized vanishing sequence, then
\[
\lim_{k\rightarrow\infty}\int_{\mathbb{R}^{4}}\left(  \exp(\beta_{k}%
|u_{k}|^{2})-1-\alpha|u_{k}|^{2}\right)  dx\leq32\pi^{2}-\alpha,
\]
that is, $d_{nv}\leq32\pi^{2}-\alpha$. Next, we show that there exists a
radially symmetric normalized vanishing sequence $\{v_{k}\}$ such that
\[
\lim_{k\rightarrow\infty}\int_{\mathbb{R}^{4}}\left(  \exp(\beta_{k}%
|v_{k}|^{2})-1-\alpha|v_{k}|^{2}\right)  dx=32\pi^{2}-\alpha.
\]
Picking a smooth radially symmetric function $\eta$ satisfying $\Vert
\Delta\eta\Vert_{2}=\Vert\eta\Vert_{2}=1$ with a compact support. Let
$\omega_{k}$ be a function defined by $\omega_{k}(x)=\rho_{k}^{2}\eta(\rho
_{k}x)$ for $\rho_{k}>0$, it is easy to check that
\[
\Vert\Delta\omega_{k}\Vert_{2}=\rho_{k}^{2}\text{ and }\Vert\omega_{k}%
\Vert_{2}=1.
\]
Setting $\bar{\omega}_{k}=\frac{\omega_{k}}{(1+\rho_{k}^{4})^{\frac{1}{2}}}$
and letting $\lim\limits_{k\rightarrow\infty}\rho_{k}=0$, we can verify
that $\Vert\bar{\omega}_{k}\Vert_{H^{2}\left(  \mathbb{R}^{4}\right)  }=1$,
$\bar{\omega}_{k}\rightarrow0$ in $L_{loc}^{2}(\mathbb{R}^{4})$ and
\[
\lim\limits_{k\rightarrow\infty}\Vert\Delta\bar{\omega}_{k}\Vert_{2}%
=0,\lim\limits_{k\rightarrow\infty}\Vert\bar{\omega}_{k}\Vert_{2}=1.
\]
Hence, $\{\bar{\omega}_{k}\}_{k}$ is a radially symmetric normalized vanishing
sequence. Through the radial lemma and the definition of the normalized
vanishing sequence, we have
\[%
\begin{split}
&  \lim_{k\rightarrow\infty}\int_{\mathbb{R}^{4}}\left(  \exp(\beta_{k}%
|\bar{\omega}_{k}|^{2})-1-\alpha|\bar{\omega}_{k}|^{2}\right)  dx\\
&  =\lim_{k\rightarrow\infty}\int_{\mathbb{R}^{4}\setminus B_{R}}\left(
\exp(\beta_{k}|\bar{\omega}_{k}|^{2})-1-\alpha|\bar{\omega}_{k}|^{2}\right)
dx\\
&  =\lim_{k\rightarrow\infty}\int_{\mathbb{R}^{4}\setminus B_{R}}\left(
\beta_{k}-\alpha\right)  \bar{\omega}_{k}^{2}dx\\
&  =\lim_{k\rightarrow\infty}\int_{\mathbb{R}^{4}}\left(  \beta_{k}%
-\alpha\right)  \bar{\omega}_{k}^{2}dx=32\pi^{2}-\alpha,
\end{split}
\]
which completes the proof.
\end{proof}

\begin{proposition}
It holds that $S>d_{nv}$.
\end{proposition}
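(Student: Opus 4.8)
Since the preceding proposition identifies $d_{nv}=32\pi^{2}-\alpha$, it suffices to produce a single admissible competitor $w\in H$ with $\int_{\mathbb{R}^{4}}\big(\exp(32\pi^{2}w^{2})-1-\alpha w^{2}\big)\,dx>32\pi^{2}-\alpha$. The plan is to recycle, essentially verbatim, the scaling test function already used in the last part of the proof of Lemma \ref{lem2}, now taken at the critical level $\beta=32\pi^{2}$, and to invoke the hypothesis $32\pi^{2}-\alpha<\frac{(32\pi^{2})^{2}B_{2}}{2}$.

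First I would let $Q\in H^{2}(\mathbb{R}^{4})$ be an optimizer for the quotient in \eqref{GNBN} (which exists and, after rescaling, solves \eqref{G-N Eq}), and set $v_{t}(x)=t^{1/2}Q(t^{1/4}x)$ for $t>0$. As recorded before, $\|\Delta v_{t}\|_{2}^{2}=t\|\Delta Q\|_{2}^{2}$ and $\|v_{t}\|_{p}^{p}=t^{(p-2)/2}\|Q\|_{p}^{p}$; in particular $\|v_{t}\|_{2}=\|Q\|_{2}$ does not depend on $t$. Then I would normalize $w_{t}=v_{t}/\|v_{t}\|_{H^{2}(\mathbb{R}^{4})}$, so that $w_{t}\in H$, and apply the elementary pointwise bound $e^{x}\geq 1+x+\tfrac{x^{2}}{2}$ for $x\geq 0$, exactly as in \eqref{s2}, to obtain
\[
\int_{\mathbb{R}^{4}}\big(\exp(32\pi^{2}w_{t}^{2})-1-\alpha w_{t}^{2}\big)\,dx\ \geq\ (32\pi^{2}-\alpha)\,g_{Q}(t),
\]
where
\[
g_{Q}(t)=\frac{\|Q\|_{2}^{2}}{t\|\Delta Q\|_{2}^{2}+\|Q\|_{2}^{2}}+\frac{(32\pi^{2})^{2}}{2(32\pi^{2}-\alpha)}\cdot\frac{t\|Q\|_{4}^{4}}{\big(t\|\Delta Q\|_{2}^{2}+\|Q\|_{2}^{2}\big)^{2}},
\]
and the factor $32\pi^{2}-\alpha$ is positive because $\alpha<32\pi^{2}$.

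Next I would observe that $g_{Q}(0)=1$ and compute, as in \eqref{c1},
\[
g_{Q}'(0)=\frac{\|\Delta Q\|_{2}^{2}}{\|Q\|_{2}^{2}}\Big(-1+\frac{(32\pi^{2})^{2}}{2(32\pi^{2}-\alpha)}\,B_{2}\Big),
\]
using $\|Q\|_{4}^{4}/(\|\Delta Q\|_{2}^{2}\|Q\|_{2}^{2})=B_{2}$. The hypothesis $32\pi^{2}-\alpha<\tfrac{(32\pi^{2})^{2}B_{2}}{2}$ is precisely the assertion that the parenthesis is strictly positive, so $g_{Q}'(0)>0$, and hence $g_{Q}(t)>g_{Q}(0)=1$ for all sufficiently small $t>0$. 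Fixing such a $t$ and using $S=S(\alpha)\geq\int_{\mathbb{R}^{4}}\big(\exp(32\pi^{2}w_{t}^{2})-1-\alpha w_{t}^{2}\big)\,dx\geq(32\pi^{2}-\alpha)g_{Q}(t)>32\pi^{2}-\alpha=d_{nv}$ concludes the argument.

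The only points requiring care are bookkeeping ones: that $w_{t}$ genuinely lies in $H$ (immediate, since $Q\in H^{2}(\mathbb{R}^{4})$ and $\|w_{t}\|_{H^{2}}=1$) and that the displayed integral is a legitimate lower bound for $S(\alpha)$ (immediate from the definition of $S(\alpha)$ as a supremum over the unit sphere of $H^{2}$). I do not expect a genuine analytic obstacle here: the whole content of the statement is the sharp Gagliardo--Nirenberg inequality \eqref{GNBN} together with the algebraic sign condition $g_{Q}'(0)>0$, which is exactly the reason the threshold $\tfrac{(32\pi^{2})^{2}B_{2}}{2}$ enters, and the proof should amount to faithfully reproducing the computation \eqref{s2}--\eqref{c1} at $\beta=32\pi^{2}$.
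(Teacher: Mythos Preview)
Your proof is correct and follows essentially the same approach as the paper: the paper also takes the scaling family $v_t(x)=t^{1/2}v(t^{1/4}x)$, bounds the functional below by $(32\pi^{2}-\alpha)g_v(t)$, and then checks that $g_Q'(0)>0$ for the Gagliardo--Nirenberg optimizer $Q$ under the hypothesis $32\pi^{2}-\alpha<\tfrac{(32\pi^{2})^{2}B_{2}}{2}$. Your write-up is in fact slightly more explicit than the paper's, which simply refers back to the computation in Lemma~\ref{lem2}.
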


\begin{proof}
For any $v\in H^{2}(\mathbb{R}^{4})$ and $t>0$, we introduce a family of
functions $v_{t}$ by
\[
v_{t}(x)=t^{\frac{1}{2}}v(t^{\frac{1}{4}}x).
\]
then
\[
\Vert\Delta v_{t}\Vert_{2}^{2}=t\Vert\Delta v\Vert_{2}^{2},\Vert v_{t}%
\Vert_{p}^{p}=t^{\frac{p-2}{2}}\Vert v\Vert_{p}^{p}%
\]
for any $p\geq2$. Similar as that in Lemma \ref{lem2}, we can get%
\[%
\begin{split}
&  \int_{\mathbb{R}^{4}}\left(  \exp\left(  32\pi^{2}\left(  \frac{v_{t}%
}{\Vert v_{t}\Vert_{H^{2}(\mathbb{R}^{4})}}\right)  ^{2}\right)
-1-\alpha\left(  \frac{v_{t}}{\Vert v_{t}\Vert_{H^{2}(\mathbb{R}^{4})}%
}\right)  ^{2}\right)  dx\\
&  \geq\left(  32\pi^{2}-\alpha\right)  g_{v}(t),
\end{split}
\]
where
\[
g_{v}(t)=\left(  \frac{\Vert v\Vert_{2}^{2}}{t\Vert\Delta v\Vert_{2}^{2}+\Vert
v\Vert_{2}^{2}}+\frac{(32\pi^{2})^2}{2\left(  32\pi^{2}-\alpha\right)  }%
\frac{t\Vert v\Vert_{4}^{4}}{(t\Vert\Delta v\Vert_{2}^{2}+\Vert v\Vert_{2}%
^{2})^{2}}\right)  .
\]
Furthermore, one can show that $g_{Q}^{\prime}(t)>0$\ ($Q$ is the ground
state\ solution of (\ref{G-N Eq})) for small $t>0$, provided $32\pi^{2}%
-\alpha<\frac{\left(  32\pi^{2}\right)  ^{2}B_{2}}{2}$, which implies that
$S>d_{nv}$.
\end{proof}

\section{Blow up analysis}

In this section, we are interested the blow-up case, that is,
\begin{equation}
c_{k}\rightarrow+\infty, \label{blowup}%
\end{equation}
the method of blow-up analysis will be used to analyze the asymptotic behavior
of the radially maximizing sequence $\left\{  u_{k}\right\}  _{k} $. By the
radial lemma, we have $x_{k}\rightarrow0\in%
%TCIMACRO{\U{211d} }%
%BeginExpansion
\mathbb{R}
%EndExpansion
^{4}$. We call $0$ the blow-up point. Here and in the sequel, we do not
distinguish sequence and subsequence, the reader can understand it from the context.

Since $u_{k}$ is bounded in $H_{r}^{2}\left(
%TCIMACRO{\U{211d} }%
%BeginExpansion
\mathbb{R}
%EndExpansion
^{4}\right)  $, we have%

\begin{equation}
\left\{
\begin{array}
[c]{l}%
u_{k}\rightharpoonup u\text{ weakly in }H_{r}^{2}\left(
%TCIMACRO{\U{211d} }%
%BeginExpansion
\mathbb{R}
%EndExpansion
^{4}\right) \\
u_{k}\rightarrow u\text{ in }L^{s}\left(
%TCIMACRO{\U{211d} }%
%BeginExpansion
\mathbb{R}
%EndExpansion
^{4}\right)  \text{, }\forall s>2\\
\beta_{k}\rightarrow32\pi^{2}\text{. }%
\end{array}
\right.  \label{convergence}%
\end{equation}

\subsection{\bigskip Asymptotic behavior of $\left\{  u_{k}\right\}  _{k}$
near the $0$}

Let
\[
r_{k}^{4}=\frac{\lambda_{k}}{c_{k}^{2}e^{\beta_{k}c_{k}^{2}}}.
\]
We claim that $r_{k}^{4}$ converges to zero rapidly. Indeed we have for any
$\gamma<32\pi^{2}$,%

\begin{align}
r_{k}^{4}c_{k}^{2}e^{\gamma c_{k}^{2}}  &  =e^{\left(  \gamma-\beta
_{k}\right)  c_{k}^{2}}\int_{%
%TCIMACRO{\U{211d} }%
%BeginExpansion
\mathbb{R}
%EndExpansion
^{4}}u_{k}^{2}(\exp\left(  \beta_{k}u_{k}^{2}\right)-\frac{\alpha}{\beta_{k}})  dx\nonumber\\
&  \leq\int_{%
%TCIMACRO{\U{211d} }%
%BeginExpansion
\mathbb{R}
%EndExpansion
^{4}}u_{k}^{2}\exp\left(  \beta_{k}u_{k}^{2}\right)  \exp\left(  \left(
\gamma-\beta_{k}\right)  u_{k}^{2}\right)  dx\nonumber\\
&  =\int_{%
%TCIMACRO{\U{211d} }%
%BeginExpansion
\mathbb{R}
%EndExpansion
^{4}}u_{k}^{2}\exp\left(  \gamma u_{k}^{2}\right)  dx\nonumber\\
&  =\int_{%
%TCIMACRO{\U{211d} }%
%BeginExpansion
\mathbb{R}
%EndExpansion
^{4}}u_{k}^{2}\left(  \exp\left(  \gamma u_{k}^{2}\right)  -1\right)
dx+\int_{%
%TCIMACRO{\U{211d} }%
%BeginExpansion
\mathbb{R}
%EndExpansion
^{4}}u_{k}^{2}dx\nonumber\\
&  \leq\left(  \int_{%
%TCIMACRO{\U{211d} }%
%BeginExpansion
\mathbb{R}
%EndExpansion
^{4}}u_{k}^{s}dx\right)  ^{\frac{2}{s}}\left(  \int_{%
%TCIMACRO{\U{211d} }%
%BeginExpansion
\mathbb{R}
%EndExpansion
^{4}}\left(  \exp\left(  \gamma u_{k}^{2}\right)  -1\right)  ^{\frac{s}{s-2}%
}dx\right)  ^{\frac{s-2}{s}}+\int_{%
%TCIMACRO{\U{211d} }%
%BeginExpansion
\mathbb{R}
%EndExpansion
^{4}}u_{k}^{2}dx\nonumber\\
&  \leq c\left(  \int_{%
%TCIMACRO{\U{211d} }%
%BeginExpansion
\mathbb{R}
%EndExpansion
^{4}}u_{k}^{s}dx\right)  ^{\frac{2}{s}}\left(  \int_{%
%TCIMACRO{\U{211d} }%
%BeginExpansion
\mathbb{R}
%EndExpansion
^{4}}\left(  \exp\left(  \frac{\gamma s}{s-2}u_{k}^{2}\right)  -1\right)
dx\right)  ^{\frac{s-2}{s}}+\int_{%
%TCIMACRO{\U{211d} }%
%BeginExpansion
\mathbb{R}
%EndExpansion
^{4}}u_{k}^{2}dx\nonumber\\
&  \leq c \label{decay for rk}%
\end{align}
provided $s$ large enough, here we have used the Adams inequality in $%
%TCIMACRO{\U{211d} }%
%BeginExpansion
\mathbb{R}
%EndExpansion
^{4}$ and (\ref{convergence}).

To understand the asymptotic behavior of $u_{k}$ near the blow-up point, we
define three sequences of functions on $%
%TCIMACRO{\U{211d} }%
%BeginExpansion
\mathbb{R}
%EndExpansion
^{4}$, namely
\[
\left\{
\begin{array}
[c]{c}%
\phi_{k}\left(  x\right)  =\frac{u_{k}\left(  x_{k}+r_{k}x\right)  }{c_{k}},\\
v_{k}\left(  x\right)  =u_{k}\left(  x_{k}+r_{k}x\right)  -u_{k}\left(
x_{k}\right)  ,\\
\psi_{k}\left(  x\right)  =c_{k}\left(  u_{k}\left(  x_{k}+r_{k}x\right)
-c_{k}\right)  ,
\end{array}
\right.
\]
where $\phi_{k}$, $v_{k}$ and $\psi_{k}$ are defined on $\Omega_{k}:=\left\{
x\in%
%TCIMACRO{\U{211d} }%
%BeginExpansion
\mathbb{R}
%EndExpansion
^{4}:r_{k}x\in B_{1}\right\}  $.

\begin{lemma}
\label{convergence for fei}\bigskip$\phi_{k}\left(  x\right)  \rightarrow1$ in
$C_{loc}^{3}\left(
%TCIMACRO{\U{211d} }%
%BeginExpansion
\mathbb{R}
%EndExpansion
^{4}\right)  $.
\end{lemma}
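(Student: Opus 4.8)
The goal is to show that the rescaled sequence $\phi_k(x)=u_k(x_k+r_kx)/c_k$ converges to the constant function $1$ in $C^3_{loc}(\mathbb{R}^4)$. The first step is to record the basic properties of $\phi_k$ coming from the definitions: since $c_k=\max|u_k|=u_k(x_k)$, we have $|\phi_k|\le 1$ on $\Omega_k$ and $\phi_k(0)=1$; moreover $\Omega_k=\{x:r_kx\in B_1\}$ exhausts $\mathbb{R}^4$ because $r_k\to 0$ (indeed $r_k^4\to 0$ by \eqref{decay for rk}, using $c_k\to\infty$). Next I would transform the Euler--Lagrange equation \eqref{euler} under the rescaling. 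Writing $\Delta^2 u_k + u_k = \lambda_k^{-1}u_k(\exp(\beta_k u_k^2)-\alpha/\beta_k)$ and substituting $x\mapsto x_k+r_kx$, and using the scaling $r_k^4=\lambda_k/(c_k^2 e^{\beta_k c_k^2})$, one gets an equation of the schematic form
\[
\Delta^2\phi_k + r_k^4\,\phi_k \;=\; \phi_k\,\exp\!\big(\beta_k c_k^2(\phi_k^2-1)\big)\;-\;\frac{\alpha r_k^4}{\beta_k c_k^2}\,\phi_k
\]
on $\Omega_k$, where I have divided the equation by $c_k/r_k^4$ and used $\beta_k u_k^2 = \beta_k c_k^2\phi_k^2$.

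\textbf{Uniform bound on the right-hand side.} The crucial observation is that the right-hand side of the rescaled equation is uniformly bounded on compact subsets of $\mathbb{R}^4$. Since $|\phi_k|\le 1$, the exponent $\beta_k c_k^2(\phi_k^2-1)\le 0$, so $0\le \phi_k^2\exp(\beta_k c_k^2(\phi_k^2-1))\le 1$; together with $r_k^4\to 0$ the whole right-hand side is bounded by a constant independent of $k$, on all of $\Omega_k$. Hence on any fixed ball $B_R$ (which is contained in $\Omega_k$ for $k$ large) we have $\|\Delta^2\phi_k\|_{L^\infty(B_R)}\le C_R$. By the elliptic estimates for $\Delta^2$ (Lemma \ref{local estimate} in the paper, i.e.\ the standard $L^p$ and Schauder theory for the bi-Laplacian), combined with the uniform bound $\|\phi_k\|_{L^\infty}\le 1$, we get that $\phi_k$ is bounded in $C^{3,\gamma}_{loc}(\mathbb{R}^4)$ for some $\gamma\in(0,1)$ (one bootstraps: $\Delta^2\phi_k\in L^\infty_{loc}$ and $\phi_k\in L^\infty$ give $\phi_k\in W^{4,p}_{loc}$ for all $p<\infty$, hence $\phi_k\in C^{3,\gamma}_{loc}$). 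By Arzel\`a--Ascoli and a diagonal argument over an exhaustion $B_1\subset B_2\subset\cdots$, a subsequence of $\phi_k$ converges in $C^3_{loc}(\mathbb{R}^4)$ to some $\phi\in C^3(\mathbb{R}^4)$ with $|\phi|\le 1$ and $\phi(0)=1$.

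\textbf{Identifying the limit.} It remains to show $\phi\equiv 1$. Passing to the limit in the rescaled equation: the term $r_k^4\phi_k\to 0$ and $\frac{\alpha r_k^4}{\beta_k c_k^2}\phi_k\to 0$, so $\phi$ satisfies $\Delta^2\phi = \lim_k \phi_k\exp(\beta_k c_k^2(\phi_k^2-1))$ in the distributional (hence classical) sense. On the set where $\phi^2<1$ we have $\phi_k^2<1-\delta$ locally for $k$ large, so $\beta_k c_k^2(\phi_k^2-1)\to-\infty$ and the right-hand side tends to $0$; thus $\Delta^2\phi=0$ there. A cleaner route, which I expect to be the one the paper takes, is to integrate: from \eqref{euler} and Lemma \ref{lamna} (i.e.\ $\inf_k\lambda_k>0$) one shows $c_k\int_{B_{Rr_k}}|\Delta u_k|\,dx\le C(Rr_k)^2$ — this is precisely the local gradient estimate \eqref{es for gra} flagged in the introduction — which translates to a uniform bound on $\int_{B_R}|\Delta\phi_k|\,dx$; combined with $\phi_k(0)=1$, $|\phi_k|\le 1$ and the equation, one deduces $\int_{\mathbb{R}^4}\phi\,\exp(\cdots)$-type quantities are finite and forces $\phi$ to be a bounded biharmonic function on $\mathbb{R}^4$ that attains its maximum $1$ at the interior point $0$. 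A bounded function with $\Delta^2\phi=$ (nonnegative limit) and $\phi\le\phi(0)=1$: applying the maximum principle for $\Delta^2$ with the interior maximum, or using a Liouville-type argument (a biharmonic function on $\mathbb{R}^4$ bounded above is constant once one also controls $\Delta\phi$ via the integral bound), gives $\phi\equiv 1$. Since every subsequence has a further subsequence converging to the same limit $1$, the full sequence converges: $\phi_k\to 1$ in $C^3_{loc}(\mathbb{R}^4)$.

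\textbf{Main obstacle.} The delicate point is not the elliptic bootstrap but justifying the Liouville/maximum-principle step for $\Delta^2$: unlike the Laplacian, a bounded biharmonic function on $\mathbb{R}^4$ need not be constant (e.g.\ bounded harmonic functions are constant, but $\Delta\phi$ could be a nonconstant harmonic function), so one genuinely needs the extra integral control on $\Delta\phi_k$ coming from \eqref{es for gra}/Lemma \ref{lamna} to pin down that the limiting $\Delta\phi$ vanishes and hence $\phi\equiv 1$. Making \eqref{es for gra} rigorous on the unbounded domain — via the biharmonic truncation of $u_k$ and an adaptation of the representation-formula argument — is the real content, as the authors emphasize in the introduction.
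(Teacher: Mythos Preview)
Your rescaled equation contains an algebraic slip that makes the proof look much harder than it is. Dividing \eqref{euler} by $c_k r_k^{-4}$ and using $r_k^4\lambda_k^{-1}=c_k^{-2}e^{-\beta_k c_k^2}$ gives
\[
\Delta^2\phi_k + r_k^4\phi_k \;=\; \frac{1}{c_k^{2}}\,\phi_k\,\exp\!\big(\beta_k c_k^2(\phi_k^2-1)\big)\;-\;\frac{\alpha}{\beta_k}\,\frac{r_k^4}{\lambda_k}\,\phi_k ,
\]
so the dominant term on the right carries an extra factor $c_k^{-2}$ that you dropped. Since $|\phi_k|\le 1$ forces $\beta_k c_k^2(\phi_k^2-1)\le 0$, the entire right-hand side is bounded by $C c_k^{-2}\to 0$ uniformly on each $B_R$. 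This is exactly what the paper computes: $|\Delta^2\phi_k|\le c\,c_k^{-2}(1+o(1))\to 0$. After the elliptic bootstrap and Arzel\`a--Ascoli (which you did correctly), the limit $\phi$ therefore satisfies $\Delta^2\phi=0$ on all of $\mathbb{R}^4$ with $|\phi|\le 1$.

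Your ``main obstacle'' is then a non-issue, and in fact your claim that a bounded biharmonic function on $\mathbb{R}^4$ need not be constant is false. By the polyharmonic Liouville theorem (Lemma~\ref{Liouville} in the paper), a solution of $\Delta^2\phi=0$ with $\phi(x)\lesssim 1+|x|^0$ is a polynomial of degree at most $2$; a bounded polynomial is constant, so $\phi\equiv\phi(0)=1$. This is precisely the paper's argument. The gradient estimate \eqref{es for gra} and the biharmonic truncation machinery are \emph{not} used here at all --- you have conflated this lemma with the genuinely delicate analysis of $\psi_k$ in Lemma~\ref{class}, where the bound $\int_{B_R}|\Delta\psi_k|\,dx\le CR^2$ is indeed essential to rule out the non-standard solutions of the limiting Liouville equation. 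For $\phi_k$ the story is much simpler: the right-hand side already vanishes in the limit.
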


\begin{proof}
From equation (\ref{euler}), the decay estimate of $r_{k}$ and the fact that
$\phi_{k}\leq1$, we know that for any $R>0$, and $x\in B_{R}\left(  0\right)
$, $\phi_{k}\left(  x\right)  $ satisfy%
\begin{align*}
\left\vert \Delta^{2}\phi_{k}\left(  x\right)  \right\vert  &  =\left\vert
\frac{r_{k}^{4}}{c_{k}}\left(  \Delta^{2}u_{k}\right)  \left(  x_{k}%
+r_{k}x\right)  \right\vert \\
&  =\left\vert r_{k}^{4}\left(  \lambda_{k}^{-1}\phi_{k}\exp\left\{  \beta
_{k}u_{k}^{2}\left(  x_{k}+r_{k}x\right)  \right\}  -\left(  1+\frac{\alpha
}{\lambda_{k}\beta_{k}}\right)  \phi_{k}\right)  \right\vert \\
&  \leq\left\vert r_{k}^{4}\left(  \lambda_{k}^{-1}\phi_{k}\exp\left\{
\beta_{k}c_{k}^{2}\right\}  -\left(  1+\frac{\alpha}{\beta_{k}}\lambda
_{k}^{-1}\right)  \phi_{k}\right)  \right\vert \\
&  \leq cc_{k}^{-2}\left(  1+o\left(  1\right)  \right)  \rightarrow0,
\end{align*}
and $\phi_{k}\left(  x\right) $ is bounded in $L_{loc}^{1}(\mathbb{R}^{4})$. The standard
regularity theory gives for any $R>0$ and some $0<\gamma<1$, $\Vert\phi_{k}\left(  x\right)
\Vert_{C^{3,\gamma}(B_{R}(0))}$ are uniformly bounded with the respect to $k$.
Through the Arzela-Ascoli theorem, there exists $\phi\in C^{3}(\mathbb{R}%
^{4})$ such that $\phi_{k}\left(  x\right)  \rightarrow\phi$ in $C_{loc}^{3}\left(
%TCIMACRO{\U{211d} }%
%BeginExpansion
\mathbb{R}
%EndExpansion
^{4}\right)  $ with $\triangle^2\phi=0$ in $%
%TCIMACRO{\U{211d} }%
%BeginExpansion
\mathbb{R}
%EndExpansion
^{4}$. Since $\phi_{k}\left(  0\right)  =1$, we have $\phi=1$ in $%
%TCIMACRO{\U{211d} }%
%BeginExpansion
\mathbb{R}
%EndExpansion
^{4}$ by the Liouville Theorem.
\end{proof}

\begin{lemma}
\label{guji1} We have
\[
v_{k}(x)=u_{k}(x_{k}+r_{k}x)-u_{k}(x_{k})\rightarrow0\ \ \mathrm{in}%
\ C_{loc}^{3}(\mathbb{R}^{4})\
\]
as $k\rightarrow\infty$. Hence
\[
|\nabla^{i}u_{k}(x)|=o(r_{k}^{-i})\text{ in\ }B_{Rr_{k}}\text{,}%
i=1,2,3\text{,}%
\]
for any $R>0.$
\end{lemma}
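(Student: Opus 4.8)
The plan is to rescale everything around the concentration point $x_{k}$ and show that $v_{k}$ coincides, up to a vanishing error, with a radial bi-harmonic function which the constraints it inherits force to be identically zero; the pointwise bound on $\nabla^{i}u_{k}$ then follows by a change of variables. I begin by rewriting the Euler--Lagrange equation (\ref{euler}) on the scale $r_{k}$. Put $w_{k}(x)=u_{k}(x_{k}+r_{k}x)$, so that $v_{k}=w_{k}-c_{k}$, $\Delta^{2}v_{k}=\Delta^{2}w_{k}$ and $\phi_{k}=w_{k}/c_{k}$. Rescaling (\ref{euler}) and using the identity $r_{k}^{4}\lambda_{k}^{-1}=c_{k}^{-2}e^{-\beta_{k}c_{k}^{2}}$ gives
\[
\Delta^{2}v_{k}=\frac{\phi_{k}}{c_{k}}\,e^{\beta_{k}(w_{k}^{2}-c_{k}^{2})}-r_{k}^{4}\Big(1+\frac{\alpha}{\lambda_{k}\beta_{k}}\Big)w_{k}\qquad\text{on }\Omega_{k}.
\]
Since $c_{k}=\max|u_{k}|=u_{k}(x_{k})$, we have $|w_{k}|\le c_{k}$ and $v_{k}\le0$ on $\Omega_{k}$; hence $w_{k}^{2}-c_{k}^{2}=v_{k}(w_{k}+c_{k})\le0$ and $e^{\beta_{k}(w_{k}^{2}-c_{k}^{2})}\le1$. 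Combined with $\inf_{k}\lambda_{k}>0$ (Lemma \ref{lamna}) and the rapid decay $r_{k}^{4}c_{k}^{2}\to0$ coming from (\ref{decay for rk}) with a small $\gamma>0$, this produces the \emph{global} estimate $\|\Delta^{2}v_{k}\|_{L^{\infty}(\Omega_{k})}\le c\,c_{k}^{-1}\to0$. Moreover $v_{k}(0)=0$, $\nabla v_{k}(0)=0$, and $\Delta v_{k}(0)=r_{k}^{2}\Delta u_{k}(x_{k})\le0$, because $x_{k}$ is an interior maximum point of $u_{k}$.

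Because $v_{k}$ is radial and $\Delta^{2}v_{k}\to0$ uniformly, on each $B_{R}$ I would split $v_{k}=\tfrac12\mu_{k}|x|^{2}+p_{k}$, where $\mu_{k}:=v_{k}''(0)$, the term $\tfrac12\mu_{k}|x|^{2}$ is the radial bi-harmonic function carrying the Cauchy data of $v_{k}$ at the origin (smoothness forces the odd radial derivatives to vanish there and $v_{k}(0)=0$), and $p_{k}$ solves $\Delta^{2}p_{k}=\Delta^{2}v_{k}$ with vanishing Cauchy data at $0$. Interior elliptic estimates for $\Delta^{2}$ (Lemma \ref{local estimate} and the Appendix) give $\|p_{k}\|_{C^{3}(B_{R})}\to0$ and in fact $c_{k}\|p_{k}\|_{C^{3}(B_{R})}\le C(R)$. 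The sign $v_{k}\le0$ then forces $\limsup_{k}\mu_{k}\le0$, and it remains to show $\mu_{k}\to0$. Suppose, along a subsequence, $\mu_{k}\to\mu_{\infty}<0$ (possibly $-\infty$); then $\psi_{k}=c_{k}v_{k}=\tfrac12 c_{k}\mu_{k}|x|^{2}+O_{R}(1)\to-\infty$ locally uniformly away from $0$. Rescaling the integral defining $\lambda_{k}$ and using $\phi_{k}\to1$ on compacta,
\[
\int_{B_{Rr_{k}}(x_{k})}u_{k}^{2}\,e^{\beta_{k}u_{k}^{2}}\,dx=\lambda_{k}\int_{B_{R}}\phi_{k}^{2}\,e^{\beta_{k}\psi_{k}(\phi_{k}+1)}\,dy,
\]
whose right-hand side tends to $0$ for every fixed $R$ by dominated convergence (the integrand is $\le1$ and $\to0$ a.e.). On the other hand $\int_{\mathbb{R}^{4}}u_{k}^{2}e^{\beta_{k}u_{k}^{2}}\,dx$ is bounded away from $0$ — otherwise $\int u_{k}^{2}\,dx\to0$ (since $e^{\beta_{k}u_{k}^{2}}\ge1$) and hence $\lambda_{k}\to0$, contradicting Lemma \ref{lamna} — and in the blow-up regime (\ref{blowup}) this weighted mass concentrates at the scale $r_{k}$ near $x_{k}$, which is the contradiction.

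Hence $\mu_{k}\to0$, so $v_{k}\to0$ uniformly on compacta, and the rescaled equation from the first paragraph together with interior elliptic estimates upgrades this to $v_{k}\to0$ in $C^{3}_{loc}(\mathbb{R}^{4})$. The pointwise statement then follows immediately: for $i=1,2,3$ and $x\in B_{Rr_{k}}$ one has, by the chain rule, $|\nabla^{i}u_{k}(x)|=r_{k}^{-i}\,|(\nabla^{i}v_{k})(r_{k}^{-1}(x-x_{k}))|\le r_{k}^{-i}\|\nabla^{i}v_{k}\|_{L^{\infty}(B_{R})}=o(r_{k}^{-i})$, since $\nabla^{i}v_{k}=\nabla^{i}w_{k}$ for $i\ge1$ (the additive constant $c_{k}$ is killed).

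The hard part is the last point above: excluding the \emph{spreading} profile $v_{k}\to\tfrac12\mu_{\infty}|x|^{2}$ with $\mu_{\infty}<0$. Concretely, this requires knowing that once blow-up occurs the weighted mass $\int u_{k}^{2}\exp(\beta_{k}u_{k}^{2})\,dx$ genuinely concentrates at the scale $r_{k}$ around $x_{k}$ (equivalently, that the weak limit of $u_{k}$ vanishes and the concentrated part of $\lambda_{k}$ is nontrivial); this is not contained in Lemma \ref{convergence for fei} and Lemma \ref{lamna} by themselves and has to be extracted from the variational nature of $u_{k}$ together with the fact — established earlier in the paper — that the maximizing sequence does not vanish. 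A direct attempt to bound $v_{k}$ from below via the bi-harmonic Green representation on $\Omega_{k}$ is of no help here, precisely because $\Omega_{k}$ expands too rapidly for the ``zero boundary data'' part of such an estimate to be small.
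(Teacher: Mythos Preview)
Your decomposition $v_{k}=\tfrac12\mu_{k}|x|^{2}+p_{k}$ and the radial ODE control of $p_{k}$ are fine (though the reference to Lemma~\ref{local estimate} is misleading: that lemma requires an a~priori $L^{1}$ bound on $p_{k}$; what you are really using is the explicit integration of the radial ODE $\Delta^{2}p_{k}=\Delta^{2}v_{k}$, $p_{k}(0)=\Delta p_{k}(0)=0$, which does give $|p_{k}|\le C\|\Delta^{2}v_{k}\|_{\infty}R^{4}$ on $B_{R}$). The genuine gap is exactly the one you flag at the end: to exclude $\mu_{k}\to\mu_{\infty}<0$ you invoke that the weighted mass $\int u_{k}^{2}e^{\beta_{k}u_{k}^{2}}$ concentrates at scale $r_{k}$. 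This is circular --- that concentration is established only later in the paper (Lemmas~\ref{gra}, \ref{class}, \ref{concentre}), and those lemmas in turn use the very statement you are trying to prove. So as written the argument does not close.

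What you are missing is the one piece of \emph{global} information available at this stage that has nothing to do with concentration: since $\|u_{k}\|_{H^{2}}=1$,
\[
\int_{\mathbb{R}^{4}}|\Delta v_{k}|^{2}\,dx=\int_{\mathbb{R}^{4}}|\Delta u_{k}|^{2}\,dx\le 1\quad\text{for every }k.
\]
This is the key input in the paper's proof: there one first uses this $L^{2}$ bound to get local $C^{1,\gamma}$ control on $g_{k}=\Delta v_{k}$, then Pizzetti's formula (together with $v_{k}\le 0$, $v_{k}(0)=0$) to bound $\|v_{k}\|_{L^{1}(B_{R})}$, then bootstraps to $v_{k}\to v$ in $C^{3}_{loc}$ with $\Delta^{2}v=0$, $v\le 0$, $\int|\Delta v|^{2}<\infty$, whence a Liouville argument forces $v\equiv 0$. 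The same $L^{2}$ bound would close \emph{your} argument directly, without any concentration: on $B_{R}$ one has $\Delta v_{k}=4\mu_{k}+\Delta p_{k}$ with $\|\Delta p_{k}\|_{L^{\infty}(B_{R})}\le C\|\Delta^{2}v_{k}\|_{\infty}R^{2}\to 0$, so
\[
4|\mu_{k}|\,|B_{R}|^{1/2}\le \|\Delta v_{k}\|_{L^{2}(\mathbb{R}^{4})}+\|\Delta p_{k}\|_{L^{2}(B_{R})}\le 1+o_{k}(1),
\]
and letting $k\to\infty$ then $R\to\infty$ gives $\mu_{k}\to 0$. In short, the missing idea is not concentration but the trivially-inherited global bound on $\|\Delta v_{k}\|_{L^{2}}$; once you use it, either route (yours or the paper's Pizzetti/Liouville argument) finishes the proof.
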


\begin{proof}
It is obvious that $v_{k}$ solves the equation
\[
(-\Delta)^{2}v_{k}+r_{k}^{4}u_{k}(x_{k}+r_{k}x_{k})=\frac{u_{k}(x_{k}+r_{k}%
x)}{c_{k}^{2}}\exp(\beta_{k}u_{k}^{2}-\beta_{k}c_{k}^{2})-\frac{1}{\lambda
_{k}}\frac{\alpha}{\beta_{k}}r_{k}^{4}u_{k}(x_{k}+r_{k}x_{k}).
\]
Set $\Delta v_{k}=g_{k}$, and then $\Delta g_{k}=f_{k}$, where
\[
f_{k}=\frac{u_{k}(x_{k}+r_{k}x)}{c_{k}^{2}}\exp(\beta_{k}u_{k}^{2}-\beta
_{k}c_{k}^{2})-\left(  \frac{1}{\lambda_{k}}\frac{\alpha}{\beta_{k}}+1\right)
r_{k}^{4}u_{k}(x_{k}+r_{k}x_{k}).
\]
since $u_{k}$ is bounded in $H^{2}(\mathbb{R}^{4})$, directly computations
yields that
\[
\int_{\mathbb{R}^{4}}|g_{k}|^{2}dx=\int_{\mathbb{R}^{4}}|\Delta v_{k}%
|^{2}dx=\int_{\mathbb{R}^{4}}|\Delta u_{k}|^{2}dx<c.
\]
Also, since $f_{k}$\ is bounded in $L_{loc}^{p}(\mathbb{R}^{4})$ for any
$p\geq1$, by Lemma \ref{local estimate} we obtain that for some $0<\gamma<1$,
\begin{equation}
\Vert g_{k}\Vert_{C^{1,\gamma}(B_{R})}\leq c,\label{addddd}%
\end{equation}
for any $R>0$. On the other hand, by using Pizzetti's formula (see Lemma
\ref{Pizz}), we can derive
\[
\int_{B_{R}(0)}v_{k}(x)dx=c_{0}R^{4}v_{k}(0)+c_{1}R^{6}\Delta v_{k}%
(0)+c_{2}R^{8}\Delta^{2}v_{k}(\xi),
\]
for some $\xi\in B_{R}(0)$.

By (\ref{addddd}) and observe that $v_{k}\leq0$ and $v_{k}(0)=0$, then
$v_{k}(x)$ is bounded in $L_{loc}^{1}(\mathbb{R}^{4})$. Hence again by Lemma
\ref{local estimate}, we obtain that there exists some $v\in C^{3}%
(\mathbb{R}^{4})$ such that
\[
v_{k}(x)\rightarrow v\ \ \mathrm{in}\ C_{loc}^{3}(\mathbb{R}^{4})
\]
with $v$ satisfying
\[
(-\Delta)^{2}v=0.
\]
By the Lemma \ref{Liouville} and $v\leq 0$ , we know that $v$ is a polynomial degree at most 6. Since $\int_{\mathbb{R}^{4}}|\Delta v
|^{2}dx\leq \lim\limits_{k\rightarrow \infty} \int_{\mathbb{R}^{4}}|\Delta v_{k}
|^{2}dx\leq C$, then $v$ must be a constant. This together with  $v(0)=0$ implies $v(x)=0$.
\end{proof}

\vskip0.1cm

The following lemma plays an important role in determining the limit behavior
of $\psi_{k}\left(  x\right)  $.

\begin{lemma}
[Gradient estimate on $B_{Rr_{k}}$]\label{gra}For any $R>0$, there holds
\[
c_{k}\int_{B_{Rr_{k}}}|\Delta u_{k}|dx\leq c(Rr_{k})^{2}.
\]
Furthermore, we have
\[
\int_{B_{R}}|\Delta\psi_{k}|dx=c_{k}r_{k}^{-2}\int_{B_{Rr_{k}}}|\Delta
u_{k}|dx\leq cR^{2}.
\]

\end{lemma}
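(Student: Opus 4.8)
The plan is to adapt the bounded-domain argument of Martinazzi (as in \cite{Mar}) to the unbounded setting by first controlling $\Delta(u_k^2)$ locally and then passing through a representation formula, being careful that the Green-type estimates are applied on a fixed ball $B_1$ rather than on all of $\mathbb{R}^4$. First I would establish the $L^1$-bound
\[
\int_{B_1}\bigl|\Delta(u_k^2)\bigr|\,dx\le c,
\]
which follows from $\Delta(u_k^2)=2u_k\Delta u_k+2|\nabla u_k|^2$ together with $\|u_k\|_{H^2(\mathbb{R}^4)}=1$ (so $\|\Delta u_k\|_2$, $\|\nabla u_k\|_2$, $\|u_k\|_2$ are all bounded) and the Gagliardo–Nirenberg control of $\|\nabla u_k\|_4$; Cauchy–Schwarz then gives the bound. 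The point of working with $u_k^2$ is that $c_k\,\Delta u_k$ is, roughly, $\Delta(u_k^2)/(2c_k)$ plus lower-order terms near the blow-up point where $u_k\approx c_k$, so an $L^1$-bound on $\Delta(u_k^2)$ translates into the desired weighted bound on $\Delta u_k$ after localizing to $B_{Rr_k}$.

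Next I would introduce a cutoff. Since one cannot truncate linearly in $H^2$, I would pick a fixed radially symmetric cutoff $\chi\in C_0^\infty(B_1)$ with $\chi\equiv1$ on $B_{1/2}$ and set $w_k=\chi u_k^2$ (or, following the paper's stated strategy, a biharmonic-type truncation of $u_k$); then $\Delta w_k$ is bounded in $L^1(\mathbb{R}^4)$ with support in $B_1$, using the $L^1$ bound above plus the $L^2$ bounds on $u_k,\nabla u_k,\Delta u_k$ to handle the terms where derivatives fall on $\chi$. Writing $w_k = -\,G * \Delta w_k$ where $G(x)=\tfrac{1}{4\pi^2}\log\frac{1}{|x|}$ is (up to constants) the fundamental solution of $-\Delta$ in $\mathbb{R}^4$ is not quite right in dimension $4$ — the Newtonian kernel is $|x|^{-2}$ — so more precisely I would solve $-\Delta \zeta_k = \text{(the $L^1$ datum)}$ by convolution with $c_4|x|^{-2}$, obtaining a potential-theoretic representation of $\nabla w_k$ through the Riesz kernel $|x|^{-3}$, and then estimate the rescaled quantity
\[
\int_{B_R}|\Delta \psi_k|\,dx = c_k r_k^{-2}\int_{B_{Rr_k}}|\Delta u_k|\,dx
\]
by a change of variables, reducing matters to the claimed estimate $c_k\int_{B_{Rr_k}}|\Delta u_k|\,dx\le c(Rr_k)^2$.

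The core estimate $c_k\int_{B_{Rr_k}}|\Delta u_k|\le c(Rr_k)^2$ I would obtain as follows: on $B_{Rr_k}$ we have $u_k(x)=c_k+v_k((x-x_k)/r_k)$ with $v_k\to 0$ in $C^3_{loc}$ (Lemma \ref{guji1}), hence $u_k\ge c_k/2$ there for $k$ large, and $|\Delta u_k|=|\Delta u_k|\cdot\frac{2u_k}{2u_k}\le \frac{1}{c_k}\bigl(|\Delta(u_k^2)|+2|\nabla u_k|^2\bigr)$. Integrating over $B_{Rr_k}$, the first term contributes $\frac{1}{c_k}\int_{B_{Rr_k}}|\Delta(u_k^2)|\le \frac{c}{c_k}$, and after multiplying by $c_k$ this is merely $O(1)$ — which is not good enough. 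So the genuinely delicate point, and what I expect to be the main obstacle, is squeezing out the extra factor $(Rr_k)^2$: this requires not just the crude $L^1$-bound on $\Delta(u_k^2)$ over a fixed ball, but a \emph{local} estimate showing $\int_{B_{Rr_k}}|\Delta(u_k^2)|\,dx = O\bigl((Rr_k)^2\bigr)$, i.e. that the measure $|\Delta(u_k^2)|\,dx$ puts mass at most comparable to $|B_{Rr_k}|^{1/2}$ on the tiny ball $B_{Rr_k}$. This in turn I would get from the representation formula: writing $u_k^2$ (cut off) as a Riesz potential of its bi-Laplacian-type datum, the oscillation of $u_k^2$ over $B_{Rr_k}$ is controlled, via the explicit kernel, by $(Rr_k)^2$ times the total $L^1$ mass of the datum plus a $\log$-type term that is absorbed because $v_k\to0$; differentiating the representation and integrating $|\Delta u_k|$ back over $B_{Rr_k}$ then produces the scaling $(Rr_k)^2$ rather than $(Rr_k)^4$. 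Tracking these kernel estimates carefully — in particular verifying that the borderline logarithmic contributions really are lower order thanks to the rapid decay $r_k^4 c_k^2 e^{\gamma c_k^2}\le c$ from \eqref{decay for rk} and the $C^3_{loc}$ convergence of $v_k$ — is where the real work lies; the final rescaled statement $\int_{B_R}|\Delta\psi_k|\le cR^2$ is then immediate from the change of variables $\Delta\psi_k(x)=c_k r_k^2(\Delta u_k)(x_k+r_kx)$.
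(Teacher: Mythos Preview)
Your proposal correctly identifies the key difficulty --- that the crude $L^1$ bound $\int_{B_1}|\Delta(u_k^2)|\,dx\le c$ only yields $O(1)$ after multiplying by $c_k$, and that the real issue is the \emph{local} estimate $\int_{B_{Rr_k}}|\Delta(u_k^2)|\,dx=O((Rr_k)^2)$ --- but your proposed resolution has a genuine gap. You want to write $u_k^2$ (or $\chi u_k^2$) as a Riesz potential of its ``bi-Laplacian-type datum'' and read off the scaling from the kernel, but you never explain why $\Delta^2(u_k^2)$, or $\Delta^2(\chi u_k^2)$, lies in $L^1$ on a fixed ball. From $u_k\in H^2$ alone this is false in general: expanding $\Delta^2(w^2)$ produces cross terms $|\nabla^j w|\,|\nabla^{4-j}w|$ for $j=1,2,3$, and the case $j=1$ (or $j=3$) involves \emph{third} derivatives of $u_k$, which are not controlled by the $H^2$ norm. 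The smooth cutoff $\chi$ does not help here; it makes things worse by generating additional cross terms of the same type.

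The paper closes this gap by a substantially different mechanism. Instead of a smooth cutoff, one subtracts the biharmonic extension $u_k^{R_0}$ solving \eqref{truc}, so that $w_k:=u_k-u_k^{R_0}\in H_0^2(B_{R_0})$ satisfies $\Delta^2 w_k=f_k$ with $f_k$ bounded in $L(\log L)^{1/2}(B_{R_0})$. Martinazzi's elliptic estimate (Lemma~\ref{Pizz1}) then gives $\nabla^j w_k\in L^{(4/j,2)}(B_{R_0})$ for $j=1,2,3$, and O'Neil's H\"older inequality in Lorentz spaces puts every cross term $|\nabla^j w_k|\,|\nabla^{4-j}w_k|$ into $L^1$. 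The remaining term $2w_k\,\Delta^2 w_k$ is handled directly from the Euler--Lagrange equation \eqref{euler}. Only \emph{after} $\Delta^2(w_k^2)\in L^1(B_{R_0})$ is established does the representation-formula argument of \cite{Mar} (Lemma~6 there) yield the crucial scaling $\int_{B_{Rr_k}}\Delta(w_k^2)\,dx\le C(Rr_k)^2$; one then transfers this back to $u_k^2$ using the $C^4$ smallness of $u_k^{R_0}$ from \eqref{es tru}. In short, the step you flagged as ``where the real work lies'' requires the $L(\log L)\to$ Lorentz machinery and the biharmonic (not smooth) truncation, neither of which appears in your outline.
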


\begin{proof}
For any $R_{0}>0$, we introduce a sequence of bi-harmonic functions
$u_{k}^{R_{0}}$ solving
\begin{equation}%
%TCIMACRO{\QDATOPD{\{}{.}{\Delta^{2}u_{k}^{R_{0}}=0\text{ in }\overline
%{B_{R_{0}}\left(  x_{k}\right)  }}{\partial_{v}^{i}u_{k}^{R_{0}}=\partial
%_{v}^{i}u_{k}\text{ on }\partial\overline{B_{\delta}\left(  x_{k}\right)
%}\text{, }i=1,2} }%
%BeginExpansion
\genfrac{\{}{.}{0pt}{0}{\Delta^{2}u_{k}^{R_{0}}=0\text{ in }\overline
{B_{R_{0}}\left(  x_{k}\right),  }}{\partial_{v}^{i}u_{k}^{R_{0}}=\partial
_{v}^{i}u_{k}\text{ on }\partial\overline{R_{0}\left(  x_{k}\right)
}\text{, }i=0,1.}
%EndExpansion
\label{truc}%
\end{equation}
By the elliptic estimates (see Lemma \ref{whole estimate}) and the radial
lemma, we derive that
\begin{equation}
\Vert u_{k}^{R_{0}}\Vert_{C^{4}(B_{R_{0}})}<\frac{c}{R_{0}^{\tau}}
\label{es tru}%
\end{equation}
for some $\tau>0$.

Observe that $u_{k}-u_{k}^{R_{0}}$ satisfies the following equation
\[
\left\{
\begin{array}
[c]{l}%
\Delta^{2}\left(  u_{k}-u_{k}^{R_{0}}\right)  =\lambda_{k}^{-1}u_{k}\left(
\exp\left\{  \beta_{k}u_{k}^{2}\right\}  -\frac{\alpha}{\beta_{k}}%
u_{k}\right)  -u_{k}\text{ in }\overline{B_{R_{0}}\left(  0\right)  }\\
\partial_{v}^{i}\left(  u_{k}-u_{k}^{R_{0}}\right)  =0\text{ \ }\ \text{on
\ }\partial\overline{B_{R_{0}}\left(  0\right)  }\text{, }i=0,1.
\end{array}
\right.
\]

Set $f_{k}=\lambda_{k}^{-1}u_{k}\left(  \exp\left\{  \beta_{k}u_{k}%
^{2}\right\}  -\frac{\alpha}{\beta_{k}}u_{k}\right)  -u_{k}$, and define
$L(\log L)^{\alpha}(B_{R_{0}})$ as the space
\[
L(\log L)^{\alpha}(B_{R_{0}}):=\{f\in L^{1}(B_{R_0}):\Vert f\Vert_{L(\log
L)^{\alpha}}:=\int_{B_{R_{0}}}|f|(\log^{\alpha}(2+|f|))dx<\infty\}.
\]
endowed with the norm $\Vert f\Vert_{L(\log L)^{\alpha}}=\int_{B_{R_{0}}%
}|f|(\log^{\alpha}(2+|f|))dx$.

It is not difficult to check that $f_{k}$ is bounded in $ L(\log L)^{\frac{1}{2}}(B_{R_{0}%
})$. This together with Lemma \ref{Pizz1} directly leads to
\begin{equation}
\Vert\nabla^{j}(u_{k}-u_{k}^{R_{0}})\Vert_{L^{(4/j,2)}}\leq C,\ \ j=1,2,3.
\label{lorenz}%
\end{equation}
where $\Vert\cdot\Vert_{L^{(4/j,2)}}$ is the Lorentz norm. For the definition
of Lorentz spaces and their basic properties, we refer the interested reader to
\cite{ONeil}.

After some computation, we obtain
\begin{align*}
\left\vert \Delta^{2}((u_{k}-u_{k}^{R_{0}})^{2})\right\vert  & \leq\left\vert
2(u_{k}-u_{k}^{R_{0}})\Delta^{2}(u_{k}-u_{k}^{R_{0}})\right\vert \\
& +C\sum_{j=1}^{3}\left\vert \nabla^{j}(u_{k}-u_{k}^{R_{0}})\right\vert
\left\vert \nabla^{4-j}(u_{k}-u_{k}^{R_{0}})\right\vert |.
\end{align*}
Thanks to the Lorentz estimates of gradients (\ref{lorenz}) and some
H\"{o}lder type inequality of O'Neil \cite{ONeil}, we know the term
$\sum_{j=1}^{3}\left\vert \nabla^{j}(u_{k}-u_{k}^{R_{0}})\right\vert
\left\vert \nabla^{4-j}(u_{k}-u_{k}^{R_{0}})\right\vert $ is bounded in
$L^{1}(B_{R_{0}})$.

We now show that $|2(u_{k}-u_{k}^{R_{0}}%
)\Delta^{2}(u_{k}-u_{k}^{R_{0}})|$ is also bounded in $L^{1}(B_{R_{0}})$. In
fact, we observe that
\[%
\begin{split}
\int_{B_{R_{0}}}|2(u_{k}-u_{k}^{R_{0}})\Delta^{2}(u_{k}-u_{k}^{R_{0}})|dx  &
\leq2\int_{B_{R_{0}}}|u_{k}\Delta^{2}(u_{k})|dx+2\int_{B_{R_{0}}}|u_{k}%
^{R_{0}}\Delta^{2}(u_{k})|dx\\
&  =I_{1}+I_{2}.
\end{split}
\]
For $I_{1}$, by equation (\ref{euler}), we obtain
\[%
\begin{split}
\int_{B_{R_{0}}}|u_{k}\Delta^{2}(u_{k})|dx  &  \leq\int_{\mathbb{R}^{4}%
}\lambda_{k}^{-1}u_{k}^{2}\left(  \exp(\beta_{k}u_{k}^{2})-\frac{\alpha}%
{\beta_{k}}\right)  dx+\int_{\mathbb{R}^{4}}|u_{k}|^{2}dx\\
&  =\int_{\mathbb{R}^{4}}|\Delta u_{k}|^{2}dx+2\int_{\mathbb{R}^{4}}%
|u_{k}|^{2}dx\leq c.
\end{split}
\]
For $I_{2}$, we have
\begin{align*}
\int_{B_{R_{0}}}|u_{k}^{R_{0}}\Delta^{2}(u_{k})|dx  &  \leq c\int_{B_{R_{0}}%
}|u_{k}\Delta^{2}(u_{k})|dx+c\int_{B_{R_{0}}\cap\{|u_{k}|\leq1\}}|\Delta
^{2}(u_{k})|dx\\
&  \leq c(R_{0}).
\end{align*}
Using the above estimates, we conclude that $\int_{B_{R_{0}}}|\Delta
^{2}((u_{k}-u_{k}^{R_{0}})^{2})|dx\leq c$. Carrying out the same procedure as in the proof of
Lemma 6 of \cite{Mar}, we have for any $R>0$,
\begin{equation}
\int_{B_{Rr_{k}}}\Delta((u_{k}-u_{k}^{R_{0}})^{2})dx\leq C(Rr_{k})^{2}.
\label{addd}%
\end{equation}
Combining (\ref{addd}) and (\ref{es tru}), we derive by Lemma \ref{guji1}
that
\begin{equation}%
\begin{split}
\int_{B_{Rr_{k}}}|\Delta(u_{k}^{2})|dx &  \leq c\left\{  \int_{B_{Rr_{k}}%
}\Delta((u_{k}-u_{k}^{R_{0}})^{2})dx+\int_{B_{Rr_{k}}}|\Delta\left(  \left(
u_{k}^{R_{0}}\right)  ^{2}\right)  |dx\right.  \\
&  \ \ +\int_{B_{Rr_{k}}}|u_{k}^{R_{0}}\Delta u_{k}|dx+\int_{B_{Rr_{k}}}%
|u_{k}\Delta u_{k}^{R_{0}}|dx+\\
&  \left.  +\int_{B_{Rr_{k}}}|\nabla u_{k}\nabla u_{k}^{R_{0}}|dx\right\}  \\
&  \leq c\int_{B_{Rr_{k}}}\Delta((u_{k}-u_{k}^{R_{0}})^{2})dx+o(r_{k}^{2}).
\end{split}
\label{guji2}%
\end{equation}

On the other hand, we also have
\begin{equation}
c_{k}|\Delta u_{k}|\leq cu_{k}|\Delta u_{k}|\leq c\left(  \Delta(u_{k}%
^{2})+|\nabla u_{k}|^{2}\right)  \leq c\Delta(u_{k}^{2})+o(r_{k}^{-2}).
\label{guji3}%
\end{equation}
From (\ref{guji2}) and (\ref{guji3}), we conclude that
\[
c_{k}\int_{B_{Rr_{k}}}|\Delta u_{k}|dx\leq c(Rr_{k})^{2}.
\]
Hence it follows that for any $R>0$,
\[
\int_{B_{R}}|\Delta\psi_{k}|dx=c_{k}r_{k}^{-2}\int_{B_{Rr_{k}}}|\Delta
u_{k}|dx\leq cR^{2}.
\]
This accomplishes the proof.
\end{proof}

Now, we are in position to analyze the limit behavior of $\psi_{k}\left(
x\right)  $.

\begin{lemma}
\label{class}We have
\[
\psi_{k}\left(  x\right)  \rightarrow\psi\ \mathrm{in}\ C_{loc}^{3}%
(\mathbb{R}^{4}),
\]
where $\psi$ satisfies the equation
\[
(-\Delta)^{2}\psi=\exp(64\pi^{2}\psi).
\]
Furthermore, we have
\[
\psi(x)=\frac{1}{16\pi^{2}}\log\frac{1}{1+\frac{\pi}{\sqrt{6}}|x|^{2}},
\]
and $\int_{%
%TCIMACRO{\U{211d} }%
%BeginExpansion
\mathbb{R}
%EndExpansion
^{4}}\exp\left(  64\pi^{2}\psi\left(  x\right)  \right)  dx=1
$.
\end{lemma}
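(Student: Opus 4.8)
The plan is to carry out the blow-up at the concentration point $0$: rewrite the Euler--Lagrange equation \eqref{euler} for the rescaled profiles $\psi_k$, derive uniform local $C^3$ bounds, pass to the limit, and identify the limit by a Liouville-type classification. First I would compute the equation for $\psi_k(x)=c_ku_k(x_k+r_kx)-c_k^2$: from $\Delta^2\psi_k(x)=c_kr_k^4(\Delta^2u_k)(x_k+r_kx)$, equation \eqref{euler}, and $r_k^4=\lambda_kc_k^{-2}e^{-\beta_kc_k^2}$, one obtains
\[
\Delta^2\psi_k(x)=\phi_k(x)\exp\!\big(\beta_k\psi_k(x)(2+c_k^{-2}\psi_k(x))\big)-r_k^4c_k^2\,\phi_k(x)-\tfrac{\alpha}{\beta_k}e^{-\beta_kc_k^2}\,\phi_k(x),
\]
using $u_k^2(x_k+r_kx)-c_k^2=\psi_k(x)\big(2+c_k^{-2}\psi_k(x)\big)$ and $\phi_k=u_k(x_k+r_k\cdot)/c_k$. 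Since $|u_k(x_k+r_kx)|\le c_k$ forces $2+c_k^{-2}\psi_k\ge0$ while $\psi_k\le0$, the exponential lies in $(0,1]$; combined with $|\phi_k|\le1$ this gives $|\Delta^2\psi_k|\le1+o(1)$ on every fixed ball.

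To extract a limit I would establish uniform $C^{3,\gamma}_{loc}$ bounds on $\psi_k$. By Lemma~\ref{gra}, $\Delta\psi_k$ is bounded in $L^1_{loc}(\mathbb{R}^4)$; since $\Delta(\Delta\psi_k)=\Delta^2\psi_k$ is bounded in $L^\infty_{loc}$, Lemma~\ref{local estimate} yields a uniform $C^{1,\gamma}_{loc}$ bound on $\Delta\psi_k$, in particular on $\Delta\psi_k(0)$. Feeding $\psi_k(0)=0$, this bound, and the $L^\infty_{loc}$ bound on $\Delta^2\psi_k$ into Pizzetti's formula (Lemma~\ref{Pizz}), and using $\psi_k\le0$, one gets $\psi_k$ bounded in $L^1_{loc}$; a further application of Lemma~\ref{local estimate} promotes this to a uniform $C^{3,\gamma}_{loc}$ bound, so along a subsequence $\psi_k\to\psi$ in $C^3_{loc}(\mathbb{R}^4)$ for some $\psi\in C^3$.

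Then I would pass to the limit. By Lemma~\ref{convergence for fei} $\phi_k\to1$, by Lemma~\ref{guji1} $v_k=c_k^{-1}\psi_k\to0$ (hence $c_k^{-2}\psi_k\to0$) locally uniformly, and by the decay estimate \eqref{decay for rk} $r_k^4c_k^2\to0$ and $e^{-\beta_kc_k^2}\to0$, while $\beta_k\to32\pi^2$. Hence the last two terms above tend to $0$ locally uniformly and $\beta_k\psi_k(2+c_k^{-2}\psi_k)\to64\pi^2\psi$, so $\Delta^2\psi_k\to e^{64\pi^2\psi}$ locally uniformly and $(-\Delta)^2\psi=e^{64\pi^2\psi}$ in $\mathbb{R}^4$. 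The limit inherits $\psi\le0$, $\psi(0)=0=\max_{\mathbb{R}^4}\psi$, $\nabla\psi(0)=0$ (from $\nabla u_k(x_k)=0$), and, since $u_k$ is radial, $\psi$ is radial; moreover, passing the divergence identity $\int_{B_R}\Delta^2\psi_k\,dx=\int_{\partial B_R}\partial_\nu(\Delta\psi_k)\,d\sigma$ to the limit and changing variables back to $u_k$, together with the definition of $\lambda_k$ and Lemma~\ref{lamna}, gives $\int_{\mathbb{R}^4}e^{64\pi^2\psi}\,dx\le1<\infty$.

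Finally I would identify $\psi$. For a radial solution of $(-\Delta)^2\psi=e^{64\pi^2\psi}$, smoothness at the origin forces $\psi'(0)=\psi'''(0)=0$ and $\Delta^2\psi(0)=1$ fixes $\psi^{(4)}(0)$, so with $\psi(0)=0$ the solution is determined by $\psi''(0)$ alone; invoking the classification of finite-mass solutions of the fourth-order Liouville equation (Lin; see also Martinazzi)---the logarithmic asymptotics at infinity forced by $\int e^{64\pi^2\psi}<\infty$, together with $\psi(0)=\max\psi$, selecting the admissible value of $\psi''(0)$---one concludes $\psi(x)=\frac{1}{16\pi^2}\log\frac{1}{1+\frac{\pi}{\sqrt6}|x|^2}$, and then $\int_{\mathbb{R}^4}e^{64\pi^2\psi}\,dx=\int_{\mathbb{R}^4}(1+\frac{\pi}{\sqrt6}|x|^2)^{-4}\,dx=1$ by a direct computation. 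I expect the main obstacles to be: (i) the uniform $L^\infty_{loc}$ control of $\psi_k$ itself, rather than only of $\Delta\psi_k$, which forces combining the $L^1_{loc}$ gradient bound of Lemma~\ref{gra}, interior elliptic estimates, and the one-sided bound $\psi_k\le0$ through Pizzetti's formula; and (ii) pinning down that the limit is exactly the standard bubble---equivalently, ruling out the remaining radial finite-mass solutions so that the mass equals $1$ rather than merely $\le1$---which rests on the sharp asymptotic behavior of $\psi$ at infinity inherited from the blow-up sequence.
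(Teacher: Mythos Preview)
Your convergence argument (deriving the equation for $\psi_k$, bounding $\Delta\psi_k$ in $L^1_{loc}$ via Lemma~\ref{gra}, upgrading to $C^{1,\gamma}_{loc}$ by elliptic estimates, then controlling $\psi_k$ in $L^1_{loc}$ through Pizzetti and the one-sided bound $\psi_k\le0$, and finally passing to the limit) is essentially the paper's argument; the paper merely abbreviates the Pizzetti step by writing ``as in Lemma~\ref{guji1}''. The bound $\int e^{64\pi^2\psi}\le1$ is obtained in the paper by Fatou's lemma and the definition of $\lambda_k$, which is simpler than your divergence-identity route but equivalent.

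The genuine gap is in the identification of $\psi$. Your claim that ``logarithmic asymptotics at infinity [are] forced by $\int e^{64\pi^2\psi}<\infty$, together with $\psi(0)=\max\psi$'' is not correct: Lin's classification says precisely that a finite-mass solution of $(-\Delta)^2\psi=e^{64\pi^2\psi}$ is either a standard bubble \emph{or} satisfies $(-\Delta)\psi(x)\to a$ for some constant $a<0$ as $|x|\to\infty$. The second family contains radial solutions with $\psi(0)=\max\psi=0$ for a whole range of values of $\Delta\psi(0)$, so radiality, finite mass, and a maximum at the origin do not by themselves select the standard bubble. The paper rules out the non-standard branch by a concrete contradiction you already have the ingredients for but did not invoke: if $(-\Delta)\psi\to a<0$, then $\int_{B_R}|\Delta\psi|\,dx\sim |a|\,\mathrm{vol}(B_1)\,R^4$ as $R\to\infty$, whereas passing the estimate of Lemma~\ref{gra} to the limit gives $\int_{B_R}|\Delta\psi|\,dx\le cR^2$ for all $R$. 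That quantitative $R^2$ growth---not merely finiteness of the mass---is exactly the ``sharp asymptotic behavior inherited from the blow-up sequence'' you flag as obstacle~(ii); it is already in hand from Lemma~\ref{gra} and should be used at this point.
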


\begin{proof}
By equation (\ref{euler}), we see that $\psi_{k}$ satisfies the equation
\[
(-\Delta)^{2}\psi_{k}+c_{k}r_{k}^{4}u_{k}(x_{k}+r_{k}%
x)=\frac{u_{k}(x_{k}+r_{k}%
x)}{c_{k}}\exp(\beta_{k}u_{k}^{2}-\beta_{k}c_{k}^{2})-\frac{1}{\lambda_{k}%
}\frac{\alpha}{\beta_{k}}c_{k}r_{k}^{4}u_{k}(x_{k}+r_{k}%
x).
\]
According to Lemma \ref{gra}, we know that
\[
\int_{B_{R}}|\Delta\psi_{k}|dx=c_{k}r_{k}^{-2}\int_{B_{Rr_{k}}}|\Delta
u_{k}|dx\leq cR^{2}.
\]
This together with the elliptic estimates (see Lemma \ref{local estimate})
yields that $\Vert\Delta\psi_{k}\Vert_{C_{loc}^{1,\alpha}}\leq c$. As  in
Lemma \ref{guji1}, we know there exists some $\psi\in C^{3}(\mathbb{R}^{4})$
such that
\[
\psi_{k}\left(  x\right)  \rightarrow\psi\ \mathrm{in}\ C_{loc}^{3}%
(\mathbb{R}^{4}),
\]
with $\psi$ satisfying the equation
\[
(-\Delta)^{2}\psi=\exp(64\pi^{2}\psi).
\]
By Fatou's lemma, we have%
\[
\int_{\mathbb{R}^{4}}\exp(64\pi^{2}\psi)dx\leq\frac{1}{\lambda_{k}}%
\int_{\mathbb{R}^{4}}u_{k}^{2}\left(  \exp\left\{  \beta_{k}u_{k}^{2}\right\}
-\frac{\alpha}{\beta_{k}}\right)  dx\leq1.
\]
We now claim that $\psi$ must take the form as
\begin{equation}
\psi(x)=\frac{1}{16\pi^{2}}\log\frac{1}{1+\frac{\pi}{\sqrt{6}}|x|^{2}}.
\label{guji5}%
\end{equation}
We argue this by contradiction. If $\psi$ don't have the form as
(\ref{guji5}), according to \cite{lin1} (see also \cite{Mar1}), there exists
some $a<0$ such that
\[
\lim_{|x|\rightarrow+\infty}(-\Delta)\psi(x)=a.
\]
This would imply
\[
\lim_{k\rightarrow+\infty}\int_{B_{R}}|\Delta\psi_{k}(x)|dx=|a|vol(B_{1}%
(0))R^{4}+o(R^{4})\ \ \mathrm{as}\ R\rightarrow+\infty,
\]
but this contradicts $\int_{B_{R}}|\Delta\psi_{k}|dx<cR^{2}$. Hence we have
\[
\psi(x)=\frac{1}{16\pi^{2}}\log\frac{1}{1+\frac{\pi}{\sqrt{6}}|x|^{2}}.
\]
Furthermore, careful computations lead to
\begin{align*}
\int_{%
%TCIMACRO{\U{211d} }%
%BeginExpansion
\mathbb{R}
%EndExpansion
^{4}}\exp\left(  64\pi^{2}\psi\left(  x\right)  \right)  dx  &  =\int_{%
%TCIMACRO{\U{211d} }%
%BeginExpansion
\mathbb{R}
%EndExpansion
^{4}}\left(  \frac{1}{1+\frac{\pi}{\sqrt{6}}\left\vert x\right\vert ^{2}%
}\right)  ^{4}dx\text{ }(\text{setting }t=\frac{\pi}{\sqrt{6}}\left\vert
x\right\vert ^{2})\\
&  =\frac{\omega_{3}}{4}\int_{0}^{\infty}\left(  1+t\right)  ^{-4}d\left\vert
x\right\vert ^{4}\\
&  =\frac{3\omega_{3}}{\pi^{2}}\int_{0}^{\infty}\left(  1+t\right)  ^{-4}tdt\\
&  =\frac{3\omega_{3}}{\pi^{2}}\cdot\frac{1}{6}=1.
\end{align*}

\end{proof}

\subsection{\bigskip Bi-harmonic truncations}

In the following, we will need some bi-harmonic truncations $u_{k}^{M}$ which
was studied in \cite{Dela}. Roughly speaking,\ the value of truncations
$u_{k}^{M}$ is close to $\frac{c_{k}}{M}$ in a small ball centered at $x_{k}$,
and coincides with $u_{k}$ outside the same ball.

\begin{lemma}
\label{truncation}\cite[Lemma 4.20]{Dela} For any $M>1$ and $k\in%
%TCIMACRO{\U{2115} }%
%BeginExpansion
\mathbb{N}
%EndExpansion
$, there exists a radius $\rho_{k}^{M}>0$ and a constant $c=c\left(  M\right)
$ such that

1. $u_{k}\geq\frac{c_{k}}{M}$ in $B_{\rho_{k}^{M}}\left(  x_{k}\right)  ;$

2.$\left\vert u_{k}-\frac{c_{k}}{M}\right\vert \leq\frac{c}{c_{k}}$ on
$\partial B_{\rho_{k}^{M}}\left(  x_{k}\right)  ;$

3.$\left\vert \nabla^{l}u_{k}\right\vert \leq\frac{c}{c_{k}\left(  \rho
_{k}^{M}\right)  ^{l}}$ on $\partial B_{\rho_{k}^{M}}\left(  x_{k}\right)  $,
for any $1\leq l\leq3;$

4. $\rho_{k}^{M}\rightarrow0$ and $\frac{\rho_{k}^{M}}{r_{k}}\rightarrow
+\infty$, as $k\rightarrow\infty.$
\end{lemma}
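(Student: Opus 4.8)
The plan is to follow the construction of \cite[Lemma 4.20]{Dela}, adapting it to our setting in which the concentration point is the origin of $\mathbb{R}^4$ rather than an interior point of a bounded domain. I would define $\rho_k^M$ as the distance from $x_k$ to the first level set of $u_k$ at height $c_k/M$,
\[
\rho_k^M:=\mathrm{dist}\big(x_k,\{x\in\mathbb{R}^4:u_k(x)=c_k/M\}\big),
\]
which is finite and positive since $u_k(x_k)=c_k>c_k/M$ while, by the radial lemma, $u_k(x)\to 0$ as $|x|\to\infty$. Property 1 is then immediate: $u_k\neq c_k/M$ on the connected open ball $B_{\rho_k^M}(x_k)$, and since $u_k(x_k)>c_k/M$ continuity forces $u_k>c_k/M$ on all of $B_{\rho_k^M}(x_k)$. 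Both parts of Property 4 are quick. For $\rho_k^M\to 0$: Property 1 gives $B_{\rho_k^M}(x_k)\subset\{u_k\ge c_k/M\}$, so by Chebyshev's inequality and the uniform $H^2$-bound, $|B_{\rho_k^M}(x_k)|\le (M/c_k)^4\|u_k\|_{L^4}^4\le CM^4c_k^{-4}\to 0$. For $\rho_k^M/r_k\to\infty$: by Lemma \ref{convergence for fei}, on $B_{Lr_k}(x_k)$ we have $u_k=c_k\phi_k$ with $\phi_k\to 1$ uniformly, so $u_k>c_k/M$ there for $k$ large (using $M>1$); hence $\rho_k^M> Lr_k$ for every fixed $L$.

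The core of the lemma is Properties 2 and 3, which together assert that on the entire sphere $\partial B_{\rho_k^M}(x_k)$ the function $u_k$ coincides with the constant $c_k/M$ up to a correction of size $c/c_k$, with the matching scale-invariant bounds on $\nabla^lu_k$, $1\le l\le 3$. I would rescale at the scale $\rho_k^M$: set $\hat u_k(y):=u_k(x_k+\rho_k^My)$ and $\xi_k(y):=c_k(\hat u_k(y)-c_k/M)$, so that Properties 2 and 3 are precisely a uniform $C^3$ bound for $\xi_k$ on a fixed neighbourhood of the unit sphere $\{|y|=1\}$; the vanishing of $\xi_k$ at some point of $\{|y|=1\}$ is built into the definition of $\rho_k^M$. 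From the Euler--Lagrange equation (\ref{euler}),
\[
\Delta^2\xi_k=c_k(\rho_k^M)^4\Big[\lambda_k^{-1}\hat u_k\big(e^{\beta_k\hat u_k^2}-\tfrac{\alpha}{\beta_k}\big)-\hat u_k\Big],
\]
and the basic quantitative input is the integral bound
\[
\int_{B_{\rho_k^M}(x_k)}\lambda_k^{-1}u_ke^{\beta_ku_k^2}\,dx\le\frac{M}{c_k}\int_{\mathbb{R}^4}\lambda_k^{-1}u_k^2e^{\beta_ku_k^2}\,dx=O(1/c_k),
\]
where the inequality uses $u_k\ge c_k/M$ on $B_{\rho_k^M}(x_k)$ (Property 1) and the identity uses the definition of $\lambda_k$ together with $\inf_k\lambda_k>0$ (Lemma \ref{lamna}). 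Rescaling this bound and using $\rho_k^M\to 0$ to absorb the lower-order terms shows that the right-hand side of the $\xi_k$-equation is bounded in $L^1(B_1)$; once an a priori $C^0$ bound on $\xi_k$ near $\{|y|=1\}$ is available, this improves near the sphere to a bound in $L(\log L)^{1/2}$, which is the regularity required to run the Pizzetti- and Lorentz-type elliptic estimates already used (Lemma \ref{local estimate}, as in the proofs of Lemmas \ref{guji1} and \ref{class}), and hence to obtain the desired uniform $C^3$ bound for $\xi_k$ there; undoing the rescaling then yields Properties 2 and 3.

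The step I expect to be the main obstacle is exactly the a priori $C^0$ control of $\xi_k$ on a neighbourhood of $\{|y|=1\}$, equivalently of $u_k$ in the outer neck region $\{Lr_k\le|x-x_k|\le\rho_k^M\}$: inside $B_{\rho_k^M}(x_k)$ the function $\xi_k$ is not bounded (near the bubble it has size of order $c_k^2$), so the maximum principle for $\Delta^2$ is not directly available. I would obtain this control by propagating the bubble description of Lemma \ref{class} outward through a dyadic family of annuli $\{2^jr_k\le|x-x_k|\le 2^{j+1}r_k\}$ up to the scale $\rho_k^M$, on each of which one rescales, uses the integral control of the nonlinearity just established and the gradient estimate of Lemma \ref{gra} (extended to the annulus), and controls the oscillation of $u_k$ from one annulus to the next; here one uses crucially that $u_k$ is radial about the origin, so that on each annulus $u_k$ is comparable to its radial average with oscillation governed by $\int|\Delta u_k|$. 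The fact that $x_k$ lies at distance $|x_k|=O(r_k)=o(\rho_k^M)$ from the symmetry centre is what allows one to pass between balls centred at $x_k$ and spheres centred at $0$ with errors of size $o(1/c_k)$; in particular this upgrades the single-point vanishing of $\xi_k$ on $\{|y|=1\}$ to $|\xi_k|\le c$ on the whole sphere, giving Property 2.
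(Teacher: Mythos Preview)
The paper does not prove this lemma: it is stated with the citation \cite[Lemma 4.20]{Dela} and used as a black box, with no argument given in the present manuscript. So there is no ``paper's own proof'' to compare your proposal against; the authors simply import the result from DelaTorre--Mancini.

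That said, your sketch is a faithful outline of the DelaTorre--Mancini construction and the easy parts (Properties 1 and 4) are handled correctly. Two remarks on the harder part. First, in the present setting $u_k$ is radial about the origin, and since $u_k\in C^\infty$ with $u_k(x_k)=\max u_k$, in fact $x_k=0$; so your concern about the discrepancy between spheres centred at $x_k$ and at $0$ disappears, and the level set $\{u_k=c_k/M\}$ (once one checks it is a single sphere in the relevant range) \emph{is} $\partial B_{\rho_k^M}(x_k)$, giving Property 2 with $c=0$. Second, the dyadic neck argument you describe for the $C^0$ control of $\xi_k$ is the right idea but, as you acknowledge, is the nontrivial step; in \cite{Dela} this is carried out via a careful iteration of local elliptic estimates on annuli, and your outline would need those details to be a complete proof. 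Since the paper treats the lemma as a citation, filling this in is not required here.
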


Let $u_{k}^{\rho_{k}^{M}}\in C^{4}\left(  \overline{B_{\rho_{k}^{M}}\left(
x_{k}\right)  }\right)  $ be the unique solution of%

\[%
%TCIMACRO{\QDATOPD{\{}{.}{\Delta^{2}u_{k}^{\rho_{k}^{M}}%
%=0\ \,\ \ \ \ \ \ \ \ \ \ \ \ \text{\ in }B_{\rho_{k}^{M}}\left(
%x_{k}\right)  ,}{\partial_{v}^{i}u_{k}^{\rho_{k}^{M}}=\partial_{v}^{i}%
%u_{k}\ \text{on }\partial B_{\rho_{k}^{M}}\left(  x_{k}\right)  ,i=0,1.}}%
%BeginExpansion
\genfrac{\{}{.}{0pt}{0}{\Delta^{2}u_{k}^{\rho_{k}^{M}}%
=0\ \,\ \ \ \ \ \ \ \ \ \ \ \ \text{\ in }B_{\rho_{k}^{M}}\left(
x_{k}\right)  ,}{\partial_{v}^{i}u_{k}^{\rho_{k}^{M}}=\partial_{v}^{i}%
u_{k}\ \text{on }\partial B_{\rho_{k}^{M}}\left(  x_{k}\right)  ,i=0,1.}%
%EndExpansion
\]
We consider the function
\[
u_{k}^{M}=%
%TCIMACRO{\QDATOPD{\{}{.}{u_{k}^{\rho_{k}^{M}}\ \ \ \text{\ in }B_{\rho_{k}%
%^{M}}\left(  x_{k}\right)  ,}{u_{k}\ \ \text{in }\U{211d} ^{4}\backslash
%B_{\rho_{k}^{M}}\left(  x_{k}\right)  .}}%
%BeginExpansion
\genfrac{\{}{.}{0pt}{0}{u_{k}^{\rho_{k}^{M}}\ \ \ \text{\ in }B_{\rho_{k}^{M}%
}\left(  x_{k}\right)  ,}{u_{k}\ \ \text{in }\mathbb{R} ^{4}\backslash
B_{\rho_{k}^{M}}\left(  x_{k}\right)  .}%
%EndExpansion
\]

\begin{lemma}
\cite[Lemma 4.21]{Dela} For any $M>1$, we have
\[
u_{k}^{M}=\frac{c_{k}}{M}+O\left(  c_{k}^{-1}\right)  ,
\]
uniformly on $\overline{B_{\rho_{k}^{M}}\left(  x_{k}\right)  }$.
\end{lemma}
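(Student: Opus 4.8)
The plan is to compare the biharmonic truncation $u_k^{\rho_k^M}$ with the constant $c_k/M$ and to exploit that both are biharmonic, so that the claim reduces to a scale-invariant elliptic estimate for the clamped biharmonic problem on a ball. First I would set $w_k:=u_k^{\rho_k^M}-\frac{c_k}{M}$. Since constants are biharmonic and $u_k^{\rho_k^M}$ is biharmonic by construction, $\Delta^2 w_k=0$ in $B_{\rho_k^M}(x_k)$. On $\partial B_{\rho_k^M}(x_k)$ the defining boundary conditions give $w_k=u_k-\frac{c_k}{M}$ and $\partial_\nu w_k=\partial_\nu u_k$, so parts 2 and 3 of Lemma \ref{truncation} yield
\[
|w_k|\le \frac{c}{c_k},\qquad |\nabla^l w_k|=|\nabla^l u_k|\le \frac{c}{c_k(\rho_k^M)^l},\quad 1\le l\le 3,\qquad\text{on }\partial B_{\rho_k^M}(x_k).
\]

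Next I would rescale to the unit ball: put $\widehat w_k(y):=w_k(x_k+\rho_k^M y)$ for $y\in B_1(0)$, so that $\Delta^2\widehat w_k=0$ in $B_1(0)$ and, on $\partial B_1(0)$,
\[
|\widehat w_k|\le \frac{c}{c_k},\qquad |\nabla^l\widehat w_k|=(\rho_k^M)^l|\nabla^l w_k|\le \frac{c}{c_k},\quad 1\le l\le 3.
\]
Thus the Cauchy data $(\widehat w_k,\partial_\nu\widehat w_k)$ of $\widehat w_k$ on $\partial B_1(0)$ are bounded by $c/c_k$ in a sufficiently strong norm (e.g. $C^2(\partial B_1(0))$). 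Since the clamped biharmonic problem $\Delta^2 v=0$ in $B_1$, $v=g_0$, $\partial_\nu v=g_1$ on $\partial B_1$ is uniquely solvable with an elliptic estimate bounding $\|v\|_{L^\infty(B_1)}$ by the boundary norms of $g_0,g_1$ (Agmon--Douglis--Nirenberg $W^{4,2}$ estimates together with the Sobolev embedding $W^{4,2}(B_1)\hookrightarrow C^0(\overline{B_1})$; cf. Lemma \ref{whole estimate}), we obtain $\|\widehat w_k\|_{L^\infty(B_1)}\le c/c_k$. Undoing the rescaling gives $\|w_k\|_{L^\infty(B_{\rho_k^M}(x_k))}\le c/c_k$, i.e. $u_k^{\rho_k^M}=\frac{c_k}{M}+O(c_k^{-1})$ uniformly on $\overline{B_{\rho_k^M}(x_k)}$; since $u_k^M\equiv u_k^{\rho_k^M}$ on this ball, the lemma follows.

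The one point to watch is the scale invariance: the bounds in Lemma \ref{truncation} carry exactly the powers of $\rho_k^M$ needed so that, after rescaling the ball to unit size, every boundary derivative bound collapses to the single scale $c/c_k$ with no residual factor of $\rho_k^M$. Equivalently, one may keep the ball $B_{\rho_k^M}(x_k)$ and use that the clamped-biharmonic elliptic estimate is invariant under the dilations $x\mapsto x/\rho_k^M$; the rescaling is merely bookkeeping. No blow-up behavior of the exponential nonlinearity enters here, because $u_k^{\rho_k^M}$ solves the homogeneous equation $\Delta^2 u_k^{\rho_k^M}=0$, so this is a genuinely elementary step once Lemma \ref{truncation} is in hand.
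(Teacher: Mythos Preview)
Your argument is correct. The paper does not supply its own proof of this lemma; it simply quotes it from \cite[Lemma 4.21]{Dela}, so there is nothing to compare against beyond noting that your rescale-and-apply-elliptic-estimates argument is exactly the natural one. One minor remark: to invoke Lemma \ref{whole estimate} with $k=4$, $m=2$ one formally needs $h_0\in C^{4,\gamma}(\partial B_1)$ and $h_1\in C^{3,\gamma}(\partial B_1)$, slightly more than the $C^3$/$C^2$ bounds Lemma \ref{truncation} provides; but in the present setting $u_k$ is radial about $x_k=0$, so the boundary data of $\widehat w_k$ on $\partial B_1$ are constants and the regularity requirement is trivially met. (Indeed, in the radial case one can simply write $\widehat w_k=c_1+c_2|y|^2$ explicitly, read off $c_1,c_2=O(c_k^{-1})$ from the boundary values, and conclude without any elliptic machinery.)
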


\begin{lemma}
\label{cufoff mod}For any $M>1$, there holds
\[
\underset{k\rightarrow\infty}{\lim\sup}\int_{%
%TCIMACRO{\U{211d} }%
%BeginExpansion
\mathbb{R}
%EndExpansion
^{4}}\left(  \left\vert \Delta u_{k}^{M}\right\vert ^{2}+\left\vert u_{k}%
^{M}\right\vert ^{2}\right)  dx\leq\frac{1}{M}.
\]

\end{lemma}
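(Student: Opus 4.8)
The plan is to set $w_k := u_k - u_k^M$, which equals $u_k - u_k^{\rho_k^M}$ on $B_{\rho_k^M}(x_k)$ and vanishes outside (so $w_k \in H_0^2(B_{\rho_k^M}(x_k))$), and to turn the energy deficit $\|u_k\|_{H^2}^2-\|u_k^M\|_{H^2}^2$ into an exact asymptotic identity by testing the Euler--Lagrange equation \eqref{euler} against $w_k$. First I would record the orthogonal splitting of the Dirichlet energy: since $u_k^{\rho_k^M}$ is biharmonic in $B_{\rho_k^M}(x_k)$ and $w_k,\partial_\nu w_k$ vanish on the boundary, two integrations by parts give $\int_{B_{\rho_k^M}(x_k)}\Delta u_k^{\rho_k^M}\,\Delta w_k\,dx=0$, hence $\int_{B_{\rho_k^M}(x_k)}|\Delta u_k|^2=\int_{B_{\rho_k^M}(x_k)}|\Delta u_k^{\rho_k^M}|^2+\int_{B_{\rho_k^M}(x_k)}|\Delta w_k|^2$; combined with $u_k^M=u_k$ on the complement this yields $\|\Delta u_k^M\|_2^2=\|\Delta u_k\|_2^2-\|\Delta w_k\|_2^2$. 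For the $L^2$ part, property~1 of Lemma~\ref{truncation} ($u_k\ge c_k/M$ on $B_{\rho_k^M}(x_k)$) together with the $L^4$-bound in \eqref{convergence} gives $(c_k/M)^2|B_{\rho_k^M}(x_k)|\le \|u_k\|_4^2|B_{\rho_k^M}(x_k)|^{1/2}\to0$ because $\rho_k^M\to0$; hence both $\int_{B_{\rho_k^M}(x_k)}|u_k|^2$ and, via the preceding lemma $u_k^{\rho_k^M}=\tfrac{c_k}{M}+O(c_k^{-1})$ (\cite[Lemma~4.21]{Dela}), also $\int_{B_{\rho_k^M}(x_k)}|u_k^{\rho_k^M}|^2$ tend to $0$. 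Therefore $\|u_k^M\|_{H^2}^2=1-\|\Delta w_k\|_2^2+o(1)$, and it suffices to prove $\liminf_k\|\Delta w_k\|_2^2\ge 1-\tfrac1M$.

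To this end I would test \eqref{euler}. On $B_{\rho_k^M}(x_k)$ one has $\Delta^2 w_k=\Delta^2 u_k=\lambda_k^{-1}u_k(e^{\beta_k u_k^2}-\tfrac{\alpha}{\beta_k})-u_k$, so pairing with $w_k$ and integrating by parts,
\[
\|\Delta w_k\|_2^2=\lambda_k^{-1}\int_{B_{\rho_k^M}(x_k)}u_k w_k e^{\beta_k u_k^2}\,dx-\lambda_k^{-1}\tfrac{\alpha}{\beta_k}\int_{B_{\rho_k^M}(x_k)}u_k w_k\,dx-\int_{B_{\rho_k^M}(x_k)}u_k w_k\,dx .
\]
The last two terms are $o(1)$ by Cauchy--Schwarz, the fact that $\|u_k\|_{L^2(B_{\rho_k^M}(x_k))}+\|w_k\|_{L^2(B_{\rho_k^M}(x_k))}\to0$, and $\inf_k\lambda_k>0$ (Lemma~\ref{lamna}). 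Writing $w_k=(u_k-\tfrac{c_k}{M})+O(c_k^{-1})$ with $u_k-\tfrac{c_k}{M}\ge0$ on $B_{\rho_k^M}(x_k)$, and absorbing the $O(c_k^{-1})$-remainder once the factor below is shown bounded, I obtain
\[
\|\Delta w_k\|_2^2=\lambda_k^{-1}\int_{B_{\rho_k^M}(x_k)}u_k^2 e^{\beta_k u_k^2}\,dx-\tfrac1M\,c_k\lambda_k^{-1}\int_{B_{\rho_k^M}(x_k)}u_k e^{\beta_k u_k^2}\,dx+o(1).
\]

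It then remains to evaluate the two mass quantities. For the first, the change of variables $x=x_k+r_k y$, the relation $r_k^4e^{\beta_k c_k^2}=\lambda_k/c_k^2$, and $\psi_k\to\psi$ in $C^0_{loc}$ (Lemma~\ref{class}) give $\lambda_k^{-1}\int_{B_{Rr_k}(x_k)}u_k^2(e^{\beta_k u_k^2}-\tfrac{\alpha}{\beta_k})\,dx\to\int_{B_R}e^{64\pi^2\psi}$, which converges to $\int_{\mathbb{R}^4}e^{64\pi^2\psi}=1$ as $R\to\infty$; since $B_{Rr_k}(x_k)\subset B_{\rho_k^M}(x_k)$ (property~4 of Lemma~\ref{truncation}) and $\lambda_k^{-1}\int_{\mathbb{R}^4}u_k^2(e^{\beta_k u_k^2}-\tfrac{\alpha}{\beta_k})=1$ by the definition of $\lambda_k$, a sandwich argument together with $\lambda_k^{-1}\int_{B_{\rho_k^M}(x_k)}u_k^2\to0$ yields $\lambda_k^{-1}\int_{B_{\rho_k^M}(x_k)}u_k^2 e^{\beta_k u_k^2}\to1$. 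For the second, I would split $B_{\rho_k^M}(x_k)=B_{Rr_k}(x_k)\cup(B_{\rho_k^M}(x_k)\setminus B_{Rr_k}(x_k))$: on the core, Lemma~\ref{class} gives $u_k=c_k(1+O_R(c_k^{-2}))$ uniformly, so $c_k u_k e^{\beta_k u_k^2}\le(1+o(1))u_k^2 e^{\beta_k u_k^2}$ there and the core contributes at most $1+o(1)$; on the neck, property~1 of Lemma~\ref{truncation} gives $c_k/M\le u_k\le c_k$, hence $c_k u_k\le c_k^2\le M^2 u_k^2$, so the neck contributes at most $M^2\lambda_k^{-1}\int_{B_{\rho_k^M}(x_k)\setminus B_{Rr_k}(x_k)}u_k^2 e^{\beta_k u_k^2}$, which tends to $M^2(1-\int_{B_R}e^{64\pi^2\psi})\to0$ as $R\to\infty$. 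Letting $R\to\infty$ after $k\to\infty$ gives $\limsup_k c_k\lambda_k^{-1}\int_{B_{\rho_k^M}(x_k)}u_k e^{\beta_k u_k^2}\le1$ (in particular this factor is bounded, which justifies the absorption above). Plugging both limits into the identity for $\|\Delta w_k\|_2^2$ gives $\liminf_k\|\Delta w_k\|_2^2\ge1-\tfrac1M$, and therefore $\limsup_k\|u_k^M\|_{H^2}^2\le\tfrac1M$.

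The step I expect to be the main obstacle is the bookkeeping of the exponential mass in the third paragraph: making rigorous that $\lambda_k^{-1}u_k^2 e^{\beta_k u_k^2}$ concentrates \emph{entirely} inside $B_{\rho_k^M}(x_k)$ with no residual mass escaping through the annulus $B_{\rho_k^M}(x_k)\setminus B_{Rr_k}(x_k)$, together with the uniform expansion $u_k=c_k(1+O(c_k^{-2}))$ on $B_{Rr_k}(x_k)$ used for the core estimate. These rely on the facts, already established in the preceding subsections, that $0$ is the only blow-up point, that $\psi_k\to\psi$ in $C^3_{loc}$ with $\int_{\mathbb{R}^4}e^{64\pi^2\psi}=1$ (Lemma~\ref{class}), on the rapid decay of $r_k$ recorded in \eqref{decay for rk}, and on $\rho_k^M/r_k\to+\infty$ from Lemma~\ref{truncation}.
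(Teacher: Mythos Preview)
Your proposal is correct and follows essentially the same route as the paper: both test the Euler--Lagrange equation \eqref{euler} against $w_k=u_k-u_k^M\in H_0^2(B_{\rho_k^M}(x_k))$, use the biharmonic orthogonality $\int\Delta u_k^{\rho_k^M}\,\Delta w_k=0$, discard the $L^2$ contributions on the shrinking ball, and evaluate the exponential mass via the blow-up scaling $\psi_k\to\psi$ of Lemma~\ref{class} together with $\int_{\mathbb{R}^4}e^{64\pi^2\psi}=1$. The only organizational difference is that you first isolate the clean identity $\|u_k^M\|_{H^2}^2=1-\|\Delta w_k\|_2^2+o(1)$ and then bound $\|\Delta w_k\|_2^2$ from below by splitting into the two mass quantities $\lambda_k^{-1}\!\int u_k^2e^{\beta_k u_k^2}$ and $c_k\lambda_k^{-1}\!\int u_ke^{\beta_k u_k^2}$ with an explicit core/neck decomposition, whereas the paper combines these steps by directly lower-bounding $\int_{B_{\rho_k^M}}\lambda_k^{-1}u_k(e^{\beta_k u_k^2}-\alpha/\beta_k)(u_k-u_k^M)$ via restriction to $B_{Rr_k}$; your version makes the ``no mass in the annulus'' step (the point you flagged as the main obstacle) more transparent, but the substance is the same.
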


\begin{proof}
Since $u_{k}$ converges in $L^{p}\left(  B_{1}\right)  $ for any $p>1$, by
Lemma \ref{truncation}, we have
\[
\int_{B_{\rho_{k}^{M}}(x_{k})}\left\vert u_{k}^{M}\right\vert ^{p}dx\leq
c\int_{B_{\rho_{k}^{M}}(x_{k})}\left\vert u_{k}\right\vert ^{p}dx\rightarrow0
\]
and
\begin{equation}
\int_{B_{\rho_{k}^{M}}(x_{k})}\left\vert u_{k}^{p}u_{k}^{M}\right\vert
dx\leq\int_{B_{\rho_{k}^{M}}(x_{k})}\left\vert u_{k}^{p}\left(  u_{k}+O\left(
c_{k}^{-1}\right)  \right)  \right\vert dx\rightarrow0, \label{test1}%
\end{equation}
as $k\rightarrow\infty$.

Testing (\ref{euler}) with $\left(  u_{k}-u_{k}^{M}\right)  $, by Lemma
\ref{truncation}, for any $R>0,$ we have
\begin{align*}
&  \int_{B_{\rho_{k}^{M}}(x_{k})}\left(  \Delta u_{k}\Delta\left(  u_{k}%
-u_{k}^{M}\right)  +u_{k}\left(  u_{k}-u_{k}^{M}\right)  \right)  dx\\
&  =\int_{B_{\rho_{k}^{M}}(x_{k})}\lambda_{k}^{-1}u_{k}\left(  \exp\left\{
\beta_{k}u_{k}^{2}\right\}  -\frac{\alpha}{\beta_{k}}\right)  \left(
u_{k}-u_{k}^{M}\right)  dx\\
&  \geq\int_{B_{Rr_{k}}(x_{k})}\lambda_{k}^{-1}u_{k}\exp\left\{  \beta
_{k}u_{k}^{2}\right\}  \left(  u_{k}-u_{k}^{M}\right)  dx\\
&  =\int_{B_{Rr_{k}}(x_{k})}\lambda_{k}^{-1}c_{k}\exp\left\{  \beta_{k}%
u_{k}^{2}\right\}  \left(  c_{k}-\frac{c_{k}}{M}\right)  dx+o_{k}\left(
1\right) \\
&  =\int_{B_{R}}\left(  1-\frac{1}{M}\right)  \exp\left\{  2\beta_{k}\psi_{k}\left(  x\right)  \right\}  dx+o_{k}\left(  1\right)
\\
&  \geq\left(  1-\frac{1}{M}\right)  \int_{B_{R}}\exp\left\{  64\pi^{2}%
\psi\left(  x\right)  \right\}  dx+o_{k}\left(  1\right)  .
\end{align*}
Letting $R\rightarrow\infty$, we get
\begin{equation}
\int_{B_{\rho_{k}^{M}}(x_{k})}\left(  \Delta u_{k}\Delta\left(  u_{k}%
-u_{k}^{M}\right)  +u_{k}\left(  u_{k}-u_{k}^{M}\right)  \right)
dx\geq1-\frac{1}{M}+o_{k}\left(  1\right)  . \label{inside}%
\end{equation}
Observe that%
\begin{align*}
&  \int_{%
%TCIMACRO{\U{211d} }%
%BeginExpansion
\mathbb{R}
%EndExpansion
^{4}}\left(  \left\vert \Delta u_{k}^{M}\right\vert ^{2}+\left\vert u_{k}%
^{M}\right\vert ^{2}\right)  dx\\
&  =\int_{B_{\rho_{k}^{M}}(x_{k})}\left\vert \Delta u_{k}^{M}\right\vert
^{2}dx+\int_{%
%TCIMACRO{\U{211d} }%
%BeginExpansion
\mathbb{R}
%EndExpansion
^{4}\backslash B_{\rho_{k}^{M}}(x_{k})}\left\vert \Delta u_{k}\right\vert
^{2}dx+\int_{B_{\rho_{k}^{M}}(x_{k})}\left\vert u_{k}^{M}\right\vert
^{2}dx+\int_{%
%TCIMACRO{\U{211d} }%
%BeginExpansion
\mathbb{R}
%EndExpansion
^{4}\backslash B_{\rho_{k}^{M}}(x_{k})}\left\vert u_{k}\right\vert ^{2}dx\\
&  =\int_{B_{\rho_{k}^{M}}(x_{k})}\left\vert \Delta u_{k}^{M}\right\vert
^{2}dx+\int_{B_{\rho_{k}^{M}}(x_{k})}\left\vert u_{k}^{M}\right\vert
^{2}dx+1-\int_{B_{\rho_{k}^{M}}(x_{k})}\left\vert \Delta u_{k}\right\vert
^{2}dx-\int_{B_{\rho_{k}^{M}}(x_{k})}\left\vert u_{k}\right\vert ^{2}dx\\
&  =\int_{B_{\rho_{k}^{M}}(x_{k})}\left\vert \Delta u_{k}^{M}\right\vert
^{2}dx+\int_{B_{\rho_{k}^{M}}(x_{k})}\left\vert u_{k}^{M}\right\vert
^{2}dx+1-\int_{B_{\rho_{k}^{M}}(x_{k})}\Delta u_{k}\Delta\left(  u_{k}%
-u_{k}^{M}\right)  dx\\
&  -\int_{B_{\rho_{k}^{M}}(x_{k})}u_{k}\left(  u_{k}-u_{k}^{M}\right)
dx-\int_{B_{\rho_{k}^{M}}(x_{k})}\Delta u_{k}\Delta u_{k}^{M}dx-\int
_{B_{\rho_{k}^{M}}(x_{k})}u_{k}u_{k}^{M}dx,
\end{align*}
by (\ref{inside}) and (\ref{test1}), we have
\begin{align*}
&  \int_{%
%TCIMACRO{\U{211d} }%
%BeginExpansion
\mathbb{R}
%EndExpansion
^{4}}\left(  \left\vert \Delta u_{k}^{M}\right\vert ^{2}+\left\vert u_{k}%
^{M}\right\vert ^{2}\right)  dx\\
&  \leq\frac{1}{M}+\int_{B_{\rho_{k}^{M}}(x_{k})}\left\vert \Delta u_{k}%
^{M}\right\vert ^{2}dx+\int_{B_{\rho_{k}^{M}}(x_{k})}\left\vert u_{k}%
^{M}\right\vert ^{2}dx-\int_{B_{\rho_{k}^{M}}(x_{k})}\Delta u_{k}\Delta
u_{k}^{M}dx-\int_{B_{\rho_{k}^{M}}(x_{k})}u_{k}u_{k}^{M}dx\\
&  \leq\frac{1}{M}+\int_{B_{\rho_{k}^{M}}(x_{k})}\Delta u_{k}^{M}\Delta\left(
u_{k}^{M}-u_{k}\right)  dx+\int_{B_{\rho_{k}^{M}}(x_{k})}\left\vert u_{k}%
^{M}\right\vert ^{2}dx-\int_{B_{\rho_{k}^{M}}(x_{k})}u_{k}u_{k}^{M}dx\\
&  =\frac{1}{M}+\int_{B_{\rho_{k}^{M}}(x_{k})}\left\vert u_{k}^{M}\right\vert
^{2}dx-\int_{B_{\rho_{k}^{M}}(x_{k})}u_{k}u_{k}^{M}dx\\
&  =\frac{1}{M}+o_{k}\left(  1\right)  .
\end{align*}
Hence the lemma is proved.
\end{proof}

With the help of bi-harmonic truncations $u_{k}^{M}$, we can show the
following result.

\begin{corollary}
\bigskip We have,
\[
\underset{k\rightarrow\infty}{\lim\sup}\int_{%
%TCIMACRO{\U{211d} }%
%BeginExpansion
\mathbb{R}
%EndExpansion
^{4}\backslash B_{\delta}}\left(  \left\vert u_{k}\right\vert ^{2}+\left\vert
\Delta u_{k}\right\vert ^{2}\right)  dx=0,
\]
for any $\delta>0$, and then
\begin{equation}
\left\vert \Delta u_{k}\right\vert ^{2}dx\rightharpoonup\delta_{0} \text{ in
the sense of measure,} \label{dirc m}%
\end{equation}
where $\delta_{0} $ is the Dirac measure supported at $0$.
\end{corollary}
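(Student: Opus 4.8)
The plan is to show the energy concentrates at the blow-up point $0$, i.e. that $u_k$ carries essentially no $H^2$-mass outside any fixed ball $B_\delta$. The main tool is the bi-harmonic truncation $u_k^M$ from Lemma \ref{cufoff mod}, which satisfies $\limsup_{k\to\infty}\int_{\mathbb{R}^4}(|\Delta u_k^M|^2+|u_k^M|^2)\,dx\le \frac 1M$ while coinciding with $u_k$ outside the small ball $B_{\rho_k^M}(x_k)$. Since $\rho_k^M\to 0$ by Lemma \ref{truncation}, for any fixed $\delta>0$ we have $B_{\rho_k^M}(x_k)\subset B_\delta$ for $k$ large (recall $x_k\to 0$). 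Hence $u_k=u_k^M$ on $\mathbb{R}^4\setminus B_\delta$, and therefore
\[
\int_{\mathbb{R}^4\setminus B_\delta}\left(|u_k|^2+|\Delta u_k|^2\right)dx=\int_{\mathbb{R}^4\setminus B_\delta}\left(|u_k^M|^2+|\Delta u_k^M|^2\right)dx\le\int_{\mathbb{R}^4}\left(|u_k^M|^2+|\Delta u_k^M|^2\right)dx.
\]
Taking $\limsup_{k\to\infty}$ gives the bound $\frac 1M$ for every $M>1$; since $M$ is arbitrary, $\limsup_{k\to\infty}\int_{\mathbb{R}^4\setminus B_\delta}(|u_k|^2+|\Delta u_k|^2)\,dx=0$. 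This is the first assertion.

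For the second assertion, I would combine this with the normalization $\|u_k\|_{H^2(\mathbb{R}^4)}=1$. Write $\mu_k=|\Delta u_k|^2\,dx$; these are finite measures with $\mu_k(\mathbb{R}^4)=\|\Delta u_k\|_2^2\le 1$, so up to a subsequence $\mu_k\rightharpoonup\mu$ weakly-$*$ for some nonnegative finite measure $\mu$. The estimate just proved shows $\mu(\mathbb{R}^4\setminus B_\delta)=0$ for every $\delta>0$ (using lower semicontinuity of $\mu\mapsto\mu(\text{open set})$ under weak-$*$ convergence, or testing against functions supported off $B_\delta$), hence $\mu$ is supported at $\{0\}$, i.e. $\mu=c\,\delta_0$ for some $c\ge 0$. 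To identify $c=1$: since $u_k\rightharpoonup u$ in $H^2_r$ and the vanishing scenario of Lemma \ref{lemx2} has been excluded, in the blow-up case $c_k\to\infty$ one has $u\equiv 0$ (because $u_k\to u$ locally uniformly away from $0$ by the earlier elliptic estimates, combined with $\int_{\mathbb{R}^4\setminus B_\delta}|\Delta u_k|^2\to 0$ forces $u=0$ off $B_\delta$, hence on all of $\mathbb{R}^4$). Then $\|u_k\|_2^2=\int_{\mathbb{R}^4\setminus B_\delta}|u_k|^2+\int_{B_\delta}|u_k|^2\to 0+0$ as first $k\to\infty$ then $\delta\to 0$ (the ball term vanishes since $u_k\to 0$ in $L^2_{loc}$), so $\|\Delta u_k\|_2^2=1-\|u_k\|_2^2\to 1$, giving $c=\mu(\mathbb{R}^4)=\lim_k\mu_k(\mathbb{R}^4)=1$.

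The step requiring the most care is passing from the local $L^2$-smallness of $|\Delta u_k|^2$ outside $B_\delta$ to the statement that the \emph{whole} sequence of measures $|\Delta u_k|^2\,dx$ converges to $\delta_0$ rather than merely that its weak-$*$ limit is supported at $0$ — one must rule out mass escaping to infinity, which is exactly what the uniform smallness $\limsup_k\int_{\mathbb{R}^4\setminus B_\delta}|\Delta u_k|^2\,dx=0$ provides (it gives tightness of the measures), together with the mass-accounting $\|\Delta u_k\|_2^2\to 1$. Once tightness and total mass are pinned down, the conclusion $|\Delta u_k|^2\,dx\rightharpoonup\delta_0$ follows by testing against an arbitrary $\varphi\in C_0(\mathbb{R}^4)$: split $\int\varphi|\Delta u_k|^2 = \int_{B_\delta}\varphi|\Delta u_k|^2+\int_{\mathbb{R}^4\setminus B_\delta}\varphi|\Delta u_k|^2$, bound the second term by $\|\varphi\|_\infty\int_{\mathbb{R}^4\setminus B_\delta}|\Delta u_k|^2\to 0$, and on $B_\delta$ use $\big|\int_{B_\delta}(\varphi-\varphi(0))|\Delta u_k|^2\big|\le \sup_{B_\delta}|\varphi-\varphi(0)|\cdot\|\Delta u_k\|_2^2$ which is small for $\delta$ small, while $\varphi(0)\int_{B_\delta}|\Delta u_k|^2=\varphi(0)(1-\|u_k\|_2^2+o(1))\to\varphi(0)$.
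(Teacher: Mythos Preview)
Your proof is correct and follows exactly the route the paper has in mind: the corollary is stated without proof precisely because it is meant to be read off from Lemma \ref{cufoff mod} via the identity $u_k=u_k^M$ on $\mathbb{R}^4\setminus B_{\rho_k^M}(x_k)$, together with $\rho_k^M\to 0$ and $x_k\to 0$. Your treatment of the measure statement (tightness from the first part, $\|u_k\|_2^2\to 0$ by compact local embedding once $u=0$ is known, hence total mass $\|\Delta u_k\|_2^2\to 1$) is the standard and intended completion. One cosmetic point: the reference to ``the vanishing scenario of Lemma \ref{lemx2} has been excluded'' is a red herring here---that lemma concerns the bounded case $\sup_k c_k<\infty$, whereas the fact $u\equiv 0$ in the blow-up regime follows directly from your first assertion (e.g.\ by weak lower semicontinuity of $\|\cdot\|_{L^2(\mathbb{R}^4\setminus B_\delta)}$), not from excluding vanishing.
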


\begin{lemma}
\label{concentre}We have
\begin{align*}
&  \underset{k\rightarrow\infty}{\lim}\int_{%
%TCIMACRO{\U{211d} }%
%BeginExpansion
\mathbb{R}
%EndExpansion
^{4}}\left(  \exp\left(  \beta_{k}u_{k}^{2}\right)  -1-\alpha u_{k}%
^{2}\right)  dx\\
&  =\underset{L\rightarrow\infty}{\lim}\underset{k\rightarrow\infty}{\lim}%
\int_{B_{Lr_{k}}(x_{k})}\left(  \exp\left(  \beta_{k}u_{k}^{2}\right)
-1-\alpha u_{k}^{2}\right)  dx\\
&  =\underset{k\rightarrow\infty}{\lim}\frac{\lambda_{k}}{c_{k}^{2}}%
\end{align*}
and consequently,
\[
\frac{\lambda_{k}}{c_{k}}\rightarrow\infty\text{ and }\underset{k}{\sup}%
\frac{c_{k}^{2}}{\lambda_{k}}<\infty.
\]

\end{lemma}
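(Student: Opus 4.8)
The plan is to track where the mass of the nonlinear term $\int_{\mathbb{R}^4}(\exp(\beta_k u_k^2)-1-\alpha u_k^2)\,dx$ concentrates. First I would split the integral into the region $B_{Lr_k}(x_k)$, the annulus $B_\delta\setminus B_{Lr_k}(x_k)$, and the exterior $\mathbb{R}^4\setminus B_\delta$. On the exterior, the Corollary preceding this lemma gives $\int_{\mathbb{R}^4\setminus B_\delta}(|u_k|^2+|\Delta u_k|^2)\,dx\to 0$, so by the Adams inequality applied to $u_k/\|u_k\|_{H^2(\mathbb{R}^4\setminus B_\delta)}$ together with H\"older's inequality (exactly as in \eqref{decay for rk}), the contribution of the exterior tends to $0$. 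On the annulus, since $u_k\to 0$ locally away from the origin (by \eqref{dirc m} and the convergence $u_k\to u$ in $L^s$ with $u=0$ in the blow-up case), one shows $\exp(\beta_k u_k^2)-1-\alpha u_k^2$ is controlled and its integral over $B_\delta\setminus B_{Lr_k}(x_k)$ vanishes as $k\to\infty$ then $L\to\infty$; the delicate point is the intermediate range just outside $B_{Lr_k}$, which is handled using the gradient estimate of Lemma \ref{gra} and the radial lemma to get pointwise decay of $u_k$. This yields the first equality.

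For the second equality, on $B_{Lr_k}(x_k)$ I would change variables $x=x_k+r_k y$ and use $\psi_k(y)=c_k(u_k(x_k+r_k y)-c_k)$, so that $\beta_k u_k^2 = \beta_k c_k^2 + 2\beta_k\psi_k(1+o(1)) + \beta_k\psi_k^2 c_k^{-2}$. Since $r_k^4 = \lambda_k c_k^{-2}e^{-\beta_k c_k^2}$ and $\beta_k\to 32\pi^2$, one gets
\[
\int_{B_{Lr_k}(x_k)}\exp(\beta_k u_k^2)\,dx = \frac{\lambda_k}{c_k^2}\int_{B_L}\exp(\beta_k u_k^2 - \beta_k c_k^2)\,dy = \frac{\lambda_k}{c_k^2}\int_{B_L}\exp(2\beta_k\psi_k + o(1))\,dy.
\]
By Lemma \ref{class}, $\psi_k\to\psi$ in $C^3_{loc}$ and $\int_{\mathbb{R}^4}\exp(64\pi^2\psi)\,dx=1$, so letting $k\to\infty$ and then $L\to\infty$ the right side is asymptotic to $\lambda_k/c_k^2$. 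The lower-order terms $-1-\alpha u_k^2$ integrated over $B_{Lr_k}(x_k)$ are $O((Lr_k)^4)=o(\lambda_k/c_k^2)$ because $r_k^4 c_k^2 e^{\beta_k c_k^2}=\lambda_k$ forces $\lambda_k/c_k^2 \gg r_k^4$. Combining the two equalities gives
\[
\lim_{k\to\infty}\int_{\mathbb{R}^4}\big(\exp(\beta_k u_k^2)-1-\alpha u_k^2\big)\,dx = \lim_{k\to\infty}\frac{\lambda_k}{c_k^2}.
\]

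Finally, since the left side equals $S(\alpha)$, which is a finite positive number (it is at least $32\pi^2-\alpha>0$ by the test-function computations in Section 2, and finite by the Adams inequality), we conclude $c_k^2/\lambda_k$ is bounded above and bounded away from $0$; in particular $\lambda_k/c_k^2$ is bounded, and since $c_k\to\infty$ we get $\lambda_k/c_k = (\lambda_k/c_k^2)\cdot c_k$. To see $\lambda_k/c_k\to\infty$, note $\lambda_k/c_k^2$ is bounded below by $S(\alpha)+o(1)>0$, hence $\lambda_k/c_k \geq (S(\alpha)/2) c_k \to\infty$. The main obstacle I anticipate is the careful estimate on the annular region $B_\delta\setminus B_{Lr_k}(x_k)$: one must show that no nonlinear mass escapes into the "neck" between the concentration scale $r_k$ and a fixed radius $\delta$, which requires combining the gradient bound from Lemma \ref{gra}, the radial decay lemma, and the smallness from the Corollary in a quantitative way uniform in the parameters, analogous to the neck analysis in Li–Ruf and Lu–Yang but complicated here by the second-order operator.
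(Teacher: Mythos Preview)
Your treatment of the inner ball $B_{Lr_k}(x_k)$ via the change of variables and Lemma \ref{class} is correct and matches the paper exactly, and the final consequences about $\lambda_k/c_k$ and $c_k^2/\lambda_k$ follow as you say.

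The genuine gap is the neck region $B_\delta\setminus B_{Lr_k}(x_k)$. The tools you propose---Lemma \ref{gra} and the radial lemma---do not control it. Lemma \ref{gra} only bounds $c_k\int_{B_{Rr_k}}|\Delta u_k|\,dx$ at the \emph{concentration scale} $Rr_k$; it says nothing about the much larger annulus reaching out to a fixed $\delta$. The radial lemma gives decay as $|x|\to\infty$, not near $0$. In the neck, $u_k$ still ranges over values of order $c_k$ (indeed $u_k\approx c_k/M$ on $\partial B_{\rho_k^M}$ for every $M>1$), so $\exp(\beta_k u_k^2)$ is not a priori in any $L^p$ with $p>1$ there, and you cannot simply pass to the limit. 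The Corollary you invoke only says the $H^2$-energy \emph{outside} $B_\delta$ vanishes; it gives no smallness inside $B_\delta$, which is exactly where the neck lives.

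The paper bypasses the neck altogether by using the bi-harmonic truncations $u_k^M$ of Lemma \ref{truncation} and the energy estimate of Lemma \ref{cufoff mod}. The decomposition is into $B_{\rho_k^M}(x_k)$ and its complement, where $\rho_k^M$ is an intermediate scale with $r_k\ll\rho_k^M\ll 1$ and $u_k\ge c_k/M$ on $B_{\rho_k^M}$. On $\mathbb{R}^4\setminus B_{\rho_k^M}$ one has $u_k=u_k^M$ with $\limsup_k\|u_k^M\|_{H^2}^2\le 1/M$, so Adams' inequality (Navier version on a large ball) gives $\exp(\beta_k(u_k^M)^2)$ bounded in $L^p$ for every $p<M$, and then $u_k^M\rightharpoonup 0$ forces the outer integral to $0$. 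On $B_{\rho_k^M}$, the lower bound $u_k\ge c_k/M$ gives
\[
\int_{B_{\rho_k^M}}\big(\exp(\beta_k u_k^2)-1-\alpha u_k^2\big)\,dx\le \frac{M^2}{c_k^2}\int_{\mathbb{R}^4}u_k^2\Big(\exp(\beta_k u_k^2)-\tfrac{\alpha}{\beta_k}\Big)\,dx=\frac{M^2\lambda_k}{c_k^2},
\]
and letting $M\to 1$ matches the lower bound from your blow-up computation. You correctly identify the neck as the obstacle, but the remedy is this truncation-plus-energy-gain mechanism, not the gradient estimate.
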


\begin{proof}
Direct computations yield that
\begin{align*}
&  \int_{%
%TCIMACRO{\U{211d} }%
%BeginExpansion
\mathbb{R}
%EndExpansion
^{4}}\left(  \exp\left(  \beta_{k}u_{k}^{2}\right)  -1-\alpha u_{k}%
^{2}\right)  dx\\
&  =\left(  \int_{B_{\rho_{k}^{M}}(x_{k})}+\int_{%
%TCIMACRO{\U{211d} }%
%BeginExpansion
\mathbb{R}
%EndExpansion
^{4}\backslash B_{\rho_{k}^{M}}(x_{k})}\right)  \left(  \exp\left(  \beta
_{k}u_{k}^{2}\right)  -1-\alpha u_{k}^{2}\right)  dx\\
&  \leq\int_{B_{\rho_{k}^{M}}(x_{k})}\left(  \exp\left(  \beta_{k}u_{k}%
^{2}\right)  -1-\alpha u_{k}^{2}\right)  dx+\int_{%
%TCIMACRO{\U{211d} }%
%BeginExpansion
\mathbb{R}
%EndExpansion
^{4}}\left(  \exp\left(  \beta_{k}\left(  u_{k}^{M}\right)  ^{2}\right)
-1-\alpha\left(  u_{k}^{M}\right)  ^{2}\right)  dx.
\end{align*}
Taking some $L$ such that $u_{k}\leq1$ on $%
%TCIMACRO{\U{211d} }%
%BeginExpansion
\mathbb{R}
%EndExpansion
^{4}\backslash B_{L}$, then we have
\[
\underset{k\rightarrow\infty}{\lim}\int_{%
%TCIMACRO{\U{211d} }%
%BeginExpansion
\mathbb{R}
%EndExpansion
^{4}\backslash B_{L}}\left(  \exp\left(  \beta_{k}u_{k}^{2}\right)  -1-\alpha
u_{k}^{2}\right)  dx\leq\underset{k\rightarrow\infty}{\lim}c\int_{%
%TCIMACRO{\U{211d} }%
%BeginExpansion
\mathbb{R}
%EndExpansion
^{4}}u_{k}^{2}dx=0.
\]

In view of Lemma \ref{cufoff mod} and the Adams' inequality with the Navier
boundary condition (see \cite{Tar}),\ we obtain
\[
\underset{k\rightarrow\infty}{\sup}\int_{B_{L}}\left(  \exp\left(  \beta
_{k}p^{\prime}\left(  u_{k}^{M}-u_{k}\left(  L\right)  \right)  ^{2}\right)
-1\right)  dx<\infty,
\]
for any $p^{\prime}<M$. Since \
\[
p\left(  u_{k}^{M}\right)  ^{2}\leq p^{\prime}\left(  u_{k}^{M}-u_{k}\left(
L\right)  \right)  ^{2}+c\left(  p,p^{\prime}\right)  ,\text{if }p<p^{\prime
},
\]
then we get%
\[
\underset{k\rightarrow\infty}{\sup}\int_{B_{L}}\left(  \exp\left(  \beta
_{k}p\left(  u_{k}^{M}\right)  ^{2}\right)  -1\right)  dx<\infty,
\]
for any $p<M$. The weak compactness of Banach space implies
\[
\underset{k\rightarrow\infty}{\lim}\int_{B_{L}}\left(  \exp\left(  \beta
_{k}\left(  u_{k}^{M}\right)  ^{2}\right)  -1\right)  dx=0.
\]
Hence, we get
\begin{equation}%
\begin{split}
&  \underset{k\rightarrow\infty}{\lim}\int_{%
%TCIMACRO{\U{211d} }%
%BeginExpansion
\mathbb{R}
%EndExpansion
^{4}}\left(  \exp\left(  \beta_{k}u_{k}^{2}\right)  -1-\alpha u_{k}%
^{2}\right)  dx\\
&  =\underset{k\rightarrow\infty}{\lim}\left(  \int_{B_{\rho_{k}^{M}}(x_{k}%
)}\left(  \exp\left(  \beta_{k}u_{k}^{2}\right)  -1-\alpha u_{k}^{2}\right)
dx+o_{k}\left(  1\right)  \right) \\
&  \leq\underset{k\rightarrow\infty}{\lim}M^{2}\frac{\lambda_{k}}{c_{k}^{2}%
}\int_{%
%TCIMACRO{\U{211d} }%
%BeginExpansion
\mathbb{R}
%EndExpansion
^{4}}\frac{u_{k}^{2}}{\lambda_{k}}\left(  \exp\left(  \beta_{k}u_{k}%
^{2}\right)  -\frac{\alpha}{\beta_{k}}\right)  dx\\
&  =M^{2}\underset{k\rightarrow\infty}{\lim}\frac{\lambda_{k}}{c_{k}^{2}}.
\end{split}
\label{gj1}%
\end{equation}
On the other hand, we get
\begin{equation}%
\begin{split}
&  \underset{L\rightarrow\infty}{\lim}\underset{k\rightarrow\infty}{\lim}%
\int_{B_{Lr_{k}}(x_{k})}\left(  \exp\left(  \beta_{k}u_{k}^{2}\right)
-1-\alpha u_{k}^{2}\right)  dx\\
&  \ \ =\underset{L\rightarrow\infty}{\lim}\underset{k\rightarrow\infty}{\lim
}\frac{\lambda_{k}}{c_{k}^{2}}\int_{B_{L}}\exp\left(  \beta_{k}u_{k}^{2}%
(r_{k}x+x_{k})-\beta_{k}c_{k}^{2}\right)  dx\\
&  \ \ =\underset{k\rightarrow\infty}{\lim}\frac{\lambda_{k}}{c_{k}^{2}%
}\left(  \int_{%
%TCIMACRO{\U{211d} }%
%BeginExpansion
\mathbb{R}
%EndExpansion
^{4}}\exp\left(  64\pi^{2}\psi\left(  x\right)  \right)  dx+o_{k}\left(
1\right)  \right)  \\
&  \ \ =\underset{k\rightarrow\infty}{\lim}\frac{\lambda_{k}}{c_{k}^{2}}.
\end{split}
\label{gj2}%
\end{equation}

Combining (\ref{gj1}) and (\ref{gj2}), and letting $M\rightarrow1$, we
conclude that
\begin{align*}
\underset{k\rightarrow\infty}{\lim}\int_{%
%TCIMACRO{\U{211d} }%
%BeginExpansion
\mathbb{R}
%EndExpansion
^{4}}\left(  \exp\left(  \beta_{k}u_{k}^{2}\right)  -1-\alpha u_{k}%
^{2}\right)  dx  &  =\underset{k\rightarrow\infty}{\lim}\frac{\lambda_{k}%
}{c_{k}^{2}}\\
&  =\underset{L\rightarrow\infty}{\lim}\underset{k\rightarrow\infty}{\lim}%
\int_{B_{Lr_{k}}(x_{k})}\left(  \exp\left(  \beta_{k}u_{k}^{2}\right)
-1-\alpha u_{k}^{2}\right)  dx.
\end{align*}

\end{proof}

Now, we introduce the following quantities:%

\begin{align*}
b_{k} &  =\underset{R\rightarrow\infty}{\lim}\underset{k\rightarrow\infty
}{\lim}\frac{\lambda_{k}}{\int_{B_{R}(x_{k})}\left\vert u_{k}\right\vert
\left(  \exp\left(  \beta_{k}u_{k}^{2}\right)  -\frac{\alpha}{\beta_{k}%
}\right)  dx}\text{, }\tau=\underset{k\rightarrow\infty}{\lim}\frac{b_{k}%
}{c_{k}}\text{ and}\\
\sigma &  =\underset{R\rightarrow\infty}{\lim}\underset{k\rightarrow\infty
}{\lim}\frac{\int_{B_{R}(x_{k})}u_{k}\left(  \exp\left(  \beta_{k}u_{k}%
^{2}\right)  -\frac{\alpha}{\beta_{k}}\right)  dx}{\int_{B_{R}(x_{k}%
)}\left\vert u_{k}\right\vert \left(  \exp\left(  \beta_{k}u_{k}^{2}\right)
-\frac{\alpha}{\beta_{k}}\right)  dx}\text{.}%
\end{align*}

\begin{lemma}
It holds $\sigma=1$.
\end{lemma}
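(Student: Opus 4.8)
Set $\mu_k:=\exp(\beta_k u_k^2)-\tfrac{\alpha}{\beta_k}$, which is strictly positive on $\mathbb{R}^4$ once $\beta_k>\alpha$ (true for large $k$, since $\beta_k\uparrow 32\pi^2>\alpha$). On any set, $\int u_k\mu_k=\int|u_k|\mu_k-2\int_{\{u_k<0\}}|u_k|\mu_k$, so proving $\sigma=1$ amounts to showing that, in the iterated limit, the negative part $\int_{B_{Rr_k}(x_k)\cap\{u_k<0\}}|u_k|\mu_k\,dx$ is of lower order than the full integral $\int_{B_{Rr_k}(x_k)}|u_k|\mu_k\,dx$. The plan is to show that the denominator is governed entirely by the blow-up core, on which $u_k$ is positive, so that the negative part cannot compete.

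The quick route, reading the ball in the definition of $\sigma$ at the blow-up scale, is essentially immediate. Fix $R>0$. By Lemma~\ref{convergence for fei}, $\phi_k(x)=u_k(x_k+r_kx)/c_k\to 1$ uniformly on $\overline{B_R}$, hence $u_k>0$ on $\overline{B_{Rr_k}(x_k)}$ for all large $k$; for such $k$, $|u_k|=u_k$ there, so the numerator and denominator in the ratio defining $\sigma$ literally coincide. It remains only to check that the common value is nonzero: the change of variables $x=x_k+r_ky$, Lemmas~\ref{convergence for fei} and \ref{class}, and the identity $c_kr_k^4e^{\beta_kc_k^2}=\lambda_k/c_k$ give
\[
\int_{B_{Rr_k}(x_k)}u_k\mu_k\,dx=\frac{\lambda_k}{c_k}\Big(\int_{B_R}e^{64\pi^2\psi(y)}\,dy+o_k(1)\Big),
\]
which is positive (indeed tends to $+\infty$, since $\lambda_k/c_k\to\infty$ by Lemma~\ref{concentre} and $\int_{\mathbb{R}^4}e^{64\pi^2\psi}=1$). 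Passing to $k\to\infty$ and then $R\to\infty$ yields $\sigma=1$.

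If instead one must work over a fixed macroscopic ball $B_R(x_k)$, the same idea applies but the negative part can also sit away from the core, and this is the genuinely delicate point. Split $B_R(x_k)$ into: the core $B_{Lr_k}(x_k)$ (no negative part, by the argument above, and by Lemma~\ref{concentre} it already carries the full mass $\sim\lambda_k/c_k\to\infty$); the far region $B_R(x_k)\setminus B_\delta$ (there the radial lemma gives $|u_k(x)|\le C|x|^{-1}$ uniformly, hence $\mu_k\le C_\delta$, and since $\|u_k\|_{L^2(\mathbb{R}^4)}\to 0$ by the Corollary after Lemma~\ref{cufoff mod}, the contribution is $\lesssim_\delta\|u_k\|_{L^2(\mathbb{R}^4)}\to 0$); and the neck $B_\delta\setminus B_{Lr_k}(x_k)$. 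The neck is the main obstacle: a naive Cauchy–Schwarz bound, using $\int_{\mathbb{R}^4}u_k^2\mu_k\,dx=\lambda_k$ and the boundedness of $\int_{B_\delta}\mu_k\,dx$ (which follows from $\int_{\mathbb{R}^4}(\exp(\beta_k u_k^2)-1-\alpha u_k^2)\,dx\to S(\alpha)$), only gives $O(\lambda_k^{1/2})=O(c_k)$, the same order as the main term $\lambda_k/c_k\sim S(\alpha)c_k$, so it cannot be discarded by soft means. The resolution is to feed in the finer blow-up asymptotics: $c_ku_k\to G$ in $C^3_{\mathrm{loc}}(\mathbb{R}^4\setminus\{0\})$, where $G$ is the Green function of $\Delta^2+1$ with pole at $0$, whose leading singularity $-\tfrac{1}{8\pi^2}\log|x|$ is positive near $0$; this forces $u_k>0$ on $\overline{B_\delta}\setminus B_{Lr_k}(x_k)$ for $\delta$ small and $L,k$ large, so the neck contributes nothing to the negative part (equivalently, the entire concentration of $|\Delta u_k|^2$ at the origin is already accounted for by the positive profile $\psi$ spread over all scales, leaving no room for a negative excursion of non-vanishing amplitude). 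Combining the three pieces, the negative part is $o(\lambda_k/c_k)$ while the denominator is $\sim\lambda_k/c_k$, hence the ratio tends to $0$ and $\sigma=1$.
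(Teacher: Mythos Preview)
Your ``quick route'' misreads the definition: the ball in $\sigma$ is the macroscopic $B_R(x_k)$ with $R$ fixed (then $R\to\infty$), not $B_{Rr_k}(x_k)$. So the real work is your second paragraph, and there the neck step has a genuine gap. You invoke $c_ku_k\to G$ in $C^3_{\mathrm{loc}}(\mathbb{R}^4\setminus\{0\})$ to force $u_k>0$ on $\overline{B_\delta}\setminus B_{Lr_k}(x_k)$, but that convergence is uniform only on compact subsets of $\mathbb{R}^4\setminus\{0\}$; since the inner radius $Lr_k\to 0$, the neck is not contained in any such compact set, and positivity of $u_k$ down to scale $Lr_k$ is simply not implied. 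The intermediate scales between $r_k$ and any fixed $\delta_0>0$ are precisely the ones neither your microscopic Lemma~\ref{convergence for fei} nor your macroscopic Lemma~\ref{esti for the out of the ball} reaches. (Incidentally, the latter is proved \emph{after} the present lemma in the paper; even though no circularity arises, you are borrowing a forward result whose own proof rests on the truncation machinery you are trying to avoid.)

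The paper closes exactly this gap via the bi-harmonic truncation at the intermediate radius $\rho_k^M$ of Lemma~\ref{truncation}, which satisfies $r_k\ll\rho_k^M\to 0$. On $B_{\rho_k^M}(x_k)$ one has $u_k\ge c_k/M>0$ by construction, so $u_k=|u_k|$ there; on $B_R(x_k)\setminus B_{\rho_k^M}(x_k)$, the energy bound of Lemma~\ref{cufoff mod} combined with Adams' inequality makes $\exp(\beta_k(u_k^M)^2)$ bounded in $L^p$ for some $p>1$, so that piece of both integrals is $o_k(1)$. Hence $\int_{B_R}u_k\mu_k$ and $\int_{B_R}|u_k|\mu_k$ both equal $\int_{B_{\rho_k^M}}|u_k|\mu_k+o_k(1)$, and since this common main term is bounded below (via $u_k\ge c_k/M$ and Lemma~\ref{concentre}), the ratio tends to $1$ after letting $k\to\infty$, $R\to\infty$, $M\to 1$. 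Your core and far-region pieces are fine, but the neck requires this intermediate-scale device; the Green-function positivity you cite does not substitute for it.
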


\begin{proof}
For any $M>1$ and $R>0$, we have
\begin{align*}
\int_{B_{R}(x_{k})}u_{k}\left(  \exp\left(  \beta_{k}u_{k}^{2}\right)
-\frac{\alpha}{\beta_{k}}\right)  dx  &  =\int_{B_{\rho_{k}^{M}(x_{k})}}%
u_{k}\left(  \exp\left(  \beta_{k}u_{k}^{2}\right)  -\frac{\alpha}{\beta_{k}%
}\right)  dx\\
&  +\int_{B_{R}(x_{k})\backslash B_{\rho_{k}^{M}(x_{k})}}u_{k}^{M}\left(
\exp\left(  \beta_{k}\left(  u_{k}^{M}\right)  ^{2}\right)  -\frac{\alpha
}{\beta_{k}}\right)  dx,
\end{align*}
By Lemma \ref{cufoff mod}, we know that\ $\exp\left(  \beta_{k}u_{k}%
^{2}\right)  -\frac{\alpha}{\beta_{k}}$ is bounded in $L^{p}\left(
B_{R}(x_{k})\backslash B_{\rho_{k}^{M}(x_{k})}\right)  $ for some $p>1$,\ then
we have%
\[
\int_{B_{R}(x_{k})\backslash B_{\rho_{k}^{M}(x_{k})}}u_{k}^{M}\left(  \exp\left(
\beta_{k}\left(  u_{k}^{M}\right)  ^{2}\right)  -\frac{\alpha}{\beta_{k}%
}\right)  dx\rightarrow0,\text{ as }k\rightarrow\infty.
\]
This implies that
\begin{equation}
\int_{B_{R}(x_{k})}u_{k}\left(  \exp\left(  \beta_{k}u_{k}^{2}\right)
-\frac{\alpha}{\beta_{k}}\right)  dx=\int_{B_{\rho_{k}^{M}(x_{k})}}\left\vert
u_{k}\right\vert \left(  \exp\left(  \beta_{k}u_{k}^{2}\right)  -\frac{\alpha
}{\beta_{k}}\right)  dx+o_{k}\left(  1\right)  . \label{22}%
\end{equation}
Similarly, we also have
\begin{equation}
\int_{B_{R}(x_{k})}\left\vert u_{k}\right\vert \left(  \exp\left(  \beta
_{k}u_{k}^{2}\right)  -\frac{\alpha}{\beta_{k}}\right)  dx=\int_{B_{\rho
_{k}^{M}}(x_{k})}\left\vert u_{k}\right\vert \left(  \exp\left(  \beta
_{k}u_{k}^{2}\right)  -\frac{\alpha}{\beta_{k}}\right)  dx+o_{k}\left(
1\right)  . \label{23}%
\end{equation}
On the other hand, it is not hard to see that
\begin{align}
c_{k}\int_{B_{\rho_{k}^{M}(x_{k})}}\left(  \exp\left(  \beta_{k}u_{k}%
^{2}\right)  -\frac{\alpha}{\beta_{k}}\right)  dx  &  \geq\int_{B_{\rho
_{k}^{M}}(x_{k})}\left\vert u_{k}\right\vert \left(  \exp\left(  \beta
_{k}u_{k}^{2}\right)  -\frac{\alpha}{\beta_{k}}\right)  dx\nonumber\\
&  \geq\frac{c_{k}}{M}\int_{B_{\rho_{k}^{M}}(x_{k})}\left(  \exp\left(
\beta_{k}u_{k}^{2}\right)  -\frac{\alpha}{\beta_{k}}\right)  dx.
\label{esti 2}%
\end{align}
Furthermore, by (4) of Lemma \ref{truncation} and Lemma \ref{concentre}, we derive that
\begin{align}
\underset{k\rightarrow\infty}{\lim}\int_{B_{\rho_{k}^{M}(x_{k})}}\left(
\exp\left(  \beta_{k}u_{k}^{2}\right)  -\frac{\alpha}{\beta_{k}}\right)  dx
&  \geq\underset{L\rightarrow\infty}{\lim}\underset{k\rightarrow\infty}{\lim
}\int_{B_{Lr_{k}}(x_{k})}\left(  \exp\left(  \beta_{k}u_{k}^{2}\right)
-\frac{\alpha}{\beta_{k}}\right)  dx\nonumber\\
&  \geq\underset{L\rightarrow\infty}{\lim}\underset{k\rightarrow\infty}{\lim
}\int_{B_{Lr_{k}}(x_{k})}\left(  \exp\left(  \beta_{k}u_{k}^{2}\right)
-1-\alpha u_{k}^{2}\right)  dx\nonumber\\
&  \geq\int_{%
%TCIMACRO{\U{211d} }%
%BeginExpansion
\mathbb{R}
%EndExpansion
^{4}}\left(  \exp\left(  \beta_{k}u_{k}^{2}\right)  -1-\alpha u_{k}%
^{2}\right)  dx>0. \label{25}%
\end{align}

Therefore, combining (\ref{22})-(\ref{25}), we get%
\[
\frac{1}{M}+o_{k}(1)\leq\frac{\int_{B_{R}(x_{k})}u_{k}\left(  \exp\left(
\beta_{k}u_{k}^{2}\right)  -\frac{\alpha}{\beta_{k}}\right)  dx}{\int
_{B_{R}(x_{k})}\left\vert u_{k}\right\vert \left(  \exp\left(  \beta_{k}%
u_{k}^{2}\right)  -\frac{\alpha}{\beta_{k}}\right)  dx}\leq1.
\]
Letting $k\rightarrow\infty,$ $R\rightarrow\infty$ and $M\rightarrow1$, we
derive that $\sigma=1$.
\end{proof}

\begin{lemma}
\label{tau} It holds $\tau=1$.
\end{lemma}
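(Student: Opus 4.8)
The plan is to show that the denominator $\int_{B_R(x_k)}|u_k|\big(\exp(\beta_k u_k^2)-\tfrac{\alpha}{\beta_k}\big)dx$ is, for every fixed $M>1$, pinched between $\tfrac{\lambda_k}{c_k}(1+o_k(1))$ and $\tfrac{M\lambda_k}{c_k}(1+o_k(1))$, and then to let $M\downarrow 1$. Three facts are needed. The first is the localization \eqref{23} established in the proof of the lemma just above, which replaces the integral over $B_R(x_k)$ by the integral over the truncation ball $B_{\rho_k^M}(x_k)$ up to an $o_k(1)$ error (negligible since $\lambda_k/c_k\to\infty$ by Lemma \ref{concentre}). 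The second is the elementary pointwise bound $\tfrac{c_k}{M}\le u_k\le c_k$ on $B_{\rho_k^M}(x_k)$, where $u_k=|u_k|>0$; the lower inequality is part~1 of Lemma \ref{truncation} and the upper one is $c_k=\max|u_k|$. The third is a concentration statement: $\int_{B_{\rho_k^M}(x_k)}u_k^2\big(\exp(\beta_k u_k^2)-\tfrac{\alpha}{\beta_k}\big)dx=\lambda_k(1+o_k(1))$.

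For this last point I would argue as in Lemma \ref{concentre}. Using $\phi_k\to 1$ and $\psi_k\to\psi$ in $C^3_{loc}$, the definition $r_k^4=\lambda_k c_k^{-2}e^{-\beta_k c_k^2}$, and the expansion $\beta_k u_k^2(x_k+r_kx)=\beta_k c_k^2+2\beta_k\psi_k(x)+\beta_k\psi_k(x)^2/c_k^2$, a change of variables gives
\[
\frac{1}{\lambda_k}\int_{B_{Lr_k}(x_k)}u_k^2\Big(\exp(\beta_k u_k^2)-\tfrac{\alpha}{\beta_k}\Big)dx\longrightarrow\int_{B_L}\exp(64\pi^2\psi)\,dx
\]
as $k\to\infty$, and the right-hand side tends to $1$ as $L\to\infty$ by Lemma \ref{class}. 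Since the integral of $u_k^2(\exp(\beta_k u_k^2)-\tfrac{\alpha}{\beta_k})$ over all of $\mathbb{R}^4$ equals $\lambda_k$ and the integrand is nonnegative where $u_k$ is large, the complement of $B_{Lr_k}(x_k)$ — hence, using $\rho_k^M/r_k\to+\infty$ and $\rho_k^M\to 0$, the complement of $B_{\rho_k^M}(x_k)$ — carries only $o_k(1)\cdot\lambda_k$ of it.

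With these three facts in hand, on $B_{\rho_k^M}(x_k)$ the bound $\tfrac{c_k}{M}\le u_k\le c_k$ yields immediately
\[
\tfrac{1}{c_k}\int_{B_{\rho_k^M}(x_k)}u_k^2\Big(\exp(\beta_k u_k^2)-\tfrac{\alpha}{\beta_k}\Big)dx\le\int_{B_{\rho_k^M}(x_k)}|u_k|\Big(\exp(\beta_k u_k^2)-\tfrac{\alpha}{\beta_k}\Big)dx\le\tfrac{M}{c_k}\int_{B_{\rho_k^M}(x_k)}u_k^2\Big(\exp(\beta_k u_k^2)-\tfrac{\alpha}{\beta_k}\Big)dx,
\]
and substituting the concentration estimate and then \eqref{23} turns this into $\tfrac{\lambda_k}{c_k}(1+o_k(1))\le\int_{B_R(x_k)}|u_k|\big(\exp(\beta_k u_k^2)-\tfrac{\alpha}{\beta_k}\big)dx\le\tfrac{M\lambda_k}{c_k}(1+o_k(1))$. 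Dividing $\lambda_k$ by this chain and then by $c_k$, and passing to the limits that define $b_k$ and $\tau$, gives $\tfrac1M\le\tau\le 1$; letting $M\to 1$ proves $\tau=1$.

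The step most in need of care is the concentration claim: one must verify that $B_{\rho_k^M}(x_k)$ — whose radius tends to $0$ but far more slowly than $r_k$ — really retains all but a vanishing fraction of the mass of $u_k^2(\exp(\beta_k u_k^2)-\tfrac{\alpha}{\beta_k})$, which amounts to controlling both the intermediate annulus $B_L(x_k)\setminus B_{\rho_k^M}(x_k)$ and the far region $\mathbb{R}^4\setminus B_L(x_k)$, where $u_k\le 1$ makes the contribution bounded and hence $o(\lambda_k)$. Everything else reduces to the monotonicity recorded in Lemma \ref{truncation} and the already-proved identity \eqref{23}.
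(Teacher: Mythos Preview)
Your proof is correct and follows essentially the same route as the paper's. Both localize to the truncation ball $B_{\rho_k^M}(x_k)$ via \eqref{23} and its analogue for the $u_k^2$-weighted integral, and both exploit the pointwise bounds $c_k/M\le u_k\le c_k$ there. The only cosmetic difference is that the paper reduces numerator and denominator separately to the unweighted integral $\int_{B_{\rho_k^M}}(\exp(\beta_k u_k^2)-\alpha/\beta_k)\,dx$, obtaining $b_k\ge c_k/M^2$, whereas you compare $|u_k|$ directly to $u_k^2/c_k$ and invoke the concentration $\int_{B_{\rho_k^M}}u_k^2(\exp-\alpha/\beta_k)\,dx=\lambda_k(1+o_k(1))$, yielding the slightly sharper $1/M\le\tau\le 1$; either way $M\downarrow 1$ gives $\tau=1$. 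Your concentration step is sound: for $k$ large one has $\alpha<\beta_k$, so the integrand $u_k^2(\exp(\beta_k u_k^2)-\alpha/\beta_k)$ is nonnegative on all of $\mathbb{R}^4$, and the monotone comparison $B_{Lr_k}\subset B_{\rho_k^M}$ for large $k$ (from $\rho_k^M/r_k\to\infty$) completes the mass argument as you indicate.
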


\begin{proof}
For any $R>0$, similar to (\ref{esti 2}), we have
\begin{align*}
\int_{%
%TCIMACRO{\U{211d} }%
%BeginExpansion
\mathbb{R}
%EndExpansion
^{4}}\left\vert u_{k}\right\vert ^{2}\left(  \exp\left(  \beta_{k}u_{k}%
^{2}\right)  -\frac{\alpha}{\beta_{k}}\right)  dx  &  =\int_{B_{\rho_{k}%
^{M}(x_{k})}}\left\vert u_{k}\right\vert ^{2}\left(  \exp\left(  \beta
_{k}u_{k}^{2}\right)  -\frac{\alpha}{\beta_{k}}\right)  dx+o_{k}\left(
1\right) \\
&  \geq\left(  \frac{c_{k}}{M}\right)  ^{2}\int_{B_{\rho_{k}^{M}}(x_{k}%
)}\left(  \exp\left(  \beta_{k}u_{k}^{2}\right)  -\frac{\alpha}{\beta_{k}%
}\right)  dx+o_{k}(1)
\end{align*}
and%
\[
\int_{B_{R}(x_{k})}\left\vert u_{k}\right\vert \left(  \exp\left(  \beta
_{k}u_{k}^{2}\right)  -\frac{\alpha}{\beta_{k}}\right)  dx\leq c_{k}%
\int_{B_{\rho_{k}^{M}}(x_{k})}\left(  \exp\left(  \beta_{k}u_{k}^{2}\right)
-\frac{\alpha}{\beta_{k}}\right)  dx+o_{k}(1).
\]
Thus,
\begin{align*}
b_{k}  &  =\underset{R\rightarrow\infty}{\lim}\frac{\int_{%
%TCIMACRO{\U{211d} }%
%BeginExpansion
\mathbb{R}
%EndExpansion
^{4}}\left\vert u_{k}\right\vert ^{2}\left(  \exp\left(  \beta_{k}u_{k}%
^{2}\right)  -\frac{\alpha}{\beta_{k}}\right)  dx}{\int_{B_{R}(x_{k}%
)}\left\vert u_{k}\right\vert \left(  \exp\left(  \beta_{k}u_{k}^{2}\right)
-\frac{\alpha}{\beta_{k}}\right)  dx}\\
&  \geq\frac{\left(  \frac{c_{k}}{M}\right)  ^{2}}{c_{k}}=\frac{c_{k}}{M^{2}},
\end{align*}
letting $M\rightarrow1$, we conclude that $\tau=1$.
\medskip

\end{proof}
\subsection{\bigskip Asymptotic behavior of $u_{k}$ away from the blow-up
point $0$}

In the following, we consider the asymptotic behavior of $u_{k}$ away from the
blow-up point $0$.

We recall that the crucial tool in studying the regularity of higher order
equations is the fundamental solution\emph{ of the operator }$\Delta^{2}%
+1$). The fundamental solution $\Gamma\left(  x,y\right)  $ for
$\Delta^{2}+1$ in $%
%TCIMACRO{\U{211d} }%
%BeginExpansion
\mathbb{R}
%EndExpansion
^{4}$ is the solution of
\[
\left(  \Delta^{2}+1\right)  \Gamma\left(  x,y\right)  =\delta_{x}\left(
y\right)  \text{ in }%
%TCIMACRO{\U{211d} }%
%BeginExpansion
\mathbb{R}
%EndExpansion
^{4}\text{,}%
\]
and all functions $u\in H^{2}\left(
%TCIMACRO{\U{211d} }%
%BeginExpansion
\mathbb{R}
%EndExpansion
^{4}\right)  \cap C^{4}\left(
%TCIMACRO{\U{211d} }%
%BeginExpansion
\mathbb{R}
%EndExpansion
^{4}\right)  $ satisfying $\left(  \Delta^{2}+1\right)  u=f$ can be
represented by%
\begin{equation}
u\left(  x\right)  =\int_{%
%TCIMACRO{\U{211d} }%
%BeginExpansion
\mathbb{R}
%EndExpansion
^{4}}\Gamma\left(  x,y\right)  f\left(  y\right)  dy. \label{representation}%
\end{equation}

We will need the following useful estimates for $\Gamma:$%

\begin{equation}
\left\vert \Gamma\left(  x,y\right)  \right\vert \leq c\ln\left(  1+\left\vert
x-y\right\vert ^{-1}\right)  ,\left\vert \nabla^{i}\Gamma\left(  x,y\right)
\right\vert \leq c\left\vert x-y\right\vert ^{-i}\text{, }i\geq1
\label{Green esti}%
\end{equation}
for all $x,y\in%
%TCIMACRO{\U{211d} }%
%BeginExpansion
\mathbb{R}
%EndExpansion
^{4}$, $x\neq y$ with $\left\vert x-y\right\vert \rightarrow0$, and%

\begin{equation}
\nabla^{i}\left\vert \Gamma\left(  x,y\right)  \right\vert =o\left(
\exp\left(  -\frac{1}{\sqrt{2}}\left\vert x-y\right\vert \right)
\right)  \,,i=0,1,2. \label{Green lage}%
\end{equation}
for all $x,y\in%
%TCIMACRO{\U{211d} }%
%BeginExpansion
\mathbb{R}
%EndExpansion
^{4}$, with $\left\vert x-y\right\vert \rightarrow+\infty$.

The above properties of $\Gamma$ can be found in \cite{Deng}.

\begin{lemma}
\label{bound for bu}\bigskip
For any $1<r<2$, $c_{k}u_{k}$ is bounded in
$W^{2,r}\left(
%TCIMACRO{\U{211d} }%
%BeginExpansion
\mathbb{R}
%EndExpansion
^{4}\right)$.
\end{lemma}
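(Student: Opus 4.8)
The plan is to combine the Euler--Lagrange equation with the representation formula (\ref{representation}) for $\Delta^{2}+1$ and Young's convolution inequality. Multiplying (\ref{euler}) by $c_{k}$ gives $(\Delta^{2}+1)(c_{k}u_{k})=f_{k}$, where
\[
f_{k}:=c_{k}\lambda_{k}^{-1}u_{k}\Big(\exp(\beta_{k}u_{k}^{2})-\tfrac{\alpha}{\beta_{k}}\Big).
\]
Since $u_{k}\in H^{2}(\mathbb{R}^{4})\cap C^{4}(\mathbb{R}^{4})$ and $f_{k}\in L^{2}(\mathbb{R}^{4})$ (it is $L^{2}$ near $0$ by the Adams embedding and dominated by $C|u_{k}|$ away from $0$), the representation formula yields $c_{k}u_{k}=\Gamma\ast f_{k}$ by translation invariance of the fundamental solution on $\mathbb{R}^{4}$. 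Hence, for $j=0,1,2$,
\[
\|\nabla^{j}(c_{k}u_{k})\|_{L^{r}(\mathbb{R}^{4})}=\|(\nabla^{j}\Gamma)\ast f_{k}\|_{L^{r}}\le\|\nabla^{j}\Gamma\|_{L^{r}(\mathbb{R}^{4})}\,\|f_{k}\|_{L^{1}(\mathbb{R}^{4})},
\]
and $\|\nabla^{j}\Gamma\|_{L^{r}(\mathbb{R}^{4})}<\infty$ for $1<r<2$: by (\ref{Green esti}), $|\nabla^{j}\Gamma(x,y)|\le c|x-y|^{-j}$ for $j\ge1$ (and the singularity of $\Gamma$ itself is only logarithmic), which is $L^{r}$-integrable across the diagonal precisely because $jr<4$ for $j\le2$, while (\ref{Green lage}) gives exponential decay at infinity. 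So it suffices to prove the uniform bound $\|f_{k}\|_{L^{1}(\mathbb{R}^{4})}\le C$.

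To do this, fix $M>1$, let $\rho_{k}^{M}$ and $u_{k}^{M}$ be the biharmonic truncation data of Lemma \ref{truncation}, and fix $L$ (uniform in $k$, by the radial lemma) with $u_{k}\le1$ on $\mathbb{R}^{4}\setminus B_{L}$; then split $\mathbb{R}^{4}=B_{\rho_{k}^{M}}(x_{k})\cup(B_{L}\setminus B_{\rho_{k}^{M}}(x_{k}))\cup(\mathbb{R}^{4}\setminus B_{L})$. On $B_{\rho_{k}^{M}}(x_{k})$ one has $u_{k}\ge c_{k}/M$, hence $|u_{k}|\le\frac{M}{c_{k}}u_{k}^{2}$, so
\[
\int_{B_{\rho_{k}^{M}}(x_{k})}|f_{k}|\,dx\le M\lambda_{k}^{-1}\int_{\mathbb{R}^{4}}u_{k}^{2}\Big(\exp(\beta_{k}u_{k}^{2})+\tfrac{|\alpha|}{\beta_{k}}\Big)dx\le M\lambda_{k}^{-1}\big(\lambda_{k}+C\big)\le 2M
\]
for $k$ large, using $\int u_{k}^{2}\exp(\beta_{k}u_{k}^{2})\,dx=\lambda_{k}+\frac{\alpha}{\beta_{k}}\|u_{k}\|_{2}^{2}$, $\|u_{k}\|_{2}\le1$ and $\lambda_{k}\to\infty$. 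On $B_{L}\setminus B_{\rho_{k}^{M}}(x_{k})$, where $u_{k}=u_{k}^{M}$, Lemma \ref{cufoff mod} and the Adams inequality under the Navier boundary condition (as in the proof of Lemma \ref{concentre}) give that $\exp(\beta_{k}(u_{k}^{M})^{2})-\frac{\alpha}{\beta_{k}}$ is bounded in $L^{p}(B_{L})$ for some $p>1$ and $u_{k}^{M}$ in $L^{p'}(B_{L})$, so by H\"{o}lder this annulus contributes $O(c_{k}\lambda_{k}^{-1})=o(1)$ to $\|f_{k}\|_{1}$. On $\mathbb{R}^{4}\setminus B_{L}$ we have $u_{k}\le1$, hence $|f_{k}|\le Cc_{k}\lambda_{k}^{-1}|u_{k}|$; there the equation reduces to $\Delta^{2}u_{k}+(1-V_{k})u_{k}=0$ with $\|V_{k}\|_{L^{\infty}(\mathbb{R}^{4}\setminus B_{L})}\le C\lambda_{k}^{-1}\to0$, and a bootstrap through the representation formula with the exponential decay (\ref{Green lage}) yields uniform exponential decay of $u_{k}$ on $\mathbb{R}^{4}\setminus B_{L}$, so $\int_{\mathbb{R}^{4}\setminus B_{L}}|u_{k}|\,dx\le C$ and this last piece is again $O(c_{k}\lambda_{k}^{-1})=o(1)$. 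Adding up, $\|f_{k}\|_{L^{1}(\mathbb{R}^{4})}\le2M+o(1)\le C$.

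The crux is the concentration piece over $B_{\rho_{k}^{M}}(x_{k})$: the naive estimate $|u_{k}|\le u_{k}^{2}$ only holds where $|u_{k}|\ge1$ and overshoots by a full factor $c_{k}$, giving merely $\int|f_{k}|\lesssim c_{k}\lambda_{k}^{-1}\lambda_{k}=c_{k}$; it is precisely the biharmonic truncation of Lemma \ref{truncation} that upgrades this to $|u_{k}|\le\frac{M}{c_{k}}u_{k}^{2}$ on the small ball where $\exp(\beta_{k}u_{k}^{2})$ concentrates. A secondary issue to handle carefully is the uniform exponential decay of $u_{k}$ at infinity: $f_{k}\in L^{1}(\mathbb{R}^{4})$ is not automatic for a generic $H^{2}$ function, and when $\alpha<0$ the term $-\frac{\alpha}{\beta_{k}}u_{k}$ inside $f_{k}$ makes integrability at infinity genuinely necessary; it follows from the exterior equation above together with the decay of $\Gamma$.
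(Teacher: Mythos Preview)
Your argument is correct and reaches the same conclusion as the paper, but by a somewhat different route. The paper does \emph{not} bound $c_{k}u_{k}$ directly; instead it introduces the auxiliary scale
\[
b_{k}\;\approx\;\frac{\lambda_{k}}{\displaystyle\int_{\mathbb{R}^{4}}|u_{k}|\bigl(\exp(\beta_{k}u_{k}^{2})-\tfrac{\alpha}{\beta_{k}}\bigr)\,dx},
\]
proves $\tau=\lim b_{k}/c_{k}=1$ in Lemma~\ref{tau} (this is exactly where the paper hides the biharmonic--truncation estimate you write out explicitly on $B_{\rho_{k}^{M}}$), and then bounds $b_{k}u_{k}$ in $W^{2,r}$ by a Jensen--type inequality with the probability measure $d\mu_{k}=\dfrac{|u_{k}|(\exp(\beta_{k}u_{k}^{2})-\alpha/\beta_{k})}{\int|u_{k}|(\exp(\beta_{k}u_{k}^{2})-\alpha/\beta_{k})}\,dy$. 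Your Young--inequality step $\|\nabla^{j}\Gamma\ast f_{k}\|_{r}\le\|\nabla^{j}\Gamma\|_{r}\|f_{k}\|_{1}$ together with your explicit proof that $\|f_{k}\|_{1}\le 2M+o(1)$ is the same information, unpacked: indeed $\|f_{k}\|_{1}\approx c_{k}/b_{k}\to1$ is precisely $\tau=1$. What your approach buys is transparency (no auxiliary $b_{k}$, $\sigma$, $\tau$); what the paper's buys is a clean separation between the concentration estimate (Lemma~\ref{tau}) and the potential--theoretic step.

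One point deserves a line more than you give it: the uniform exponential decay of $u_{k}$ on $\mathbb{R}^{4}\setminus B_{L}$, which you need so that $\int_{\mathbb{R}^{4}\setminus B_{L}}|u_{k}|\,dx\le C$ (recall a radial $H^{2}(\mathbb{R}^{4})$ function need not lie in $L^{1}$). Your claim is correct---on that region $u_{k}$ solves $(\Delta^{2}+1-V_{k})u_{k}=0$ with $\|V_{k}\|_{\infty}=O(\lambda_{k}^{-1})\to0$, and the exponential decay then follows e.g.\ from the Deng--Li result cited as \cite{Deng} for the Green function (\ref{Green lage})---but a sentence pointing to that reference would be appropriate. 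The paper is in fact no more careful on this point: its definition of $b_{k}$ tacitly requires the same integrability over all of $\mathbb{R}^{4}$.
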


\begin{remark}
It is quite difficult to prove $\left\Vert c_{k}u_{k}\right\Vert
_{W^{2,r}\left(
%TCIMACRO{\U{211d} }%
%BeginExpansion
\mathbb{R}
%EndExpansion
^{4}\right)  }\leq c$ directly. But as we have showed the fact $\underset
{k\rightarrow\infty}{\lim}\frac{c_{k}}{b_{k}}=1$ in Lemma \ref{tau}, we will
prove this lemma by showing that $\left\Vert b_{k}u_{k}\right\Vert
_{W^{2,r}\left(
%TCIMACRO{\U{211d} }%
%BeginExpansion
\mathbb{R}
%EndExpansion
^{4}\right)  }\leq c$. We find it quite easy to obtain the desired result for
$b_{k}u_{k}$.
\end{remark}

\begin{proof}
Let $\eta_{k}$ be the solution of
\[
\Delta^{2}\eta_{k}+\eta_{k}=\frac{b_{k}u_{k}}{\lambda_{k}}\left(  \exp\left\{
\beta_{k}u_{k}^{2}\right\}  -\frac{\alpha}{\beta_{k}}\right)  ,\ \ x\in%
%TCIMACRO{\U{211d} }%
%BeginExpansion
\mathbb{R}
%EndExpansion
^{4}.
\]
By the representation formula, we have
\[
\eta_{k}\left(  x\right)  =\int_{%
%TCIMACRO{\U{211d} }%
%BeginExpansion
\mathbb{R}
%EndExpansion
^{4}}\Gamma\left(  x,y\right)  \frac{b_{k}u_{k}\left(  y\right)  }{\lambda
_{k}}\left(  \exp\left\{  \beta_{k}u_{k}^{2}\left(  y\right)  \right\}
-\frac{\alpha}{\beta_{k}}\right)  dy.
\]
Then, by H\"{o}rder's inequality, for any $1<r<2$, we have

\begin{align*}
\left\vert \nabla^{i}\eta_{k}\left(  x\right)  \right\vert ^{r} &  =\left(
\frac{b_{k}}{\lambda_{k}}\int_{%
%TCIMACRO{\U{211d} }%
%BeginExpansion
\mathbb{R}
%EndExpansion
^{4}}\nabla^{i}\Gamma\left(  x,y\right)  u_{k}\left(  y\right)  \left(
\exp\left\{  \beta_{k}u_{k}^{2}\left(  y\right)  \right\}  -\frac{\alpha
}{\beta_{k}}\right)  dy\right)  ^{r}\\
&  \leq\left(  \int_{%
%TCIMACRO{\U{211d} }%
%BeginExpansion
\mathbb{R}
%EndExpansion
^{4}}\nabla^{i}\Gamma\left(  x,y\right)  \frac{u_{k}\left(  y\right)  \left(
\exp\left\{  \beta_{k}u_{k}^{2}\left(  y\right)  \right\}  -\frac{\alpha
}{\beta_{k}}\right)  }{\int_{%
%TCIMACRO{\U{211d} }%
%BeginExpansion
\mathbb{R}
%EndExpansion
^{4}}u_{k}\left(  z\right)  \left(  \exp\left\{  \beta_{k}u_{k}^{2}\left(
z\right)  \right\}  dz-\frac{\alpha}{\beta_{k}}\right)  dz}dy\right)  ^{r}\\
&  \leq\int_{%
%TCIMACRO{\U{211d} }%
%BeginExpansion
\mathbb{R}
%EndExpansion
^{4}}\left\vert \nabla^{i}\Gamma\left(  x,y\right)  \right\vert ^{r}%
\frac{u_{k}\left(  y\right)  \left(  \exp\left\{  \beta_{k}u_{k}^{2}\left(
y\right)  \right\}  -\frac{\alpha}{\beta_{k}}\right)  }{\int_{%
%TCIMACRO{\U{211d} }%
%BeginExpansion
\mathbb{R}
%EndExpansion
^{4}}u_{k}\left(  z\right)  \left(  \exp\left\{  \beta_{k}u_{k}^{2}\left(
z\right)  \right\}  dz-\frac{\alpha}{\beta_{k}}\right)  dz}dy,
\end{align*}
where $i=0,1,2$. Applying Fubini's theorem, we get by (\ref{Green esti}) and
(\ref{Green lage}) that
\begin{align*}
\int_{%
%TCIMACRO{\U{211d} }%
%BeginExpansion
\mathbb{R}
%EndExpansion
^{4}}\left\vert \nabla^{i}\eta_{k}\left(  x\right)  \right\vert ^{r}dx &
=\int_{%
%TCIMACRO{\U{211d} }%
%BeginExpansion
\mathbb{R}
%EndExpansion
^{4}}\int_{%
%TCIMACRO{\U{211d} }%
%BeginExpansion
\mathbb{R}
%EndExpansion
^{4}}\left\vert \nabla^{i}\Gamma\left(  x,y\right)  \right\vert ^{r}%
dx\frac{u_{k}\left(  y\right)  \left(  \exp\left\{  \beta_{k}u_{k}^{2}\left(
y\right)  \right\}  -\frac{\alpha}{\beta_{k}}\right)  }{\int_{%
%TCIMACRO{\U{211d} }%
%BeginExpansion
\mathbb{R}
%EndExpansion
^{4}}u_{k}\left(  z\right)  \left(  \exp\left\{  \beta_{k}u_{k}^{2}\left(
z\right)  \right\}  dz-\frac{\alpha}{\beta_{k}}\right)  dz}dy\\
&  \leq c\text{, for }i=0,1,2,
\end{align*}
thus, we get
\begin{equation}
\left\Vert \eta_{k}\right\Vert _{W^{2,r}\left(
%TCIMACRO{\U{211d} }%
%BeginExpansion
\mathbb{R}
%EndExpansion
^{4}\right)  }<c. \label{bound}%
\end{equation}
Let $\eta_{k}=b_{k}u_{k}$, then $\eta_{k}$ satisfies%
\[
\Delta^{2}\eta_{k}+\eta_{k}=\frac{b_{k}u_{k}}{\lambda_{k}}\left(
\exp\left\{  \beta_{k}u_{k}^{2}\right\}  -\frac{\alpha}{\beta_{k}}\right)
\text{ in }%
%TCIMACRO{\U{211d} }%
%BeginExpansion
\mathbb{R}
%EndExpansion
^{4}.
\]
By (\ref{bound}),\ we have $\left\Vert \eta_{k}\right\Vert _{W^{2,r}\left(
%TCIMACRO{\U{211d} }%
%BeginExpansion
\mathbb{R}
%EndExpansion
^{4}\right)  }<c$. This accomplishes the proof of Lemma \ref{bound for bu}.
\end{proof}

Now, we will show that $c_{k}u_{k}$ converges to some Green function.

\begin{lemma}\label{Dircm}
For any $\varphi\in C_{0}^{\infty}\left(
%TCIMACRO{\U{211d} }%
%BeginExpansion
\mathbb{R}
%EndExpansion
^{4}\right)  $, one has
\begin{equation}
\underset{k\rightarrow\infty}{\lim}\int_{%
%TCIMACRO{\U{211d} }%
%BeginExpansion
\mathbb{R}
%EndExpansion
^{4}}\varphi\left(  x\right)  \frac{c_{k}u_{k}}{\lambda_{k}}\left(
\exp\left(  \beta_{k}u_{k}^{2}\right)  -\frac{\alpha}{\beta_{k}}\right)
dx=\varphi\left(  0\right)  .\ \label{concentra}%
\end{equation}

\end{lemma}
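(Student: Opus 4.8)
Here is the plan. I would show that the signed measures
\[
d\nu_k:=\frac{c_ku_k}{\lambda_k}\Big(\exp(\beta_k u_k^2)-\frac{\alpha}{\beta_k}\Big)\,dx
\]
converge weakly to $\delta_0$ when tested against a fixed $\varphi\in C_0^\infty(\mathbb{R}^4)$. Pick $L_0$ with $\operatorname{supp}\varphi\subset B_{L_0}$, and for $k$ large (so that $B_{Lr_k}(x_k)\subset B_{\rho_k^M}(x_k)\subset B_{L_0}$ by Lemma~\ref{truncation}) split
\[
\int_{\mathbb{R}^4}\varphi\,d\nu_k=\int_{B_{Lr_k}(x_k)}\varphi\,d\nu_k+\int_{B_{L_0}\setminus B_{Lr_k}(x_k)}\varphi\,d\nu_k ,
\]
where $M>1$ and $L>1$ are parameters, to be sent respectively to $1$ and to $\infty$ at the very end.

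For the inner integral I would rescale $x=x_k+r_ky$ and use $u_k(x_k+r_ky)=c_k\phi_k(y)$, the relation $r_k^4=\lambda_k(c_k^2e^{\beta_kc_k^2})^{-1}$, and the identity $\phi_k-1=c_k^{-2}\psi_k$ to bring the integrand into the form
\[
\varphi(x_k+r_ky)\,\phi_k(y)\Big(\exp\!\big(\beta_k\psi_k(y)(\phi_k(y)+1)\big)-\tfrac{\alpha}{\beta_k}e^{-\beta_kc_k^2}\Big).
\]
By $\phi_k\to1$ and $\psi_k\to\psi$ in $C^0_{loc}$ (Lemmas~\ref{convergence for fei} and~\ref{class}) this converges, uniformly on $B_L$, to $\varphi(0)\exp(64\pi^2\psi(y))$; since $0\le\phi_k\le1$ gives $\exp(\beta_k\psi_k(\phi_k+1))\le1$, dominated convergence yields $\int_{B_{Lr_k}(x_k)}\varphi\,d\nu_k\to\varphi(0)E(L)$ where $E(L):=\int_{B_L}\exp(64\pi^2\psi)\,dy$, and $E(L)\uparrow1$ by Lemma~\ref{class}.

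For the annular piece I would estimate $\int_{B_{L_0}\setminus B_{Lr_k}(x_k)}|d\nu_k|$, splitting it further at the truncation radius $\rho_k^M$. On $B_{\rho_k^M}(x_k)\setminus B_{Lr_k}(x_k)$ one has $u_k>0$, hence $|d\nu_k|\le\frac{c_k^2}{\lambda_k}(\exp(\beta_k u_k^2)-\frac{\alpha}{\beta_k})\,dx$; since $u_k\ge c_k/M$ there, the normalization $\lambda_k=\int_{\mathbb{R}^4}u_k^2(\exp(\beta_k u_k^2)-\frac{\alpha}{\beta_k})\,dx$ forces $\frac{c_k^2}{\lambda_k}\int_{B_{\rho_k^M}(x_k)}(\exp(\beta_k u_k^2)-\frac{\alpha}{\beta_k})\,dx\le M^2$, and subtracting the $B_{Lr_k}(x_k)$-part (which tends to $E(L)$ by the same rescaling as above) leaves $\limsup_k\int_{B_{\rho_k^M}(x_k)\setminus B_{Lr_k}(x_k)}|d\nu_k|\le M^2-E(L)$. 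On $B_{L_0}\setminus B_{\rho_k^M}(x_k)$ one has $u_k^M=u_k$, and Lemma~\ref{cufoff mod} gives $\|u_k^M\|_{H^2}^2\le M^{-1}+o(1)<1$, so the Adams inequality (\ref{Adams entire space}) makes $\exp(\beta_k(u_k^M)^2)$ bounded in some $L^p(\mathbb{R}^4)$ with $p=p(M)>1$; combined with $u_k\to0$ in $L^{p'}(\mathbb{R}^4)$ (recall $u\equiv0$) and $\lambda_k\to\infty$ (Lemma~\ref{concentre}), a H\"older estimate gives $\int_{B_{L_0}\setminus B_{\rho_k^M}(x_k)}|d\nu_k|\to0$. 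Adding the two contributions yields $\limsup_k\big|\int_{\mathbb{R}^4}\varphi\,d\nu_k-\varphi(0)E(L)\big|\le\|\varphi\|_\infty(M^2-E(L))$ for every $M>1$; letting $M\to1$ and then $L\to\infty$ gives $\int_{\mathbb{R}^4}\varphi\,d\nu_k\to\varphi(0)$, which is the claim.

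The delicate point is the \emph{neck} $B_{L_0}\setminus B_{Lr_k}(x_k)$ separating the concentration scale $r_k$ from an $O(1)$ radius: the rescaled analysis near $x_k$ controls only $B_{Lr_k}(x_k)$, while the strong $L^{p'}$-vanishing of $u_k$ is usable only where the Adams exponent can be absorbed, i.e.\ on $B_{L_0}\setminus B_{\rho_k^M}(x_k)$ once $u_k^M$ has subcritical energy. Inserting the biharmonic truncation radius $\rho_k^M$ bridges the gap, and on the remaining shell $B_{\rho_k^M}(x_k)\setminus B_{Lr_k}(x_k)$ the crude bound $u_k\ge c_k/M$ tested against $\lambda_k$ costs only the factor $M^2$, which is why $M\to1$ must be taken after $L$ is fixed but before $L\to\infty$.
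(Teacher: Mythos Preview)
Your argument is correct and follows essentially the same three-region decomposition as the paper (inner ball $B_{Lr_k}(x_k)$, neck up to $\rho_k^M$, outer annulus), the only cosmetic difference being that on the neck the paper uses $c_k\le Mu_k$ to bound $|d\nu_k|\le M\frac{u_k^2}{\lambda_k}\big(\exp(\beta_ku_k^2)-\frac{\alpha}{\beta_k}\big)\,dx$ and obtains the remainder $M(1-E(L))$ rather than your $M^2-E(L)$. One small imprecision: for the outer piece you need $c_k/\lambda_k\to0$ (which Lemma~\ref{concentre} indeed gives), not merely ``$\lambda_k\to\infty$'', since $c_k\to\infty$ as well---invoking $u_k\to0$ in $L^{p'}$ alone does not absorb the diverging factor $c_k$.
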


\begin{proof}
Suppose $supp\ \varphi\subset B_{\rho}$ and we spit the integral as follows%
\begin{align}
&  \int_{%
%TCIMACRO{\U{211d} }%
%BeginExpansion
\mathbb{R}
%EndExpansion
^{4}}\varphi\left(  x\right)  \frac{c_{k}u_{k}}{\lambda_{k}}\left(
\exp\left(  \beta_{k}u_{k}^{2}\right)  -\frac{\alpha}{\beta_{k}}\right)
dx\nonumber\\
&  =\int_{B_{\rho_{k}^{M}}(x_{k})\setminus B_{Lr_{k}}}\varphi\left(  x\right)
\frac{c_{k}u_{k}}{\lambda_{k}}\left(  \exp\left(  \beta_{k}u_{k}^{2}\right)
-\frac{\alpha}{\beta_{k}}\right)  dx\nonumber\\
&  +\int_{B_{Lr_{k}}}\varphi\left(  x\right)  \frac{c_{k}u_{k}}{\lambda_{k}%
}\left(  \exp\left(  \beta_{k}u_{k}^{2}\right)  -\frac{\alpha}{\beta_{k}%
}\right)  dx\nonumber\\
&  +\int_{B_{\rho}\setminus B_{\rho_{k}^{M}}(x_{k})}\varphi\left(  x\right)
\frac{c_{k}u_{k}}{\lambda_{k}}\left(  \exp\left(  \beta_{k}u_{k}^{2}\right)
-\frac{\alpha}{\beta_{k}}\right)  dx\nonumber\\
&  =I_{1}^{k}+I_{2}^{k}+I_{3}^{k}.
\end{align}
For $I_{1}^{k}$, it follows that
\begin{equation}
\label{g1}%
\begin{split}
I_{1}^{k}  &  \leq M \|\varphi\|_{C^{0}}\int_{B_{\rho_{k}^{M}}(x_{k})\setminus
B_{Lr_{k}}}  \frac{u^{2}_{k}}{\lambda_{k}}\left(
\exp\left(  \beta_{k}u_{k}^{2}\right)  -\frac{\alpha}{\beta_{k}}\right)  dx\\
&  =M \|\varphi\|_{C^{0}}\Big(1-\int_{B_{L}}\exp\left(  2\beta_{k}\psi
_{k}\left(  x\right)  +o_{k}(1) \right)  dx\Big).
\end{split}
\end{equation}
Letting $k\rightarrow+\infty$ and $L\rightarrow+\infty$, we derive that
$\underset{k\rightarrow\infty}{\lim}I_{1}^{k}=0$.

For $I_{2}^{k}$, we have
\begin{equation}
\label{g2}%
\begin{split}
I_{2}^{k}  &  =\int_{B_{L}}\varphi(r_{k}x+x_k) \frac{u_{k}(r_{k}x+x_k)}{c_{k}}%
\exp\left(  2\beta_{k}\psi_{k}\left(  x\right)  +o_{k}(1) \right)  dx.\\
\end{split}
\end{equation}
Letting $k\rightarrow+\infty$ and $L\rightarrow+\infty$, we derive that
$\underset{k\rightarrow\infty}{\lim}I_{2}^{k}=\varphi(0)$.

For $I_{3}^{k}$, since $\exp(\beta_{k}|u_{k}^{M}|^{2})$ is bounded in
$L^{p}(B_{\rho})$ for some $p>1$, choosing $p>1$ sufficiently close to $1$ and
by H\"{o}lder's inequality, we derive that
\begin{equation}
\label{g3}%
\begin{split}
I_{3}^{k}\leq\frac{c_{k}}{\lambda_{k}}\|\varphi\|_{C^{0}}\big(\int_{B_{\rho}%
}|u_{k}|^{p^{\prime}}dx\big)^{\frac{1}{p^{\prime}}}\big(\int_{B_{\rho}}\exp(\beta
_{k}p|u_{k}^{M}|^{2})dx\big)^{\frac{1}{p}}.
\end{split}
\end{equation}
Note that $\underset{k\rightarrow\infty}{\lim}\frac{c_{k}}{\lambda_{k}}=0$,
hence $\underset{k\rightarrow\infty}{\lim}I_{3}^{k}=0$. Combining \eqref{g1},
\eqref{g2} and \eqref{g3}, we conclude that
\[
\underset{k\rightarrow\infty}{\lim}\int_{%
%TCIMACRO{\U{211d} }%
%BeginExpansion
\mathbb{R}
%EndExpansion
^{4}}\varphi\left(  x\right)  \frac{c_{k}u_{k}}{\lambda_{k}}\left(
\exp\left(  \beta_{k}u_{k}^{2}\right)  -\frac{\alpha}{\beta_{k}}\right)
dx=\varphi\left(  0\right)  .\ \label{concentra}%
\]

\end{proof}

\begin{lemma}
\label{esti for the out of the ball}For any $1<r<2$, $c_{k}u_{k}%
\rightharpoonup G\in C^{3}\left(
%TCIMACRO{\U{211d} }%
%BeginExpansion
\mathbb{R}
%EndExpansion
^{4}\right)  \backslash\left\{  0\right\}  $ weakly in $W^{2,r}\left(
%TCIMACRO{\U{211d} }%
%BeginExpansion
\mathbb{R}
%EndExpansion
^{4}\right)  $, where $G$ is a Green function satisfying
\[
\Delta^{2}G+G=\delta\left(  x\right)  \text{ in }%
%TCIMACRO{\U{211d} }%
%BeginExpansion
\mathbb{R}
%EndExpansion
^{4}.
\]
Also, we have%
\begin{equation}
G=-\frac{1}{8\pi^{2}}\ln\left\vert x\right\vert +A+\varphi\left(  x\right)
,\label{Green}%
\end{equation}
where $A$ is a constant depending on $0$, $\varphi\left(  x\right)  \in
C^{3}\left(
%TCIMACRO{\U{211d} }%
%BeginExpansion
\mathbb{R}
%EndExpansion
^{4}\right)  $ and $\varphi\left(  0\right)  =0$. Moreover, we have
\begin{align*}
&  \underset{k\rightarrow\infty}{\lim}\left(  \int_{%
%TCIMACRO{\U{211d} }%
%BeginExpansion
\mathbb{R}
%EndExpansion
^{4}\backslash B_{\delta}\left(  0\right)  }\left\vert \Delta\left(
c_{k}u_{k}\right)  \right\vert ^{2}dx+\int_{%
%TCIMACRO{\U{211d} }%
%BeginExpansion
\mathbb{R}
%EndExpansion
^{4}\backslash B_{\delta}\left(  0\right)  }\left\vert c_{k}u_{k}\right\vert
^{2}dx\right) \\
&  =-\frac{1}{8\pi^{2}}\ln\delta-\frac{1}{16\pi^{2}}+A+O\left(  \delta\right)
.
\end{align*}
\end{lemma}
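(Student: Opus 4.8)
The plan is to identify the weak limit of $c_{k}u_{k}$ with the Green function of $\Delta^{2}+1$ at the origin, record its logarithmic expansion there, upgrade the convergence on each exterior ball $\mathbb{R}^{4}\setminus B_{\delta}$, and then extract the limit of the energy from a Green‑identity computation on $\partial B_{\delta}$. First, fix $1<r<2$. By Lemma \ref{bound for bu}, $c_{k}u_{k}$ is bounded in $W^{2,r}(\mathbb{R}^{4})$, so along a subsequence $c_{k}u_{k}\rightharpoonup G$ in $W^{2,r}(\mathbb{R}^{4})$, with $c_{k}u_{k}\to G$ in $L^{q}_{loc}$ and a.e. Multiplying the Euler--Lagrange equation (\ref{euler}) by $c_{k}$ gives
$$\Delta^{2}(c_{k}u_{k})+c_{k}u_{k}=\frac{c_{k}u_{k}}{\lambda_{k}}\Big(\exp(\beta_{k}u_{k}^{2})-\frac{\alpha}{\beta_{k}}\Big),$$
whose left side converges to $\Delta^{2}G+G$ in $\mathcal{D}'(\mathbb{R}^{4})$ and whose right side converges to $\delta_{0}$ by Lemma \ref{Dircm}; hence $\Delta^{2}G+G=\delta_{0}$. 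By uniqueness among solutions with the decay forced by (\ref{Green esti})--(\ref{Green lage}) this means $G=\Gamma(\cdot,0)$, so $G\not\equiv0$ and $G\in C^{\infty}(\mathbb{R}^{4}\setminus\{0\})$ by interior elliptic regularity. Since the fundamental solution of $\Delta^{2}$ in $\mathbb{R}^{4}$ is $-\tfrac{1}{8\pi^{2}}\ln|x|$, the function $w:=G+\tfrac{1}{8\pi^{2}}\ln|x|$ satisfies $\Delta^{2}w=(\delta_{0}-G)-\delta_{0}=-G\in L^{p}_{loc}$ for every finite $p$, hence $w\in W^{4,p}_{loc}\hookrightarrow C^{3,\gamma}_{loc}$; being moreover smooth away from $0$, $w\in C^{3}(\mathbb{R}^{4})$, and setting $A:=w(0)$, $\varphi:=w-w(0)$ yields the asserted decomposition $G(x)=-\tfrac{1}{8\pi^{2}}\ln|x|+A+\varphi(x)$ with $\varphi\in C^{3}(\mathbb{R}^{4})$, $\varphi(0)=0$.

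Next I would upgrade the convergence on $\mathbb{R}^{4}\setminus B_{\delta}$, aiming for
$$\lim_{k\to\infty}\int_{\mathbb{R}^{4}\setminus B_{\delta}}\big(|\Delta(c_{k}u_{k})|^{2}+|c_{k}u_{k}|^{2}\big)\,dx=\int_{\mathbb{R}^{4}\setminus B_{\delta}}\big(|\Delta G|^{2}+G^{2}\big)\,dx.$$
Away from the blow‑up point $u_{k}\to0$ uniformly — the radial lemma gives $|u_{k}(x)|\le C|x|^{-3/2}$ uniformly in $k$, and on $B_{L}\setminus B_{\delta}$ the elliptic estimates of Lemma \ref{local estimate} give $u_{k}\to0$ in $C^{3}$ (recall $u\equiv0$, since the $H^{2}$‑norm of $u_{k}$ outside every $B_{\delta}$ tends to $0$) — so $\exp(\beta_{k}u_{k}^{2})-\tfrac{\alpha}{\beta_{k}}$ is uniformly close to a positive constant there and $c_{k}u_{k}\to G$ in $C^{2}_{loc}(\mathbb{R}^{4}\setminus\{0\})$. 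For the tail I would use the representation $c_{k}u_{k}(x)=\tfrac{c_{k}}{b_{k}}\int_{\mathbb{R}^{4}}\Gamma(x,y)\,d\mu_{k}(y)$ from the proof of Lemma \ref{bound for bu}, where $\mu_{k}$ has bounded total variation and tends weakly to $\delta_{0}$ (Lemma \ref{Dircm}), together with $\tfrac{c_{k}}{b_{k}}\to1$ (Lemma \ref{tau}) and $\lambda_{k}\to+\infty$: for $|x|\ge\delta$ the estimates (\ref{Green esti}) control the part of the integral where $y$ lies in the bulk of $|\mu_{k}|$ (there $|x-y|\ge\delta/2$), while (\ref{Green lage}) controls the rest, giving $|\nabla^{i}(c_{k}u_{k})(x)|\le Ce^{-c|x|}$ for $i=0,1,2$ uniformly in $k$. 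Dominated convergence then promotes the $C^{2}_{loc}$‑convergence to convergence of $c_{k}u_{k}$ and $\Delta(c_{k}u_{k})$ in $L^{2}(\mathbb{R}^{4}\setminus B_{\delta})$, which is the displayed identity.

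Finally I would evaluate the right‑hand side by Green's identity for $\Delta^{2}$ on the annulus $B_{R}\setminus B_{\delta}$: using $\Delta^{2}G=-G$ there and letting $R\to\infty$ (the $\partial B_{R}$‑terms vanish by (\ref{Green lage})),
$$\int_{\mathbb{R}^{4}\setminus B_{\delta}}\big(|\Delta G|^{2}+G^{2}\big)\,dx=\int_{\partial B_{\delta}}\big(G\,\partial_{r}\Delta G-\Delta G\,\partial_{r}G\big)\,dS.$$
Inserting the expansion of $G$ and using $\Delta\ln|x|=2|x|^{-2}$ in $\mathbb{R}^{4}$, on $\partial B_{\delta}$ one has $G=-\tfrac{1}{8\pi^{2}}\ln\delta+A+O(\delta)$, $\partial_{r}G=-\tfrac{1}{8\pi^{2}\delta}+O(1)$, $\Delta G=-\tfrac{1}{4\pi^{2}\delta^{2}}+O(1)$, $\partial_{r}\Delta G=\tfrac{1}{2\pi^{2}\delta^{3}}+O(1)$; since $\mathrm{Area}(\partial B_{\delta})=2\pi^{2}\delta^{3}$, the first summand contributes $-\tfrac{1}{8\pi^{2}}\ln\delta+A$, the second $-\tfrac{1}{16\pi^{2}}$, and all remaining contributions are $O(\delta)$, yielding exactly $-\tfrac{1}{8\pi^{2}}\ln\delta-\tfrac{1}{16\pi^{2}}+A+O(\delta)$.

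I expect the main obstacle to be the second step: plain weak $W^{2,r}$‑convergence of $c_{k}u_{k}$ only gives $\liminf\ge$ in the energy, so to get equality one genuinely needs the uniform pointwise ($C^{2}$) bounds with exponential decay on $\mathbb{R}^{4}\setminus B_{\delta}$. These hinge on the representation formula for $b_{k}u_{k}$ together with the sharp decay estimates (\ref{Green esti})--(\ref{Green lage}) for the fundamental solution of $\Delta^{2}+1$ and the concentration $\mu_{k}\rightharpoonup\delta_{0}$ from Lemma \ref{Dircm}; once this uniform control is in hand, the identification of $G$ and the boundary computation are routine.
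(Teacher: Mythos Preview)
Your identification of $G$ and its logarithmic expansion (parts one and two) is correct and essentially follows the paper; the only cosmetic difference is that the paper multiplies $\ln|x|$ by a cutoff $\phi$ before adding it to $G$, while you work with $w=G+\tfrac{1}{8\pi^2}\ln|x|$ directly. Both lead to the same $C^3$ regularity for the regular part.

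For the energy identity, however, you take a genuinely different route than the paper, and your stated key estimate is too strong as written. The paper never proves $L^{2}(\mathbb{R}^{4}\setminus B_{\delta})$-convergence of $c_{k}u_{k}$ or $\Delta(c_{k}u_{k})$. Instead it does the integration by parts at level $k$: writing $U_{k}=c_{k}u_{k}$, Proposition~\ref{split} (applied on $B_{R}\setminus B_{\delta}$ and letting $R\to\infty$) gives
\[
\int_{\mathbb{R}^{4}\setminus B_{\delta}}\big(|\Delta U_{k}|^{2}+U_{k}^{2}\big)\,dx
=\int_{\partial B_{\delta}} v\cdot\big(U_{k}\,\Delta^{3/2}U_{k}-\Delta^{1/2}U_{k}\,\Delta U_{k}\big)\,dS+o_{k}(1),
\]
the $o_{k}(1)$ coming from $\tfrac{c_{k}^{2}}{\lambda_{k}}\int_{\mathbb{R}^{4}\setminus B_{\delta}}u_{k}^{2}(\exp(\beta_{k}u_{k}^{2})-\tfrac{\alpha}{\beta_{k}})\,dx$. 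One then passes to the limit \emph{only} in the boundary integral, which lives on the compact sphere $\partial B_{\delta}$ and therefore needs nothing more than the $C^{3}_{loc}(\mathbb{R}^{4}\setminus\{0\})$-convergence already obtained from interior elliptic estimates. No uniform-in-$k$ tail control is required.

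Your route can be made to work, but the claim ``$|\nabla^{i}(c_{k}u_{k})(x)|\le Ce^{-c|x|}$ uniformly in $k$'' is not quite true: splitting $\mu_{k}=\mu_{k}|_{B_{\epsilon}}+\mu_{k}|_{B_{\epsilon}^{c}}$, the first piece indeed gives exponential decay, but the second contributes a term of size $O(c_{k}/\lambda_{k})$ which is $o_{k}(1)$ uniformly in $x$ yet does \emph{not} decay in $|x|$ --- so the pointwise exponential bound fails and dominated convergence cannot be applied directly. What does go through is that $|\mu_{k}|(\{|y|\ge\epsilon\})=O(1/\lambda_{k})\to0$ and $\nabla^{i}\Gamma\in L^{1}(\mathbb{R}^{4})$ for $i=0,1,2$; combining this with Young's inequality one gets $\|\nabla^{i}(c_{k}u_{k})\|_{L^{2}(|x|>L)}\le Ce^{-cL}+o_{k}(1)$, which is enough for the $L^{2}$-convergence you want. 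So the gap is repairable, but the paper's boundary-integral argument is shorter and sidesteps it entirely.
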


\begin{proof}
By Lemma \ref{bound for bu}, there exists a function $G\in W^{2,r}\left(
%TCIMACRO{\U{211d} }%
%BeginExpansion
\mathbb{R}
%EndExpansion
^{4}\right)  $\ such that $c_{k}u_{k}\rightharpoonup G$ weakly in
$W^{2,r}\left(
%TCIMACRO{\U{211d} }%
%BeginExpansion
\mathbb{R}
%EndExpansion
^{4}\right)  $ for any $1<r<2$. For any $s>0,$ by (\ref{dirc m}), we know
$\exp\left(  \beta_{k}u_{k}^{2}\right)  $ is bounded in $L^{p}\left(
B_{R}\backslash B_{s}\right)  $,\ for any $0<s<R$. Notice that $c_{k}u_{k}$
satisfies
\begin{equation}
\Delta^{2}\left(  c_{k}u_{k}\right)  +c_{k}u_{k}=\frac{c_{k}u_{k}}{\lambda
_{k}}\left(  \exp\left(  \beta_{k}u_{k}^{2}\right)  -\frac{\alpha}{\beta_{k}%
}\right)  \text{ in }%
%TCIMACRO{\U{211d} }%
%BeginExpansion
\mathbb{R}
%EndExpansion
^{4}.\label{equa}%
\end{equation}

Then the standard regularity theory gives  $c_{k}u_{k}\rightarrow G$ in
$C_{loc}^{3}\left(
%TCIMACRO{\U{211d} }%
%BeginExpansion
\mathbb{R}
%EndExpansion
^{4}\backslash\left\{  0\right\}  \right)  $. \

For any $\varphi\left(  x\right)  \in C_{0}^{\infty}\left(
%TCIMACRO{\U{211d} }%
%BeginExpansion
\mathbb{R}
%EndExpansion
^{4}\right)  $, in view of Lemma \ref{Dircm}, we have
\[
\int_{%
%TCIMACRO{\U{211d} }%
%BeginExpansion
\mathbb{R}
%EndExpansion
^{4}}\varphi\left(  x\right)  \left(  \frac{c_{k}u_{k}}{\lambda_{k}}\left(
\exp\left(  \beta_{k}u_{k}^{2}\right)  -\frac{\alpha}{\beta_{k}}\right)
-c_{k}u_{k}\right)  dx=\varphi\left(  0\right)  -\int_{%
%TCIMACRO{\U{211d} }%
%BeginExpansion
\mathbb{R}
%EndExpansion
^{4}}G\left(  x\right)  \varphi\left(  x\right)  dx.
\]
Hence, it follows that
\[
\Delta^{2}G=\delta\left(  x\right)  -G\text{ in }%
%TCIMACRO{\U{211d} }%
%BeginExpansion
\mathbb{R}
%EndExpansion
^{4}.
\]

Fix $r>0$, we choose some cutoff function $\phi\in C_{0}^{\infty}\left(
B_{2r}\left(  0\right)  \right)  $ such that $\phi=1$ in $B_{r}\left(
0\right)  $, and let%
\[
g\left(  x\right)  =G\left(  x\right)  +\frac{1}{8\pi^{2}}\phi\left(
x\right)  \ln\left\vert x\right\vert .
\]
Then a direct computation shows that
\[
\Delta^{2}g\left(  x\right)  =f\text{ in }%
%TCIMACRO{\U{211d} }%
%BeginExpansion
\mathbb{R}
%EndExpansion
^{4}\text{,}%
\]
where
\begin{align*}
f\left(  x\right)   &  =-\frac{1}{8\pi^{2}}\left(  \Delta^{2}\phi\cdot
\ln\left\vert x\right\vert +2\nabla\Delta\phi\cdot\nabla\ln\left\vert
x\right\vert +\right.  \\
&  \left.  +2\Delta\left(  \nabla\phi\cdot\nabla\ln\left\vert x\right\vert
\right)  +2\nabla\phi\cdot\nabla\Delta\ln\left\vert x\right\vert +\phi
\cdot\Delta^{2}\ln\left\vert x\right\vert \right)  +\delta\left(
x\right)  -G.
\end{align*}
Since $\frac{1}{8\pi^{2}}\phi\cdot\Delta^{2}\ln\left\vert x\right\vert
=\delta\left(  x\right)  $ in $%
%TCIMACRO{\U{211d} }%
%BeginExpansion
\mathbb{R}
%EndExpansion
^{4}$, direct calculations yield that
\begin{align*}
f\left(  x\right)   &  =-\frac{1}{8\pi^{2}}\left(  \Delta^{2}\phi\cdot
\ln\left\vert x\right\vert +2\nabla\Delta\phi\cdot\nabla\ln\left\vert
x\right\vert +\right.  \\
&  \left.  +2\Delta\left(  \nabla\phi\cdot\nabla\ln\left\vert x\right\vert
\right)  +2\nabla\phi\cdot\nabla\Delta\ln\left\vert x\right\vert \right)  -G.
\end{align*}
Since $G\in W^{2,r}\left(
%TCIMACRO{\U{211d} }%
%BeginExpansion
\mathbb{R}
%EndExpansion
^{4}\right)  $ for any $1<r<2$, we have $f\left(  x\right)  \in L_{loc}%
^{p}\left(
%TCIMACRO{\U{211d} }%
%BeginExpansion
\mathbb{R}
%EndExpansion
^{4}\right)  $ for any $p>2$. By the standard regularity theory, we get
$g\left(  x\right)  \in C_{loc}^{3}\left(
%TCIMACRO{\U{211d} }%
%BeginExpansion
\mathbb{R}
%EndExpansion
^{4}\right)  $. Let $A=g\left(  0\right)  $ and%
\[
\varphi\left(  x\right)  =g\left(  x\right)  -g\left(  0\right)  +\frac
{1}{8\pi^{2}}\left(  1-\phi\right)  \ln\left\vert x\right\vert .
\]
Then\ we have
\begin{equation}
G=-\frac{1}{8\pi^{2}}\ln\left\vert x\right\vert +A+\varphi\left(  x\right)
,\label{Green1}%
\end{equation}
where $A$ is constant depending on $0$, $\varphi\left(  x\right)  \in
C^{3}\left(
%TCIMACRO{\U{211d} }%
%BeginExpansion
\mathbb{R}
%EndExpansion
^{4}\right)  $ and $\varphi\left(  0\right)  =0$. Then (\ref{Green}) follows
directly from (\ref{Green1}).

Setting $U_{k}=c_{k}u_{k}$, then $U_{k}$ satisfy:
\[
\Delta^{2}U_{k}=\frac{U_{k}}{\lambda_{k}}\left(  \exp\left(  \beta_{k}%
u_{k}^{2}\right)  -\frac{\alpha}{\beta_{k}}\right)  -U_{k}\text{ in }%
%TCIMACRO{\U{211d} }%
%BeginExpansion
\mathbb{R}
%EndExpansion
^{4}.
\]
Testing it with $U_{k}$, by Proposition \ref{split}, we get%
\[
\int_{%
%TCIMACRO{\U{211d} }%
%BeginExpansion
\mathbb{R}
%EndExpansion
^{4}\backslash B_{\delta}\left(  0\right)  }\left\vert \Delta U_{k}\right\vert
^{2}dx+\int_{%
%TCIMACRO{\U{211d} }%
%BeginExpansion
\mathbb{R}
%EndExpansion
^{4}\backslash B_{\delta}\left(  0\right)  }\left\vert U_{k}\right\vert
^{2}dx=\int_{\partial B_{\delta}\left(  0\right)  }v\left(  U_{k}\Delta
^{3/2}U_{k}-\Delta^{1/2}U_{k}\Delta U_{k}\right)  dx,
\]
where $v$ is the outer normal vector of $\partial B_{\delta}\left(  0\right)
$.\ Then we have
\begin{align}
&  \underset{k\rightarrow\infty}{\lim}\left(  \int_{%
%TCIMACRO{\U{211d} }%
%BeginExpansion
\mathbb{R}
%EndExpansion
^{4}\backslash B_{\delta}\left(  0\right)  }\left\vert \Delta U_{k}\right\vert
^{2}dx+\int_{%
%TCIMACRO{\U{211d} }%
%BeginExpansion
\mathbb{R}
%EndExpansion
^{4}\backslash B_{\delta}\left(  0\right)  }\left\vert U_{k}\right\vert
^{2}dx\right)  \nonumber\\
&  =\int_{\partial B_{\delta}\left(  0\right)  }v\left(  G\Delta^{3/2}%
G-\Delta^{1/2}G\Delta G\right)  dx.\label{esti en}%
\end{align}
\bigskip It is known that the fundamental solution of $\Delta^{2}$ in $%
%TCIMACRO{\U{211d} }%
%BeginExpansion
\mathbb{R}
%EndExpansion
^{4}$ is $-\frac{1}{8\pi^{2}}\ln\left\vert x\right\vert $, and it satisfies
\begin{equation}
\Delta^{\frac{1}{2}}\left(  \log\left\vert x\right\vert \right)  =\frac
{x}{\left\vert x\right\vert ^{2}},\Delta\left(  \log\left\vert x\right\vert
\right)  =\frac{2}{\left\vert x\right\vert ^{2}},\Delta^{1+\frac{1}{2}}\left(
\log\left\vert x\right\vert \right)  =-4\frac{x}{\left\vert x\right\vert ^{4}%
}.\label{gradi es}%
\end{equation}
After some computation, we obtain
\begin{align}
v\cdot G\left(  \delta\right)  \Delta^{3/2}G\left(  \delta\right)   &
=\left(  -\frac{1}{8\pi^{2}}\ln\left\vert \delta\right\vert +A+O\left(
\delta\right)  \right)  \left(  \frac{1}{2\pi^{2}}\frac{1}{\delta^{3}%
}+O\left(  1\right)  \right)  \label{es1}\\
&  =\frac{1}{2\pi^{2}}\frac{1}{\delta^{3}}\left(  -\frac{1}{8\pi^{2}}\ln
\delta+A+O\left(  \delta\right)  \right)  \nonumber
\end{align}
and%
\begin{align}
-v\cdot\Delta^{1/2}G\left(  \delta\right)  \Delta G\left(  \delta\right)   &
=-\left(  -\frac{1}{8\pi^{2}}\frac{1}{\delta}+O\left(  1\right)  \right)
\left(  -\frac{1}{8\pi^{2}}\frac{2}{\delta^{2}}+O\left(  1\right)  \right)
\label{es2}\\
&  =-\frac{1}{32\pi^{4}}\frac{1}{\delta^{3}}\left(  1+O\left(  \delta\right)
\right)  .\nonumber
\end{align}
Plugging (\ref{es1})\ and (\ref{es2}) into (\ref{esti en}), we get
\[
\underset{k\rightarrow\infty}{\lim}\left(  \int_{%
%TCIMACRO{\U{211d} }%
%BeginExpansion
\mathbb{R}
%EndExpansion
^{4}\backslash B_{\delta}\left(  0\right)  }\left\vert \Delta U_{k}\right\vert
^{2}dx+\int_{%
%TCIMACRO{\U{211d} }%
%BeginExpansion
\mathbb{R}
%EndExpansion
^{4}\backslash B_{\delta}\left(  0\right)  }\left\vert U_{k}\right\vert
^{2}dx\right)  =-\frac{1}{8\pi^{2}}\ln\delta-\frac{1}{16\pi^{2}}+A+O\left(
\delta\right)  ,
\]
and we are done.
\end{proof}

\subsection{\bigskip The upper bound of Adams inequality for normalized
concentration sequence}
The strategy we will use to obtain the upper bound for the Adams inequality on the entire
Euclidean space\textbf{ }$\mathbb{R}^{4}$ is similar to that of Li-Ruf \cite{liruf}. Firstly, we need to know the specific value
of the upper bound for any blow up function sequences in $H_{0}^{2}\left(
B_{R}\right)  $.

\begin{lemma}
\label{bound for ball}Let $B_{R}$ be the ball\ with radius $R$ in $%
%TCIMACRO{\U{211d} }%
%BeginExpansion
\mathbb{R}
%EndExpansion
^{4}$. Assume that $u_{k}$ is a bounded sequence in $H_{0}^{2}\left(
B_{R}\right)  $ with $\int_{B_{R}}\left\vert \Delta u_{k}\right\vert ^{2}%
dx=1$. If $u_{k}\rightharpoonup0$, then
\[
\underset{k\rightarrow+\infty}{\lim\sup}\int_{B_{R}}\left(  \exp\left(
\beta_{k}u_{k}^{2}\right)  -1-\alpha u_{k}^{2}\right)  dx\leq\frac{1}%
{3}\left\vert B_{R}\right\vert \exp\left(  -\frac{1}{3}\right)  .
\]

\end{lemma}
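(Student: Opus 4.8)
The plan is to transplant the Carleson--Chang--Li--Ruf strategy to the bi--Laplacian and then to read off the sharp constant from an explicitly solvable fourth--order ODE. We would first dispose of the lower--order terms: since $u_{k}\rightharpoonup 0$ in $H_{0}^{2}(B_{R})$, Rellich's theorem gives $u_{k}\to 0$ in $L^{2}(B_{R})$, so $\int_{B_{R}}\alpha u_{k}^{2}\to 0$ and $\int_{B_{R}}\beta_{k}u_{k}^{2}\to 0$, and the assertion reduces to
\[
\underset{k\to\infty}{\limsup}\int_{B_{R}}\bigl(\exp(\beta_{k}u_{k}^{2})-1-\beta_{k}u_{k}^{2}\bigr)\,dx\leq\tfrac13\,|B_{R}|\,e^{-1/3}.
\]
By the Adams inequality on $B_{R}$ and a Lions--type concentration--compactness alternative, either $|\Delta u_{k}|^{2}\,dx$ stays non--concentrated --- in which case $\exp(\beta_{k}u_{k}^{2})-1$ is bounded in some $L^{p}(B_{R})$ with $p>1$, hence tends to $0$ in $L^{1}(B_{R})$, and the left side is $\leq 0$ --- or $|\Delta u_{k}|^{2}\,dx\rightharpoonup\delta_{x_{0}}$ for some $x_{0}\in\overline{B_{R}}$, with $u_{k}\to 0$ in $C_{loc}^{3}(\overline{B_{R}}\setminus\{x_{0}\})$. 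A boundary concentration point would only make the constant smaller (odd reflection of a Dirichlet bubble across $\partial B_{R}$ doubles the Dirichlet energy needed for a prescribed profile), so we may assume $x_{0}\in B_{R}$.

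To activate the blow--up machinery of this section it is convenient to assume that $u_{k}$ satisfies a fourth--order equation of the type (\ref{euler}) on $B_{R}$ under Dirichlet conditions; this can be arranged by replacing $u_{k}$, if necessary, with a near--maximiser (a Palais--Smale--type sequence) of $\int_{B_{R}}(\exp(\beta_{k}u^{2})-1-\alpha u^{2})$ over $\{u\in H_{0}^{2}(B_{R}):\|\Delta u\|_{2}\leq 1,\ \|u\|_{2}\leq\varepsilon_{k}\}$ with $\varepsilon_{k}\downarrow 0$ chosen so that the given sequence is admissible --- such a sequence still converges weakly to $0$ and solves (\ref{euler}) up to a vanishing error. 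With $c_{k}=\max_{\overline{B_{R}}}|u_{k}|=u_{k}(x_{k})\to\infty$ and $r_{k}^{4}$ defined by the same normalisation as above, the rescalings $\phi_{k}(y)=u_{k}(x_{k}+r_{k}y)/c_{k}$ and $\psi_{k}(y)=c_{k}(u_{k}(x_{k}+r_{k}y)-c_{k})$ would be handled exactly as in Lemmas~\ref{convergence for fei}, \ref{guji1}, \ref{gra} and \ref{class}; the one ingredient needing a new proof is the gradient bound $c_{k}\int_{B_{Rr_{k}}}|\Delta u_{k}|\,dx\leq C(Rr_{k})^{2}$, obtained along the route of Lemma~\ref{gra} with the Dirichlet problem on $B_{R}$ in place of $\mathbb{R}^{4}$ and the bi--harmonic truncation of Lemma~\ref{truncation} used to localise. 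This yields $\phi_{k}\to 1$ and $\psi_{k}\to\psi$ in $C_{loc}^{3}(\mathbb{R}^{4})$ with $(-\Delta)^{2}\psi=e^{64\pi^{2}\psi}$, $\psi(x)=\tfrac{1}{16\pi^{2}}\log\tfrac{1}{1+\frac{\pi}{\sqrt6}|x|^{2}}$, $\int_{\mathbb{R}^{4}}e^{64\pi^{2}\psi}=1$, and therefore, as in Lemma~\ref{concentre},
\[
\underset{k\to\infty}{\limsup}\int_{B_{R}}\bigl(\exp(\beta_{k}u_{k}^{2})-1-\beta_{k}u_{k}^{2}\bigr)\,dx=\underset{k\to\infty}{\limsup}\ \frac{\lambda_{k}}{c_{k}^{2}}.
\]

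It then remains to compute $\limsup_{k}\lambda_{k}c_{k}^{-2}$. We would set $U_{k}=c_{k}u_{k}$, test its equation against $U_{k}$, and split the resulting identity on $\partial B_{\delta}$ as in the proof of Lemma~\ref{esti for the out of the ball}, now with the Dirichlet data on $\partial B_{R}$ and with $U_{k}\to G_{R}$, the (explicit, Boggio--type) Dirichlet Green function of the relevant fourth--order operator on $B_{R}$. This produces a relation linking the total energy $\int_{B_{R}}|\Delta u_{k}|^{2}=1$, the bubble energy $\int_{B_{Lr_{k}}}|\Delta u_{k}|^{2}\to 1$, and the rate at which $\beta_{k}c_{k}^{2}-\tfrac{1}{8\pi^{2}}\log r_{k}^{-4}$ tends to the constant $A_{R}$ (the value at $x_{0}$ of the regular part of $G_{R}$). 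Substituting this back into $\lambda_{k}c_{k}^{-2}=\int_{B_{L}}e^{2\beta_{k}\psi_{k}}\,dx+o_{L}(1)$ and passing to the limit first in $k$ and then in $L$ converts the estimate into the maximisation of an explicit one--parameter (equivalently, radial--ODE) functional on $B_{R}$; its Euler--Lagrange equation is a fourth--order ODE that integrates in closed form, and evaluating the optimal value returns the sharp bound $\tfrac13|B_{R}|e^{-1/3}$. Since a non--central concentration point can only diminish the right--hand side, the claimed inequality follows.

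The hard part will be this last step. Unlike the first--order Trudinger--Moser case on a ball, where a linear cut--off is enough, the cut--offs needed for the energy bookkeeping here must preserve second--order regularity, and a generic polynomial cut--off injects a \emph{non--negligible} amount of extra Dirichlet energy; keeping this slack would push the estimate above the sharp value and wreck the later comparison with test functions (this is exactly the gap in the earlier arXiv version). The remedy is to use the \emph{optimal} polynomial truncation --- the bi--harmonic extension of least $\|\Delta\cdot\|_{2}^{2}$ for the prescribed Cauchy data on the relevant sphere --- together with its exact expression and no error term, so that the ODE computation returns precisely $\tfrac13|B_{R}|e^{-1/3}$. A secondary technical obstacle is re--deriving the gradient estimate of Lemma~\ref{gra} in the Dirichlet setting on $B_{R}$, where the bi--harmonic truncation is once more the decisive tool.
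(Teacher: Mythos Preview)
Your route differs substantially from the paper's. The paper does \emph{not} redo any blow--up analysis for this lemma: it simply invokes the bounded--domain result of Lu and Yang \cite[(5.23)]{lu-yang 1}, which for any smooth bounded $\Omega\subset\mathbb{R}^{4}$ and any concentrating sequence in $H_{0}^{2}(\Omega)$ with $\|\Delta u_{k}\|_{2}=1$, $u_{k}\rightharpoonup 0$ already gives
\[
\limsup_{k\to\infty}\int_{\Omega}\bigl(\exp(\beta_{k}u_{k}^{2})-1-\alpha u_{k}^{2}\bigr)\,dx\le\frac{\pi^{2}}{6}\exp\Bigl(\tfrac{5}{3}+32\pi^{2}A_{0}\Bigr),
\]
with $A_{0}$ the value at the blow--up point of the regular part of the Dirichlet Green function of $\Delta^{2}$ on $\Omega$. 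The only new content here is to compute $A_{0}$ when $\Omega=B_{R}$: solving the radial ODE gives
\[
\tilde G(x)=-\tfrac{1}{8\pi^{2}}\log|x|+\tfrac{1}{16\pi^{2}}\tfrac{|x|^{2}}{R^{2}}+\tfrac{1}{8\pi^{2}}\log R-\tfrac{1}{16\pi^{2}},
\]
hence $32\pi^{2}A_{0}=4\log R-2$ and the bound collapses to $\tfrac{\pi^{2}}{6}R^{4}e^{-1/3}=\tfrac{1}{3}|B_{R}|e^{-1/3}$. That is the entire proof.

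What you propose is effectively a from--scratch re--derivation of the Lu--Yang bound on the ball, and several of the steps are not solid as written. First, the lemma is stated for an \emph{arbitrary} sequence with $\|\Delta u_{k}\|_{2}=1$ and $u_{k}\rightharpoonup 0$; replacing it by a constrained near--maximiser over $\{\|\Delta u\|_{2}\le 1,\ \|u\|_{2}\le\varepsilon_{k}\}$ neither obviously preserves the limsup nor produces a genuine solution of an Euler--Lagrange equation (the $L^{2}$ constraint may be active, introducing a second multiplier and spoiling the clean form of (\ref{euler})). Second, your dismissal of boundary concentration by ``odd reflection'' does not work for Dirichlet data in $H_{0}^{2}$: odd reflection across $\partial B_{R}$ preserves Navier, not Dirichlet, boundary conditions, so the reflected function need not lie in $H_{0}^{2}$ of the doubled domain. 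Third, the ``optimal polynomial truncation'' issue you highlight is not relevant to \emph{this} lemma at all; it arises only in the \emph{next} step, where one passes from the ball estimate to the $\mathbb{R}^{4}$ upper bound via the truncation $u_{k}^{\delta}$ of (\ref{Def}). On the ball itself, the Lu--Yang machinery (capacity estimates plus a Pohozaev identity) already delivers the sharp constant, and the paper just quotes it.
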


\begin{proof}
By the results in \cite[(5.23)]{lu-yang 1}, we have
\[
\underset{k\rightarrow+\infty}{\lim\sup}\int_{B_{R}}\left(  \exp\left(
\beta_{k}u_{k}^{2}\right)  -1-\alpha u_{k}^{2}\right)  dx\leq\frac{\pi^{2}}%
{6}\exp\left(  \frac{5}{3}+32\pi^{2}A_{0}\right)  ,
\]
where $A_{0}$ is the value at $0$ of\ the trace of the regular part of the
Green function $\tilde{G}$ for the operator $\Delta^{2}$.\ Actually, when the
domain is a ball, by solving the corresponding ODE's, we have
\[
\tilde{G}=-\frac{1}{8\pi^{2}}\log\left\vert x\right\vert +\frac{1}{16\pi^{2}%
}\frac{\left\vert x\right\vert ^{2}}{R^{2}}+\frac{1}{8\pi^{2}}\log R-\frac
{1}{16\pi^{2}},
\]
and the value at $0$ of\ the trace of the regular part of $G$ is $\frac
{1}{8\pi^{2}}\log R-\frac{1}{8\pi^{2}}$. Therefore we have%
\begin{align*}
&  \underset{k\rightarrow+\infty}{\lim\sup}\int_{B_{R}}\left(  \exp\left(
\beta_{k}u_{k}^{2}\right)  -1-\alpha u_{k}^{2}\right)  dx\\
&  \leq\frac{\pi^{2}}{6}\exp\left(  \frac{5}{3}+32\pi^{2}\left(  \frac{1}%
{8\pi^{2}}\log R-\frac{1}{16\pi^{2}}\right)  \right) \\
&  =\frac{1}{3}\left\vert B_{R}\right\vert \exp\left(  -\frac{1}{3}\right)  .
\end{align*}
\end{proof}

Based on the specific value of the upper bound above, we can obtain the follow upper bound for the Adams inequality on the entire
Euclidean space.
\begin{lemma}
\bigskip If $S\left(  \alpha\right)  $ can not be attained, then
\[
S\left(  \alpha\right)  =\underset{u\in H}{\sup}\int_{%
%TCIMACRO{\U{211d} }%
%BeginExpansion
\mathbb{R}
%EndExpansion
^{4}}\left(  \exp\left(  \beta_{k}u_{k}^{2}\right)  -1-\alpha u_{k}%
^{2}\right)  dx\leq\frac{\pi^{2}}{6}\exp\left(  \frac{5}{3}+32\pi^{2}A\right)
,
\]
where $A$ is the value at $0$ of\ the trace of the regular part of the Green
function $G$ for the operator $\Delta^{2}+1$.
\end{lemma}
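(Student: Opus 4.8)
The plan is to reduce the estimate on the entire space $\mathbb{R}^4$ to the ball case handled in Lemma \ref{bound for ball}, using the energy-splitting and Green-function analysis developed in Lemmas \ref{bound for bu}--\ref{esti for the out of the ball}. Assume $S(\alpha)$ is not attained; then by Lemma \ref{lemx2} the blow-up case $c_k\to+\infty$ must occur (otherwise $u$ would be an extremal), so the entire machinery of Section 3 is available. Fix a small $\delta>0$. On $\mathbb{R}^4\setminus B_\delta(0)$ the sequence $u_k\to 0$ in $C^3_{loc}$ and, by Lemma \ref{esti for the out of the ball}, $c_ku_k\to G$ there, so the integrand $\exp(\beta_k u_k^2)-1-\alpha u_k^2$ is of order $|u_k|^2=O(c_k^{-2})$ and its integral over $\mathbb{R}^4\setminus B_\delta$ tends to $0$. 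Hence
\[
S(\alpha)=\underset{k\to\infty}{\lim}\int_{B_\delta}\left(\exp(\beta_k u_k^2)-1-\alpha u_k^2\right)dx+o_\delta(1).
\]
So the whole contribution concentrates inside $B_\delta$, and the task is to bound the inside integral.

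Next I would pass to the rescaled/normalized picture on $B_\delta$. Write $\mu_k^2:=\int_{B_\delta}|\Delta u_k|^2\,dx$; by the Corollary after Lemma \ref{cufoff mod} (the Dirac concentration $|\Delta u_k|^2dx\rightharpoonup\delta_0$), we have $\mu_k^2\to 1$ as first $k\to\infty$ then $\delta\to 0$. Set $w_k:=u_k/\mu_k$ restricted to $B_\delta$, modified near $\partial B_\delta$ by a polynomial/bi-harmonic correction (exactly the optimal truncation discussed in the introduction) so that $w_k\in H_0^2(B_\delta)$ with $\int_{B_\delta}|\Delta w_k|^2=1$; the correction costs only $o_\delta(1)$ in energy and changes the nonlinear integral by $o_k(1)+o_\delta(1)$ because $u_k\to 0$ uniformly near $\partial B_\delta$ and the relevant elliptic/Lorentz estimates (\ref{lorenz}) control the boundary layer. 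Moreover $w_k\rightharpoonup 0$ in $H_0^2(B_\delta)$ since $u_k\rightharpoonup u$ with $u=0$ in the vanishing situation forced here. Then Lemma \ref{bound for ball}, applied on the ball $B_\delta$ with exponent $\beta_k\leq 32\pi^2$, gives
\[
\underset{k\to\infty}{\lim\sup}\int_{B_\delta}\left(\exp(\beta_k w_k^2)-1-\alpha w_k^2\right)dx\leq\frac{1}{3}|B_\delta|\exp\!\left(-\frac13\right).
\]
Since $u_k^2=\mu_k^2 w_k^2\leq w_k^2$ on $B_\delta$ eventually (as $\mu_k\le 1+o(1)$, and more precisely $\mu_k\to1$), one has $\exp(\beta_k u_k^2)-1-\alpha u_k^2\le \exp(\beta_k w_k^2)-1-\alpha w_k^2+o_k(1)\cdot(\text{bounded})$, so the inside integral is bounded by the same right-hand side up to $o_k(1)+o_\delta(1)$.

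The final step converts the ``$\Delta^2$'' Green constant to the ``$\Delta^2+1$'' Green constant $A$. Unwinding the elementary bound $\frac13|B_\delta|\exp(-\frac13)=\frac{\pi^2}{6}\exp\!\big(\frac53+32\pi^2\tilde A_0(\delta)\big)$ from the proof of Lemma \ref{bound for ball}, where $\tilde A_0(\delta)=\frac{1}{8\pi^2}\log\delta-\frac{1}{8\pi^2}$ is the regular-part trace on $B_\delta$ for $\Delta^2$, I would compare it to the regular-part trace $A$ of $G$ for $\Delta^2+1$ from (\ref{Green}). The point is that, as $\delta\to 0$, the difference between the Green function of $\Delta^2+1$ on all of $\mathbb{R}^4$ and that of $\Delta^2$ on $B_\delta$ — after subtracting the common singular part $-\frac{1}{8\pi^2}\log|x|$ — is controlled: the lower-order term $+1$ and the change of domain contribute $O(\delta^2\log\delta)$ and $O(\delta)$ respectively to the regular parts near $0$, so $\tilde A_0(\delta)\to A$ as the blow-up localizes. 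Combined with the energy identity of Lemma \ref{esti for the out of the ball}, which pins down exactly how $\mu_k$ relates to $A$ through $-\frac{1}{8\pi^2}\log\delta-\frac{1}{16\pi^2}+A+O(\delta)$, letting $\delta\to 0$ yields
\[
S(\alpha)\leq\frac{\pi^2}{6}\exp\!\left(\frac53+32\pi^2A\right).
\]

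\textbf{Main obstacle.} The delicate point is not the ball estimate itself (that is quoted from \cite{lu-yang 1} and recomputed in Lemma \ref{bound for ball}) but the \emph{transition at $\partial B_\delta$}: one must replace $u_k$ by a function in $H_0^2(B_\delta)$ normalized to have $\int|\Delta\cdot|^2=1$ without losing or gaining more than $o_\delta(1)$ energy \emph{and} without disturbing the nonlinear integral by more than $o(1)$. This is exactly where the ``optimal polynomial truncation'' is needed: a crude linear or generic polynomial cutoff injects extra curvature energy of size $O(1)$ rather than $o_\delta(1)$, which would make the resulting constant strictly larger than $\frac{\pi^2}{6}\exp(\frac53+32\pi^2A)$ and break the argument. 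Managing this requires the precise expression of the truncation together with the sharp elliptic (Lorentz-space) estimates of Lemma \ref{esti for the out of the ball}, and keeping track of the $O(\delta)$, $O(\delta^2\log\delta)$ error terms so that they genuinely vanish in the limit $\delta\to 0$.
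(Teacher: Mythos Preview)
Your argument has a genuine gap at the step where you compare $u_k$ to the normalized function $w_k$ on $B_\delta$, and the final ``conversion'' step is simply false.

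First, the inequality $u_k^2\le w_k^2$ (from $\mu_k\le 1$) is not usable in the way you claim. Although $\mu_k^2=\int_{B_\delta}|\Delta u_k|^2\,dx$ differs from $1$ by $O(c_k^{-2})$, on the concentration region $B_{Lr_k}$ one has $u_k\approx c_k$, so the discrepancy $(1-\mu_k^2)\,u_k^2$ is of order $1$, not $o(1)$, in the exponent. Moreover the boundary correction you subtract to land in $H_0^2(B_\delta)$ has size $h_k\approx G(\delta)/c_k$ (not zero), and the cross term $2h_k u_k\approx 2G(\delta)$ is again $O(1)$ on the concentration set. Both of these $O(1)$ shifts in the exponent are exactly what carries the constant $A$; if you drop them as you do, your chain of inequalities gives
\[
S(\alpha)\le \tfrac{1}{3}|B_\delta|\exp(-\tfrac{1}{3})+o(1)=O(\delta^4)\to 0,
\]
which is absurd.

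Second, your claim that $\tilde A_0(\delta)=\frac{1}{8\pi^2}\log\delta-\frac{1}{8\pi^2}\to A$ as $\delta\to 0$ is false: this quantity tends to $-\infty$. The regular part of the $\Delta^2$ Green function on $B_\delta$ does \emph{not} converge to that of $\Delta^2+1$ on $\mathbb{R}^4$; the two are linked only through the precise exponent shift above.

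The paper's proof makes this quantitative. It takes the explicit polynomial $u_k^\delta(r)=u_k(\delta)+\frac{u_k'(\delta)}{2\delta}r^2-\frac{u_k'(\delta)\delta}{2}$, sets $\tilde u_k=(u_k-u_k^\delta)/\|\Delta(u_k-u_k^\delta)\|_{L^2(B_\delta)}$, and computes the normalization exactly via Lemma~\ref{esti for the out of the ball} to obtain
\[
\|\Delta(u_k-u_k^\delta)\|_{L^2(B_\delta)}^2=1-\frac{1}{c_k^2}\Big(-\tfrac{1}{8\pi^2}\ln\delta+A+\tfrac{1}{16\pi^2}+O(\delta)\Big).
\]
Expanding $\beta_k u_k^2$ on $B_{Lr_k}$ then yields $\beta_k u_k^2=\beta_k\tilde u_k^2+32\pi^2 G(\delta)+2+o_\delta(1)$. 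Only after pulling out this factor $\exp(32\pi^2 G(\delta)+2)=\delta^{-4}\exp(32\pi^2 A+2+o_\delta(1))$ does the ball bound $\frac{1}{3}|B_\delta|e^{-1/3}$ produce the finite constant $\frac{\pi^2}{6}\exp(\frac{5}{3}+32\pi^2 A)$, the $\delta^{-4}$ cancelling $|B_\delta|$. Your outline identifies the right ingredients but skips the one computation that makes the argument work.
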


\begin{proof}
Set
\[
\tilde{u}_{k}\left(  x\right)  =\frac{u_{k}\left(  x\right)  -u_{k}^{\delta}%
}{\left\Vert \Delta\left(  u_{k}\left(  x\right)  -u_{k}^{\delta}\right)
\right\Vert _{L^{2}\left(  B_{\delta}\left(  x_{k}\right)  \right)  }},
\]
where\
\begin{align}
u_{k}^{\delta}=u_{k}\left(  \delta\right)  +\frac{u_{k}^{\prime}\left(
\delta\right)  r^{2}}{2\delta}-\frac{u_{k}^{\prime}\left(  \delta\right)
\delta}{2}. \label{Def}
\end{align}
\ Now, we compute $\int_{B_{\delta}}\left(  \Delta u_{k}^{\delta}-\Delta
u_{k}\right)  ^{2}dx$. For this, we rewrite it as follows

\begin{align}
\int_{B_{\delta}}\left(  \Delta u_{k}^{\delta}-\Delta u_{k}\right)  ^{2}dx  &
=\int_{B_{\delta}}\left(  \Delta u_{k}\right)  ^{2}dx+\int_{B_{\delta}}\left(
\Delta u_{k}^{\delta}\right)  ^{2}dx-2\int_{B_{\delta}}\Delta u_{k}^{\delta
}\Delta u_{k}dx\nonumber\\
&  =I+II-III \label{M1}%
\end{align}
By Lemma \ref{esti for the out of the ball}, we have%
\begin{align*}
&  \underset{k\rightarrow\infty}{\lim}\left(  \int_{%
%TCIMACRO{\U{211d} }%
%BeginExpansion
\mathbb{R}
%EndExpansion
^{4}\backslash B_{\delta}\left(  0\right)  }\left\vert c_{k}\Delta
u_{k}\right\vert ^{2}dx+\int_{%
%TCIMACRO{\U{211d} }%
%BeginExpansion
\mathbb{R}
%EndExpansion
^{4}\backslash B_{\delta}\left(  0\right)  }\left\vert c_{k}u_{k}\right\vert
^{2}dx\right) \\
&  \leq-\frac{1}{8\pi^{2}}\ln\delta-\frac{1}{16\pi^{2}}+A+O_{k}\left(
\delta\right)  .
\end{align*}
Thus we get
\begin{align}
I  &  =\int_{B_{\delta}\left(  0\right)  }\left\vert \Delta u_{k}\right\vert
^{2}dx=1-\int_{%
%TCIMACRO{\U{211d} }%
%BeginExpansion
\mathbb{R}
%EndExpansion
^{4}\backslash B_{\delta}\left(  0\right)  }\left\vert \Delta u_{k}\right\vert
^{2}dx-\int_{%
%TCIMACRO{\U{211d} }%
%BeginExpansion
\mathbb{R}
%EndExpansion
^{4}\backslash B_{\delta}\left(  0\right)  }\left\vert u_{k}\right\vert
^{2}dx-\int_{B_{\delta}\left(  0\right)  }u_{k}^{2}dx\nonumber\\
&  =1-\frac{-\frac{1}{8\pi^{2}}\ln\delta-\frac{1}{16\pi^{2}}+A+O_{k}\left(
\delta\right)  }{c_{k}^{2}}. \label{addd1}%
\end{align}

By the definition of $u_{k}^{\delta}$, we have%
\begin{align}
II  &  =\int_{B_{\delta}}\left(  \Delta u_{k}^{\delta}\right)  ^{2}%
dx=\int_{B_{\delta}}\left(  4\frac{u_{k}^{\prime}\left(  \delta\right)
}{\delta}\right)  ^{2}dx\nonumber\\
&  =16\frac{\left(  u_{k}^{\prime}\left(  \delta\right)  \right)  ^{2}}%
{\delta^{2}}\left\vert B_{\delta}\right\vert =8\pi^{2}\left(  u_{k}^{\prime
}\left(  \delta\right)  \right)  ^{2}\delta^{2}. \label{M3}%
\end{align}

In order to estimate $III$, by Proposition \ref{split}, we rewrite it as follows:%

\begin{align*}
III &  =2\int_{B_{\delta}}u_{k}^{\delta}\Delta^{2}u_{k}dx-2\int_{\partial
B_{\delta}}v\cdot u_{k}^{\delta}\Delta^{\frac{3}{2}}u_{k}d\sigma
+2\int_{\partial B_{\delta}}v\cdot\Delta^{\frac{1}{2}}u_{k}^{\delta}\Delta
u_{k}d\sigma\\
&  =2\left(  \left(  1\right)  -\left(  2\right)  +\left(  3\right)  \right)
,
\end{align*}

By (\ref{equa}) and Lemma \ref{Dircm}, we have%

\begin{align*}
\left(  1\right)   &  =\int_{B_{\delta}}u_{k}^{\delta}\Delta^{2}u_{k}%
dx=\frac{1}{c_{k}}\int_{B_{\delta}}u_{k}^{\delta}\left(  \frac{c_{k}u_{k}%
}{\lambda_{k}}\left(  \exp\left(  \beta_{k}u_{k}^{2}\right)  -\frac{\alpha
}{\beta_{k}}\right)  -c_{k}u_{k}\right)  dx\\
&  =\frac{1}{c_{k}}\int_{B_{\delta}}u_{k}^{\delta}\left(  \delta\left(
x\right)  -G\left(  x\right)  +o_{k}\left(  1\right)  \right)  dx\\
&  =\frac{1}{c_{k}}\left(  u_{k}\left(  \delta\right)  -\frac{u_{k}^{\prime
}\left(  \delta\right)  \delta}{2}+\frac{o_{k}\left(  \delta\right)  }{c_{k}%
}\right) \\
&  =\frac{1}{c_{k}^{2}}\left(  -\frac{1}{8\pi^{2}}\ln\delta+A+\frac{1}%
{16\pi^{2}}+o_{k}\left(  \delta\right)  \right)  ,
\end{align*}

By (\ref{gradi es}), we have%

\begin{align*}
\left(  2\right)   &  =\int_{\partial B_{\delta}}v\cdot u_{k}^{\delta}%
\Delta^{\frac{3}{2}}u_{k}d\sigma=\frac{1}{c_{k}}\left(  \int_{\partial
B_{\delta}}v\cdot u_{k}^{\delta}\Delta^{\frac{3}{2}}Gd\sigma+\frac
{o_{k}\left(  \delta\right)  }{c_{k}}\right)  \\
&  =\frac{-1}{8\pi^{2}c_{k}}\left(  \int_{\partial B_{\delta}}v\cdot
u_{k}^{\delta}\Delta^{\frac{3}{2}}\ln xd\sigma+\frac{o_{k}\left(
\delta\right)  }{c_{k}}\right)  \\
&  =\frac{-u_{k}\left(  \delta\right)  }{8\pi^{2}c_{k}}\left(  \int_{\partial
B_{\delta}}v\cdot\left(  -4\frac{x}{\left\vert x\right\vert ^{4}}\right)
d\sigma+\frac{o_{k}\left(  \delta\right)  }{c_{k}}\right)  \\
&  =\frac{1}{c_{k}^{2}}\frac{G\left(  \delta\right)  }{2\pi^{2}\delta^{3}%
}\cdot2\pi^{2}\delta^{3}=\frac{G\left(  \delta\right)  +o_{k}\left(
\delta\right)  }{c_{k}^{2}}%
\end{align*}
and%
\begin{align*}
\left(  3\right)   &  =\int_{\partial B_{\delta}}v\cdot\Delta^{\frac{1}{2}%
}u_{k}^{\delta}\Delta u_{k}d\sigma=\frac{1}{c_{k}}\int_{\partial B_{\delta}%
}\left(  u_{k}^{\prime}\left(  \delta\right)  \right)  \Delta Gd\sigma
+\frac{o_{k}\left(  \delta\right)  }{c_{k}^{2}}\\
&  =\left(  -\frac{1}{8\pi^{2}}\right)  ^{2}\frac{2}{\delta^{2}}\frac
{1}{\delta c_{k}^{2}}2\pi^{2}\delta^{3}+\frac{o_{k}\left(  \delta\right)
}{c_{k}^{2}}\\
&  =\frac{1}{c_{k}^{2}}\left(  \frac{1}{16\pi^{2}}+o_{k}\left(  \delta\right)
\right)  .
\end{align*}
Thus, we have%
\begin{align}
&  \int_{B_{\delta}}\Delta u_{k}^{\delta}\Delta u_{k}dx\nonumber\\
&  =\frac{1}{c_{k}^{2}}\left(  -\frac{1}{8\pi^{2}}\ln\delta+A+\frac{1}%
{16\pi^{2}}\right)  +\frac{1}{c_{k}^{2}}\left(  \frac{1}{16\pi^{2}}-G\left(
\delta\right)  +o_{k}\left(  \delta\right)  \right)  \nonumber\\
&  =\frac{1}{c_{k}^{2}}\left(  -\frac{1}{8\pi^{2}}\ln\delta+A+\frac{1}%
{16\pi^{2}}\right)  +\nonumber\\
&  +\frac{1}{c_{k}^{2}}\left(  \frac{1}{16\pi^{2}}+\frac{1}{8\pi^{2}}\ln
\delta-A+O_{k}\left(  \delta\right)  \right)  \nonumber\\
&  =\frac{1}{c_{k}^{2}}\left(  \frac{1}{8\pi^{2}}+O_{k}\left(  \delta\right)
\right)  \label{M4}%
\end{align}
Combining (\ref{addd1}) to (\ref{M4}), \ we get%
\begin{align}
&  \int_{B_{\delta}}\left(  \Delta u_{k}^{\delta}-\Delta u_{k}\right)
^{2}dx\nonumber\\
&  =1-\frac{1}{c_{k}^{2}}\left(  -\frac{1}{8\pi^{2}}\ln\delta-\frac{1}%
{16\pi^{2}}+A+o_{k}\left(  \delta\right)  \right)  +\left(  u_{k}^{\prime
}\left(  \delta\right)  \right)  ^{2}8\pi^{2}\delta^{2}\nonumber\\
&  -\frac{1}{c_{k}^{2}}\left(  \frac{1}{4\pi^{2}}+O_{k}\left(  \delta\right)
\right) \nonumber\\
&  =1-\frac{1}{c_{k}^{2}}\left(  -\frac{1}{8\pi^{2}}\ln\delta+A+\frac{1}%
{16\pi^{2}}+O_{k}\left(  \delta\right)  \right)  . \label{esforlap}%
\end{align}
Therefore, we have%
\begin{align*}
\tilde{u}_{k}^{2}\left(  x\right)   &  =\frac{\left(  u_{k}\left(  x\right)
-u_{k}^{\delta}\left(  x\right)  \right)  ^{2}}{1-\frac{-\frac{1}{8\pi^{2}}%
\ln\delta+\frac{1}{16\pi^{2}}+A+O_{k}\left(  \delta\right)  }{c_{k}^{2}}}\\
&  =u_{k}^{2}\left(  x\right)  \left(  1+\frac{-\frac{1}{8\pi^{2}}\ln
\delta+\frac{1}{16\pi^{2}}+A+O_{k}\left(  \delta\right)  }{c_{k}^{2}}\right)
\\
&  -\left(  2u_{k}^{\delta}\left(  x\right)  u_{k}\left(  x\right)  +\left(
u_{k}^{\delta}\left(  x\right)  \right)  ^{2}\right)  \left(  1+o_{k}\left(
1\right)  \right) \\
&  =u_{k}^{2}\left(  x\right)  -c\ln\delta^{4}.
\end{align*}
\bigskip

On the other hand, by Lemma \ref{concentre}, we have
\[
\underset{L\rightarrow\infty}{\lim}\underset{k\rightarrow\infty}{\lim}%
\int_{B_{\rho}\backslash B_{Lr_{k}}\left(  x_{k}\right)  }\exp\left(
\beta_{k}u_{k}^{2}\right)  =\left\vert B_{\rho}\right\vert ,
\]
for any $\rho<\delta$.

Thus we have
\begin{align*}
\underset{L\rightarrow\infty}{\lim}\underset{k\rightarrow\infty}{\lim}%
\int_{B_{\rho}\backslash B_{Lr_{k}}(x_{k})}\exp\left(  \beta_{k}\tilde{u}%
_{k}^{2}\right)  dx  &  \leq O\left(  \delta^{-4c}\right)  \underset
{L\rightarrow\infty}{\lim}\underset{k\rightarrow\infty}{\lim}\int_{B_{\rho
}\backslash B_{Lr_{k}}(x_{k})}\exp\left(  \beta_{k}u_{k}^{2}\right)  dx\\
&  =O\left(  \delta^{-4c}\right)  \left\vert B_{\rho}\right\vert
\rightarrow0\text{ as }\rho\rightarrow0\text{. }%
\end{align*}
Also, by (\ref{dirc m}) we get
\[
\underset{k\rightarrow\infty}{\lim}\int_{B_{\delta}\backslash B_{\rho}}\left(
\exp\left(  \beta_{k}\tilde{u}_{k}^{2}\right)  -1-\alpha \tilde{u}_{k}^{2}\right)
dx=0.
\]
Hence, by Lemma \ref{bound for ball},\ we derive that
\[
\underset{L\rightarrow\infty}{\lim}\underset{k\rightarrow\infty}{\lim}%
\int_{B_{Lr_{k}}}\left(  \exp\left(  \beta_{k}\tilde{u}_{k}^{2}\right)
-1-\alpha \tilde{u}_{k}^{2}\right)  dx=\underset{k\rightarrow\infty}{\lim}%
\int_{B_{\delta}}\exp(\beta_{k}\tilde{u}_{k}^{2})dx\leq\left\vert B_{\delta
}\right\vert \frac{1}{3}\exp\left(  -\frac{1}{3}\right)  .
\]
Now, we fix some $L>0$, then for any\ $x\in B_{Lr_{k}(x_{k})},$ we have%
\begin{align*}
\beta_{k}u_{k}^{2}  &  =\beta_{k}\left(  \frac{u_{k}}{\left\Vert \Delta\left(
u_{k}\left(  x\right)  -u_{k}^{\delta}\left(  x\right)  \right)  \right\Vert
_{L^{2}\left(  B_{\delta}\right)  }}\right)  ^{2}\int_{B_{\delta}}\left\vert
\Delta\left(  u_{k}\left(  x\right)  -u_{k}^{\delta}\left(  x\right)  \right)
\right\vert ^{2}dx\\
\  &  =\beta_{k}\left(  \tilde{u}_{k}+\frac{u_{k}^{\delta}\left(  x\right)
}{\left\Vert \Delta\left(  u_{k}\left(  x\right)  -u_{k}^{\delta}\left(
x\right)  \right)  \right\Vert _{L^{2}\left(  B_{\delta}\right)  }}\right)
^{2}\int_{B_{\delta}}\left\vert \Delta\left(  u_{k}\left(  x\right)
-u_{k}^{\delta}\left(  x\right)  \right)  \right\vert ^{2}dx.
\end{align*}
By (\ref{esforlap}), we have
\begin{align*}
\beta_{k}u_{k}^{2}  &  =\beta_{k}\left(  \tilde{u}_{k}+u_{k}^{\delta}+O\left(
\frac{1}{c_{k}^{2}}\right)  \right)  ^{2}\cdot\\
&  \cdot\left(  1-\frac{1}{c_{k}^{2}}\left(  -\frac{1}{8\pi^{2}}\ln
\delta+A+\frac{1}{16\pi^{2}}+O_{k}\left(  \delta\right)  \right)  \right) \\
&  =\beta_{k}\tilde{u}_{k}^{2}\left(  1+\frac{u_{k}^{\delta}}{c_{k}}+O\left(
\frac{1}{c_{k}^{3}}\right)  \right)  ^{2}\cdot\\
&  \cdot\left(  1-\frac{1}{c_{k}^{2}}\left(  -\frac{1}{8\pi^{2}}\ln
\delta+A+\frac{1}{16\pi^{2}}+O_{k}\left(  \delta\right)  \right)  \right)  ,
\end{align*}
Observe that
\[
\lim\frac{\tilde{u}_{k}\left(  x_{k}+r_{k}x\right)  }{c_{k}}=1\text{, and
}\tilde{u}_{k}\left(  x_{k}+r_{k}x\right)  u_{k}\left(  \delta\right)
\rightarrow G\left(  \delta\right)  ,
\]
we derive that
\begin{align*}
\beta_{k}u_{k}^{2}  &  =\beta_{k}\tilde{u}_{k}^{2}\left(  1+\frac{1}{c_{k}%
^{2}}\left(  G\left(  \delta\right)  +\frac{1}{16\pi^{2}}\right)  +O\left(
\frac{1}{c_{k}^{3}}\right)  \right)  ^{2}\cdot\\
&  \cdot\left(  1-\frac{1}{c_{k}^{2}}\left(  -\frac{1}{8\pi^{2}}\ln
\delta+A+\frac{1}{16\pi^{2}}+O_{k}\left(  \delta\right)  \right)  \right) \\
&  =\beta_{k}\tilde{u}_{k}^{2}\left(  1+\frac{2G\left(  \delta\right)  }%
{c_{k}^{2}}+\frac{1}{8\pi^{2}c_{k}^{2}}-\frac{G\left(  \delta\right)
+\frac{1}{16\pi^{2}}+O_{k}\left(  \delta\right)  }{c_{k}^{2}}\right) \\
&  =\beta_{k}\tilde{u}_{k}^{2}+\beta_{k}G\left(  \delta\right)  +\frac
{\beta_{k}}{16\pi^{2}}+O_{k}\left(  \delta\right)  .
\end{align*}
Thus we have%

\begin{align*}
&  \underset{L\rightarrow\infty}{\lim}\underset{k\rightarrow\infty}{\lim}%
\int_{B_{Lr_{k}}(x_{k})}\left(  \exp\left(  \beta_{k}u_{k}^{2}\right)
-1-\alpha u_{k}^{2}\right)  dx\\
&  \leq\underset{L\rightarrow\infty}{\lim}\underset{k\rightarrow\infty}{\lim
}\int_{B_{Lr_{k}}(x_{k})}\exp\left(  \beta_{k}u_{k}^{2}\right)  dx\\
&  \leq\underset{L\rightarrow\infty}{\lim}\underset{k\rightarrow\infty}{\lim
}\exp\left(  32\pi^{2}G\left(  \delta\right)  +2+o_{\delta}\left(  1\right)
\right)  \int_{B_{Lr_{k}}}\exp\left(  \beta_{k}\tilde{u}_{k}^{2}\right)  dx\\
&  =\exp\left(  32\pi^{2}G\left(  \delta\right)  +2+o_{\delta}\left(
1\right)  \right)  \left\vert B_{\delta}\right\vert \frac{1}{3}\exp\left(
-\frac{1}{3}\right) \\
&  =\exp\left(  \ln\delta^{-4}+32\pi^{2}A+\varphi\left(  \delta\right)
+2+o_{\delta}\left(  1\right)  \right)  \left\vert B_{\delta}\right\vert
\frac{1}{3}\exp\left(  -\frac{1}{3}\right) \\
&  =\frac{\pi^{2}}{6}\exp\left(  \frac{5}{3}+32\pi^{2}A\right)  +o\left(
\delta\right)  .
\end{align*}

Letting $\delta\rightarrow0$, we get
\[
S\left(  \alpha\right)  \leq\frac{\pi^{2}}{6}\exp\left(  \frac{5}{3}+32\pi
^{2}A\right)  ,
\]
and the proof is finished.
\end{proof}
\section{\bigskip The test function}

Let
\[
\phi_{\varepsilon}=%
%TCIMACRO{\QDATOPD{\{}{.}{C+\frac{a-\frac{1}{16\pi^{2}}\ln\left(  1+\frac{\pi
%}{\sqrt{6}}\frac{x^{2}}{\varepsilon^{2}}\right)  +A+\varphi\left(  x\right)
%+b\left\vert x\right\vert ^{2}}{C}\text{ if }\left\vert x\right\vert \leq
%L\varepsilon,}{\frac{G\left(  x\right)  }{C}\text{
%\ \ \ \ \ \ \ \ \ \ \ \ \ \ \ \ \ \ \ \ \ \ \ \ if }\left\vert x\right\vert
%\geq L\varepsilon,}}%
%BeginExpansion
\genfrac{\{}{.}{0pt}{0}{C+\frac{a-\frac{1}{16\pi^{2}}\ln\left(  1+\frac{\pi
}{\sqrt{6}}\frac{x^{2}}{\varepsilon^{2}}\right)  +A+\varphi\left(  x\right)
+b\left\vert x\right\vert ^{2}}{C}\text{ if }\left\vert x\right\vert \leq
L\varepsilon,}{\frac{G\left(  x\right)  }{C}\text{
\ \ \ \ \ \ \ \ \ \ \ \ \ \ \ \ \ \ \ \ \ \ \ \ if }\left\vert x\right\vert
\geq L\varepsilon,}%
%EndExpansion
\]
where $L,C,a,b$ are functions of $\varepsilon$ (which will be defined later)
such that

i) $\varepsilon=\exp\left(  -L\right)  $, $\frac{1}{C^{2}}=O\left(  \frac
{1}{L}\right)  $ as $\varepsilon\rightarrow0,$

ii) $a=-\frac{1}{8\pi^{2}}\ln\left(  L\varepsilon\right)  -C^{2}+\frac
{1}{16\pi^{2}}\ln\left(  1+\frac{\pi}{\sqrt{6}}L^{2}\right)  -bL^{2}%
\varepsilon^{2},$

iii) $b=-\frac{1}{16\pi^{2}L^{2}\varepsilon^{2}\left(  1+\frac{\pi}{\sqrt{6}%
}L^{2}\right)  }.$

By Lemma \ref{esti for the out of the ball}, we have
\begin{align*}
\int_{%
%TCIMACRO{\U{211d} }%
%BeginExpansion
\mathbb{R}
%EndExpansion
^{4}\backslash B_{L\varepsilon}\left(  0\right)  }\left(  \left\vert
\Delta\phi_{\varepsilon}\right\vert ^{2}+\left\vert \phi_{\varepsilon
}\right\vert ^{2}\right)  dx  &  =\frac{1}{C^{2}}\int_{%
%TCIMACRO{\U{211d} }%
%BeginExpansion
\mathbb{R}
%EndExpansion
^{4}\backslash B_{L\varepsilon}\left(  0\right)  }\left(  \left\vert \Delta
G\right\vert ^{2}+\left\vert G\right\vert ^{2}\right)  dx\\
&  =\frac{1}{C^{2}}\left(  -\frac{1}{8\pi^{2}}\ln\left(  L\varepsilon\right)
-\frac{1}{16\pi^{2}}+A+O\left(  L\varepsilon\right)  \right)  ,
\end{align*}
and\ as \cite{lu-yang 1}, one has%
\[
\int_{B_{L\varepsilon}\left(  0\right)  }\left\vert \Delta\phi_{\varepsilon
}\right\vert ^{2}dx=\frac{1}{96\pi^{2}C^{2}}\left(  6\ln\left(  1+\frac{\pi
}{\sqrt{6}}L^{2}\right)  +1+O\left(  \frac{1}{\ln^{2}\varepsilon}\right)
\right)  .
\]

It is easy to check that \ \
\begin{equation}
\int_{B_{L\varepsilon}\left(  0\right)  }\left\vert \phi_{\varepsilon
}\right\vert ^{2}dx=O\left(  \left(  L\varepsilon\right)  ^{4}C^{4}\right)  ,
\label{38}%
\end{equation}
thus it follows that
\begin{align*}
&  \int_{%
%TCIMACRO{\U{211d} }%
%BeginExpansion
\mathbb{R}
%EndExpansion
^{4}}\left(  \left\vert \Delta\phi_{\varepsilon}\right\vert ^{2}+\left\vert
\phi_{\varepsilon}\right\vert ^{2}\right)  dx\\
&  =\frac{1}{32\pi^{2}C^{2}}\left(  2\ln\left(  1+\frac{\pi}{\sqrt{6}}%
L^{2}\right)  -\frac{5}{3}-4\ln\left(  L\varepsilon\right)  +32\pi
^{2}A+O\left(  \frac{1}{\ln^{2}\varepsilon}\right)  \right) \\
&  =\frac{1}{32\pi^{2}C^{2}}\left(  2\ln\left(  \frac{\frac{\pi}{\sqrt{6}%
}\left(  1+\frac{\sqrt{6}}{\pi}\frac{1}{L^{2}}\right)  }{\varepsilon^{2}%
}\right)  -\frac{5}{3}+32\pi^{2}A+O\left(  \frac{1}{\ln^{2}\varepsilon
}\right)  \right) \\
&  =\frac{1}{32\pi^{2}C^{2}}\left(  2\ln\left(  \frac{\frac{\pi}{\sqrt{6}}%
}{\varepsilon^{2}}\right)  +2\ln\left(  1+\frac{\sqrt{6}}{\pi}\frac{1}{L^{2}%
}\right)  -\frac{5}{3}+32\pi^{2}A+O\left(  \frac{1}{\ln^{2}\varepsilon
}\right)  \right) \\
&  =\frac{1}{32\pi^{2}C^{2}}\left(  2\ln\left(  \frac{\frac{\pi}{\sqrt{6}}%
}{\varepsilon^{2}}\right)  +\frac{2\sqrt{6}}{\pi}\frac{1}{L^{2}}-\frac{5}%
{3}+32\pi^{2}A+O\left(  \frac{1}{\ln^{2}\varepsilon}\right)  \right) \\
&  =\frac{1}{32\pi^{2}C^{2}}\left(  2\ln\left(  \frac{\frac{\pi}{\sqrt{6}}%
}{\varepsilon^{2}}\right)  -\frac{5}{3}+32\pi^{2}A+O\left(  \frac{1}{\ln
^{2}\varepsilon}\right)  \right)  .
\end{align*}

Set $\int_{%
%TCIMACRO{\U{211d} }%
%BeginExpansion
\mathbb{R}
%EndExpansion
^{4}}\left(  \left\vert \Delta\phi_{\varepsilon}\right\vert ^{2}+\left\vert
\phi_{\varepsilon}\right\vert ^{2}\right)  dx=1$ and direct computations yield
that \ $\ $%

\begin{equation}
32\pi^{2}C^{2}=2\ln\frac{\pi}{\sqrt{6}\varepsilon^{2}}-\frac{5}{3}+32\pi
^{2}A+O\left(  \frac{1}{\ln^{2}\varepsilon}\right)  , \label{es for c}%
\end{equation}
then
\begin{equation}
C^{2}\sim\frac{1}{8\pi^{2}}\ln\frac{1}{\varepsilon}. \label{40}%
\end{equation}
For any $x\in B_{L\varepsilon}$, by careful calculation, we also derive that
\begin{align}
32\pi^{2}\phi_{\varepsilon}^{2}  &  \geq32\pi^{2}\left(  C^{2}+2\left(
a-\frac{1}{16\pi^{2}}\ln\left(  1+\frac{\pi}{\sqrt{6}}\frac{x^{2}}%
{\varepsilon^{2}}\right)  +A+\varphi\left(  x\right)  +b\left\vert
x\right\vert ^{2}\right)  \right) \nonumber\\
&  =32\pi^{2}\left(  -C^{2}+2\left(  -\frac{1}{8\pi^{2}}\ln\left(
L\varepsilon\right)  +\frac{1}{16\pi^{2}}\ln\left(  1+\frac{\pi}{\sqrt{6}%
}L^{2}\right)  -bL^{2}\varepsilon^{2}\right)  \right. \nonumber\\
&  \left.  +2A+2\varphi\left(  x\right)  +2b\left\vert x\right\vert ^{2}%
-\frac{1}{8\pi^{2}}\ln\left(  1+\frac{\pi}{\sqrt{6}}\frac{x^{2}}%
{\varepsilon^{2}}\right)  \right) \nonumber\\
&  =4\ln\left(  1+\frac{\pi}{\sqrt{6}}L^{2}\right)  -8\ln\left(
L\varepsilon\right)  -2\ln\frac{\pi}{\sqrt{6}\varepsilon^{2}}+\frac{5}%
{3}+32\pi^{2}A\nonumber\\
&  -4\ln\left(  1+\frac{\pi}{\sqrt{6}}\frac{x^{2}}{\varepsilon^{2}}\right)
+O\left(  \frac{1}{\ln^{2}\varepsilon}\right)  +64\pi^{2}\left(
\varphi\left(  x\right)  +b\left\vert x\right\vert ^{2}\right)  -64bL^{2}%
\varepsilon^{2}\nonumber\\
&  =\ln\left(  \left(  1+\frac{\pi}{\sqrt{6}}L^{2}\right)  ^{4}\frac
{6\varepsilon^{4}}{\pi^{2}}\left(  L\varepsilon\right)  ^{-8}\right)
+\frac{5}{3}+32\pi^{2}A\nonumber\\
&  -4\ln\left(  1+\frac{\pi}{\sqrt{6}}\frac{x^{2}}{\varepsilon^{2}}\right)
+O\left(  \frac{1}{\ln^{2}\varepsilon}\right)  +64\pi^{2}\left(
\varphi\left(  x\right)  +b\left\vert x\right\vert ^{2}\right)  -64bL^{2}%
\varepsilon^{2}\nonumber\\
&  =\ln\frac{6\varepsilon^{-4}}{\pi^{2}}\frac{\pi^{4}}{36}+O\left(
L^{-2}\right)  +\frac{5}{3}+32\pi^{2}A-4\ln\left(  1+\frac{\pi}{\sqrt{6}}%
\frac{x^{2}}{\varepsilon^{2}}\right)  +O\left(  \frac{1}{\ln^{2}\varepsilon
}\right) \nonumber\\
&  \geq\ln\frac{\pi^{2}}{6\varepsilon^{4}}+32\pi^{2}A+\frac{5}{3}-\ln\left(
1+\frac{\pi r^{2}}{\sqrt{6}\varepsilon^{2}}\right)  ^{4}+O\left(  \frac{1}%
{\ln^{2}\varepsilon}\right)  , \label{41}%
\end{align}
where we have used that fact that $\varphi$ is a continuous function and
$\varphi\left(  0\right)  =0$.

Therefore, combining (\ref{38}) and (\ref{41}), we derive that%

\begin{align*}
&  \int_{B_{L\varepsilon}}\left(  \exp\left(  32\pi^{2}\phi_{\varepsilon}%
^{2}\right)  -1-\alpha\phi_{\varepsilon}^{2}\right)  dx\\
&  \geq\int_{B_{L\varepsilon}}\exp\left(  32\pi^{2}\phi_{\varepsilon}%
^{2}\right)  dx+O\left(  \left(  L\varepsilon\right)  ^{4}C^{4}\right) \\
&  =\frac{\pi^{2}}{6\varepsilon^{4}}\exp\left(  32\pi^{2}A+\frac{5}{3}\right)
\int_{B_{L\varepsilon}}\left(  1+\frac{\pi r^{2}}{\sqrt{6}\varepsilon^{2}%
}\right)  ^{-4}dx+O\left(  \frac{1}{\ln^{2}\varepsilon}\right)  .
\end{align*}
Since%

\begin{align*}
&  \int_{B_{L\varepsilon}}\left(  1+\frac{\pi r^{2}}{\sqrt{6}\varepsilon^{2}%
}\right)  ^{-4}dx\\
&  =2\pi^{2}\bigskip\varepsilon^{4}\int_{0}^{L}\left(  1+\frac{\pi r^{2}%
}{\sqrt{6}}\right)  ^{-4}r^{3}dr\\
&  =2\pi^{2}\bigskip\varepsilon^{4}\frac{1}{2}\frac{6}{\pi^{2}}\int_{0}%
^{\frac{\pi}{\sqrt{6}}L^{2}}\frac{u}{\left(  1+u\right)  ^{4}}du\\
&  =2\pi^{2}\bigskip\varepsilon^{4}\frac{1}{2}\frac{6}{\pi^{2}}\left(
\frac{1}{6}-\frac{1}{3}\left(  1+\frac{\pi}{\sqrt{6}}L^{2}\right)
^{-2}+O\left(  L^{-6}\right)  \right) \\
&  =\varepsilon^{4}\left(  1-2\left(  1+\frac{\pi}{\sqrt{6}}L^{2}\right)
^{-2}+O\left(  L^{-6}\right)  \right)  ,
\end{align*}
we have \
\begin{align*}
&  \int_{B_{L\varepsilon}}\left(  \exp\left(  32\pi^{2}\phi_{\varepsilon}%
^{2}\right)  -1-\alpha\phi_{\varepsilon}^{2}\right)  dx\\
&  \geq\frac{\pi^{2}}{6\varepsilon^{4}}\exp\left(  32\pi^{2}A+\frac{5}%
{3}\right)  \varepsilon^{4}\left(  1-2\left(  1+\frac{\pi}{\sqrt{6}}%
L^{2}\right)  ^{-2}+O\left(  L^{-6}\right)  \right)  +\\
&  +O\left(  C^{4}\left(  L\varepsilon\right)  ^{4}\right)  +O\left(  \frac
{1}{\ln^{2}\varepsilon}\right) \\
&  =\frac{\pi^{2}}{6}\exp\left(  32\pi^{2}A+\frac{5}{3}\right)  +O\left(
\frac{1}{\ln^{2}\varepsilon}\right)  .
\end{align*}

On the other hand, we also have%
\begin{align*}
&  \int_{%
%TCIMACRO{\U{211d} }%
%BeginExpansion
\mathbb{R}
%EndExpansion
^{4}\backslash B_{L\varepsilon}}\left(  \exp\left(  32\pi^{2}\phi
_{\varepsilon}^{2}\right)  -1-\alpha\phi_{\varepsilon}^{2}\right)  dx\\
&  \geq\frac{32\pi^{2}-\alpha}{C^{2}}\int_{%
%TCIMACRO{\U{211d} }%
%BeginExpansion
\mathbb{R}
%EndExpansion
^{4}\backslash B_{L\varepsilon}}G\left(  x\right)  ^{2}dx\\
&  =\frac{32\pi^{2}-\alpha}{C^{2}}\left\Vert G\left(  x\right)  \right\Vert
_{L^{2}\left(
%TCIMACRO{\U{211d} }%
%BeginExpansion
\mathbb{R}
%EndExpansion
^{4}\right)  }^{2}\\
&  =\frac{32\pi^{2}-\alpha}{C^{2}}\left(  \left\Vert G\left(  x\right)
\right\Vert _{L^{2}\left(
%TCIMACRO{\U{211d} }%
%BeginExpansion
\mathbb{R}
%EndExpansion
^{4}\right)  }^{2}+O\left(  L^{4}\varepsilon^{4}\right)  \right)  .
\end{align*}
Then we get%

\begin{align*}
&  \bigskip\int_{%
%TCIMACRO{\U{211d} }%
%BeginExpansion
\mathbb{R}
%EndExpansion
^{4}}\left(  \exp\left(  32\pi^{2}\phi_{\varepsilon}^{2}\right)  -1-\alpha
\phi_{\varepsilon}^{2}\right)  dx\\
&  =\frac{\pi^{2}}{6}\exp\left(  32\pi^{2}A+\frac{5}{3}\right)  +\frac
{32\pi^{2}-\alpha}{C^{2}}\left\Vert G\left(  x\right)  \right\Vert
_{L^{2}\left(
%TCIMACRO{\U{211d} }%
%BeginExpansion
\mathbb{R}
%EndExpansion
^{4}\right)  }^{2}+O\left(  \frac{1}{\ln^{2}\varepsilon}\right)  .
\end{align*}
By (\ref{40}), we know $C^{2}\sim\left\vert \ln\varepsilon\right\vert $, which
implies that
\begin{equation}
\int_{%
%TCIMACRO{\U{211d} }%
%BeginExpansion
\mathbb{R}
%EndExpansion
^{4}}\left(  \exp\left(  32\pi^{2}\phi_{\varepsilon}^{2}\right)  -1\right)
dx>\frac{\pi^{2}}{6}\exp\left(  32\pi^{2}A+\frac{5}{3}\right)  \label{supass}%
\end{equation}
for $\varepsilon$ small enough. This accomplishes the proof.

\bigskip

\section{Nonexistence of extremals}

In this section, we will show that when $32\pi^{2}-\alpha$ large enough, the
supremum $S\left(  \alpha\right)  $ is not attained. For this aim, we will
need the precise estimates for the best constants of Gagliardo-Nirenberg
inequalities, the detailed proof is given in the Appendix.

\begin{lemma}
\label{sharp1}Let \bigskip$B_{k}=\underset{u\in H^{2}\left(
%TCIMACRO{\U{211d} }%
%BeginExpansion
\mathbb{R}
%EndExpansion
^{4}\right)  }{\sup}\frac{\int_{%
%TCIMACRO{\U{211d} }%
%BeginExpansion
\mathbb{R}
%EndExpansion
^{4}}u^{2k}dx}{\left(  \int_{%
%TCIMACRO{\U{211d} }%
%BeginExpansion
\mathbb{R}
%EndExpansion
^{4}}\left\vert \Delta u\right\vert ^{2}dx\right)  ^{k-1}\int_{%
%TCIMACRO{\U{211d} }%
%BeginExpansion
\mathbb{R}
%EndExpansion
^{4}}u^{2}dx}$, then
\begin{equation}
B_{k}\leq\frac{1}{4}\left(  1+\frac{2k}{2k-1}\right)  ^{2k}\left(
\frac{2\left(  k-1\right)  }{2k-1}\right)  ^{2k-2}\cdot\sqrt{\left(  \frac
{k}{k-1}\right)  ^{k-1}}k^{-1/2}\left(  \frac{k}{32\pi^{2}}\right)  ^{k}%
\cdot32\pi^{2}. \label{best constant}%
\end{equation}

\end{lemma}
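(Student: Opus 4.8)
The plan is to follow Beckner's method \cite{Beckner} for estimating the sharp constant in a Gagliardo--Nirenberg inequality by interpolating the $L^{2k}$ norm between a "low" endpoint ($L^{2}$, controlled by $\|u\|_{2}$) and a "high" endpoint controlled by $\|\Delta u\|_{2}$ via a Sobolev-type bound, using the precise Fourier-analytic form of the Bessel-kernel estimates. Concretely, I would first write $u = G_{2}*f$ where $G_{2}$ is the kernel of $(I-\Delta)^{-1}$ on $\mathbb{R}^{4}$ (equivalently expand $u$ in terms of $\Delta u$ and $u$ using the Green function $\Gamma$ for $\Delta^{2}+1$ introduced in Section 3), so that the problem reduces to a convolution estimate. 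The key inequality to establish is a one-parameter family of bounds of the form
\[
\|u\|_{2k}^{2k} \le C(k,\theta)\,\|\Delta u\|_{2}^{2(k-1)}\,\|u\|_{2}^{2},
\]
obtained by splitting the Fourier side into $|\xi|\le \lambda$ and $|\xi|>\lambda$, estimating the low-frequency part by $\|u\|_{2}$ and the high-frequency part by $\lambda^{-2}\|\Delta u\|_{2}$ (in the appropriate $L^{p}$ sense through Hausdorff--Young / Young's convolution inequality), and then optimizing over the cut-off parameter $\lambda$. The exponents $\bigl(1+\tfrac{2k}{2k-1}\bigr)^{2k}$ and $\bigl(\tfrac{2(k-1)}{2k-1}\bigr)^{2k-2}$ appearing in \eqref{best constant} are exactly what comes out of this optimization — they are the "$p^{p}(p')^{p'}$"-type constants produced by Young's inequality and by the scaling optimization in $\lambda$ — and the factors $\sqrt{(k/(k-1))^{k-1}}\,k^{-1/2}(k/32\pi^{2})^{k}\cdot 32\pi^{2}$ encode the precise value of the relevant integral of the Bessel kernel on $\mathbb{R}^{4}$ (where $32\pi^{2}=\beta(4,2)$ and $\omega_{3}=2\pi^{2}$ enter through the normalization of the fundamental solution of $\Delta^{2}+1$).

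The steps, in order, are: (1) pass to the Fourier side and record the multiplier identities $\widehat{u}(\xi)=(1+|\xi|^{4})^{-1}\,\widehat{((\Delta^{2}+1)u)}(\xi)$ and the split $(1+|\xi|^{4})^{-1}=$ (low) $+$ (high); (2) apply Hölder/Young to bound $\|u\|_{2k}$ by a product of a norm controlled by $\|u\|_{2}$ and one controlled by $\|\Delta u\|_{2}$, keeping track of the constant in Young's convolution inequality in the sharp (Beckner--Brascamp--Lieb) form; (3) compute exactly the radial integral $\int_{\mathbb{R}^{4}}$ of the relevant power of the kernel — this is where the $1/\sqrt{k}$, the $(k/(k-1))^{(k-1)/2}$ and the $\pi^{2}$-powers are generated, via a Beta-function evaluation analogous to the one already carried out in Lemma \ref{class} where $\int_{\mathbb{R}^{4}}(1+\tfrac{\pi}{\sqrt 6}|x|^{2})^{-4}dx=1$; (4) optimize over the free cut-off/scaling parameter, producing the $\bigl(1+\tfrac{2k}{2k-1}\bigr)^{2k}\bigl(\tfrac{2(k-1)}{2k-1}\bigr)^{2k-2}$ block; (5) collect all constants and verify the displayed form \eqref{best constant}, and as a sanity check specialize to $k=2$ to recover $B_{2}<\tfrac{32}{729\pi^{2}}$ as asserted in the introduction.

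I expect the main obstacle to be step (2)--(4): getting the \emph{sharp} constant rather than merely a finite one. A naive interpolation gives the right shape but a worse constant; one must use the sharp Young (or sharp Hausdorff--Young) inequality and, crucially, choose the frequency split and the scaling parameter simultaneously optimally, since the two optimizations interact. A secondary subtlety is bookkeeping: the statement mixes a power $\sqrt{(k/(k-1))^{k-1}}$ with integer powers, so one has to be careful about which factors come from the kernel integral (half-integer powers, from a $\Gamma$-function ratio at a half-integer argument in dimension $4$) versus from Young's inequality (integer powers). Because the precise expression $\Gamma$ for $\Delta^{2}+1$ and its decay estimates \eqref{Green esti}--\eqref{Green lage} are already available, and the model computation $\int_{\mathbb{R}^4}(1+\tfrac{\pi}{\sqrt6}|x|^2)^{-4}dx=1$ is already done, the remaining work is a careful but essentially routine optimization; the full details are deferred to the Appendix as announced.
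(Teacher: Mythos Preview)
Your overall philosophy---Beckner's method, sharp Young and Hausdorff--Young on the Fourier side---matches the paper, but the concrete mechanism you outline (a hard frequency cutoff $|\xi|\lessgtr\lambda$ followed by optimization in $\lambda$) is not what the paper does, and it is doubtful that it would yield the precise constant in \eqref{best constant} without loss. The paper instead first introduces the auxiliary constant
\[
C_{k}=\sup_{u}\frac{\bigl(\int|u|^{2k}\bigr)^{1/k}}{\int(|\Delta u|^{2}+|u|^{2})}
\]
and uses the scaling $u_{\tau}=\tau^{2/k}u(\tau\,\cdot)$ together with the one-variable optimization of Lemma~\ref{lem3} to prove the exact identity $B_{k}=C_{k}^{k}\,k^{k}/(k-1)^{k-1}$. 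It is this scaling reduction---not a Fourier cutoff---that generates the $k^{-1/2}\sqrt{(k/(k-1))^{k-1}}$ block. The paper then bounds $C_{k}$ by writing $\|u\|_{2k}\le C_{k}^{1/2}\|(I+\Delta^{2})^{1/2}u\|_{2}$ as a convolution estimate $\|G*f\|_{2k}\le C_{k}^{1/2}\|f\|_{2}$ with $\widehat G(\xi)=(1+16\pi^{4}|\xi|^{4})^{-1/2}$, pairing it with its dual $\|G*f\|_{2}\le C_{k}^{1/2}\|f\|_{(2k)'}$, and applying sharp Young plus sharp Hausdorff--Young to $\int(G*G*u)\,u$ to obtain $C_{k}\le(A_{(2k)'}A_{k}/A_{2k})^{4}\,(k'^{1/k'}k^{-1/k})^{1/2}\,\|\widehat G^{2}\|_{k'}$; the factor $(1+\tfrac{2k}{2k-1})^{2k}(\tfrac{2(k-1)}{2k-1})^{2k-2}$ comes from these Beckner constants, and $\|\widehat G^{2}\|_{k'}$ is an exact Beta-function integral.

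Two specific points to correct: first, the relevant kernel is that of $(I+\Delta^{2})^{-1/2}$, not $(I-\Delta)^{-1}$, since the full norm is $\|\Delta u\|_{2}^{2}+\|u\|_{2}^{2}$; second, a Littlewood--Paley-type hard cutoff generally introduces boundary losses that a scaling/duality argument avoids, so your step~(4) is precisely where the proposal is likely to fall short of the stated constant. Replacing the frequency split by the scaling-to-$C_{k}$ reduction and the $G*G$ duality argument fixes both issues.
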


\begin{proof}
[Proof of Theorem \ref{nonatain}]Let $u\in H^{2}\left(
%TCIMACRO{\U{211d} }%
%BeginExpansion
\mathbb{R}
%EndExpansion
^{4}\right)  $ satisfying $\left\Vert u\right\Vert _{H^{2}\left(
%TCIMACRO{\U{211d} }%
%BeginExpansion
\mathbb{R}
%EndExpansion
^{4}\right)  }=1$, we have%
\begin{align*}
&  \int_{%
%TCIMACRO{\U{211d} }%
%BeginExpansion
\mathbb{R}
%EndExpansion
^{4}}\left(  \exp\left(  32\pi^{2}\left\vert u\right\vert ^{2}\right)
-1-\alpha\left\vert u\right\vert ^{2}\right)  dx\\
&  =\left(  32\pi^{2}-\alpha\right)  \int_{%
%TCIMACRO{\U{211d} }%
%BeginExpansion
\mathbb{R}
%EndExpansion
^{4}}u^{2}dx+\frac{\left(  32\pi^{2}\right)  ^{2}}{2}\int_{%
%TCIMACRO{\U{211d} }%
%BeginExpansion
\mathbb{R}
%EndExpansion
^{4}}u^{4}dx+\ldots+\frac{\left(  32\pi^{2}\right)  ^{k}}{k!}\int_{%
%TCIMACRO{\U{211d} }%
%BeginExpansion
\mathbb{R}
%EndExpansion
^{4}}u^{2k}dx+\ldots.
\end{align*}
By the Gagliardo-Nirenberg inequalities, we have
\[
\frac{\left(  32\pi^{2}\right)  ^{k}}{k!}\int_{%
%TCIMACRO{\U{211d} }%
%BeginExpansion
\mathbb{R}
%EndExpansion
^{4}}u^{2k}dx\leq\frac{\left(  32\pi^{2}\right)  ^{k}}{k!}B_{k}\left(  \int_{%
%TCIMACRO{\U{211d} }%
%BeginExpansion
\mathbb{R}
%EndExpansion
^{4}}\left\vert \Delta u\right\vert ^{2}dx\right)  ^{k-1}\int_{%
%TCIMACRO{\U{211d} }%
%BeginExpansion
\mathbb{R}
%EndExpansion
^{4}}u^{2}dx,
\]
where $B_{k}$ is the best constant. We employ \eqref{best constant} to obtain
that
\begin{align*}
B_{k}  &  \leq\frac{1}{4}\left(  1+\frac{1}{2k-1}\right)  ^{2k-1}\left(
\frac{2k}{2k-1}\right)  \left(  1-\frac{1}{2k-1}\right)  ^{2k-1}\left(
\frac{2k-1}{2\left(  k-1\right)  }\right)  \cdot\\
&  \sqrt{\left(  1+\frac{1}{k-1}\right)  ^{k-1}}k^{-1/2}\left(  \frac{k}%
{32\pi^{2}}\right)  ^{k}\cdot32\pi^{2}\\
&  \sim\frac{8\pi^{2}\sqrt{e}}{\sqrt{k}}\left(  \frac{k}{32\pi^{2}}\right)
^{k},\text{ }%
\end{align*}
when $k$ is large enough. Thus, it follows that
\begin{align*}
\frac{\left(  32\pi^{2}\right)  ^{k}}{k!}B_{k} &  \leq c\frac{\left(
32\pi^{2}\right)  ^{k}}{k!}\frac{8\pi^{2}\sqrt{e}}{\sqrt{k}}\left(  \frac
{k}{32\pi^{2}}\right)  ^{k}\\
&  =c8\pi^{2}\sqrt{e}\frac{k^{k}}{\sqrt{k}k!}\left(  \text{by the stirling
formula}\right)  \\
&  \sim c\frac{8\pi^{2}\sqrt{e}}{\sqrt{2\pi}}\cdot\frac{e^{k}}{k}.
\end{align*}
Setting $\int_{%
%TCIMACRO{\U{211d} }%
%BeginExpansion
\mathbb{R}
%EndExpansion
^{4}}\left\vert \Delta u\right\vert ^{2}dx=t$, we have
\[
\frac{\left(  32\pi^{2}\right)  ^{k}}{k!}\int_{%
%TCIMACRO{\U{211d} }%
%BeginExpansion
\mathbb{R}
%EndExpansion
^{4}}u^{2k}dx\leq\frac{c8\pi^{2}\sqrt{e}}{\sqrt{2\pi}}\cdot\frac{e^{k}}{k}%
t^{k}\left(  1-t\right).
\]
 Since the series
\[
\underset{k=1}{\overset{\infty}{\sum}}\frac{8\pi^{2}\sqrt{e}}{\sqrt{2\pi}%
}\cdot\frac{e^{k}}{k}t^{k}\left(  1-t\right)
\]
converges if $t<\frac{1}{e}$, hence, there exists some constant $c>0$ such
that
\begin{align*}
&  \int_{%
%TCIMACRO{\U{211d} }%
%BeginExpansion
\mathbb{R}
%EndExpansion
^{4}}\left(  \exp\left(  32\pi^{2}\left\vert u\right\vert ^{2}\right)
-1-\alpha\left\vert u\right\vert ^{2}\right)  dx\\
&  \leq F\left(  t\right)  =\left(  32\pi^{2}-\alpha\right)  \left(
1-t\right)  +c\underset{k=2}{\overset{\infty}{\sum}}\frac{8\pi^{2}\sqrt{e}%
}{\sqrt{2\pi}}\cdot\frac{e^{k}}{k}t^{k}.
\end{align*}
Since $F^{\prime}\left(  t\right)  |_{t=0}=-32\pi^{2}+\alpha<0$ if
$\alpha<32\pi^{2}$ and $c\underset{k=1}{\overset{\infty}{\sum}}\frac{8\pi
^{2}\sqrt{e}}{\sqrt{2\pi}}\cdot\frac{e^{k}}{k}t^{k}\rightarrow0$ as
$t\rightarrow0$, there exists some $t_{0}$ such that $F(t)$ is decreasing on $[0,t_0]$. Hence, it follows that
\[
F\left(  t\right)  \leq F\left(  0\right) \leq 32\pi^{2}-\alpha.
\]

Now we consider the case $t\geq t_{0}$. By the Adams inequality, there exists some
$M>0$ such that
\[
\int_{%
%TCIMACRO{\U{211d} }%
%BeginExpansion
\mathbb{R}
%EndExpansion
^{4}}\left(  \exp\left(  32\pi^{2}\left\vert u\right\vert ^{2}\right)
-1-32\pi^{2}\left\vert u\right\vert ^{2}\right)  dx<M.
\]
Hence
\begin{align*}
F\left(  t\right)   &  \leq\left(  32\pi^{2}-\alpha\right)  \left(
1-t\right)  +M\\
&  =32\pi^{2}-\alpha+M-\left(  32\pi^{2}-\alpha\right)  t_{0},
\end{align*}
and we have $F\left(  t\right)  <32\pi^{2}-\alpha$, if $32\pi^{2}-\alpha
>\frac{M}{t_{0}}$.
\end{proof}

Finally, we give the

\begin{proof}
[Proof of Theorem \ref{fina}]We only need to show that if $\ S\left(
\alpha_{1}\right)  $ is attained, then for any $\alpha_{1}<\alpha_{2}$,
$S\left(  \alpha_{2}\right)  $ is also attained.

For any $\alpha\in%
%TCIMACRO{\U{211d} }%
%BeginExpansion
\mathbb{R}
%EndExpansion
$, we denote by $d_{nv}\left(  \alpha\right)  $ and $d_{nc}\left(
\alpha\right)  $ for the upper bounds of Adams' inequality of the normalized
vanishing sequences and the normalized concentration sequences, respectively.
It is easy to check the following facts:
\[
d_{nc}\left(  \alpha_{1}\right)  =d_{nc}\left(  \alpha_{2}\right)
,d_{nv}\left(  \alpha_{1}\right)  -d_{nv}\left(  \alpha_{2}\right)
=\alpha_{2}-\alpha_{1},
\]
for any $\alpha_{1}<\alpha_{2}$. \ Since $S\left(  \alpha_{1}\right)  $ is
attained, we have
\[
S\left(  \alpha_{1}\right)  \geq\max\left\{  d_{nv}\left(  \alpha_{1}\right)
,d_{nc}\left(  \alpha_{1}\right)  \right\}  .
\]
On the other hand, by (\ref{supass}),\ we know that
\begin{equation}
S\left(  \alpha\right)  >d_{nc}\left(  \alpha\right)  \text{, for any }%
\alpha\in%
%TCIMACRO{\U{211d} }%
%BeginExpansion
\mathbb{R}
%EndExpansion
, \label{concen}%
\end{equation}
thus, we have
\begin{equation}
S\left(  \alpha_{1}\right)  \geq d_{nv}\left(  \alpha_{1}\right)  .
\label{sup}%
\end{equation}

Now, we show that $S\left(  \alpha_{2}\right)  \geq d_{nv}\left(  \alpha
_{2}\right)  $. Indeed, since $S\left(  \alpha_{1}\right)  $ is attained by
some $\bar{u}\in H^{2}\left(
%TCIMACRO{\U{211d} }%
%BeginExpansion
\mathbb{R}
%EndExpansion
^{4}\right)  \backslash\left\{  0\right\}  $ satisfying $\left\Vert \bar
{u}\right\Vert _{H^{2}\left(
%TCIMACRO{\U{211d} }%
%BeginExpansion
\mathbb{R}
%EndExpansion
^{4}\right)  }=1$, that is,
\begin{align*}
S\left(  \alpha_{1}\right)   &  =\int_{%
%TCIMACRO{\U{211d} }%
%BeginExpansion
\mathbb{R}
%EndExpansion
^{4}}\left(  \exp\left(  32\pi^{2}\left\vert \bar{u}\right\vert ^{2}\right)
-1-\alpha_{1}\left\vert \bar{u}\right\vert ^{2}\right)  dx\\
&  =\left(  32\pi^{2}-\alpha_{1}\right)  \int_{%
%TCIMACRO{\U{211d} }%
%BeginExpansion
\mathbb{R}
%EndExpansion
^{4}}\bar{u}^{2}dx+G\left(  \bar{u}\right)
\end{align*}
where $G\left(  \bar{u}\right)  =\int_{%
%TCIMACRO{\U{211d} }%
%BeginExpansion
\mathbb{R}
%EndExpansion
^{4}}\left(  \exp\left(  32\pi^{2}\left\vert \bar{u}\right\vert ^{2}\right)
-1-32\pi^{2}\left\vert \bar{u}\right\vert ^{2}\right)  dx$. \ By (\ref{sup})
and the fact that $\int_{%
%TCIMACRO{\U{211d} }%
%BeginExpansion
\mathbb{R}
%EndExpansion
^{4}}\bar{u}^{2}dx<1$, we have
\begin{align}
S\left(  \alpha_{2}\right)   &  \geq\left(  32\pi^{2}-\alpha_{2}\right)
\int_{%
%TCIMACRO{\U{211d} }%
%BeginExpansion
\mathbb{R}
%EndExpansion
^{4}}\bar{u}^{2}dx+G\left(  \bar{u}\right) \nonumber\\
&  =\left(  \alpha_{1}-\alpha_{2}\right)  \int_{%
%TCIMACRO{\U{211d} }%
%BeginExpansion
\mathbb{R}
%EndExpansion
^{4}}\bar{u}^{2}dx+\left(  32\pi^{2}-\alpha_{1}\right)  \int_{%
%TCIMACRO{\U{211d} }%
%BeginExpansion
\mathbb{R}
%EndExpansion
^{4}}\bar{u}^{2}dx+G\left(  \bar{u}\right) \nonumber\\
&  =\left(  \alpha_{1}-\alpha_{2}\right)  \int_{%
%TCIMACRO{\U{211d} }%
%BeginExpansion
\mathbb{R}
%EndExpansion
^{4}}\bar{u}^{2}dx+S\left(  \alpha_{1}\right) \nonumber\\
&  \geq\left(  \alpha_{1}-\alpha_{2}\right)  \int_{%
%TCIMACRO{\U{211d} }%
%BeginExpansion
\mathbb{R}
%EndExpansion
^{4}}\bar{u}^{2}dx+d_{nv}\left(  \alpha_{1}\right) \nonumber\\
&  >\alpha_{1}-\alpha_{2}+32\pi^{2}-\alpha_{1}\nonumber\\
&  =32\pi^{2}-\alpha_{2}=d_{nv}\left(  \alpha_{2}\right)  . \label{vanish}%
\end{align}
Combining (\ref{concen}) and (\ref{vanish}), we have $S\left(  \alpha
_{2}\right)  >\max\left\{  d_{nv}\left(  \alpha_{2}\right)  ,d_{nc}\left(
\alpha_{2}\right)  \right\}  $, and then $S\left(  \alpha_{2}\right)  $ is attained.
\end{proof}

\section{Existence and nonexistence of extremal functions for the Trudinger-Mpser inequalities in $\mathbb{R}^2$}\label{section6}

In this section, we will provide  sketchy proofs of Theorems \ref{addthm2} and \ref{addthm3} all together.

\vskip0.1cm

Step 1: Setting
\[
I_{\beta}^{\alpha}\left(  u\right)  =\int_{B_{R}}\left(  \exp(\beta
|u|^{2})-1-\alpha|u|^{2}\right)  dx,
\]
one can easily check that $I_{\beta}^{\alpha}$ could be achieved through combining the subcritical Trudinger-Moser inequality in $H^{1}(\mathbb{R}^2)$ and Vitali convergence theorem.
\medskip

Step 2. We can find a positive radially symmetric maximizing sequence
$\left\{  u_{k}\right\}  $\ for critical functional
\[
\tilde{S}\left(  \alpha\right)  =\underset{u\in H^{1},\left\Vert u\right\Vert
_{H^{1}}=1}{\sup}\int_{\mathbb{R}^{2}}\left(  \exp(4\pi|u|^{2})-1-\alpha
|u|^{2}\right)  dx.
\]
where, $u_k$ is positive, radial extremals for the Trudinger-Moser inequality
$$\int_{B_{R_k}}\left(  \exp(\beta_k
|u|^{2})-1-\alpha|u|^{2}\right)  dx
$$
and $R_k\rightarrow \mathbb{R}^2$, $\beta_k\rightarrow 32\pi^2$.
\medskip

Step 3. If $c_{k}$ is bounded from above, then one of the following holds.

(i) $u\neq0$ and $\tilde{S}\left(  \alpha\right)$ could be achieved by a
radial function $u\in H^{1}(\mathbb{R}^{2})$,

(ii) $u=0$ and $\{u_{k}\}$ is a normalized vanishing sequence, furthermore,
$\tilde{S}\left(  \alpha\right)\leq d_{nv}=4\pi-\alpha$.

where $d_{nv}$ is the upper bound of Trudinger-Moser inequality for normalized
vanishing sequence.
\medskip

Step 4. Similar to the proof of Lemma \ref{lem2}, we can show $\tilde{S}\left(\alpha\right)
>d_{nv}$, when $4\pi-8\pi^{2}B_{2}<\alpha$. Indeed, for any $v\in
H^{1}(\mathbb{R}^{2})$ and $t>0$, we introduce a family of functions $v_{t}$
by
\[
v_{t}(x)=t^{\frac{1}{2}}v(t^{\frac{1}{2}}x),
\]
and we easily verify that
\[
\Vert\nabla v_{t}\Vert_{2}^{2}=t\Vert\nabla v\Vert_{2}^{2},\Vert v_{t}%
\Vert_{p}^{p}=t^{\frac{p-2}{2}}\Vert v\Vert_{p}^{p}.
\]
Hence, it follows that

\bigskip
\begin{equation}%
\begin{split}
&  \int_{\mathbb{R}^{2}}\left(  \exp\Big(4\pi\big(\frac{v_{t}}{\Vert
v_{t}\Vert_{H^{1}(\mathbb{R}^{2})}}\big)^{2}\Big)-1-\alpha\left(  \frac{v_{t}%
}{\Vert v_{t}\Vert_{H^{1}(\mathbb{R}^{2})}}\right)  ^{2}\right)  dx\\
&  \ \ \geq\left(  4\pi-\alpha\right)  \frac{\Vert v_{t}\Vert_{2}^{2}}%
{\Vert\nabla v_{t}\Vert_{2}^{2}+\Vert v_{t}\Vert_{2}^{2}}+\frac{\left(
4\pi\right)  ^{2}}{2}\frac{\Vert v_{t}\Vert_{4}^{4}}{(\Vert\nabla v_{t}%
\Vert_{2}^{2}+\Vert v_{t}\Vert_{2}^{2})^{2}}\\
&  \ \ =\left(  4\pi-\alpha\right)  \Big(\frac{\Vert v\Vert_{2}^{2}}%
{t\Vert\nabla v\Vert_{2}^{2}+\Vert v\Vert_{2}^{2}}+\frac{\left(  4\pi\right)
^{2}}{2\left(  4\pi-\alpha\right)  }\frac{t\Vert v\Vert_{4}^{4}}{(t\Vert\nabla
v\Vert_{2}^{2}+\Vert v\Vert_{2}^{2})^{2}}\Big)\\
&  \ \ =\left(  4\pi-\alpha\right)  g_{v}(t).
\end{split}
\label{s2}%
\end{equation}

Noting that $g_{v}(0)=1$, and $g_{v}^{\prime}(0)=\frac{\left\Vert \nabla
v\right\Vert _{2}^{2}}{\left\Vert v\right\Vert _{2}^{2}}\left(  \frac{\left(
4\pi\right)  ^{2}\left\Vert v\right\Vert _{4}^{4}}{2\left(  4\pi
-\alpha\right)  \left\Vert \nabla v\right\Vert _{2}^{2}\left\Vert v\right\Vert
_{2}^{2}}-1\right)  $, we recall that $\bar{B}_{1}=\sup\frac{\left\Vert
v\right\Vert _{4}^{4}}{\left\Vert \nabla v\right\Vert _{2}^{2}\left\Vert
v\right\Vert _{2}^{2}}$ is attained by some $U\in H^{1}\left(  \mathbb{R}%
^{2}\right)  \backslash\left\{  0\right\}  $. Then $g_{U}^{\prime}(0)>0$ if
$4\pi-\alpha<8\pi^{2}B_{1}$. Therefore, the vanishing phenomenon does not occur.
\medskip

Step 5. If $\sup_{k}c_{k}=+\infty$, we define $d_{cv}$ by the upper bound of
Trudinger-Moser inequality for normalized concentration sequence. From the
proof of Ruf \cite{ruf}, we know that $\tilde{S}\left(  \alpha\right) >d_{cv}$ for
any $\alpha\in \mathbb{R}$, thus $\tilde{S}\left(  \alpha\right)$ is attained for any
$\alpha$, thus $\tilde{S}\left(  \alpha\right)  >\max\left\{  d_{cv}%
,d_{nv}\right\}  $, when $4\pi-\alpha<8\pi^{2}B_{1}$.
\medskip

Step 6. Similar to the proof of Theorem \ref{nonatain}, we can show that there exists some
$\beta^{\ast\ast}$, such that when $4\pi-\alpha>\beta^{\ast\ast}$, $\tilde
{S}\left(  \alpha\right)  $ is not attained, by using estimates of the
constants
\[
\bar{B}_{k}=\underset{u\in H^{1}}{\sup}\frac{\int_{\mathbb{R}^2} u^{2k}dx}{\int_{\mathbb{R}^2}\left\vert
\nabla u\right\vert ^{2k}dx\int_{\mathbb{R}^2} u^{2}dx}.%
\]
Similarly, we also can show that if $\tilde{S}\left(  \alpha_{1}\right)  $  is
attained, then for any $\alpha_{1}<\alpha_{2}$, $\tilde{S}\left(  \alpha
_{2}\right)  $ is also attained as the argument of Theorem \ref{fina}. Hence we can conclude that
When $4\pi-\alpha<\beta^{\ast}$ then $\tilde{S}\left(  \alpha\right)< \beta^{\ast}$ and $\tilde{S}(\alpha)$ could be attained, while when $4\pi-\alpha>\beta^{\ast}$, $\tilde{S}\left(  \alpha\right)=4\pi-\alpha$, and $\tilde{S}(\alpha)$ is not attained, where $\beta^{*}$
is defined as \[
\beta^{\ast}=\sup\left\{  \left.  \left(  4\pi-\alpha\right)
\right\vert \tilde{S}\left(  \alpha\right)  \text{ is attained}\right\}
\] and $\beta^{*}\geq \frac{(4\pi)^2B_{1}}{2}>4\pi$.

\section{Appendix}

\subsection{Estimates for sharp constants of Gagliardo-Nirenberg inequalities}

\begin{lemma}
\label{lem3} For any fixed $a,b,M,N>0$, let $h(s):=s^{a}M+s^{-b}N$, then we
have
\[
\inf_{s>0}h(s)=h((\frac{bN}{aM})^{\frac{1}{a+b}})=\frac{a+b}{a}(\frac{b}%
{a})^{\frac{-b}{a+b}}M^{\frac{b}{a+b}}N^{\frac{a}{a+b}}.
\]

\end{lemma}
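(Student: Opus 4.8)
The plan is to treat this as an elementary one–variable minimization and exhibit the global minimum explicitly. First I would note that $h$ is smooth and strictly positive on $(0,\infty)$, and that $h(s)\to+\infty$ both as $s\to0^{+}$ (the term $s^{-b}N$ blows up, since $b,N>0$) and as $s\to+\infty$ (the term $s^{a}M$ blows up, since $a,M>0$). Consequently the infimum is in fact a minimum, attained at an interior critical point of $h$.

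Next I would differentiate: $h'(s)=a\,s^{a-1}M-b\,s^{-b-1}N$, so $h'(s)=0$ is equivalent to $s^{a+b}=\tfrac{bN}{aM}$, giving the unique critical point $s_{0}=\left(\tfrac{bN}{aM}\right)^{1/(a+b)}$. Since $s\mapsto s^{a+b}$ is increasing, $h'$ changes sign exactly once, from negative to positive, at $s_{0}$; hence $h$ is decreasing on $(0,s_{0})$, increasing on $(s_{0},\infty)$, and $s_{0}$ is the global minimizer. (One can equally invoke $h''(s)=a(a-1)s^{a-2}M+b(b+1)s^{-b-2}N$ with a convexity-type argument to reach the same conclusion.)

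It then remains to evaluate $h(s_{0})=s_{0}^{a}M+s_{0}^{-b}N$. Using $s_{0}^{a}=\left(\tfrac{bN}{aM}\right)^{a/(a+b)}$ and $s_{0}^{-b}=\left(\tfrac{bN}{aM}\right)^{-b/(a+b)}$ and collecting the powers of $a,b,M,N$ one finds
\[
s_{0}^{a}M=\left(\tfrac{b}{a}\right)^{a/(a+b)}M^{b/(a+b)}N^{a/(a+b)},\qquad
s_{0}^{-b}N=\left(\tfrac{b}{a}\right)^{-b/(a+b)}M^{b/(a+b)}N^{a/(a+b)} .
\]
Factoring the common quantity $\left(\tfrac{b}{a}\right)^{-b/(a+b)}M^{b/(a+b)}N^{a/(a+b)}$ out of the sum leaves the factor $\tfrac{b}{a}+1=\tfrac{a+b}{a}$, and hence $h(s_{0})=\tfrac{a+b}{a}\left(\tfrac{b}{a}\right)^{-b/(a+b)}M^{b/(a+b)}N^{a/(a+b)}$, which is the claimed value.

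A cleaner alternative, which I would also mention, is weighted AM–GM: writing $h(s)=\tfrac{b}{a+b}\cdot\tfrac{a+b}{b}s^{a}M+\tfrac{a}{a+b}\cdot\tfrac{a+b}{a}s^{-b}N$ and applying the weighted arithmetic–geometric mean inequality with weights $\tfrac{b}{a+b},\tfrac{a}{a+b}$, the powers of $s$ cancel in the geometric mean, producing precisely the constant above as an $s$–independent lower bound, with equality exactly when $\tfrac{a+b}{b}s^{a}M=\tfrac{a+b}{a}s^{-b}N$, i.e. at $s=s_{0}$. There is no genuine difficulty in this lemma; the only point requiring care is the exponent bookkeeping needed to match the extremal value to the stated closed form, which the factoring step above handles.
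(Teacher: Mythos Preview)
Your proof is correct. The paper actually states this lemma without proof, treating it as elementary; your direct first-derivative argument (and the weighted AM--GM alternative) are exactly the standard ways to verify it, and your exponent bookkeeping is accurate.
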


\begin{proof}
[Proof of Lemma \ref{sharp1}]Define
\[
C_{j}=\sup_{u\in H^{2}(\mathbb{R}^{4})}\frac{\left(  \int_{\mathbb{R}^{4}%
}|u|^{2j}dx\right)  ^{\frac{1}{j}}}{\int_{\mathbb{R}^{4}}\left(  |\Delta
u|^{2}+|u|^{2}\right)  dx}.
\]
First, we claim that $B_{j}=\frac{C_{j}^{j}j^{j}}{(j-1)^{j-1}}$. \ Set
\[
I(u):=\int_{\mathbb{R}^{4}}\left(  |\Delta u|^{2}+|u|^{2}\right)  dx
\]
and
\[
\Lambda:=\{u\in H^{2}(\mathbb{R}^{4}),\int_{\mathbb{R}^{4}}|u|^{2j}dx=1\}.
\]
For any $\tau>0$, we set $$u_{\tau}:=\tau^{\frac
{2}{j}}u(\tau x). $$Direct calculations lead to
\[
\int_{\mathbb{R}^{4}}|u_{\tau}|^{2j}dx=\int_{\mathbb{R}^{4}}|u|^{2j}dx
\]
and
\[
\int_{\mathbb{R}^{4}}|\Delta u_{\tau}|^{2}dx=\tau^{\frac{4}{j}}\int
_{\mathbb{R}^{4}}|\Delta u|^{2}dx,\ \int_{\mathbb{R}^{4}}|u_{\tau}|^{2}%
dx=\tau^{\frac{4}{j}-4}\int_{\mathbb{R}^{4}}|u|^{2}dx.
\]
It follows from Lemma \ref{lem3} that
\[%
\begin{split}
\inf_{u\in\Lambda}I(u)  &  =\inf_{u\in\Lambda}\inf_{\tau>0}I(u_{\tau})\\
&  =\inf_{u\in\Lambda}\inf_{\tau>0}\tau^{\frac{4}{j}}\int_{\mathbb{R}^{4}%
}|\Delta u|^{2}dx+\tau^{\frac{4}{j}-4}\int_{\mathbb{R}^{4}}|u|^{2}dx\\
&  =j(j-1)^{\frac{1-j}{j}}\left(  \int_{\mathbb{R}^{4}}|\Delta u|^{2}%
dx\right)  ^{1-\frac{1}{j}}\left(  \int_{\mathbb{R}^{4}}|u|^{2}dx\right)
^{\frac{1}{j}}.
\end{split}
\]
According to the definitions of $B_{j}$ and $C_{j}$, we derive that
$B_{j}=\frac{C_{j}^{j}j^{j}}{(j-1)^{j-1}}$.

Next, we turn to the estimate of sharp constants $C_{j}$. Through Fourier
transform and duality, it is easy to check that
\[
\big(\int_{\mathbb{R}^{4}}|u|^{2j}dx\big)^{\frac{1}{j}}\leq C_{j}%
\int_{\mathbb{R}^{4}}\left(  |\Delta u|^{2}+|u|^{2}\right)  dx
\]
is equivalent to
\[
\big(\int_{\mathbb{R}^{4}}|G\ast u|^{2j}|dx\big)^{\frac{1}{2j}}\leq
C_{j}^{\frac{1}{2}}\big(\int_{\mathbb{R}^{4}}|u|^{2}dx\big)^{\frac{1}{2}}%
\]
and
\[
\big(\int_{\mathbb{R}^{4}}|G\ast u|^{2}|dx\big)^{\frac{1}{2}}\leq C_{j}%
^{\frac{1}{2}}\big(\int_{\mathbb{R}^{4}}|u|^{\frac{2j}{2j-1}}dx\big)^{\frac
{2j-1}{2j}},
\]
where $\hat{G}(\xi)=(16\pi^4|\xi|^4+1)^{-\frac{1}{2}}$.
Then it follows that
\begin{equation}%
\begin{split}
\int_{\mathbb{R}^{4}}|(G\ast G\ast u)u|dx  &  \leq\big(\int_{\mathbb{R}^{4}%
}|G\ast G\ast u|^{2j}dx\big)^{\frac{1}{2j}}\big(\int_{\mathbb{R}^{4}%
}|u|^{\frac{2j}{2j-1}}\big)^{\frac{2j-1}{2j}}\\
&  \leq C_{j}^{\frac{1}{2}}\big(\int_{\mathbb{R}^{4}}|G\ast u|^{2}%
dx\big)^{\frac{1}{2}}\big(\int_{\mathbb{R}^{4}}|u|^{\frac{2j}{2j-1}%
}\big)^{\frac{2j-1}{2j}}\\
&  \leq C_{j}\big(\int_{\mathbb{R}^{4}}|u|^{\frac{2j}{2j-1}}\big)^{\frac
{2j-1}{j}}.
\end{split}
\label{ap1}%
\end{equation}

On the other hand, for any $t>1$, define $A_t=\Big(\frac{t^{\frac{1}{t}}}{t'^{\frac{1}{t'}}}\Big)^{\frac{1}{2}}$, where $t'$  satisfies $\frac{1}{t'}+\frac{1}{t}=1 $. Then it follows from sharp convolution Young inequality and Haousdorff
Young inequality that
\begin{equation}%
\begin{split}
&  \int_{\mathbb{R}^{4}}|(G\ast G\ast u)udx\\
&  \leq\big(\int_{\mathbb{R}^{4}}|G\ast G\ast u|^{2j}dx\big)^{\frac{1}{2j}%
}\big(\int_{\mathbb{R}^{4}}|u|^{\frac{2j}{2j-1}}\big)^{\frac{2j-1}{2j}}\\
&  \leq\big(\frac{A_{\frac{2j}{2j-1}}A_{j}}{A_{2j}}\big)^{4}\big(\int
_{\mathbb{R}^{4}}|G\ast G|^{j}dx\big)^{\frac{1}{j}}\big(\int_{\mathbb{R}^{4}%
}|u|^{\frac{2j}{2j-1}}\big)^{\frac{2j-1}{j}}\\
&  \leq\big(\frac{A_{\frac{2j}{2j-1}}A_{j}}{A_{2j}}\big)^{4}\big(j^{\prime}%
{}^{\frac{1}{j^{\prime}}}j^{-\frac{1}{j}}\big)^{\frac{1}{2}}\big(\int
_{\mathbb{R}^{4}}|\hat{G}\hat{G}|^{j^{\prime}}dx\big)^{\frac{1}{j^{\prime}}%
}\big(\int_{\mathbb{R}^{4}}|u|^{\frac{2j}{2j-1}}\big)^{\frac{2j-1}{j}},
\end{split}
\label{ap2}%
\end{equation}
which together with (\ref{ap1}) yields that
\[
C_{j}\leq\big(\frac{A_{\frac{2j}{2j-1}}A_{j}}{A_{2j}}\big)^{4}\big(j^{\prime
}{}^{\frac{1}{j^{\prime}}}j^{-\frac{1}{j}}\big)^{\frac{1}{2}}\big(\int
_{\mathbb{R}^{4}}|\hat{G}\hat{G}|^{j^{\prime}}dx\big)^{\frac{1}{j^{\prime}}},
\]
and the proof is finished.
\end{proof}

\subsection{Some useful results}

\begin{proposition}
\label{split}Let $\Omega\subseteq%
%TCIMACRO{\U{211d} }%
%BeginExpansion
\mathbb{R}
%EndExpansion
^{4}$ be a bounded open domain with Lipschitz boundary. Then for any $u\in
H^{2}\left(  \Omega\right)  ,\omega\in H^{4}\left(  \Omega\right)  $, we have%
\[
\int_{\Omega}\Delta u\cdot\Delta\omega dx=\int_{\Omega}u\cdot\Delta^{2}\omega
dx-\int_{\partial\Omega}v\cdot u\Delta^{\frac{3}{2}}\omega dx+\int
_{\partial\Omega}v\cdot\Delta^{\frac{1}{2}}u\Delta\omega dx
\]
where $v$ denotes the outer normal to $\partial\Omega$.
\end{proposition}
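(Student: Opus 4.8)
The plan is to reduce the identity to the case of smooth functions by density, and then to obtain it from two successive applications of Green's first identity. First I would check that every term is meaningful: since $\omega\in H^{4}(\Omega)$ gives $\Delta\omega\in H^{2}(\Omega)$, the volume term $\int_{\Omega}u\,\Delta^{2}\omega\,dx$ is finite because $u,\Delta^{2}\omega\in L^{2}(\Omega)$, while the trace theorem on Lipschitz domains puts $u|_{\partial\Omega},(\Delta\omega)|_{\partial\Omega}\in H^{3/2}(\partial\Omega)$ and the normal derivatives $\partial_{v}u|_{\partial\Omega}=v\cdot\Delta^{1/2}u$ and $\partial_{v}(\Delta\omega)|_{\partial\Omega}=v\cdot\Delta^{3/2}\omega$ in $H^{1/2}(\partial\Omega)$; since all of these embed continuously into $L^{2}(\partial\Omega)$, the boundary integrals are finite and depend continuously on $u\in H^{2}(\Omega)$ and $\omega\in H^{4}(\Omega)$. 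The left-hand side is manifestly continuous in the same topologies.

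Because $\Omega$ is a bounded Lipschitz domain it is an extension domain, so $C^{\infty}(\overline{\Omega})$ is dense both in $H^{2}(\Omega)$ and in $H^{4}(\Omega)$; hence it suffices to prove the formula for $u,\omega\in C^{\infty}(\overline{\Omega})$ and then pass to the limit along approximating sequences, using the continuity just recorded. In the smooth case I would apply Green's identity $\int_{\Omega}f\,\Delta g\,dx=-\int_{\Omega}\nabla f\cdot\nabla g\,dx+\int_{\partial\Omega}f\,\partial_{v}g\,d\sigma$ first with $f=\Delta\omega$, $g=u$, obtaining $\int_{\Omega}\Delta u\,\Delta\omega\,dx=-\int_{\Omega}\nabla u\cdot\nabla(\Delta\omega)\,dx+\int_{\partial\Omega}(\partial_{v}u)\,\Delta\omega\,d\sigma$, and then again with $f=u$, $g=\Delta\omega$ to rewrite $\int_{\Omega}\nabla u\cdot\nabla(\Delta\omega)\,dx=-\int_{\Omega}u\,\Delta^{2}\omega\,dx+\int_{\partial\Omega}u\,\partial_{v}(\Delta\omega)\,d\sigma$. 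Substituting the second relation into the first yields exactly the asserted identity, after identifying $v\cdot\Delta^{3/2}\omega=\partial_{v}(\Delta\omega)$ and $v\cdot\Delta^{1/2}u=\partial_{v}u$ in accordance with the odd-order convention $\Delta^{l+1/2}=\nabla\Delta^{l}$.

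I expect the only delicate point to be the density–trace bookkeeping on a merely Lipschitz domain: one must invoke that $C^{\infty}(\overline{\Omega})$ is dense in $H^{2}(\Omega)$ and $H^{4}(\Omega)$ and that the boundary pairings are jointly continuous in the natural Sobolev topologies, so that the smooth identity is stable under approximation. This is standard material, and it is precisely the place where the Lipschitz regularity of $\partial\Omega$ is used; the computational core, namely the two integrations by parts, is entirely routine.
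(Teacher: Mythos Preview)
Your proposal is correct and follows the standard route: two applications of Green's first identity for smooth functions, followed by a density argument. The paper itself does not supply a proof of this proposition; it is simply recorded in the Appendix among ``some useful results'' and used as a black box in the blow-up analysis (Lemma~\ref{esti for the out of the ball} and in computing $III$ in Section~3.4). So there is nothing to compare against, and your argument is exactly what one would expect.

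One small remark on the bookkeeping: on a merely Lipschitz domain the trace exponents you quote ($H^{3/2}$ and $H^{1/2}$) are a bit optimistic; the sharp mapping $H^{2}(\Omega)\to H^{3/2}(\partial\Omega)\times H^{1/2}(\partial\Omega)$ generally requires more boundary regularity. What is safely available on Lipschitz domains is $u|_{\partial\Omega}\in H^{1}(\partial\Omega)$ and $\partial_{v}u\in L^{2}(\partial\Omega)$ for $u\in H^{2}(\Omega)$, which already makes all your boundary pairings well-defined in $L^{2}(\partial\Omega)\times L^{2}(\partial\Omega)$ and continuous in the relevant Sobolev topologies. This does not affect the structure of your proof at all; only the stated indices should be weakened. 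In the paper's applications the domain is always a ball, so the issue never arises in practice.
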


\begin{lemma}
[Pizzetti \cite{Pizz}]\label{Pizz} Let $u\in C^{2m}(B_{R}(x_{0}))$,
$B_{R}(x_{0})\subset\mathbb{R}^{n}$, for some $m$, $n$ positive integers. Then
there are positive constants $c_{i}=c_{i}(n)$ such that
\[
\int_{B_{R}(x_{0})}u(x)dx=\sum_{i=0}^{m-1}c_{i}R^{n+2i}\Delta^{i}%
u(x_{0})+c_{m}R^{n+2m}\Delta^{m}u(\xi),
\]
for some $\xi\in B_{R}(x_{0})$.
\end{lemma}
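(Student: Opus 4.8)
The plan is to reduce the $n$-dimensional identity to a one-dimensional (radial) problem via spherical averages and then integrate. Fix $x_{0}$ and, for $0\le k\le m$, set
\[
M_{k}(r)=\frac{1}{\omega_{n-1}}\int_{S^{n-1}}\Delta^{k}u(x_{0}+r\theta)\,d\theta ,\qquad 0\le r\le R .
\]
Each $M_{k}$ is even in $r$, of class $C^{2(m-k)}$ on $[0,R]$, with $M_{k}(0)=\Delta^{k}u(x_{0})$ and $M_{k}'(0)=\frac{1}{\omega_{n-1}}\nabla\Delta^{k}u(x_{0})\cdot\int_{S^{n-1}}\theta\,d\theta=0$. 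Applying the divergence theorem on $B_{r}(x_{0})$ gives $r^{n-1}M_{k}'(r)=\frac{1}{\omega_{n-1}}\int_{B_{r}(x_{0})}\Delta^{k+1}u\,dx$, and differentiating once more in $r$ yields the Darboux-type relation
\[
M_{k}''(r)+\frac{n-1}{r}M_{k}'(r)=M_{k+1}(r),\qquad 0\le k\le m-1 ,
\]
that is, the radial Laplacian $\Delta_{\mathrm{rad}}$ sends $M_{k}$ to $M_{k+1}$.

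Next I would invert the radial Laplacian. Let $(Tg)(r)=\int_{0}^{r}\sigma^{1-n}\int_{0}^{\sigma}\tau^{n-1}g(\tau)\,d\tau\,d\sigma$; this is the right inverse of $\Delta_{\mathrm{rad}}$ that vanishes to second order at $r=0$, it preserves nonnegativity, and $T(\tau^{2j})=\frac{r^{2j+2}}{(2j+2)(n+2j)}$, so $T^{k}(1)(r)=\lambda_{k}r^{2k}$ with $\lambda_{k}=\prod_{j=0}^{k-1}\big[(2j+2)(n+2j)\big]^{-1}$. Since every $M_{k}$ is bounded near $0$ and $\Delta_{\mathrm{rad}}M_{k}=M_{k+1}$, the radial harmonic function $M_{k}-M_{k}(0)-TM_{k+1}$ is bounded near $0$ and vanishes at $0$, hence is identically $0$; that is, $M_{k}=M_{k}(0)+TM_{k+1}$. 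Iterating this from $k=0$ downwards gives
\[
M_{0}(r)=\sum_{k=0}^{m-1}\lambda_{k}\,\Delta^{k}u(x_{0})\,r^{2k}+(T^{m}M_{m})(r).
\]

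Finally I would integrate over the ball, using $\int_{B_{R}(x_{0})}u\,dx=\omega_{n-1}\int_{0}^{R}r^{n-1}M_{0}(r)\,dr$. The polynomial part contributes $\sum_{k=0}^{m-1}c_{k}R^{n+2k}\Delta^{k}u(x_{0})$ with $c_{k}=\frac{\omega_{n-1}\lambda_{k}}{n+2k}>0$ depending only on $n$. For the remainder $\omega_{n-1}\int_{0}^{R}r^{n-1}(T^{m}M_{m})(r)\,dr$, unwinding the iterated integrals defining $T^{m}$ and $M_{m}$ rewrites it as $\int_{B_{R}(x_{0})}\Phi(|y-x_{0}|)\,\Delta^{m}u(y)\,dy$ for an explicit kernel $\Phi\ge 0$ which is positive on $(0,R)$; testing with the radial polynomial whose $m$-th Laplacian equals $1$ gives $\int_{B_{R}(x_{0})}\Phi=c_{m}R^{n+2m}$ with $c_{m}=\frac{\omega_{n-1}\lambda_{m}}{n+2m}>0$. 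The first mean value theorem for integrals, together with the intermediate value property of the continuous function $\Delta^{m}u$ on the connected set $\overline{B_{R}(x_{0})}$, then produces $\xi\in B_{R}(x_{0})$ with $\int_{B_{R}(x_{0})}\Phi\,\Delta^{m}u=c_{m}R^{n+2m}\Delta^{m}u(\xi)$, which is the asserted formula.

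The step I expect to be the main obstacle is the last one: keeping enough positivity in the iterated remainder $\omega_{n-1}\int_{0}^{R}r^{n-1}(T^{m}M_{m})(r)\,dr$ so that it can be written as a single integral of $\Delta^{m}u$ against a nonnegative weight, which is precisely what upgrades the elementary bound $O\!\big(R^{n+2m}\|\Delta^{m}u\|_{L^{\infty}(B_{R})}\big)$ to a genuine point evaluation $c_{m}R^{n+2m}\Delta^{m}u(\xi)$. A secondary, purely bookkeeping, matter is justifying the differentiations of $M_{k}$ and the applications of the divergence theorem for a function that is only $C^{2m}$; this is handled by the regularity $M_{k}\in C^{2(m-k)}$, the smoothness of $\partial B_{r}(x_{0})$ for $r>0$, and treating $r=0$ as a removable limit.
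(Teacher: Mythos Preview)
Your argument is correct and follows the classical route to Pizzetti's formula: reduce to spherical means via the Darboux equation $\Delta_{\mathrm{rad}}M_{k}=M_{k+1}$, invert the radial Laplacian iteratively with the positive operator $T$, and handle the remainder by the first mean value theorem for integrals against the nonnegative kernel that results from unwinding $T^{m}$. The positivity you flag as the main obstacle is indeed present at every stage, since each factor $\sigma^{1-n}$, $\tau^{n-1}$, $r^{n-1}$ in the iterated integrals is nonnegative on $[0,R]$, so Fubini produces a nonnegative $\Phi$ as you claim; the total mass computation by testing against a polyharmonic polynomial with $\Delta^{m}u\equiv 1$ then identifies $c_{m}$ correctly.

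Note, however, that the paper does not supply its own proof of this lemma: it is quoted in the Appendix as a known result attributed to Pizzetti (1909), without argument. So there is nothing to compare your approach against here; you have simply written out a complete proof of a result the authors take for granted. One minor point worth tightening if you were to write this up formally: the hypothesis $u\in C^{2m}(B_{R}(x_{0}))$ on the open ball is, strictly speaking, not quite enough for the mean value theorem step on the closed ball, and in practice one assumes $u\in C^{2m}(\overline{B_{R}(x_{0})})$ (or at least that $\Delta^{m}u$ extends continuously to the closure). This is a defect of the statement as quoted rather than of your argument.
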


\begin{lemma}
[Martinazzi \cite{Mar}]\label{Pizz1} Let $u$ solve $\Delta^{m}u=f\in L(\log
L)^{\alpha}$ in smooth bounded $\Omega$ with the Dirichlet boundary condition
for some $0\leq\alpha\leq1$ and $n\geq2m$. Then
\[
\nabla^{2m-l}u\in L^{(\frac{n}{n-l},\frac{1}{\alpha})}(\Omega),1\leq
l\leq2m-1
\]
and
\[
\Vert\nabla^{2m-l}u\Vert_{(\frac{n}{n-l},\frac{1}{\alpha})}\leq C\Vert
f\Vert_{L(logL)^{\alpha}}.
\]

\end{lemma}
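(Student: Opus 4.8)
\emph{The statement of Lemma \ref{Pizz1} is due to Martinazzi \cite{Mar}; I would prove it by the standard route, namely a Green's representation followed by sharp rearrangement estimates for an integral operator with a weakly singular kernel.} \emph{Step 1.} Let $G_{\Omega}(x,y)$ be the Green function of $\Delta^{m}$ on $\Omega$ with the Dirichlet conditions, so that $u(x)=\int_{\Omega}G_{\Omega}(x,y)f(y)\,dy$. Since $1\le l\le 2m-1$ we only differentiate $G_{\Omega}$ to order $2m-l\ge 1$, and the classical pointwise estimates for polyharmonic Dirichlet Green functions give
\[
\bigl|\nabla_{x}^{2m-l}G_{\Omega}(x,y)\bigr|\le C\,|x-y|^{l-n},\qquad x\ne y\in\Omega .
\]
Hence $\bigl|\nabla^{2m-l}u(x)\bigr|\le C\,(Tf)(x)$ with $(Tf)(x):=\int_{\Omega}|x-y|^{l-n}|f(y)|\,dy$, and it suffices to bound $\Vert Tf\Vert_{L^{(n/(n-l),1/\alpha)}(\Omega)}$ by $\Vert f\Vert_{L(\log L)^{\alpha}}$. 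When $\alpha=0$ this is just the classical weak-type $(1,n/(n-l))$ estimate for such kernels on a finite-measure set, so the content is the refinement for $0<\alpha\le1$.

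\emph{Step 2 and Step 3 (the heart).} For $T$ as above, the O'Neil-type rearrangement inequality for integral operators with kernel $\lesssim|x-y|^{l-n}$ on the finite-measure domain $\Omega$ yields
\[
(Tf)^{*}(t)\le C\Bigl(t^{l/n-1}\int_{0}^{t}f^{*}(s)\,ds+\int_{t}^{|\Omega|}f^{*}(s)\,s^{l/n-1}\,ds\Bigr),\qquad 0<t\le|\Omega| .
\]
Plugging this into the Lorentz norm $\Vert Tf\Vert_{L^{(n/(n-l),1/\alpha)}(\Omega)}^{1/\alpha}\sim\int_{0}^{|\Omega|}\bigl(t^{(n-l)/n}(Tf)^{*}(t)\bigr)^{1/\alpha}\frac{dt}{t}$ and noting that $\tfrac{n-l}{n}+\bigl(\tfrac{l}{n}-1\bigr)=0$, the problem collapses to the weighted Hardy-type inequality
\[
\int_{0}^{|\Omega|}\Bigl(\int_{0}^{t}f^{*}(s)\,ds\Bigr)^{1/\alpha}\frac{dt}{t}\le C\Bigl(\int_{0}^{|\Omega|}f^{*}(s)\bigl(\log(e|\Omega|/s)\bigr)^{\alpha}\,ds\Bigr)^{1/\alpha}
\]
together with its dual counterpart for the tail term $t^{(n-l)/n}\!\int_{t}^{|\Omega|}f^{*}(s)s^{l/n-1}\,ds$. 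The right-hand side above is equivalent to $\Vert f\Vert_{L(\log L)^{\alpha}}^{1/\alpha}$ by the rearrangement characterization of the Orlicz norm, and the inequality itself follows by integrating by parts and using the self-improving bound $\int_{0}^{t}f^{*}(s)\,ds\le\bigl(\log(e|\Omega|/t)\bigr)^{-\alpha}\Vert f\Vert_{L(\log L)^{\alpha}}$ (which in turn is immediate from the same rearrangement formula, since $\log(e|\Omega|/s)\ge\log(e|\Omega|/t)$ for $s\le t$).

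\emph{The main obstacle} is precisely this last step: the bookkeeping of the logarithmic factors, showing that the $(\log)^{\alpha}$ gain in the integrability of the datum $f$ is transferred to exactly the second Lorentz index $1/\alpha$ of $\nabla^{2m-l}u$, and doing so uniformly for $\alpha\in[0,1]$ so that the two extreme cases $\alpha=0$ (weak $L^{n/(n-l)}$) and $\alpha=1$ (the small Lorentz space $L^{(n/(n-l),1)}$) are captured in one stroke. Everything else — the Green's representation, the kernel bound, and the O'Neil estimate — is classical. Since this argument is carried out carefully in \cite{Mar}, I would invoke that reference for the technical details, the only point to check in our setting being the pointwise bound on $\nabla_{x}^{2m-l}G_{\Omega}$, which holds for the smooth bounded domains considered here.
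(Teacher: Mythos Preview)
The paper does not prove this lemma at all: it is stated in the Appendix with the attribution ``[Martinazzi \cite{Mar}]'' and no proof is given, since it is simply quoted as a known result to be invoked in the proof of Lemma~\ref{gra}. Your sketch follows exactly the route Martinazzi takes in \cite{Mar} (Green's representation, the pointwise bound $|\nabla_x^{2m-l}G_\Omega(x,y)|\le C|x-y|^{l-n}$, O'Neil's rearrangement inequality for the resulting Riesz-type potential, and then the Hardy-type estimate transferring the $(\log)^\alpha$ gain on $f$ into the second Lorentz index $1/\alpha$), so there is nothing to compare: your proposal \emph{is} the original proof, and citing \cite{Mar} as you do at the end is precisely what the paper does.
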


\begin{lemma}
[\cite{Gazzola}]\label{whole estimate} Let $\Omega\subset%
%TCIMACRO{\U{211d} }%
%BeginExpansion
\mathbb{R}
%EndExpansion
^{N}$ be a bounded open set with smooth boundary, and take \thinspace$k,m\in%
%TCIMACRO{\U{2115} }%
%BeginExpansion
\mathbb{N}
%EndExpansion
$,$k\geq2m$, and $\gamma\in\left(  0,1\right)  $. If $u\in H^{m}\left(
\Omega\right)  $ is a weak solutions of the problem%
\[%
%TCIMACRO{\QDATOPD{\{}{.}{\left(  -\Delta\right)  ^{m}u=f\text{ in }\Omega
%}{\partial_{v}^{i}u=h_{i}\text{ on }\partial\Omega,\text{ }0\leq i\leq m-1}}%
%BeginExpansion
\genfrac{\{}{.}{0pt}{0}{\left(  -\Delta\right)  ^{m}u=f\text{ in }%
\Omega}{\partial_{v}^{i}u=h_{i}\text{ on }\partial\Omega,\text{ }0\leq i\leq
m-1}%
%EndExpansion
\]
with $f\in C^{k-2m,\gamma}\left(  \Omega\right)  $ and $h_{i}\in
C^{k-i,\gamma}\left(  \partial\Omega\right)  $, then $u\in C^{k,\gamma}\left(
\Omega\right)  $ and there exists a constant $c=c\left(  \Omega,k,\gamma
\right)  $ such that
\[
\left\Vert u\right\Vert _{C^{k,\gamma}\left(  \Omega\right)  }\leq c\left(
\left\Vert f\right\Vert _{C^{k-2m,\gamma}\left(  \Omega\right)  }%
+\underset{i=0}{\overset{m-1}{\sum}}\left\Vert h_{i}\right\Vert
_{C^{k-i,\gamma}\left(  \partial\Omega\right)  }\right)  .
\]
Similarly, If $f\in C^{k-2m,\gamma}(\Omega)$ and $u$ is a weak solution of
$(-\Delta)^{m}u=f$ in $\Omega$, then $u\in C_{loc}^{k,\gamma}(\Omega)$, and
for any open set $V\Subset\Omega$, then there exists a constant
$C=C(k,p,V,\Omega)$ such that
\[
\Vert u\Vert_{C^{k,\gamma}(V)}\leq C(\Vert f\Vert_{C^{k-2m,\gamma}(\Omega)}+\Vert
u\Vert_{L^{1}(\Omega)}).
\]

\end{lemma}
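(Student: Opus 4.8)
The statement is the classical Schauder theory for the polyharmonic Dirichlet problem, so the plan is to deduce it from the Agmon--Douglis--Nirenberg (ADN) estimates for elliptic boundary value problems, handling the global estimate and the purely interior estimate separately.

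For the global estimate I would argue as follows. The operator $(-\Delta)^m$ is uniformly elliptic of order $2m$ with constant coefficients, and the boundary operators $B_iu=\partial_\nu^i u$, $0\le i\le m-1$, are normal and of pairwise distinct orders $0,1,\dots,m-1$. The first step is to verify that the pair $\big((-\Delta)^m,\{B_i\}_{i=0}^{m-1}\big)$ satisfies the complementing (Lopatinski--Shapiro) condition: after freezing coefficients at a boundary point and passing to the half-space model, the ordinary differential equation $(|\xi'|^2+D_t^2)^m v=0$ on $\{t>0\}$ has no nontrivial bounded solution with $D_t^iv(0)=0$ for $0\le i\le m-1$, which is exactly the required algebraic condition. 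Granting this, the ADN Schauder estimate directly gives $u\in C^{k,\gamma}(\overline\Omega)$ together with the asserted bound, the hypothesis $k\ge 2m$ ensuring that $f\in C^{k-2m,\gamma}$ and $h_i\in C^{k-i,\gamma}$ are the natural data; the smoothness of $\partial\Omega$ enters only to flatten the boundary and to absorb lower-order curvature contributions into the constant. For the concrete situation actually used in this paper, where $\Omega$ is always a ball, one may bypass ADN altogether: Boggio's explicit Green function for $(-\Delta)^m$ on a ball furnishes a representation formula for $u$, and the estimate then follows by differentiating under the integral sign and invoking the known pointwise bounds on the Green function and its derivatives.

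For the interior estimate I would first apply the interior ADN estimate on a slightly larger concentric set, obtaining, for $V\Subset W\Subset\Omega$, a bound of the form $\Vert u\Vert_{C^{k,\gamma}(V)}\le C\big(\Vert f\Vert_{C^{k-2m,\gamma}(W)}+\Vert u\Vert_{C^0(W)}\big)$. To replace $\Vert u\Vert_{C^0(W)}$ by $\Vert u\Vert_{L^1(\Omega)}$, the plan is to run the standard weighted-seminorm interpolation argument, i.e.\ the higher-order analogue of the Gilbarg--Trudinger device for second-order equations: introduce interior norms that weight the $j$-th derivatives by $d_x^{\,j+\gamma}$ with $d_x=\mathrm{dist}(x,\partial\Omega)$, use the interpolation inequalities among these norms to absorb every contribution of $u$ except the $\Vert u\Vert_{L^1}$ term into the left-hand side, and finally localize back to $V$. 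I expect the genuinely nontrivial input to be the ADN estimate itself — verifying the complementing condition and carrying the ADN proof through with constants depending only on $\Omega,k,\gamma$ — with the $C^0\to L^1$ downgrade being the remaining technical subtlety; once the ADN machinery and the weighted interior norms are in place, the rest is routine.
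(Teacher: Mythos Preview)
Your proposal is correct in spirit, but you should note that the paper does not actually prove this lemma: it is quoted verbatim from the Gazzola--Grunau--Sweers monograph \cite{Gazzola} and placed in the appendix under ``Some useful results'' with no argument supplied. The ADN Schauder approach you outline is precisely the route taken in that reference (see in particular Chapter~2 of \cite{Gazzola}), so your sketch matches the cited source rather than anything the present paper contributes.
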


\begin{lemma}
[\cite{Gazzola}]\label{local estimate} Let $\Omega\in\mathbb{R}^{n}$ be a
bounded open set with smooth boundary and take $m$, $k\in\mathbb{N}$,
$k\geq2m$, $p>1$. If $f\in W^{k-2m,p}(\Omega)$ and $u\in H^{m}(\Omega)$ is a
weak solution of $(-\Delta)^{m}u=f$ in $\Omega$, then $u\in W_{loc}%
^{k,p}(\Omega)$, and for any open set $V\subset\subset\Omega$, then there
exists a constant $C=C(k,p,V,\Omega)$ such that
\[
\Vert u\Vert_{W^{k,p}(V)}\leq C(\Vert f\Vert_{W^{k-2m,p}(\Omega)}+\Vert
u\Vert_{L^{1}(\Omega)}).
\]
Similarly, If $f\in C^{k-2m,\gamma}(\Omega)$ and $u$ is a weak solution of
$(-\Delta)^{m}u=f$ in $\Omega$, then $u\in C_{loc}^{k,\gamma}(\Omega)$, and
for any open set $V\subset\subset\Omega$, then there exists a constant
$C=C(k,p,V,\Omega)$ such that
\[
\Vert u\Vert_{C^{k,\gamma}(V)}\leq C(\Vert f\Vert_{C^{k-2m,\gamma}(\Omega)}+\Vert
u\Vert_{L^{1}(\Omega)}).
\]

\end{lemma}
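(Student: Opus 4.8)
Since Lemma~\ref{local estimate} is quoted from the monograph \cite{Gazzola}, what I would present is a sketch of the classical route rather than a new argument. The plan is: (a) localize the interior problem in $\Omega$ to a compactly supported problem for the model operator $(-\Delta)^m$ on all of $\mathbb{R}^n$ by means of a cutoff; (b) use the fundamental solution of $(-\Delta)^m$ together with the Calder\'{o}n--Zygmund theory (respectively the H\"{o}lder, i.e.\ Schauder, estimates) to obtain the basic a priori bound for this model problem; (c) bootstrap this bound in the number of derivatives (and, when $p<2$, also in the integrability exponent via Sobolev embedding), first getting $u\in W^{k,p}_{loc}(\Omega)$ (respectively $C^{k,\gamma}_{loc}$) and then a quantitative estimate; (d) feed this into an interpolation-plus-iteration argument over a chain of nested subdomains to reduce the norm of $u$ on the right-hand side to $\|u\|_{L^1(\Omega)}$.

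The core is the model estimate. Let $\Phi_m$ be the fundamental solution of $(-\Delta)^m$ on $\mathbb{R}^n$; then $D^{2m}\Phi_m$ is a Calder\'{o}n--Zygmund kernel, while the derivatives of $\Phi_m$ of order below $2m$ are locally integrable Riesz-type kernels. If $w\in H^m(\mathbb{R}^n)$ has compact support and $(-\Delta)^m w=h$, then $w=\Phi_m*h$, so for $1<p<\infty$ one has
\[
\|w\|_{W^{2m,p}(\mathbb{R}^n)}\le C_p\,\|h\|_{L^p(\mathbb{R}^n)},
\]
and, for $0<\gamma<1$,
\[
\|w\|_{C^{2m,\gamma}(\mathbb{R}^n)}\le C_\gamma\,\|h\|_{C^{0,\gamma}(\mathbb{R}^n)};
\]
the top-order bound is the singular-integral (resp.\ Campanato) estimate, and the lower-order derivatives of $w$ are controlled using the compact support of $h$. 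These are first proved for smooth $w$ and then extended to $H^m$ solutions by mollifying $h$ and using uniqueness of the compactly supported solution.

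For the localization, given $V\Subset\Omega$ choose $V\Subset W\Subset\Omega$ and $\eta\in C_0^\infty(W)$ with $\eta\equiv1$ on a neighborhood of $\overline{V}$, and put $v=\eta u$. Then $v$ is compactly supported and
\[
(-\Delta)^m v=\eta f+\sum_{1\le |\alpha|\le 2m}c_\alpha\,(D^\alpha\eta)\,(D^{2m-|\alpha|}u),
\]
so the model estimate yields $\|u\|_{W^{2m,p}(V)}\le C\big(\|f\|_{L^p(W)}+\|u\|_{W^{2m-1,p}(W)}\big)$, and likewise in H\"{o}lder norms. For $k>2m$ one differentiates the equation: if $|\beta|=k-2m$ then $D^\beta u$ is a weak solution of $(-\Delta)^m(D^\beta u)=D^\beta f$, and iterating the previous step gives $\|u\|_{W^{k,p}(V)}\le C\big(\|f\|_{W^{k-2m,p}(W)}+\|u\|_{W^{k-1,p}(W)}\big)$ together with $u\in W^{k,p}_{loc}(\Omega)$; the membership itself is obtained in the standard way by applying the same argument to difference quotients (or to mollified equations).

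It remains to replace $\|u\|_{W^{k-1,p}(W)}$ by $\|u\|_{L^1(\Omega)}$. For this I would use the Gagliardo--Nirenberg interpolation inequality $\|u\|_{W^{k-1,p}(W)}\le\varepsilon\,\|u\|_{W^{k,p}(W)}+C_\varepsilon\,\|u\|_{L^1(W)}$ and run the standard iteration on a chain $V=\Omega_0\Subset\Omega_1\Subset\cdots\Subset\Omega_N=W\Subset\Omega$ (equivalently, work with the weighted interior seminorms of Gilbarg--Trudinger): at stage $j$ the estimate controls the $W^{k,p}$-norm on $\Omega_j$ by a small multiple of the $W^{k,p}$-norm on $\Omega_{j+1}$ plus $\|f\|_{W^{k-2m,p}(\Omega)}+\|u\|_{L^1(\Omega)}$, and choosing $\varepsilon$ small and $N$ large enough absorbs all intermediate terms, yielding the asserted estimate. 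The Schauder statement is entirely parallel, with the Calder\'{o}n--Zygmund step replaced by the H\"{o}lder estimate for the polyharmonic Newtonian potential. The one genuinely delicate point is precisely this last absorption: the interpolation constant degenerates as the subdomains shrink, so controlling how many iterations are needed and how the constants compound is the main obstacle, while everything else is routine once the model estimate is in hand.
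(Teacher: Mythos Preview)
The paper does not prove this lemma at all: it is stated in the Appendix as a known result and simply attributed to the monograph \cite{Gazzola}, with no argument given. Your sketch is therefore not in competition with any proof in the paper; it is a reasonable outline of the standard interior regularity argument for polyharmonic operators (cutoff localization, Calder\'on--Zygmund/Schauder estimates for the Newtonian potential of $(-\Delta)^m$, bootstrap in the number of derivatives, and an interpolation--iteration to reduce the right-hand side to $\|u\|_{L^1}$), and is appropriate in spirit and in level of detail for a result that the authors themselves treat as background.
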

\begin{lemma}[\cite{Mar}]\label{Liouville}
Suppose that $u$ satisfies the bi-harmonic equation $(-\Delta)^2u=0$ with $u(x)\lesssim(1+|x|^{l})$ for some $l\geq 0$. Then
$u$ is a polynomial of degree at most $\max\{l,2\}$.
\end{lemma}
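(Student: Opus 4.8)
The plan is to reduce the biharmonic Liouville statement to the classical Liouville theorem for harmonic functions by means of the Almansi representation, which is available on $\mathbb{R}^{n}$ because it is star–shaped with respect to the origin. First I would observe that $w:=\Delta u$ is harmonic on $\mathbb{R}^{n}$, and then produce harmonic functions $h_{1},h_{2}$ on $\mathbb{R}^{n}$ with $u(x)=h_{1}(x)+|x|^{2}h_{2}(x)$: expanding $w=\sum_{j\ge 0}w_{j}$ into homogeneous harmonic components and using $\Delta(|x|^{2}w_{j})=2(n+2j)w_{j}$, one sees that $h_{2}:=\sum_{j\ge 0}\frac{1}{2(n+2j)}w_{j}$ is harmonic and satisfies $\Delta(|x|^{2}h_{2})=w$, so that $h_{1}:=u-|x|^{2}h_{2}$ is harmonic as well (the coefficients $\frac{1}{2(n+2j)}\le\frac{1}{2n}$ are bounded, so the series converges wherever the one for $w$ does, and uniqueness of the splitting lets it glue to all of $\mathbb{R}^{n}$).

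The second step is to transfer the polynomial growth of $u$ to $w$, $h_{2}$ and $h_{1}$. By the interior estimates for the constant–coefficient operator $\Delta^{2}$ (Lemma \ref{local estimate}, applied on a ball and rescaled) one has $|\Delta u(x)|\le CR^{-2}\sup_{B_{R}(x)}|u|$; taking $R=|x|$ for $|x|\ge 1$ and using $|u(x)|\lesssim 1+|x|^{l}$ gives $|w(x)|\lesssim 1+|x|^{\max\{l-2,0\}}$. The classical Liouville theorem for harmonic functions of polynomial growth then forces $w$, hence $h_{2}$ (a finite combination of its homogeneous parts), to be a polynomial of degree at most $\max\{\lfloor l\rfloor-2,0\}$, so $|x|^{2}h_{2}$ is a polynomial of degree at most $\max\{\lfloor l\rfloor,2\}$. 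Consequently $|h_{1}(x)|\le|u(x)|+|x|^{2}|h_{2}(x)|\lesssim 1+|x|^{\max\{l,2\}}$, and the harmonic Liouville theorem again shows $h_{1}$ is a polynomial of degree at most $\max\{\lfloor l\rfloor,2\}$. Adding the two pieces yields that $u$ is a polynomial of degree at most $\max\{l,2\}$.

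The delicate point — the one I would check most carefully — is precisely the passage from a growth bound on $u$ to one on $\Delta u$ (equivalently, the scale–invariant Cauchy-type bound $|D^{\beta}u(x)|\le C_{\beta}R^{-|\beta|}\sup_{B_{R}(x)}|u|$ for biharmonic $u$), since the harmonic Liouville theorem only enters after that reduction; it is also the source of the constant $2$ in $\max\{l,2\}$, coming from the factor $|x|^{2}$ in the Almansi term. A second, essentially equivalent route bypasses Almansi altogether: the same Cauchy estimates, with $R\to\infty$, show $D^{\beta}u\equiv 0$ for every multi-index $|\beta|>l$, hence $u$ is a polynomial of degree at most $\lfloor l\rfloor\le\max\{l,2\}$, the harmonic case $l<2$ being handled separately if a sharper conclusion is wanted. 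Finally, if in an application only the one-sided bound $u\le C(1+|x|^{l})$ is available, the scheme still works once the pointwise interior estimate is replaced by a Harnack-inequality argument applied to the nonnegative harmonic majorants of $h_{1}$ and (after multiplying out) of $h_{2}$.
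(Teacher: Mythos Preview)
The paper does not supply a proof of this lemma: it is quoted in the Appendix as a known result from Martinazzi~\cite{Mar}, so there is no in-paper argument to compare against. Your Almansi-based proof is a correct and self-contained route under the two-sided hypothesis $|u(x)|\lesssim 1+|x|^{l}$, and the alternative via scale-invariant interior (Cauchy) estimates for $\Delta^{2}$ is equally valid in that setting; both are more elementary than invoking the full machinery of~\cite{Mar}.

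There is, however, a genuine gap if the hypothesis is read as the one-sided bound $u(x)\le C(1+|x|^{l})$, which is both what the notation suggests and exactly how the lemma is used in the paper (in the proof of Lemma~\ref{guji1} it is applied to $v\le 0$). Your interior estimate $|\Delta u(x)|\le CR^{-2}\sup_{B_{R}(x)}|u|$ and the Cauchy-type bound $|D^{\beta}u|\le C_{\beta}R^{-|\beta|}\sup_{B_{R}(x)}|u|$ both require control of $|u|$, not merely of $u^{+}$, so neither your main argument nor your ``second route'' applies as written. The closing suggestion to run Harnack on the Almansi pieces $h_{1}$ and $h_{2}$ does not work either: neither $h_{1}$ nor $h_{2}$ individually inherits a one-sided bound from $u=h_{1}+|x|^{2}h_{2}$, so there is nothing to feed into a Harnack inequality. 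To repair this you must first extract from the one-sided bound on $u$ some genuine control on the harmonic function $\Delta u$; one way is to use the exact Pizzetti identity $\fint_{\partial B_{R}(x)}u=u(x)+c_{n}R^{2}\Delta u(x)$ (which, unlike a mere inequality, lets you play different radii against each other and against the sign information) to show that $\Delta u$ is a harmonic function bounded above by a polynomial of degree $\max\{l-2,0\}$, after which the one-sided harmonic Liouville theorem and your Almansi argument go through.
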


\end{document}